\tikzstyle{special} = [rectangle, draw, 
\tikzstyle{process} = [rectangle, draw, 
\tikzstyle{block} = [rectangle, draw,
\tikzstyle{line} = [draw, -latex']
\tikzstyle{cloud} = [draw, ellipse, node distance=3cm,
\newcommand\textify[1]{%
  \expandafter\GlobalLetLtxMacro\csname textify@\string#1\endcsname#1%
  \protected\gdef#1{\ensuremath{\csname textify@\string#1\endcsname}}%
}
\newcommand{\drawHQ}[1]{\operatorname{
\begin{tikzpicture}[#1]
\draw[very thick] (0,0) circle (0.8ex);
\draw[white, thick] (0,0) circle (0.1ex);
\end{tikzpicture}
}}
\newcommand{\drawHR}[1]{\operatorname{
\begin{tikzpicture}[#1]
\draw[very thick] (0,0) circle (0.85ex);
\draw[very thick] (0,0) circle (0.7ex);
\draw[very thick] (0,0) circle (0.6ex);
\draw[very thick] (0,0) circle (0.5ex);
\draw[very thick] (0,0) circle (0.4ex);
\end{tikzpicture}
}}
\newcommand{\drawBC}[1]{\operatorname{
\begin{tikzpicture}[#1]
\draw[thick] (0,0) circle (0.85ex);
\draw[very thin] (0,0) circle (0.3ex);
\end{tikzpicture}
}}
\newcommand{\newHQ}{\ensuremath \raisebox{-1pt}{$\drawHQ{}$}}
\newcommand{\newHR}{\ensuremath \raisebox{-1pt}{$\drawHR{}$}}
\newcommand{\newBC}{\ensuremath \raisebox{-1pt}{$\drawBC{}$}}
\def\namedlabel#1#2{\begingroup
   \def\@currentlabel{#2}%
   \label{#1}\endgroup
}
\newcommand*{\wackyenum}[1]{%
  \expandafter\@wackyenum\csname c@#1\endcsname%
}
\newcommand*{\@wackyenum}[1]{%
  $\ifcase#1\or \newHQ \or \newHR \or \newBC \or idk%
    \else\@ctrerr\fi$%
}
\AddEnumerateCounter{\wackyenum}{\@wackyenum}{(\textbf{BC})}
\setlist[description]{leftmargin=\parindent,labelindent=\parindent}
\title[A priori bounds and degeneration of Herman rings]{A priori bounds and degeneration of Herman rings with bounded type rotation number}
\author{Willie Rush Lim}
\address{Dept. of Mathematics, Brown University, Providence, RI 02912}
\email{willie\_rush\_lim@brown.edu}
\date{}
\begin{document}

\begin{abstract}
By adapting the near-degenerate regime designed by Kahn, Lyubich, and D. Dudko, we prove that the boundaries of Herman rings with bounded type rotation number and of the simplest configuration are quasicircles with dilatation depending only on the degree and the rotation number. As a consequence, we show that these Herman rings always degenerate to a \emph{Herman curve}, i.e. an invariant Jordan curve that is not contained in the closure of a rotation domain and on which the map is conjugate to a rigid rotation. This process enables us to construct the first general examples of rational maps having Herman curves of bounded type with arbitrary degree and combinatorics. In particular, they do not come from Blaschke products. We also demonstrate the existence of Renormalization Theory for Herman curves by constructing rescaled limits of the first return maps in the unicritical case.
\end{abstract}

\maketitle

\renewcommand{\ULdepth}{1.8pt}
\setcounter{tocdepth}{1}
\tableofcontents

\section{Introduction}
\label{sec:intro}

\subsection{On Herman rings} 
\label{ss:on-herman-rings}

By Fatou's classification of periodic Fatou components, every maximal domain $U \subset \RS$ in which a rational map $f: \RS \to \RS$ of degree $d\geq 2$ is conformally conjugate to a rigid rotation is either a topological disk, in which case $U$ is called a Siegel disk, or an annulus, in which case $U$ is called a Herman ring.

Siegel disks have been actively studied in the last few decades. In the second half of the last century, the study of local dynamics near a neutral fixed point has essentially received a complete treatment by the works of Brjuno, Herman, Yoccoz, and Perez-Marco. At the same time, the semi-local theory for Siegel disks of quadratic maps $e^{2\pi i \theta}z+z^2$ started with the introduction of Douady-Ghys surgery \cite{D87,G84}, which was based on the work of Herman and \'Swi\k{a}tek \cite{H86,S88}. The surgery procedure proves that Siegel disks of quadratic maps with bounded type rotation number are quasidisks containing a critical point on the boundary. The above result was generalized by Zakeri \cite{Z99} for cubic polynomials, by Shishikura for polynomials of arbitrary degree, and ultimately by Zhang \cite{Z11} for rational maps. Moreover, Zhang proved \emph{a priori bounds} for Siegel disks: the dilatation of the boundary of every invariant Siegel disk with bounded type rotation number $\theta$ of a rational map $f$ depends only on the degree of $f$ and the bound 
\[
\beta(\theta) := \max_i a_i < \infty
\]
of the continued fraction expansion $[0;a_1, a_2 ,\ldots ]$ of $\theta$.

The absence of periodic points associated to Herman rings makes them more difficult to study than Siegel disks. The construction of the first examples of Herman rings was based on the study of linearizability of analytic circle diffeomorphisms by Arnol'd and Herman. A more general construction was later established by Shishikura. In \cite{S87}, Shishikura developed surgery procedures to construct Herman rings out of two Siegel disks, and to convert Herman rings into Siegel disks. Combining this surgery and Zhang's results, every boundary component of a bounded type invariant Herman ring $\He$ of a rational map $f$ is a quasicircle containing a critical point and having dilatation that depends only on the degree of $f$, the rotation number, and the modulus of $\He$. We will develop the machinery to remove the dependence of the modulus and obtain \emph{a priori bounds}.

\begin{definition}
\label{main-definition}
    We define $\HRspace_{d_0, d_\infty,\theta}$ to be the space of all degree $d_0 + d_\infty -1$ rational maps $f$ such that 
\begin{enumerate}[label=\text{(\Roman*)}]
    \item\label{def:1} $0$ and $\infty$ are superattracting fixed points of $f$ with local degrees $d_0\geq 2$ and $d_\infty\geq 2$ respectively;
    \item\label{def:2} the map $f$ admits an invariant Herman ring $\He$ with a bounded type rotation number $\theta$;
    \item\label{def:3} $\He$ separates $0$ and $\infty$;
    \item\label{def:4} every critical point of $f$ other than $0$ and $\infty$ lies on the boundary of $\He$.
\end{enumerate}
\end{definition}

The space $\HRspace_{d_0, d_\infty,\theta}$ encapsulates general Herman rings of the simplest configuration that can be obtained from Shishikura's surgery: they can be constructed out of two polynomials having unique invariant Siegel disks satisfying conditions similar to \ref{def:4}. The existence and rigidity of maps in $\HRspace_{d_0, d_\infty,\theta}$ of any prescribed combinatorics are guaranteed by a Thurston-type result by Wang \cite{W12}. The following is our main theorem. 

\begin{thmx}[\emph{A priori bounds}]
\label{main-theorem-01}
    The boundary components of the Herman ring of every map in $\HRspace_{d_0, d_\infty,\theta}$ are quasicircles with dilatation depending only on $d_0$, $d_\infty$, and $\beta(\theta)$.
\end{thmx}

Most of this paper is dedicated to the proof of Theorem \ref{main-theorem-01}, which will be achieved in the {\bf Near-Degenerate Regime}. A more detailed summary of our proof will be provided in the outline \S\ref{ss:outline}. The idea that compactness results are amenable for near-degenerate surfaces goes back to the work of W. Thurston on the geometry of $3$-manifolds. (See, for instance, the Double Limit Theorem \cite{Th86}.) In complex dynamics, the near-degenerate regime was successfully implemented in the proof of W. Thurston's characterization of postcritically finite rational maps \cite{DH93}. In mid 2000's, Kahn \cite{K06} introduced the near-degenerate regime to the Renormalization Theory of quadratic-like maps. Together with Lyubich, they set up fundamental tools, such as the Quasi-Additivity Law and the Covering Lemma \cite{KL05}, and attained substantial progress in the primitive case of the MLC conjecture \cite{KL08,KL09a}. Other applications of the Covering Lemma include the extension of Yoccoz's results and puzzle-parapuzzle machinery to higher degrees \cite{KL09b,AKLS,KvS,ALS}. (See also \cite{CDKvS} for a detailed exposition.) Recently, D. Dudko and Lyubich \cite{DL22} transferred the near-degenerate regime to neutral dynamics of quadratic polynomials $e^{2\pi i \theta} z+z^2$: they constructed \emph{pseudo-Siegel disks} out of bounded type Siegel disks by filling in fjords at all scales, and showed that these are quasidisks with uniform dilatation. Even though the above instances of the near-degenerate regime are unified by the same general principle, they have little in common on the technical level.

\subsection{On Herman curves} 
\label{ss:on-herman-curves}
The interest in studying invariant curves of a holomorphic dynamical system can be traced back to Fatou's memoirs \cite{F20} in 1920. One hundred years later, Eremenko posed a question at the online conference ``On Geometric Complexity of Julia Sets II`` \cite{E20} on the existence of non-trivial Herman curves. This object is defined as follows.

\begin{definition}
\label{herman-curve-defn}
    Given a rational map $f$, a forward invariant Jordan curve $\Hq$ is called a \emph{Herman curve} of $f$ of rotation number $\theta$ if $\Hq$ is not contained in the closure of any rotation domain and $f|_\Hq$ is conjugate to the rigid rotation $R_\theta$ by angle $\theta$ on the unit circle. Additionally, $\Hq$ is called a \emph{Herman quasicircle} if it is a quasicircle.
\end{definition}

The first known example of a Herman curve is the unit circle $\T$. If a rational map leaves a round circle invariant, it must be conformally conjugate to a Blaschke product. One explicit example is as follows. For any irrational $\theta \in (0,1)$, there is a unique $t_{\theta} \in [0,1)$ such that the Blaschke product
\begin{equation}
\label{cubic-blaschke-product}
    B_{\theta}(z) = e^{2\pi i t_{\theta}} z^2 \frac{z-3}{1-3z}
\end{equation}
restricts to a self homeomorphism of $\T$ with rotation number $\theta$. This map was originally studied by Douady \cite{D87} in his contribution towards showing that the boundary of an invariant quadratic Siegel disk with bounded type rotation number is a quasicircle containing the critical point.

Eremenko asked whether or not there exist Herman curves that are not round circles. One can perform quasiconformal surgery on $B_\theta$ to replace the multiplier of one of the superattracting fixed points ($0$ or $\infty$) to any small non-zero complex number. The resulting rational map has a Herman quasicircle that is not a Euclidean circle due to asymmetry, but it is still quasiconformally conjugate to a Blaschke product near its Julia set. We say that a Herman curve is \emph{trivial} if it is a round circle or it can be obtained from a round circle via quasiconformal surgery.

Let us view $\HRspace_{d_0, d_\infty,\theta}$ as a subspace of the space $\rat_{d_0+d_\infty-1}$ of degree $d_0+d_\infty-1$ rational maps endowed with the topology of uniform convergence on compact subsets. Denote the corresponding limit space by 
\[
\HRlimitspace_{d_0, d_\infty,\theta} := \overline{\HRspace_{d_0, d_\infty,\theta}} \backslash \HRspace_{d_0, d_\infty,\theta} \subset \rat_{d_0 + d_\infty -1}.
\]
One consequence of Theorem \ref{main-theorem-01} is that as maps in $\HRspace_{d_0, d_\infty,\theta}$ approach $\HRlimitspace_{d_0, d_\infty,\theta}$, the corresponding Herman rings must degenerate to a Herman quasicircle.

\begin{corx}
\label{main-corollary}
    For every $f \in \HRlimitspace_{d_0, d_\infty,\theta}$, the following properties hold:
    \begin{enumerate}[label=\textnormal{(\roman*)}]
    \item\label{cor:1} $0$ and $\infty$ are superattracting fixed points of $f$ with local degrees $d_0\geq 2$ and $d_\infty\geq 2$ respectively;
    \item\label{cor:2} the function $f$ admits a Herman quasicircle $\Hq$ of rotation number $\theta$;
    \item\label{cor:3} $\Hq$ separates $0$ and $\infty$;
    \item\label{cor:4} every critical point of $f$ other than $0$ and $\infty$ lies in $\Hq$;
    \item\label{cor:5} the conjugacy between $f|_\Hq$ and $R_\theta|_\T$ is quasisymmetric with dilatation depending only on $d_0$, $d_\infty$ and $\beta(\theta)$.
\end{enumerate}
\end{corx}

The combinatorics of a rational map $f \in \HRlimitspace_{d_0, d_\infty,\theta}$ is encoded by the relative position and criticality of the critical points along the Herman quasicircle $\Hq$ of $f$. (See Definition \ref{def:combinatorics} for details.) Using Wang's result as well as the precompactness arising from \emph{a priori bounds}, we construct rational maps in $\HRlimitspace_{d_0, d_\infty,\theta}$ of any prescribed combinatorics by taking limits of degenerating Herman rings in $\HRspace_{d_0,d_\infty,\theta}$.

\begin{thmx}[Realization]
\label{main-theorem-02}
    Given any prescribed combinatorics, there exists a rational map in $\HRlimitspace_{d_0,d_\infty,\theta}$ having a Herman quasicircle that realizes such combinatorics.
\end{thmx}

In particular, by picking combinatorial data with critical asymmetry, we obtain a wealth of examples of non-trivial Herman curves. See Figures \ref{fig:asymmetric-herman-quasicircle} and \ref{fig:asymmetric-herman-quasicircle-02}.

\begin{figure}
    \centering
    \includegraphics[width=0.97\columnwidth]{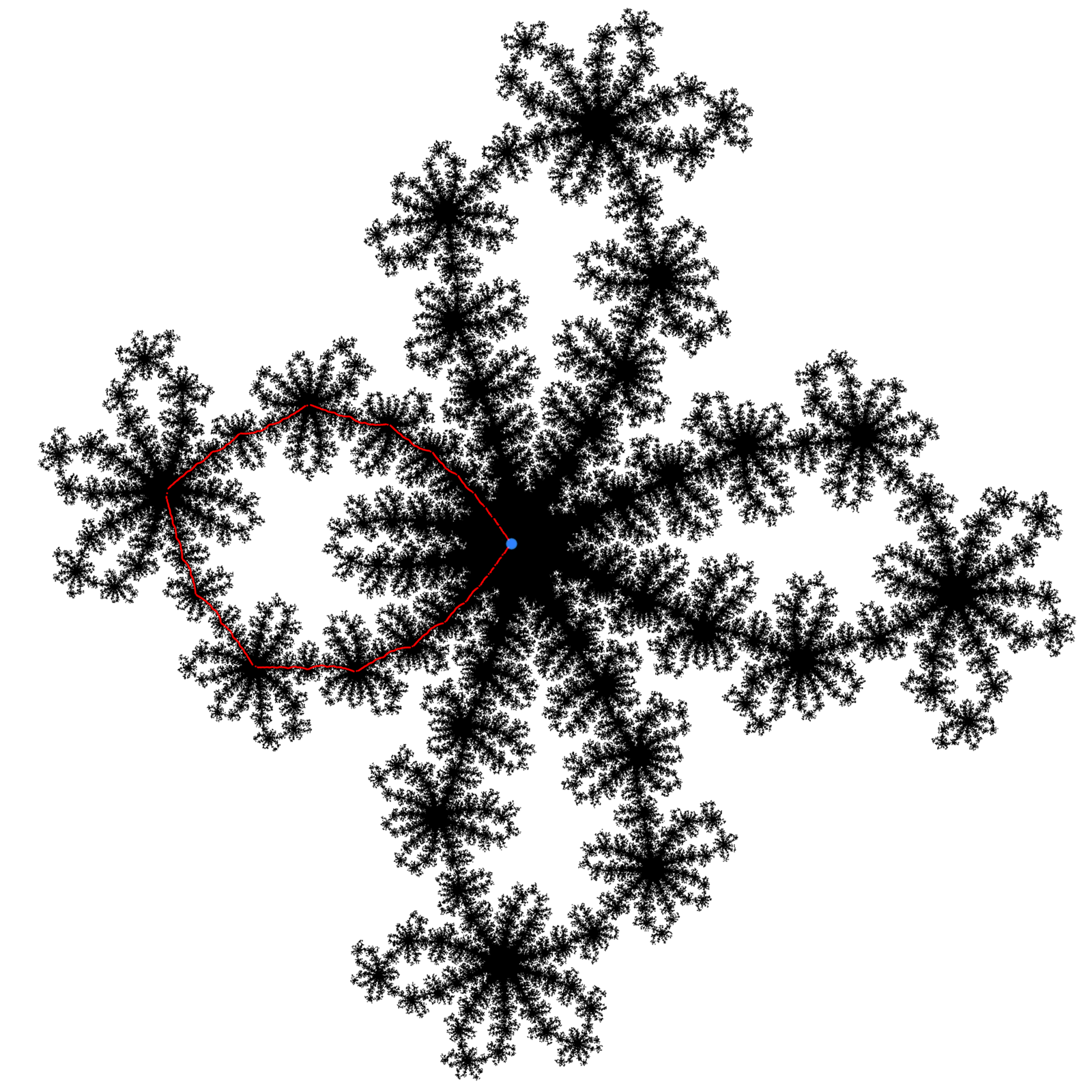}
    \caption{The Julia set of $f(z) =  c_* z^2 \: \dfrac{z^3-5z^2+10z-10}{1-5z}$. It has a unique free critical point at $1$, marked in blue, and its forward orbit is colored red. The critical value $c_* = f(1) \approx -0.386631-0.320505i$ is found numerically such that $f$ has a Herman quasicircle passing through $1$ of golden mean rotation number $\theta_* = \frac{\sqrt{5}-1}{2}$ and $f \in \HRlimitspace_{2,4,\theta_*}$. Refer to \S\ref{sec:unicritical-herman-curves} for further details.}
\label{fig:asymmetric-herman-quasicircle}
\end{figure}

\begin{figure}
    \centering
    \includegraphics[width=0.87\columnwidth]{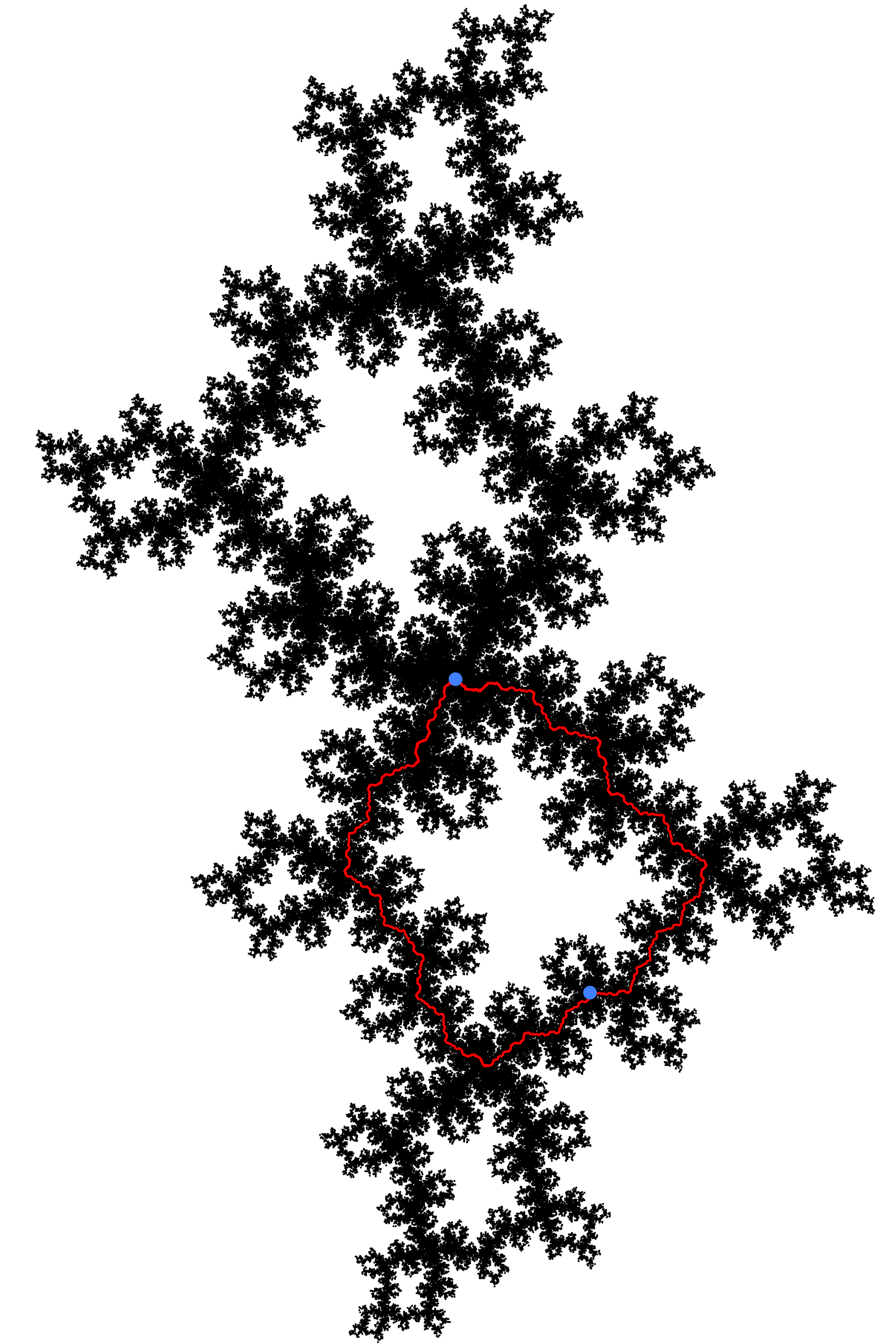}
    \caption{The Julia set of $f(z) =  z^2 \dfrac{q_*-z}{1+\overline{q_*}z}$. The map $f(z)$ is antipode preserving (commutes with $-1/\bar{z}$) and was first studied by Bonifant, Buff and Milnor in \cite{BBM18,BBM}. The two free antipodal critical points are marked in blue and their forward orbit is colored red. The constant $q_* \approx -1.26+2.94i$ is determined numerically such that $f$ admits a Herman quasicircle of golden mean rotation number $\theta_*$ and $f \in \HRlimitspace_{2,2,\theta_*}$.}
\label{fig:asymmetric-herman-quasicircle-02}
\end{figure}

During the time this paper was written, Yang Fei \cite{Y22} also gave an affirmative answer to Eremenko's question by proving the existence of a cubic rational map whose Julia set has positive area and contains a smooth Herman curve. In general, the bounded type Herman curves in $\HRlimitspace_{d_0,d_\infty,\theta}$ are not smooth due to the presence of critical points. Yang's construction assumes high type rotation number and adapts a perturbative method by Avila-Buff-Ch\'eritat \cite{ABC04} that was originally applied to show the existence of quadratic Siegel disks with smooth boundaries.

It is natural to ask the following questions:
\begin{enumerate}
    \item When is a limit of degenerating Herman rings a Herman curve?
    \item When is a Herman curve a limit of degenerating Herman rings?
\end{enumerate} 
We expect that most of the methods in this paper can be extended to larger classes of Herman rings. In particular, we conjecture that the limit of degenerating Herman rings of bounded type is always a Herman curve. A partial answer to (2) will be explored in a forthcoming paper \cite{Lim23}. 

At the end of this paper, we initiate the development of Renormalization Theory for unicritical Herman curves. We show the existence of rescaled limits of first return maps (Corollary \ref{rescaled-limits}) and state a conjecture on self-similarity in the associated parameter space (Conjecture \ref{conj:self-similarity}). See Figure \ref{fig:parspace2}. This serves as a motivation to further study the Renormalization Theory for Herman curves in the near future.

\subsection{Outline}
\label{ss:outline}

The proofs of our results hinge upon the near-degenerate machinery, including the Quasi-Additivity Law and the Covering Lemma \cite{KL05}. For the reader's convenience, the details are supplied in the appendix. Many of the steps in our proof are inspired by \cite{K06,DL22}.

As we will be working with Herman rings of arbitrarily small modulus $\mu\ll 1$, we will also consider the space $\HQspace_{d_0, d_\infty,\theta}$ of rational maps having a Herman quasicircle $\Hq$; this is axiomatically defined in a similar fashion as $\HRspace_{d_0, d_\infty,\theta}$ except that properties \ref{def:2}--\ref{def:4} are replaced by \ref{cor:2}--\ref{cor:4} in Corollary \ref{main-corollary}. We will work with a fixed map $f$ in $\HQspace_{d_0, d_\infty,\theta} \cup \HRspace_{d_0, d_\infty,\theta}$ and denote by $\Hq$ either the Herman quasicircle (Case \ref{case:herman-curve}), the closure of the Herman ring $\He$ (Case \ref{case:herman-ring}, a fat Herman curve), or a boundary component of $\He$ (Case \ref{case:boundary-ring}). Basic properties of $\Hq$ are covered in Section \S2.

We adapt the vocabulary in \cite{DL22} to represent local degeneration as quantities $W_\alpha(I)$ defined as follows. (Refer to \S\ref{ss:irrational-rotations} and \S\ref{ss:encoding-degeneration}.) Let $\phi$ denote the linearization of $f|_\Hq$. A \emph{piece} $I$ in $\Hq$ is the preimage of a closed interval $I'$ on the unit circle $\T := \R / \Z$ under the mapping $z \mapsto \frac{1}{2\pi}\arg(\phi(z))$. The \emph{combinatorial length} $|I|$ of a piece $I$ is defined to be the Euclidean length of $I'$. For $\alpha \in (1, |I|^{-1})$, the \emph{$\alpha$-width} $W_\alpha(I)$ of $I$ is the extremal width of the family of curves that connect $I$ and $\overline{\Hq \backslash \alpha I}$, where the piece $\alpha I$ is an enlargement of $I$ by a factor of $\alpha$. We say that a piece $I$ is \emph{$[K,\alpha]$-wide} if $W_\alpha(I)\geq K$.

The key to \emph{a priori bounds} is the Amplification Theorem \ref{amplification} which states that the existence of a $[K,10]$-wide piece $I \subset \Hq$ implies the existence of a $[2 K,10]$-wide piece, where $K$ is sufficiently large depending only on $d_0$, $d_\infty$, and $\beta(\theta)$. Our analysis is split into two cases. (See \S\ref{sss:modulus-Siegel-scale} and \S\ref{sec:a-priori-bounds}.)
\begin{description}
    \item[Herman scale] $|I| > \mu$, (the \textbf{main case}, roughly \ref{case:herman-ring} and \ref{case:herman-curve})
    \item[Siegel scale] $|I| \leq \mu.$ (roughly Case \ref{case:boundary-ring})
\end{description}
In the Siegel scale, this theorem is similar to (and was inspired by) \cite[Theorem 8.1]{DL22} in the context of quadratic Siegel disks. In the Herman scale, the techniques in \cite{DL22}, especially \cite[Snake Lemma 2.12]{DL22}, are not applicable because, unlike in the Siegel scale, the geometry on both sides of $I$ is unknown. In Sections \S\ref{sec:bubble-wave-argument}--\ref{sec:loss-of-horizontal-width}, we develop the fundamental results necessary to prove the Amplification Theorem.

In Section \S\ref{sec:bubble-wave-argument}, we discuss the notion of waves. In Proposition \ref{prop:wave-degeneration}, we show that large waves generate large $10$-width at a \emph{shallow} level, i.e. when $|I|\asymp 1$. One of the main ideas is to use the fact that \emph{bubbles}, i.e. preimages of $\Hq$, up to a certain generation that are attached to $\Hq$ have substantial harmonic measure about either $0$ or $\infty$ (\ref{s3.claim2} in the proof of Lemma \ref{bubble-wave-argument}).

Section \S\ref{sec:spreading-degeneration} discusses how to spread degeneration by rotating it along $\Hq$. Let $\frac{p_n}{q_n}$ be the best rational approximations of $\theta$. A piece $I \subset \Hq$ is a \emph{combinatorial piece} of level $n$ if it has endpoints $x$ and $f^{q_n}(x)$ for some point $x \in \Hq$. A level $n$ \emph{almost tiling} $\mathcal{I}$ is a collection of pieces with disjoint interiors of the form $\{f^i(I)\}_{0\leq i \leq q_{n+1}-1}$ for some level $n$ combinatorial piece $I$. By applying the Covering Lemma, we show in Proposition \ref{spreading-lambda} that for any $\Xi>1$, $\lambda \gg 10$, and $K \gg_{\Xi,\lambda} 1$, the existence of a $[K,\lambda]$-wide combinatorial piece implies the existence of either a $[\Xi K,10]$-wide piece or an almost tiling $\mathcal{I}$ consisting of $[\xi K,\lambda]$-wide pieces for some $\xi = \xi(\Xi)>0$.

The main result in Section \S\ref{sec:trading} is Theorem \ref{demotion} which states that for $K \gg_\lambda 1$, the existence of an almost tiling consisting of $[K,\lambda]$-wide pieces implies the existence of a $[\Pi_\lambda K,10]$-wide piece where $\Pi_\lambda \to \infty$ as $\lambda \to \infty$. The proof is split into two cases: the deep case, where the level of the almost tiling is high, and the shallow case, where the level is low. The deep case is an application of the Quasi-Additivity Law, whereas the shallow case uses the wave mechanism from Section \S\ref{sec:bubble-wave-argument}.

In Section \S\ref{sec:amplifying-tau-degeneration}, we prove in Theorem \ref{promotion} that for $\lambda \gg 10$ and $K\gg_\lambda 1$, the existence of a $[K,10]$-wide combinatorial piece induces the existence of a piece that is either $[2 K,10]$-wide or $[\chi K,\lambda]$-wide, where $0<\chi<1$ is independent of $\lambda$. Since the shallow case can again be handled via waves, we are left with the deep case. Our main strategy is to adapt Kahn's push-forward argument in \cite[\S7]{K06} to our setting. A key ingredient in the original push-forward argument is the positivity of the core entropy corresponding to primitive renormalization, which stands in contrast to the lack of entropy of the rotational action of $f$ on $\Hq$. Section \S\ref{sec:loss-of-horizontal-width} is dedicated to developing a replacement for Kahn's entropy argument, namely Proposition \ref{pos-entropy}. Due to technical considerations, we supply a more detailed outline in \S\ref{sss:rough-outline}.

Finally, the proof of the Amplification Theorem \ref{amplification} is an application of Theorems \ref{demotion} and \ref{promotion}. In short, we will eventually pick $\lambda$ to be large enough such that the constant $\Pi_\lambda$ beats the constant $\chi$. This is captured in Figure \ref{fig:implication-diagram}.

In Section \S\ref{sec:construction-of-herman-curves}, we discuss some consequences of \emph{a priori bounds}. Firstly, we show that the space of Herman rings in $\HRspace_{d_0, d_\infty,\theta}$ with modulus bounded from above, up to conformal equivalence, is precompact in the topology of locally uniform convergence. Secondly, we prove Corollary \ref{main-corollary} by showing that $\HRlimitspace_{d_0, d_\infty,\theta} \subset \HQspace_{d_0, d_\infty,\theta}$. Both of these allow us to prove Theorem \ref{main-theorem-02} as follows. Starting with any rational map $f_1 \in \HRspace_{d_0, d_\infty,\theta}$ with a Herman ring $\He_1$, we can quasiconformally deform the complex structure of $\He_1$ and its grand orbit to obtain a normalized family of rational maps $\{f_t \}_{0<t \leq 1}$ in $\HRspace_{d_0, d_\infty,\theta}$ depending real analytically on $t$ such that each $f_t$ has the same combinatorics as $f_1$ and that the modulus of the Herman ring of $f_t$ converges to $0$ as $t\to 0$. By precompactness, there is a limit $f_0$ of $\{f_t \}_{0<t\leq 1}$ as $t \to 0$. By Corollary \ref{main-corollary}, $f_0$ must have a Herman quasicircle with the same combinatorics as $f_1$. Roughly speaking, $f_0$ is the 'quotient' of $f_1$ obtained by collapsing each leaf of the radial foliation of $\He_1$ and its preimages.

In the final section, we calculate an explicit formula for unicritical maps in $\HQspace_{d_0,d_\infty,\theta}$. In Corollary \ref{rescaled-limits}, we also show that the sequence of rescaled first return maps near the critical point is precompact in the Carath\'eodory topology.

\subsection{Acknowledgements}
\label{ss:acknowledgements}
I would like to thank my advisor Dzmitry Dudko for suggesting this problem and for his relentless support and invaluable advice. I would also like to thank Mikhail Lyubich for helpful advice, and Araceli Bonifant and John Milnor for illuminating discussion on their work in \cite{BBM} with Xavier Buff. This project has been partially supported by the NSF grant DMS 2055532 and by Simons Foundation International, LTD. The results of this paper also appear in the PhD thesis \cite{LimThesis}.


\section{Preliminaries}
\label{sec:prelim}
Fix a pair of positive integers $d_0, d_\infty \geq 2$ and an irrational number $0<\theta<1$ with continued fraction expansion $\theta = [0;a_1,a_2, \ldots]$. Unless otherwise stated, we will always assume that $\theta$ is of bounded type, that is, the maximum $\beta(\theta):= \max_{i} a_i < \infty$ is well-defined.

\subsection{Herman rings}
\label{ss:herman-rings}
The following procedure allows one to obtain Siegel disks out of invariant curves.

\begin{theorem}[Douady-Ghys surgery]
    Let $f: \RS \to \RS$ be a rational map, $Y \subset \RS$ be a quasidisk such that $\partial Y$ is forward invariant and $f|_{\partial Y}$ is quasisymmetrically conjugate to an irrational rotation $R_\theta$ of the circle $\T$. There exists a $K$-quasiconformal map $\phi: \RS \to \RS$ and a rational map $F$ such that
    \begin{enumerate}[label=\textnormal{(\arabic*)}]
        \item $F = \phi \circ f \circ \phi^{-1}$ on $\RS \backslash \phi(Y)$, and
        \item $F$ has a Siegel disk of the same rotation number $\theta$ containing $\phi(Y)$.
    \end{enumerate}
    Moreover, $K$ depends only on the dilatation of the conjugacy between $f|_{\partial Y}$ and $R_\theta$.
\end{theorem}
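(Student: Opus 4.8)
The statement is the Douady-Ghys surgery theorem: given a quasidisk $Y$ whose boundary is forward invariant and on which $f$ is quasisymmetrically conjugate to a rotation $R_\theta$, one wants to produce a rational map $F$ that agrees with $f$ outside $Y$ (up to conjugacy) and has an honest Siegel disk containing (the image of) $Y$. This is a classical quasiconformal surgery, and my plan would follow the standard recipe.

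The plan is the classical quasiconformal surgery argument. Let $h\colon\partial Y\to\T$ denote the quasisymmetric conjugacy, so that $h\circ f=R_\theta\circ h$ on $\partial Y$. Since $Y$ is a quasidisk, $h$ extends to a quasiconformal homeomorphism $G\colon\overline Y\to\overline\Delta$ onto a round disk $\Delta$ (say the one bounded by $\T$), with $G|_{\partial Y}=h$ and with dilatation bounded in terms of the quasisymmetry constant of $h$. I then form the \emph{quasiregular model} $\tilde f\colon\RS\to\RS$ by setting $\tilde f:=f$ on $\RS\setminus Y$ and $\tilde f:=G^{-1}\circ R_\theta\circ G$ on $\overline Y$. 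The two formulas agree on $\partial Y$ precisely because $G|_{\partial Y}=h$ conjugates $f|_{\partial Y}$ to $R_\theta$, so $\tilde f$ is a well-defined continuous self-map of $\RS$ that is holomorphic off $\overline Y$ and quasiregular on $\overline Y$. Because quasidisks are Sobolev extension domains, the gluing lemma for quasiregular maps then gives that $\tilde f$ is quasiregular on all of $\RS$, and it is plainly nonconstant.

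Next I would construct a $\tilde f$-invariant Beltrami coefficient $\mu$ with $\|\mu\|_\infty<1$. On $\overline Y$ I set $\mu:=G^\ast\sigma_0$, the pull-back of the standard complex structure $\sigma_0$; since $R_\theta$ is conformal, this $\mu$ is already $\tilde f$-invariant there. For a point $z\notin\overline Y$ whose forward $\tilde f$-orbit first meets $\overline Y$ at some time $n\ge 1$ (note that $f$ may well carry points of $\RS\setminus Y$ into $Y$, so one really must follow orbits), I set $\mu(z):=(G\circ\tilde f^{\,n})^\ast\sigma_0(z)$; the intermediate points of the orbit avoid $\overline Y$, so $\tilde f^{\,n}$ agrees there with the holomorphic map $f^{\,n}$, and this makes the definition consistent along orbits. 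Finally, on the (forward-invariant) set of points whose orbit never meets $\overline Y$ — where $\tilde f=f$ is holomorphic — I set $\mu:=\sigma_0$. A direct check gives $\tilde f^\ast\mu=\mu$ a.e.; the essential point is that away from $\overline Y$ all pull-backs are effected by the holomorphic map $f$, so they do not increase the dilatation, whence $\|\mu\|_\infty=\|\mu_G\|_\infty$ is controlled solely by the quasisymmetry constant of $h$.

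Applying the Measurable Riemann Mapping Theorem yields a quasiconformal $\phi\colon\RS\to\RS$ with $\mu_\phi=\mu$ (normalized as convenient), and I set $F:=\phi\circ\tilde f\circ\phi^{-1}$. Then $\mu_F=\sigma_0$, so $F$ is $1$-quasiregular, hence holomorphic, hence a nonconstant rational map, and $K:=\mathrm{Dil}(\phi)=\tfrac{1+\|\mu\|_\infty}{1-\|\mu\|_\infty}$ depends only on the dilatation of the conjugacy. On $\RS\setminus\phi(Y)=\phi(\RS\setminus Y)$ one has $\tilde f=f$, so $F=\phi\circ f\circ\phi^{-1}$ there, giving (1). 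On $\phi(Y)$ the map $\Psi:=G\circ\phi^{-1}\colon\phi(Y)\to\Delta$ carries $\sigma_0$ to $\sigma_0$ and is therefore conformal, $F|_{\phi(Y)}=\Psi^{-1}\circ R_\theta\circ\Psi$, and $F(\phi(Y))=\phi(Y)$; thus $\phi(Y)$ is a forward-invariant topological disk on which $F$ is conformally conjugate to $R_\theta$, so it lies in a Siegel disk of $F$ of rotation number $\theta$, giving (2). The one point that genuinely needs care is the dilatation bookkeeping of the second paragraph — arranging that the invariant structure $\mu$ has norm governed purely by the conjugacy, which is exactly what the holomorphy of $f$ outside $Y$ buys — while the gluing lemma across the possibly irregular quasicircle $\partial Y$ and the Measurable Riemann Mapping Theorem are invoked as standard.
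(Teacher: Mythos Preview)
Your proposal is correct and follows exactly the approach the paper sketches: replace $f|_Y$ with $G^{-1}\circ R_\theta\circ G$ for a quasiconformal extension $G$ of the boundary conjugacy, build an invariant Beltrami field by spreading $G^*\sigma_0$ along orbits, and straighten via the Measurable Riemann Mapping Theorem. The paper does not give a full proof but only this outline together with a reference to \cite[\S7.2]{BF14}; your write-up is a faithful and careful expansion of that same argument.
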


The original idea of the surgery procedure was by Ghys in \cite{G84}, but the formulation above follows from \cite[\S7.2]{BF14}. The essence of the surgery procedure is to replace the dynamics $f|_Y$ with a rotation. More precisely, we replace $f|_Y$ with $\psi^{-1} \circ R_\theta \circ \psi$, where $\psi: Y \to \D$ is a quasiconformal extension of the quasisymmetric conjugacy between $f|_{\partial Y}$ and $R_\theta|_{\partial \D}$, and straighten the new map via the measurable Riemann mapping theorem. 

As explained in the introduction, Douady-Ghys surgery plays an essential role in deducing the regularity of the boundary of Siegel disks with bounded type rotation number. The most general version of this result is the following theorem.

\begin{theorem}[\cite{Z11}]
    Let $f$ be a rational map of degree $d \geq 2$. If $f$ has an invariant Siegel disk $Z$ with bounded type rotation number $\theta$, then the boundary $\partial Z$ is a $K(d,\beta(\theta))$-quasicircle containing at least one critical point.
\end{theorem}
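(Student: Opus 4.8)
The plan is to reduce the assertion to a model situation by quasiconformal surgery and then import the renormalization-theoretic a priori bounds for circle maps of bounded type. Before doing anything quantitative, one disposes of the qualitative half — that $\partial Z$ contains a critical point — by a soft argument: Ma\~n\'e's theorem on the transverse repelling behaviour along a Siegel boundary forces $\partial Z \subset \overline{P(f)}$, where $P(f)$ is the postcritical set; and if in addition no critical point lay on $\partial Z$, then, since $\theta$ is bounded (hence Diophantine) type, the classical linearization theory of Herman and P\'erez-Marco together with a Zalcman-type normal-families argument near $\partial Z$ would let one extend the linearizing coordinate $\phi\colon Z \to \D$ across $\partial Z$, contradicting the maximality of $Z$. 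No quantitative bound comes out of this step.

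For the quasicircle statement I would combine Douady--Ghys surgery (stated above) with Shishikura's surgery \cite{S87} to replace $f$ by a rational map $g$ of the same degree leaving invariant a Jordan curve $\Gamma$ on which $g$ restricts to a \emph{critical circle map} of rotation number $\theta$, the number and multiplicities of the critical points on $\Gamma$ being bounded in terms of $d$ alone. A subtlety is that the dilatation of the conjugating surgery depends a priori on the very modulus of quasisymmetry we want to bound; this is resolved by performing the surgery with an undetermined dilatation and observing only at the end that everything is controlled, since the \emph{real} a priori bounds for bounded type critical circle maps (Yoccoz, de Faria--de Melo) are independent of it.

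On the model side one invokes \emph{complex} a priori bounds: for bounded type the renormalization microscope around each boundary critical point of $\Gamma$ stabilizes with geometry depending only on $\beta(\theta)$ and the multiplicity (Petersen, Yampolsky, de Faria--de Melo). Concretely, at every scale the pieces of the dynamical partition of $\Gamma$ about a critical point are $K$-quasiround and pairwise $K$-commensurable with $K = K(d,\beta(\theta))$; a telescoping argument then builds a $K'(d,\beta(\theta))$-quasiconformal reflection in $\Gamma$ near each critical point, while away from the critical points $\Gamma$ is locally an iterated image of a smooth arc with bounded distortion, and patching shows $\Gamma$ is a quasicircle and that the linearization on the rotation domain of $g$ extends quasiconformally to a neighbourhood of $\Gamma$ with dilatation $K(d,\beta(\theta))$. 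Transporting this extended coordinate back through the surgery maps — whose dilatations are now seen to be controlled — shows that $\phi\colon Z \to \D$ extends $K(d,\beta(\theta))$-quasiconformally across $\partial Z$, so $\partial Z$ is a $K(d,\beta(\theta))$-quasicircle.

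The main obstacle is the renormalization input together with the combinatorial bookkeeping attached to it: establishing \emph{uniform} complex a priori bounds near several boundary critical points of a priori distinct multiplicities — in effect the full renormalization theory of multicritical circle maps of bounded type — and arranging the surgery so that the model's combinatorics (number, multiplicities and cyclic order of the critical points on $\Gamma$) are genuinely controlled by $d$. An alternative route, closer in spirit to the present paper, would bypass the surgery and run the Kahn--Lyubich Quasi-Additivity Law and Covering Lemma directly on the postcritical partition of $\partial Z$, as Dudko and Lyubich do in the quadratic case \cite{DL22}; this demands adapting the near-degenerate regime to arbitrary degree, which is itself a substantial project and is, essentially, the Siegel-scale half of the machinery developed here.
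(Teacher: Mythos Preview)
The paper does not prove this statement: it is quoted from Zhang \cite{Z11} as a known input and is used only as a black box (in Corollary \ref{old-bounds} and in Proposition \ref{bubble-structure}). There is therefore no ``paper's own proof'' to compare your proposal against.

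As a sketch of Zhang's argument your outline is in the right spirit --- surgery to a model with an invariant circle, then uniform bounds for the resulting critical circle map --- but two points deserve flagging. First, the circularity you note (the surgery dilatation depends on the quasisymmetry you want to bound) is not resolved by ``observing only at the end that everything is controlled''; Zhang's actual mechanism is a \emph{trans-quasiconformal} surgery in which one glues a model Blaschke-type map along the boundary using a David (not quasiconformal) homeomorphism, so that the integrability hypothesis replaces the missing a priori dilatation bound, and the output is nonetheless a genuine rational map. Your sketch as written does not supply this, and the sentence ``real a priori bounds are independent of it'' does not by itself close the loop, since you still need a straightening step with controlled distortion. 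Second, your qualitative argument for a critical point on $\partial Z$ is essentially Herman's, but the invocation of ``P\'erez-Marco together with a Zalcman-type argument'' is vague; the standard route is Ghys--Herman: if no critical point lies on $\partial Z$, the first-return construction yields an analytic circle diffeomorphism with Diophantine rotation number, which is analytically linearizable, and this extends the Siegel coordinate beyond $Z$.

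Your closing paragraph correctly identifies the alternative near-degenerate route via \cite{DL22}; note that this is \emph{not} how \cite{Z11} proceeds, and in any case the present paper only develops that machinery for its Herman-ring setting, not for general Siegel disks.
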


In \cite[\S6]{S87}, Shishikura originally discovered a way to convert Herman rings into Siegel disks (and vice versa) through quasiconformal surgery. We will formulate this procedure as a straightforward application of Douady-Ghys surgery and combine it with Zhang's theorem to obtain the following corollary.

\begin{corollary}
\label{old-bounds}
    Let $f$ be a rational map of degree $d \geq 3$ having an invariant Herman ring $\He$ with bounded type rotation number $\theta$ and modulus $\modu(\He) \geq \mu > 0$. Then,
    \begin{enumerate}[label=\textnormal{(\arabic*)}]
        \item every boundary component of $\He$ is a $K$-quasicircle containing at least one critical point;
        \item there is an $L$-quasiconformal map $\phi : \RS \to \RS$ that is conformal in $\He$ and conjugates $f|_{\overline{\He}}$ and the rigid rotation $R_\theta$ on the annulus $\{ 1\leq |z| \leq e^{2\pi\modu(\He)} \}$.
    \end{enumerate}
    Moreover, the dilatations $K$ and $L$ depend only on $d$, $\beta(\theta)$, and $\mu$.
\end{corollary}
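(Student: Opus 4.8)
The plan is to realize Shishikura's conversion of a Herman ring into a Siegel disk as an instance of Douady--Ghys surgery, apply Zhang's a priori bounds for Siegel disks, and transport the conclusions back; the only genuine work is verifying that every dilatation produced along the way depends solely on $d$, $\beta(\theta)$ and $\mu$.

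Write $m := \modu(\He) \ge \mu$, let $\phi_0 \colon \He \to A := \{1 < |z| < e^{2\pi m}\}$ be the conformal linearization with $\phi_0 \circ f \circ \phi_0^{-1} = R_\theta$, and let $E_0$, $E_\infty$ be the two complementary components of $\He$. Set $\Gamma := \phi_0^{-1}(\{|z| = e^{\pi m}\})$, the conformal ``equator'' of $\He$: it is a real-analytic Jordan curve, hence bounds a quasidisk $D_0$ on the $E_0$-side, and $\partial D_0 = \Gamma$ is $f$-invariant since the round circle $\{|z| = e^{\pi m}\}$ is $R_\theta$-invariant. I would apply Douady--Ghys surgery with $Y = D_0$. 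The only hypothesis needing care is the dilatation of the conjugacy $f|_\Gamma \sim R_\theta$: that conjugacy is $e^{-\pi m}\phi_0|_\Gamma$, the restriction to $\Gamma$ of a conformal map which is in fact defined on the sub-annulus $\phi_0^{-1}(\{e^{\pi m/2} < |z| < e^{3\pi m/2}\})$, an annular neighborhood of $\Gamma$ of modulus $m/2 \ge \mu/2$. By the Koebe distortion theorem, the restriction to the core curve of a conformal map on a round annulus of modulus $\ge \mu/2$ is $\eta$-quasisymmetric with $\eta = \eta(\mu)$. Douady--Ghys then yields a rational map $F_0$ of degree $\ge 2$ (it carries a Siegel disk) and $\le d$ (the surgery replaces a branched self-covering of $D_0$ by a homeomorphism, so the degree does not increase), a $K_1(\mu)$-quasiconformal $\Phi_0$ with $\Phi_0 \circ f = F_0 \circ \Phi_0$ off $D_0$, and a Siegel disk $Z_0$ of rotation number $\theta$. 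Since the $E_\infty$-half of $\He$, where $f$ is still $\phi_0$-conformally a rotation, is glued to $\Phi_0(D_0)$ along $\Phi_0(\Gamma)$ into a single rotation domain, one checks $Z_0 = \Phi_0(\RS \setminus E_\infty)$, hence $\partial Z_0 = \Phi_0(\partial E_\infty)$.

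By Zhang's theorem, $\partial Z_0$ is a $K_2(d,\beta(\theta))$-quasicircle containing a critical point $c'$ of $F_0$. Transporting by the $K_1(\mu)$-quasiconformal map $\Phi_0^{-1}$ shows $\partial E_\infty$ is a $K_3(d,\beta(\theta),\mu)$-quasicircle, and $\Phi_0^{-1}(c')$ is a point of $\partial E_\infty$ of local degree $\ge 2$ for $f$ (topological conjugacy preserves local degree, and $f$ coincides with the surgered map near $\partial E_\infty$). Running the mirror surgery that caps off the $E_\infty$-side gives the same statement for $\partial E_0$, which proves (1). For (2), given (1) the curves $\partial E_0$, $\partial E_\infty$ are $K_3$-quasicircles, so $E_0$, $E_\infty$ are $K_3$-quasidisks; I would extend $\phi_0$ to a global quasiconformal map $\phi \colon \RS \to \RS$ by the standard welding recipe: over each $E_i$, precompose a conformal uniformization of the quasidisk $E_i$ with a Beurling--Ahlfors extension of the circle homeomorphism $\phi_0|_{\partial E_i} \circ (\text{uniformization})^{-1}$, which — because $\phi_0$ is conformal on $\He$ — is a rotation composed with the conformal welding of the quasicircle $\partial E_i$, hence $\eta(K_3)$-quasisymmetric. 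Gluing these extensions along the quasicircles $\partial E_0$, $\partial E_\infty$ produces the required $L$-quasiconformal $\phi$ with $L = L(d,\beta(\theta),\mu)$; it is conformal on $\He$ by construction and conjugates $f|_{\overline{\He}}$ to $R_\theta$ on $\{1 \le |z| \le e^{2\pi m}\}$ (on $\He$ by construction, on $\partial\He$ by continuity).

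The hard part is the bookkeeping that keeps every constant a function of $(d,\beta(\theta),\mu)$ with no residual dependence on $f$; the one genuinely delicate estimate is the quasisymmetry of the surgery's conjugacy, which is exactly why $\Gamma$ must be taken at the conformal center of $\He$ so as to inherit a collar of definite modulus $\ge \mu/2$. Everything else reduces to the quoted theorems of Douady--Ghys and Zhang together with routine quasiconformal surgery and welding.
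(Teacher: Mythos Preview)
Your proof is correct and follows essentially the same route as the paper's: take the conformal equator of $\He$, observe that the linearizing conjugacy there is $\eta(\mu)$-quasisymmetric because it extends conformally over a collar of modulus $\ge \mu/2$, apply Douady--Ghys surgery to produce a Siegel disk, invoke Zhang's theorem, and transport the conclusion back via the surgery's $K_1(\mu)$-quasiconformal map. The paper's proof is a three-line sketch of exactly this; you have simply supplied the bookkeeping it omits, including an explicit construction for (2) via quasiconformal extension over the complementary quasidisks. Two minor remarks: the distortion bound on the core curve is not literally Koebe (which is for disks) but the analogous compactness statement for conformal maps of annuli with modulus bounded below, and the paper records the sharper degree bound $\deg F_0 \le d-1$ rather than your $\le d$, though either suffices since Zhang's constant depends only on an upper bound for the degree.
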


\begin{proof}
    By Koebe distortion theorem, along the core curve $\gamma$ of $\He$, $f|_\gamma$ must be $K'$-quasisymmetrically conjugate to $R_\theta|_\T$ for some $K'=K'(\mu)$. Pick a boundary component $H$ of $\He$ and let $D$ be the component of $\RS \backslash \gamma$ containing $H$. Apply Douady-Ghys surgery along $\gamma$ to obtain a degree $\leq d-1$ rational map $F$ having an invariant Siegel disk $Z$ and an $L'$-quasiconformal map $\psi:\RS\to\RS$ that maps $H$ to $\partial Z$ and restricts to a conjugacy between $f|_{D}$ and $F|_{\psi(D)}$, where $L'$ depends on $\mu$. Then, the corollary follows from applying Zhang's theorem to $F$.
\end{proof}

In this paper, we would like to remove the dependency on the modulus $\mu$ for one of the simplest families of rational maps with Herman rings, namely $\HRspace_{d_0,d_\infty,\theta}$ defined in the introduction. Rational maps in $\HRspace_{d_0, d_\infty,\theta}$ can be constructed through Shishikura's quasiconformal surgery \cite{S87} (see also \cite[\S7.3]{BF14}) from two polynomials $P_0$ and $P_\infty$ of degree $d_0$ and $d_\infty$ respectively satisfying the following conditions:
\begin{itemize}
    \item[$\rhd$] $P_0$ and $P_\infty$ have invariant Siegel disks $Z_0$ and $Z_\infty$ of rotation numbers $1-\theta$ and $\theta$ respectively;
    \item[$\rhd$] the only non-repelling periodic points of $P_0$ and $P_\infty$ are the centers of $Z_0$ and $Z_\infty$;
    \item[$\rhd$] all finite critical points of $P_0$ and $P_\infty$ lie on $\partial Z_0$ and $ \partial Z_\infty$ respectively.
\end{itemize}

The surgery involves removing a proper invariant sub-disk of each $Z_0$ and $Z_\infty$, gluing the two remaining Riemann surfaces along the boundary of the sub-disks and applying the measurable Riemann mapping theorem to obtain some $f \in \HRspace_{d_0, d_\infty,\theta}$ that mimics the dynamics of both $P_0$ and $P_\infty$ outside of the removed disks. 

\begin{remark}
    Theorems by Wang (see Theorem \ref{wang} below) and Zhang in \cite{Z08} guarantee that every Herman ring in $\HRspace_{d_0,d_\infty,\theta}$ can be constructed from the surgery above. 
\end{remark}

Let $f \in \HRspace_{d_0,d_\infty,\theta}$. Denote by $\Hq$ the closure of the Herman ring of $f$, and by $Y^0$ and $Y^\infty$ the connected components of $\RS \backslash \Hq$ containing $0$ and $\infty$ respectively. The covering structure of $f$ is well understood.

\begin{proposition}
\label{bubble-str}
       The preimage $f^{-1}(\Hq)$ of $\Hq$ is of the form
        $$
        \Hq \cup \bigcup_{i=1}^{d_0-1} A_i^0 \cup \bigcup_{j=1}^{d_\infty-1} A_j^\infty
        $$
        where for each $\bullet \in \{0,\infty\}$ and $i \in \{1, \ldots, d_{\bullet}-1\}$,
        \begin{enumerate}[label=\textnormal{(\arabic*)}]
            \item $A_i^{\bullet}$ is a closed topological annulus in $\overline{Y^{\bullet}}$;
            \item $A_i^{\bullet} \cap \Hq = \{c\}$ for some critical point $c$;
            \item if $j \neq i$, $A_i^{\bullet} \cap A_j^{\bullet}$ is either empty or $\{c\}$ for some critical point $c$;
            \item $f$ is univalent in the interior of $A_i^\bullet$.
        \end{enumerate}
\end{proposition}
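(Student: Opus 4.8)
My plan is to deduce everything from the single fact that the open Herman ring $\He$ contains no critical value of $f$. The critical values are $0=f(0)$, $\infty=f(\infty)$, and the images $f(c)$ of the free critical points $c$; but $f|_\He$ is a conformal automorphism, and since $\partial\He$ is a quasicircle (Corollary~\ref{old-bounds}) it extends to a homeomorphism of $\Hq$ carrying each of $H^0,H^\infty$ to itself, so $f(c)\in\partial\He\subset\Hq$ for every free critical point $c$. Hence $f\colon f^{-1}(\He)\to\He$ is an unbranched covering, and, $\He$ being an annulus, each of its components is an annulus mapping onto $\He$ with a finite degree $m\ge 1$, the degrees summing to $d:=d_0+d_\infty-1$. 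One component is $\He$ (degree $1$, as $f|_\He$ is an automorphism); the others I call \emph{bubbles}. A bubble $U$ is disjoint from $\Hq$ — a point of $U\cap\partial\He$ would land simultaneously in $\He$ and in $f(\partial\He)=\partial\He$ — so $U$ lies in $Y^0$ or in $Y^\infty$; and since $f$ is a local homeomorphism near any non-critical point of $\partial\He$, where $\He$ is the only local sheet of $f^{-1}(\He)$, the closure $\overline U$ can meet $\Hq$ only at free critical points.

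Next I would analyze the preimages of the complementary disks. The only critical point of $f$ in $Y^0$ is $0$ with $f(0)=0$, so no preimage of $0$ other than $0$ is critical, and $f^{-1}(0)$ is $\{0\}$ (local multiplicity $d_0$) together with $d_\infty-1$ simple points. Every component of $f^{-1}(Y^0)$ contains a preimage of $0$; the one containing $0$ is, by Riemann--Hurwitz, a topological disk mapped properly onto $Y^0$ with degree $d_0$ (branched only over $0$) and is a proper subset of $Y^0$ (because $f|_{H^0}$ is a homeomorphism), while the remaining components are topological disks, each carrying one simple preimage of $0$ and mapped isomorphically onto $Y^0$; there are $d_\infty-1$ of them. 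The same holds for $Y^\infty$, with degree $d_\infty$ and $d_0-1$ isomorphic disks. Applying Riemann--Hurwitz to $f\colon f^{-1}(\Hq)\to\Hq$, whose branching is exactly at the free critical points (total ramification $2d-2-(d_0-1)-(d_\infty-1)=d-1$), gives $\chi\!\left(f^{-1}(\Hq)\right)=-(d-1)$.

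The substance of the proof is then the topological assembly of $f^{-1}(\Hq)=\Hq\cup\bigcup\overline U$. I would argue: (i) each free critical point $c$, of local degree $k_c$, lies on the boundary of exactly $k_c-1$ bubbles, each a closed annulus meeting $\Hq$ only at $c$ and contributing a single local sector there — two sectors of a single bubble would make $c$ a cut point of that annulus; (ii) consequently $\Hq$ and the bubbles touching it are glued along a forest, and since $\chi\!\left(f^{-1}(\Hq)\right)=-(d-1)=-\sum_c(k_c-1)$, there are exactly $d-1$ bubbles, each touches $\Hq$ (there are no free-standing or nested bubbles), and the covering degrees, which sum to $d-1$, are all $1$ — so $f$ is univalent on the interior of every bubble. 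Finally I would obtain the split $d-1=(d_0-1)+(d_\infty-1)$: for a bubble $U$ in $Y^0$, the complementary component $\Delta$ of $\overline U$ not containing $Y^\infty$ has boundary mapping into $H^\infty$, contains no bubble, and therefore — being open, connected, with boundary on one side of $H^\infty$ — maps isomorphically onto $Y^\infty$, hence is one of the $d_0-1$ isomorphic components of $f^{-1}(Y^\infty)$, distinct bubbles giving distinct ones; symmetrically on the $Y^\infty$ side; as the two bounds $\le d_0-1$ and $\le d_\infty-1$ must add to $d-1$, both are equalities. Properties (1)--(4) of the proposition then read off.

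The step I expect to be the real obstacle is part (ii): ruling out a bubble whose closure does not touch $\Hq$. The Euler-characteristic identity alone does not exclude this, so I would combine it with a connectivity/degree argument — tracking, for such a bubble, where its complementary components map, and iterating to produce an infinite strictly nested family of bubbles, contradicting their finite number — together with the quasicircle regularity of Corollary~\ref{old-bounds}, which guarantees that $\partial\He$ and all the bubble boundaries are Jordan curves so that the sector count and the complementary-component arguments are clean. An alternative, perhaps cleaner, route uses the remark preceding the proposition: $f$ arises from Shishikura's surgery on polynomials $P_0,P_\infty$ of degrees $d_0,d_\infty$ whose invariant Siegel disks $Z_0,Z_\infty$ carry all their finite critical points on the boundary; one proves by the same argument, applied to a disk rather than an annulus and where $P_\bullet^{-1}(\infty)=\{\infty\}$ makes the no-stray-bubble step transparent, that $P_\bullet^{-1}(\overline{Z_\bullet})=\overline{Z_\bullet}\cup\bigcup_{i=1}^{d_\bullet-1}B_i^\bullet$ with each $B_i^\bullet$ a topological disk attached to $\partial Z_\bullet$ at a critical point and $P_\bullet$ univalent on its interior; the annuli $A_i^\bullet$ are precisely these disks once the excised invariant sub-disk and its preimage are removed.
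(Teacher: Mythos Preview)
Your primary argument is the ``combinatorial proof avoiding the surgery procedure'' that the paper mentions in a footnote and leaves as an exercise. The paper instead performs Douady--Ghys surgery along each boundary quasicircle $\partial Y^\bullet$ to obtain a polynomial $P_\bullet$ of degree $d_\bullet$ with an invariant Siegel disk $Z_\bullet$ (this is your ``alternative route'', except that the paper constructs $P_\bullet$ directly via Douady--Ghys rather than invoking Shishikura's construction as a black box), and then reads off the structure of $f^{-1}(\Hq)$ from the elementary polynomial fact --- immediate from the maximum modulus principle --- that $P_\bullet^{-1}(Z_\bullet) = Z_\bullet \cup E_1^\bullet \cup \cdots \cup E_{d_\bullet-1}^\bullet$ with each $E_i^\bullet$ a disk attached at a single critical point and mapped univalently.

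The trade-off: your direct route avoids quasiconformal machinery but pays for it in topological bookkeeping --- the sector count at critical points, the Euler-characteristic identity, and the nesting argument to exclude bubbles disjoint from $\Hq$. Your identification of that last step as the crux is accurate, and your proposed resolution (a stray bubble encloses a disk $\Delta \subset Y^\bullet$ disjoint from $\Hq$ whose boundary maps to $H^0$ or $H^\infty$, forcing $\Delta$ to contain another component of $f^{-1}(\He)$ and hence another stray bubble strictly inside) does go through. One point that needs slightly more than you give it: to conclude from the Euler-characteristic count that there are exactly $d-1$ bubbles each of degree $1$, you must also rule out a single bubble touching $\Hq$ at two \emph{distinct} critical points (not just two sectors at one point); this follows a posteriori from the bijection with the $d_0-1$ and $d_\infty-1$ univalent preimage disks of $Y^\infty$ and $Y^0$ that you set up at the end, so the logical order of your last two paragraphs should be tightened. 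The paper's surgery sidesteps all of this, since for a polynomial the filled Julia set is full and no stray or multiply-attached preimage of the Siegel disk can occur.
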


\begin{proof}
    For each $\bullet \in \{0,\infty\}$, the boundary $\partial Y^{\bullet}$ is a quasicircle along which $f$ is conjugate to the irrational rotation. We can perform Douady-Ghys surgery\footnote{A combinatorial proof avoiding the surgery procedure is possible, but we will leave it as an exercise to the keen reader.} to replace $f$ on the disk $D_{\bullet} := \RS \backslash (\Hq \cup Y^{\bullet})$ with a rotation and obtain a rational map $P_{\bullet}$ that satisfies the following properties:
    \begin{itemize}
        \item[$\rhd$] $P_{\bullet}$ admits an invariant Siegel disk $Z_{\bullet} \subset \C $, which is a quasidisk;
        \item[$\rhd$] there is a quasiconformal map $\phi_{\bullet}: \RS \to \RS$ that restricts to a conjugacy between $f|_{\RS \backslash D_{\bullet}}$ and $P_{\bullet}|_{\RS \backslash Z_{\bullet} }$;
        \item[$\rhd$] $\phi_{\bullet}(\bullet) = \infty$, and thus $P_{\bullet}$ has a superattracting fixed point at $\infty$ with local degree $d_{\bullet}$.
    \end{itemize} 
    
    Clearly, for each $\bullet$, $P_{\bullet}$ must have degree at least $d_{\bullet}$. The critical points of $P_{\bullet}$ aside from $\infty$ must lie on $\partial Z_{\bullet}$. Moreover, the sum of the numbers of critical points of $P_0$ and $P_\infty$ is equal to the number of critical points of $f$, which is $2 (d_0 + d_\infty - 2)$. As such, $P_0$ and $P_\infty$ must be polynomials of degrees $d_0$ and $d_\infty$ respectively. 
    
    For each $\bullet$, the maximum modulus principle implies that the preimage of $Z_{\bullet}$ under $P_{\bullet}$ must be of the form $Z_{\bullet} \cup E_1^{\bullet} \cup \ldots, E_{d_{\bullet}-1}^{\bullet}$ for some $d_{\bullet}-1$ pairwise disjoint open disks $E_i^{\bullet}$'s where for each $i$, $P_\bullet$ is univalent in $E_i^\bullet$ and the closure $\overline{E_i^\bullet}$ intersects $\overline{ Z_{\bullet}}$ precisely at one point, which is a critical point of $P_{\bullet}$. Therefore, the preimage of $\Hq$ under $f$ is of the form 
    \[
    \Hq \cup \bigcup_{\bullet \in \{0, \infty \}} \bigcup_{i=1}^{d_{\bullet}-1} \big( \phi_{\bullet}^{-1}(\overline{E_i^\bullet}) \cap f^{-1} (\Hq) \big).
    \]
    Then, the proposition follows immediately.
\end{proof}

Denote by $\mathcal{C}$ the set of all free critical points of $f$. For any $n\geq 1$, we refer to the closure of a component of $f^{-n}(\Hq) \backslash f^{-(n-1)}(\Hq)$ as a \emph{bubble} of \emph{generation} $n$. By Proposition \ref{bubble-str}, every bubble $B$ of generation $n$ is a closed annulus admitting a unique point on the outer boundary of $B$ that lies on the pre-critical set $ f^{-(n-1)}(\mathcal{C})$. This unique point will be called the \emph{root} of $B$. In particular, every bubble of generation $1$ is precisely one of the $A_i^\bullet$'s above and it is rooted at a unique critical point. (See Figure \ref{fig:bubbles}.) We say that a bubble attached to $\Hq$ is an \emph{inner} bubble if it lies in $\overline{Y^0}$ and an \emph{outer} bubble if it lies in $\overline{Y^\infty}$.

We shall formally define combinatorics of Herman rings as follows. For any $n \in \N$, the $n^{\text{th}}$ symmetric product $\SP^n(\T)$ of the unit circle $\T$ is the quotient of the $n$-dimensional torus $\T^n$ under the symmetric group $S_n$ acting by permutation. Elements of $\SP^n(\T)$ are precisely unordered $n$-tuples of elements of $\T$. 

\begin{definition}
    Define $\mathcal{C}_{m,n}$ to be the quotient space of $\SP^{m-1}(\T) \times \SP^{n-1}(\T)$ modulo the action of $\T$ by any rigid rotation, endowed with the quotient topology.
\end{definition}

Let $\phi : \Hq \to \{ 1 \leq |z| \leq R \}$, where $R = e^{2 \pi \modu(\Hq)}$, denote a linearization of $f|_\Hq$. Let $(c^0_1,\ldots,c^0_{d_0-1})$ and $(c^\infty_1,\ldots,c^\infty_{d_\infty-1})$ denote the tuples of inner and outer critical points of $f$ counting multiplicity.

\begin{definition}
\label{comb-definition}
    The \emph{combinatorics} of $f \in \HRspace_{d_0, d_\infty,\theta}$ is the element $\text{comb}(f)$ in $\mathcal{C}_{d_0,d_\infty}$ induced by the pairs of tuples $\left(\phi(c^0_1),\ldots, \phi(c^0_{d_0-1})\right)$ and $\left(\frac{\phi(c^\infty_1)}{R}, \ldots, \frac{\phi(c^\infty_{d_0-1})}{R}\right)$.
\end{definition}

Note that $\text{comb}(f)$ is well-defined because $\phi$ is unique up to post-composition with rigid rotation.

Zhang \cite{Z08} proved that bounded type Siegel disks of any prescribed combinatorics are realized by a unique rational map as long as outside the closure of the Siegel disk, the postcritical set is finite and there are no Thurston obstructions. Using methods similar to Shishikura's quasiconformal surgery, Wang \cite{W12} extended Zhang's result to rational maps with an invariant Herman ring where outside the closure of the Herman ring, the postcritical set is finite and there are no Thurston obstructions. In particular, such Herman rings are uniquely determined by their conformal moduli and the combinatorial data on their boundaries.

\begin{theorem}[\cite{W12}]
\label{wang}
    For any bounded type irrational number $\theta$, any positive number $\mu >0$, and any $\mathcal{C} \in \mathcal{C}_{d_0,d_\infty}$, there is a rational map $f \in \HRspace_{d_0, d_\infty,\theta}$ such that its Herman ring has modulus $\mu$ and combinatorics $\mathcal{C}$. Moreover, such $f$ is unique up to conformal conjugacy.
\end{theorem}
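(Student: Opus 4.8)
The plan is to derive Theorem~\ref{wang} from Zhang's realization and rigidity theorem for bounded type Siegel disks \cite{Z08} together with Shishikura's surgery, in the manner recalled above for the construction of $\HRspace_{d_0,d_\infty,\theta}$. For the existence half, I would first represent $\mathcal{C}$ by a pair of tuples $(u_1,\dots,u_{d_0-1})$ and $(v_1,\dots,v_{d_\infty-1})$ of points of $\T$, well defined up to a common rotation, and invoke \cite{Z08} to produce polynomials $P_0$ of degree $d_0$ and $P_\infty$ of degree $d_\infty$ with invariant Siegel disks $Z_0$, $Z_\infty$ of rotation numbers $1-\theta$ and $\theta$, each a quasidisk, whose only non-repelling cycles are the Siegel centers and all of whose finite critical points lie on $\partial Z_0$, $\partial Z_\infty$ at the positions prescribed by the $u_i$ and $v_j$ in the linearizing coordinates; the Thurston-obstruction hypothesis of \cite{Z08} is vacuous here because the postcritical set of each $P_\bullet$ off its Siegel disk is a single superattracting fixed point, which no essential multicurve can surround. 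I would then glue exactly as in Shishikura's surgery \cite{S87} (see also \cite[\S7.3]{BF14}): remove from each $Z_\bullet$ a round invariant subdisk $W_\bullet$ so that $\modu(Z_0\setminus\overline{W_0})+\modu(Z_\infty\setminus\overline{W_\infty})=\mu$, identify $\partial W_0$ with $\partial W_\infty$ by a real-analytic circle map intertwining the induced boundary rotations and whose rotational (twist) parameter is chosen so that the critical points on the two boundary curves of the resulting annulus are arranged according to $\mathcal{C}$, and straighten the resulting quasiregular branched self-cover of the glued topological sphere by the measurable Riemann mapping theorem. This yields a map $f\in\HRspace_{d_0,d_\infty,\theta}$ with $\text{comb}(f)=\mathcal{C}$ whose Herman ring has modulus $\mu$.

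For uniqueness, suppose $f_1,f_2\in\HRspace_{d_0,d_\infty,\theta}$ share the modulus $\mu$ and the combinatorics $\mathcal{C}$; I would construct a conformal conjugacy in two stages. First, linearizing the two Herman rings identifies $\overline{\He_1}$ and $\overline{\He_2}$ with the common round annulus $\{1\le|z|\le e^{2\pi\mu}\}$ and conjugates $f_1|_{\overline{\He_1}}$ to $f_2|_{\overline{\He_2}}$; since $\text{comb}(f_1)=\text{comb}(f_2)$, the induced identification of $\partial\He_1$ with $\partial\He_2$ carries critical points to critical points with matching local degrees, so — using that these boundary components are quasicircles (Corollary~\ref{old-bounds}) — it extends to a quasiconformal homeomorphism of $\RS$ that is conformal on $\He_1$ and conjugates $f_1$ to $f_2$ near $\overline{\He_1}$. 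Lifting through the covering structure of Proposition~\ref{bubble-str} — the maps are univalent on each bubble, so the lift adds no dilatation — spreads this to a quasiconformal conjugacy on the grand orbit of $\He_1$, which, together with the B\"ottcher conjugacies near the two superattracting fixed points, assembles into a global quasiconformal conjugacy $h:\RS\to\RS$ that is conformal on the Fatou set of $f_1$; moreover the grand orbit of $\partial\He_1$ is Lebesgue-null, being a countable union of quasicircles.

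Second, I would upgrade $h$ to a conformal map by showing that its Beltrami coefficient $\mu_h$ vanishes a.e. By the first stage, $\mu_h$ is $f_1$-invariant and supported, up to a null set, on $J(f_1)$ minus the grand orbit of $\overline{\He_1}$. But that set contains no critical points of $f_1$ — the free ones all lie on $\partial\He_1$, while $0$ and $\infty$ are not in $J(f_1)$ — and there $f_1$ is expanding in the hyperbolic metric of $\RS$ minus the closure $\{0,\infty\}\cup\partial\He_1$ of its postcritical set. The standard ergodic argument ruling out measurable invariant line fields for an expanding map off its postcritical orbit then forces $\mu_h\equiv 0$ (equivalently, this is the rigidity of the pullback operator on the Teichm\"uller space of $\RS$ marked by $\{0,\infty\}$ with the conformal structure of $\He_1$ held fixed, whose hypotheses — only two marked points outside $\overline{\He_1}$, and no Thurston obstruction, as above — are met). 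Hence $h$ is conformal and $f_1$, $f_2$ are conformally conjugate.

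I expect the rigidity step to be the main obstacle. The existence half is a fairly direct mating construction, but uniqueness genuinely requires showing that fixing the modulus and the combinatorics of the Herman ring removes all quasiconformal flexibility; the delicate points are (a) producing, in the first stage, a conjugacy of uniformly bounded dilatation despite passing to bubbles of arbitrarily high generation, and (b) verifying the absence of invariant line fields (equivalently, of Thurston obstructions) in the complement of the Herman ring. These are exactly the places where the ``simplest configuration'' axioms \ref{def:1}--\ref{def:4} — above all, that every free critical point lies on $\partial\He$ and that the only superattracting cycles are $0$ and $\infty$ — are used.
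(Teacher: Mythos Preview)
The paper does not supply its own proof of Theorem~\ref{wang}: the statement is cited directly from Wang \cite{W12}, with only the surrounding commentary that Wang ``extended Zhang's result to rational maps with an invariant Herman ring where outside the closure of the Herman ring, the postcritical set is finite and there are no Thurston obstructions.'' There is therefore no proof in the paper to compare your proposal against.

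That said, your outline is in the right spirit and matches how the result is actually obtained. The existence half --- realize the two Siegel polynomials via Zhang \cite{Z08} and then weld via Shishikura's surgery with the gluing twist and the subdisk radii chosen to hit the prescribed combinatorics and modulus --- is exactly the construction the paper alludes to just before the statement. For uniqueness, however, your direct appeal to hyperbolic expansion is not correct as stated: the postcritical set contains $\partial\He_1$, which lies in the Julia set, so $f_1$ is not expanding in the usual sense and the ``standard ergodic argument ruling out invariant line fields for an expanding map'' does not apply off the shelf. The actual rigidity in \cite{W12} (as in \cite{Z08}) goes through a Thurston-type pullback argument on the Teichm\"uller space of a sphere marked by the finite postcritical set outside $\overline{\He}$ together with the conformal class of the rotation annulus --- precisely what you gesture at in your parenthetical --- and establishing contraction of that pullback operator is the substance of Wang's paper, not a consequence one can invoke in a sentence. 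So your point~(b) is indeed the real obstacle, and it is not resolved by the expansion claim you make.
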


\subsection{Herman quasicircles}
\label{ss:herman-quasicircles}
To obtain a priori bounds for Herman rings in the space $\HRspace_{d_0, d_\infty,\theta}$, Corollary \ref{old-bounds} suggests that it is sufficient to consider those with small moduli. This motivates us to work with the following degenerate version.

\begin{definition}
    For any $d_0, d_\infty \geq 2$ and irrational number $\theta \in (0,1)$, define $\HQspace_{d_0, d_\infty,\theta}$ to be the space of all degree $d_0 + d_\infty -1$ rational maps $f$ such that
\begin{enumerate}[label=\text{(\roman*)}]
    \item\label{def:HQ1} $0$ and $\infty$ are superattracting fixed points of $f$ with local degrees $d_0\geq 2$ and $d_\infty\geq 2$ respectively;
    \item\label{def:HQ2} $f$ has a Herman quasicircle $\Hq$ of rotation number $\theta$ (see Definition \ref{herman-curve-defn});
    \item\label{def:HQ3} $\Hq$ separates $0$ and $\infty$;
    \item\label{def:HQ4} every critical point of $f$ other than $0$ and $\infty$ lies in $\Hq$.
\end{enumerate} 
\end{definition}

\begin{example}[Unicritical trivial Herman curves]
\label{eg:general-blaschke}
    For any $d \geq 2$ and irrational $\theta \in (0,1)$, there is a unique point $c=c(d,\theta)$ on $\T$ such that the map
    \[
    B_{d,\theta}(z) = c z^d \frac{ \displaystyle\sum_{j=0}^{d-1} \binom{2d-1}{j} (-1)^j z^{d-1-j} 
    }{ 
    \displaystyle\sum_{j=0}^{d-1} \binom{2d-1}{j} (-1)^j z^{j} 
    }
    \]
    is a Blaschke product lying in $\HQspace_{d,d,\theta}$ with a trivial Herman quasicircle $\T$ of rotation number $\theta$ and a unique free critical point $z=1$ and critical value $c$. Indeed, it is not difficult to check by straightforward computation that $B_{d,\theta}$ commutes with the reflection $z \mapsto 1/\overline{z}$, that \ref{def:HQ1} holds, and that $z=1$ is the only free critical point with image $B_{d,\theta}(1)=c$. By Rouch\'e's theorem, one can check that each of the $d-1$ poles lies within the unit disk. By the argument principle, $B_{d,\theta}$ restricts to an analytic circle homeomorphism. The uniqueness of the parameter $c$ comes from standard monotonicity considerations in the theory of circle maps. (See \cite[\S4]{dMvS93}.) Note that $B_{2,\theta}$ coincides with the Blaschke product $B_{\theta}$ in the introduction (\ref{cubic-blaschke-product}).
\end{example}

The example above will be further generalized in Proposition \ref{unicritical-formula}.

Let $f \in \HQspace_{d_0, d_\infty,\theta}$. Denote by $\Hq$ the Herman quasicircle of $f$, and by $Y^0$ and $Y^\infty$ the connected components of $\RS \backslash \Hq$ containing $0$ and $\infty$ respectively. A generalization of the Herman-\'Swi\k{a}tek theorem by Petersen guarantees that the bounded type assumption automatically gives us regularity of the conjugacy.

\begin{proposition}[\cite{Pe04}]
    The rotation number $\theta$ is of bounded type if and only if there exists a quasiconformal map $\phi: \RS \to \RS$ such that $\phi(\Hq) = \T$ and $f = \phi^{-1} \circ R_\theta\circ \phi$ in $\Hq$.
\end{proposition}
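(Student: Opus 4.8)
The plan is to reduce the statement to Petersen's generalization of the Herman--\'Swi\k{a}tek theorem \cite{Pe04}, which is tailored exactly to a holomorphic map with a forward-invariant quasicircle carrying critical points. First I would record the structure of $f$ along $\Hq$. Since $f$ is rational it is holomorphic on both quasidisks $Y^0$ and $Y^\infty$; by property \ref{def:HQ4} all of its critical points other than $0$ and $\infty$ lie on $\Hq$, and there are $d_0+d_\infty-2\geq 2$ of them counted with multiplicity, so $\Hq$ genuinely meets the critical set. As in the $\HQspace$-analogue of Proposition \ref{bubble-str}, near each such critical point $c$ the map is conformally a finite power $z\mapsto z^{m_c}$, so $\Hq$ is pinched into a sector of angle $<2\pi/m_c$ at $c$, while away from the finitely many bubbles hanging off $\Hq$ the map restricts to a proper branched self-cover of each side. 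This is precisely the configuration \cite{Pe04} governs: a holomorphic map, holomorphic on both complementary disks of a forward-invariant quasicircle, carrying finitely many critical points of finite order on that quasicircle, and topologically conjugate there to $R_\theta$.

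For the forward implication I would argue as follows. Assuming $\theta$ of bounded type, \cite{Pe04} yields that the topological conjugacy $h:\Hq\to\T$ between $f|_\Hq$ and $R_\theta$ --- which exists and is unique up to a rotation of $\T$ by Denjoy/Yoccoz --- is quasisymmetric. To promote this to a global quasiconformal conjugacy I would use that $\Hq$ is a quasicircle: fix any global quasiconformal $R:\RS\to\RS$ with $R(\Hq)=\T$, observe that $v_0:=h\circ(R|_\Hq)^{-1}$ is then a quasisymmetric self-homeomorphism of $\T$, extend $v_0$ to a quasiconformal self-homeomorphism $v$ of $\RS$ preserving $\T$ (Beurling--Ahlfors/Douady--Earle), and set $\phi:=v\circ R$. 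This $\phi$ is globally quasiconformal, maps $\Hq$ onto $\T$, and satisfies $\phi|_\Hq=v_0\circ(R|_\Hq)=h$, hence $f=\phi^{-1}\circ R_\theta\circ\phi$ on $\Hq$. For the converse, given such a $\phi$, its restriction $\phi|_\Hq$ is automatically quasisymmetric (a quasiconformal homeomorphism of $\RS$ is quasisymmetric, so restricts to one on $\Hq$) and conjugates $f|_\Hq$ to $R_\theta$; since $\Hq$ carries an honest critical point, the Herman--\'Swi\k{a}tek obstruction in its critical-circle form (again \cite{Pe04}) forces $\theta$ to be of bounded type. This is exactly where the presence of the critical point --- guaranteed by $d_0,d_\infty\geq 2$ --- makes the ``only if'' direction non-vacuous, as opposed to the trivial case $f|_\Hq\equiv R_\theta$.

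The genuine content of the proposition lies entirely in \cite{Pe04}, so the main ``obstacle'' on our side is not an estimate but a verification: one must confirm that a map in $\HQspace_{d_0,d_\infty,\theta}$ really presents the configuration \cite{Pe04} assumes --- holomorphicity on both sides of $\Hq$, finitely many critical points of finite order on $\Hq$ with the power-map local model and the associated sector-pinching, and the proper-branched-cover description of $f$ off the bubbles. Everything else (uniqueness of the linearizing conjugacy, the quasisymmetric-to-quasiconformal extension across a quasicircle, and the quasisymmetry of restrictions of global quasiconformal maps) is classical, so I would keep the write-up short and lean on \cite{Pe04} for the analytic heart of the matter.
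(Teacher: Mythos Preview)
Your proposal is correct and aligns with the paper's own treatment: the paper does not give a proof at all but simply records the proposition as a direct citation of Petersen \cite{Pe04}, so the content you supply --- verifying that $f|_{\Hq}$ is a critical quasicircle map in Petersen's sense, invoking his result for the quasisymmetry of the conjugacy, and then doing the standard quasisymmetric-to-quasiconformal extension across the quasicircle --- is exactly the intended argument, just spelled out in more detail than the paper bothers with.
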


We can apply Douady-Ghys surgery along $\Hq$, repeat most of the proof of Proposition \ref{bubble-str}, and obtain the following proposition.

\begin{proposition}
\label{bubble-structure}
    Suppose $\theta$ is of bounded type.
    \begin{enumerate}[label=\textnormal{(\arabic*)}]
        \item For each $\bullet \in \{0,\infty\}$, there is a degree $d_\bullet$ polynomial $P_\bullet$ and a quasiconformal map $\psi_\bullet: \RS \to \RS$ such that
    \begin{itemize}
        \item[$\rhd$] $P_\bullet$ has a Siegel disk $Z_\bullet$ of rotation number $\theta$ if $\bullet = \infty$, $1-\theta$ if $\bullet = 0$;
        \item[$\rhd$] every finite critical point of $P_\bullet$ lies on $\partial Z_\bullet$;
        \item[$\rhd$] $\psi_\bullet$ conjugates $f|_{\overline{Y^\bullet}}$ and $P_\bullet|_{\RS \backslash Z_\bullet}$;
        \item[$\rhd$] $\psi_\bullet$ is conformal on the immediate basin of attraction of $\bullet$.
    \end{itemize}
    \item The preimage $f^{-1}(\Hq)$ of $\Hq$ is of the form
        $$
        \Hq \cup \bigcup_{i=1}^{d_0-1} A_i^0 \cup \bigcup_{j=1}^{d_\infty-1} A_j^\infty
        $$
        where for each $\bullet \in \{0,\infty\}$ and $i\in \{1, \ldots, d_{\bullet}-1\}$,
        \begin{itemize}
            \item[$\rhd$] $A_i^{\bullet}$ is a Jordan curve in $\overline{Y^{\bullet}}$;
            \item[$\rhd$] $A_i^{\bullet} \cap \Hq = \{c\}$ for some critical point $c$;
            \item[$\rhd$] if $j\neq i$, $A_i^{\bullet} \cap A_j^{\bullet}$ is either empty or $\{c\}$ for some critical point $c$, and the Jordan disks enclosed by $A_i^\bullet$ and $A_j^\bullet$ respectively are disjoint;
            \item[$\rhd$] $f: A_i^\bullet \to \Hq$ is a homeomorphism.
        \end{itemize}
    \end{enumerate}
\end{proposition}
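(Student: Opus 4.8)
The plan is to mirror the proof of Proposition~\ref{bubble-str}, performing Douady--Ghys surgery once on each side of $\Hq$. Since $\theta$ is of bounded type, Petersen's theorem \cite{Pe04} (the preceding proposition) provides a global quasiconformal $\phi$ with $\phi(\Hq)=\T$ conjugating $f|_\Hq$ to $R_\theta$; in particular $\Hq$ is a quasicircle, so $\overline{Y^0}$ and $\overline{Y^\infty}$ are closed quasidisks and $f|_\Hq$ is quasisymmetrically conjugate to $R_\theta$. Fix $\bullet\in\{0,\infty\}$ and let $\bar\bullet$ denote the other symbol. Applying Douady--Ghys surgery to the quasidisk $Y^{\bar\bullet}$ (whose boundary $\Hq$ is forward invariant) to replace $f|_{Y^{\bar\bullet}}$ by a rigid rotation produces a rational map $P_\bullet$ and a quasiconformal $\psi_\bullet:\RS\to\RS$ with $P_\bullet=\psi_\bullet\circ f\circ\psi_\bullet^{-1}$ on $\RS\backslash\psi_\bullet(Y^{\bar\bullet})$ and an invariant Siegel disk, a quasidisk, containing $\psi_\bullet(Y^{\bar\bullet})$. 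This Siegel disk is in fact exactly $Z_\bullet:=\psi_\bullet(Y^{\bar\bullet})$: were it larger it would meet $\psi_\bullet(Y^\bullet)$, and transporting by $\psi_\bullet^{-1}$ would place $\Hq$ in the closure of a rotation domain of $f$, contradicting that $\Hq$ is a Herman quasicircle. Thus $\psi_\bullet(\overline{Y^\bullet})=\RS\backslash Z_\bullet$, the map $\psi_\bullet$ conjugates $f|_{\overline{Y^\bullet}}$ to $P_\bullet|_{\RS\backslash Z_\bullet}$ and carries $\Hq$ onto $\partial Z_\bullet$, and, normalising so that $\psi_\bullet(\bullet)=\infty$, this point is a superattracting fixed point of $P_\bullet$ of local degree $d_\bullet$. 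Finally $Z_\bullet$ inherits the rotation number of $f|_\Hq$, which, since $\partial Y^0$ and $\partial Y^\infty$ induce opposite orientations on $\Hq$, is $\theta$ for $\bullet=\infty$ and $1-\theta$ for $\bullet=0$.

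Two more observations finish part~(1). The Beltrami coefficient defining $\psi_\bullet$ is supported on the grand orbit $\bigcup_{k\ge 0}f^{-k}(Y^{\bar\bullet})$; since the immediate basin $\Omega_\bullet$ of $\bullet$ is forward invariant and contained in $Y^\bullet$, it is disjoint from $Y^{\bar\bullet}$ and from every $f^{-k}(Y^{\bar\bullet})$, so $\psi_\bullet$ is conformal on $\Omega_\bullet$. Moreover, $Z_\bullet$ being a Siegel disk contains no critical point, and off $\Hq$ the map $P_\bullet$ is conjugate to $f$, whose only critical points other than $0,\infty$ lie on $\Hq$ by~\ref{def:HQ4}; hence every critical point of $P_\bullet$ apart from $\infty$ lies on $\partial Z_\bullet$, and $\deg P_\bullet\ge d_\bullet$. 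A critical point count identical to that in Proposition~\ref{bubble-str} now fixes the degrees: each critical point of $f$ on $\Hq$, of multiplicity $k$, distributes its local branching along $\Hq$ between the two sides, contributing complementary multiplicities to $P_0$ and $P_\infty$ that sum to $k$, so the critical points of $P_0$ and of $P_\infty$ other than $\infty$ together account for the $d_0+d_\infty-2$ free critical points of $f$; since a rational map of degree $n$ has $2n-2$ critical points with multiplicity and $d_\bullet-1$ of those of $P_\bullet$ sit at $\infty$, this forces $\deg P_0+\deg P_\infty=d_0+d_\infty$, hence $\deg P_\bullet=d_\bullet$. With $\infty$ totally ramified, $P_\bullet$ is a polynomial of degree $d_\bullet$ all of whose finite critical points lie on $\partial Z_\bullet$.

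For part~(2) I would argue as in Proposition~\ref{bubble-str} via the maximum modulus principle: since $Z_\bullet$ is a Jordan domain containing no critical point of the degree $d_\bullet$ polynomial $P_\bullet$, the preimage $P_\bullet^{-1}(Z_\bullet)$ is the union of $Z_\bullet$ with $d_\bullet-1$ further Jordan domains $E_1^\bullet,\dots,E_{d_\bullet-1}^\bullet$, each mapped homeomorphically onto $Z_\bullet$ (the restriction $P_\bullet:E_i^\bullet\to Z_\bullet$ is proper and unbranched, hence a covering of the simply connected $Z_\bullet$, hence of degree one), with $\overline{E_i^\bullet}\cap\overline{Z_\bullet}$ a single critical point of $P_\bullet$ and the closures $\overline{E_i^\bullet}$ pairwise disjoint apart from possibly shared such roots. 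Setting $A_i^\bullet:=\psi_\bullet^{-1}(\partial E_i^\bullet)$ and using that $\psi_\bullet^{-1}$ conjugates $P_\bullet|_{\RS\backslash Z_\bullet}$ to $f|_{\overline{Y^\bullet}}$ and sends $\partial Z_\bullet$ to $\Hq$, one gets
\[
f^{-1}(\Hq)\cap\overline{Y^\bullet}=\Hq\cup\bigcup_{i=1}^{d_\bullet-1}A_i^\bullet .
\]
Since $\overline{Y^0}\cup\overline{Y^\infty}=\RS$, taking the union over $\bullet\in\{0,\infty\}$ gives the claimed form of $f^{-1}(\Hq)$, and all the listed properties transport through $\psi_\bullet$: $A_i^\bullet$ is a Jordan curve in $\overline{Y^\bullet}$, $A_i^\bullet\cap\Hq$ is the single critical point $\psi_\bullet^{-1}(\overline{E_i^\bullet}\cap\overline{Z_\bullet})$ of $f$, for $i\ne j$ the set $A_i^\bullet\cap A_j^\bullet$ is empty or a shared root with $\psi_\bullet^{-1}(E_i^\bullet)$ and $\psi_\bullet^{-1}(E_j^\bullet)$ disjoint, and $f:A_i^\bullet\to\Hq$ is a homeomorphism, being conjugate via $\psi_\bullet$ to the homeomorphism $P_\bullet:\partial E_i^\bullet\to\partial Z_\bullet$.

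The routine steps above are genuinely routine; the points I expect to require care are (i) the critical point bookkeeping -- checking that the local branching of $f$ along $\Hq$ at each critical point splits between $P_0$ and $P_\infty$ with total multiplicity preserved and nothing double-counted, which is what makes the identity $\deg P_0+\deg P_\infty=d_0+d_\infty$ tight -- and (ii) verifying that the surgery's Beltrami support misses the immediate basins and that the Siegel disk it produces is precisely $\psi_\bullet(Y^{\bar\bullet})$ rather than something larger, so that $\psi_\bullet$ is a genuine conjugacy on all of $\overline{Y^\bullet}$ and conformal on $\Omega_\bullet$.
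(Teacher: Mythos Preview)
Your proof is correct and follows essentially the same route as the paper, which simply says to apply Douady--Ghys surgery along $\Hq$ and repeat the proof of Proposition~\ref{bubble-str}. You have supplied considerably more detail than the paper's one-line argument, correctly isolating the two points that genuinely differ from the Herman-ring case---that the Siegel disk produced by surgery is exactly $\psi_\bullet(Y^{\bar\bullet})$ (using the Herman-curve hypothesis), and that the free critical multiplicity on $\Hq$ splits between $P_0$ and $P_\infty$---and handling both appropriately.
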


The first part of the proposition hints that $f$ can be interpreted as a welding\footnote{This is similar to Bers' simultaneous uniformization. Compare with \cite{McM} and \cite[\S7.4]{BF14}.} of two polynomial Siegel disks of degrees $d_0$ and $d_\infty$ along their boundaries. The second part\footnote{Actually, the second part of Proposition \ref{bubble-structure} holds for any irrational $\theta$. This follows from an alternative combinatorial proof avoiding surgery.} tells us that the covering structure of $f$ is identical to that of maps in $\HRspace_{d_0, d_\infty,\theta}$. 

\begin{remark}
    Despite the striking similarity, a priori we do not know yet whether degenerating Herman rings in $\HRspace_{d_0,d_\infty,\theta}$ can converge to a limit in $\HQspace_{d_0,d_\infty,\theta}$. We also do not know yet if every rational map in $\HQspace_{d_0,d_\infty,\theta}$ arises as a genuine limit of Herman rings. Some of these will be resolved in Section \S\ref{sec:construction-of-herman-curves}.
\end{remark}

Similarly, we can define the notion of bubbles by taking preimages of the Herman quasicircle. See Figure \ref{fig:bubbles}. By standard properties of Julia sets (see \cite{M06}), bubbles generate the Julia set $J(f)$ of $f$.

\begin{figure}
    \centering
    
    \begin{tikzpicture}
    \node[anchor=south west,inner sep=0] (image) at (0,0) {\includegraphics[scale=0.17]{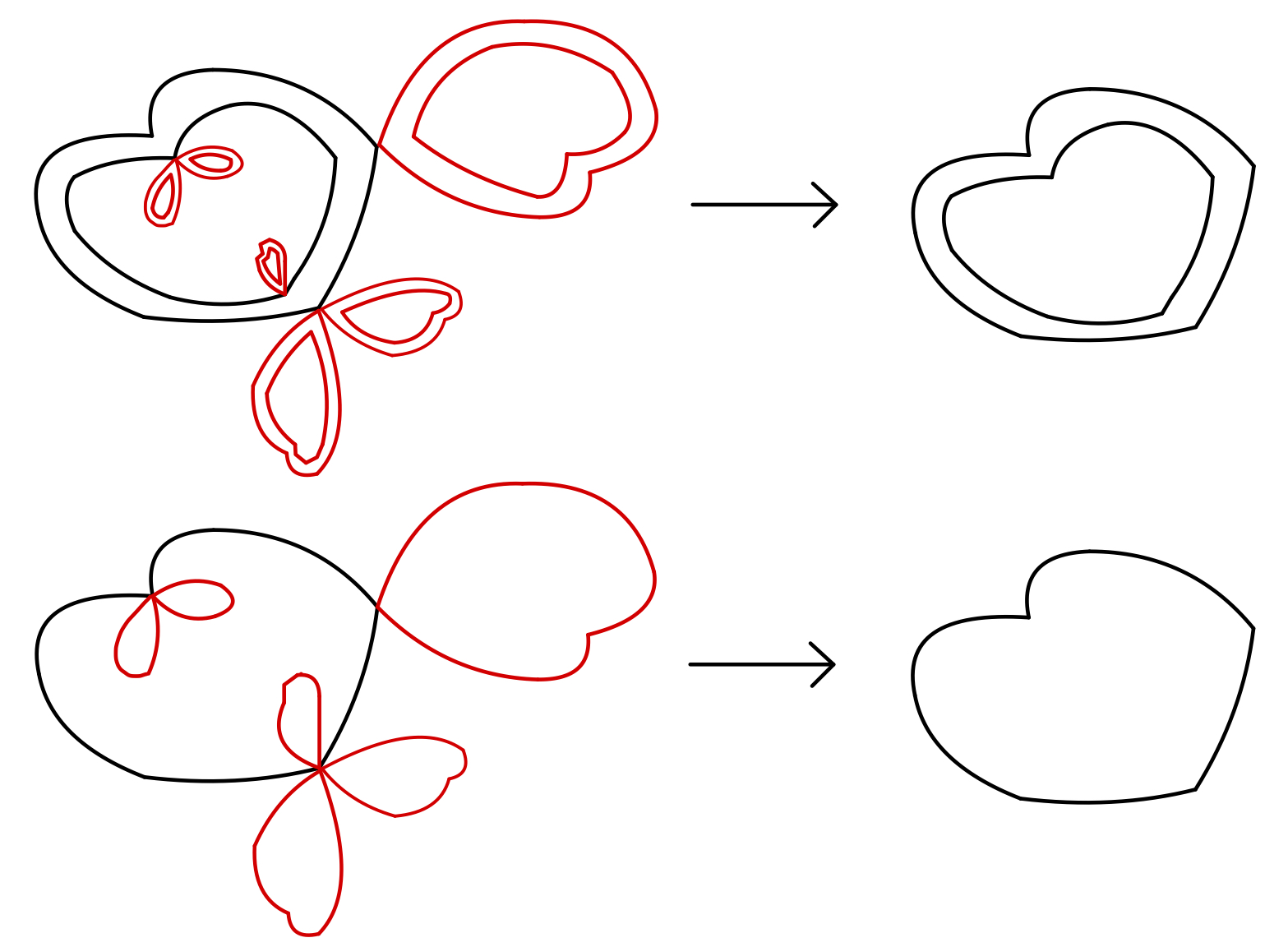}};
    \begin{scope}[
        x={(image.south east)},
        y={(image.north west)}
    ]
        \node [black, font=\bfseries] at (0.58,0.73) {$f$};
        \node [black, font=\bfseries] at (0.58,0.25) {$g$};
    \end{scope}
\end{tikzpicture}

    \caption{Bubbles of generation $1$ for $f \in \HRspace_{d_0, d_\infty,\theta}$ above and $g \in \HQspace_{d_0, d_\infty,\theta}$ below.}
    \label{fig:bubbles}
\end{figure}

\begin{proposition}
\label{bubbles-julia}
    $
    J(f) = \overline{\cup_{k=0}^\infty f^{-k}(\Hq)}.
    $
\end{proposition}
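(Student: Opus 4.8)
The plan is to prove the two inclusions separately. For the inclusion $\overline{\cup_{k\geq 0} f^{-k}(\Hq)} \subseteq J(f)$, I would first observe that $\Hq$ itself lies in $J(f)$: indeed, $\Hq$ is the common boundary of the two Fatou components $Y^0$ and $Y^\infty$ (the immediate basins are superattracting, so contained in the Fatou set), and boundaries of Fatou components always lie in the Julia set. More directly, in Case \ref{case:herman-ring} $\Hq$ contains the two boundary curves of the Herman ring, each of which is a component boundary and contains a critical point with infinite forward orbit, while in Case \ref{case:herman-curve} the Herman quasicircle $\Hq$ is by definition not contained in the closure of any rotation domain, hence its points are not in any Siegel disk, Herman ring, attracting, parabolic, or (since $\theta$ is bounded type and the dynamics on $\Hq$ is a topological rotation that is not linearizable on a neighborhood) any Fatou component — so $\Hq \subseteq J(f)$. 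Since $J(f)$ is completely invariant, $f^{-k}(\Hq) \subseteq J(f)$ for every $k$, and since $J(f)$ is closed we get the first inclusion.

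For the reverse inclusion $J(f) \subseteq \overline{\cup_{k\geq 0} f^{-k}(\Hq)}$, I would invoke the standard fact (see Milnor \cite{M06}) that for any point $z_0 \in J(f)$, the backward orbit $\cup_{k\geq 0} f^{-k}(z_0)$ is dense in $J(f)$, with at most two exceptional points $z_0$ (the exceptional set $E(f)$ has cardinality at most $2$, and consists of superattracting fixed points or points on a superattracting $2$-cycle). Apply this with $z_0$ any point of $\Hq$ that is not exceptional — such a point exists since $\Hq$ is a nondegenerate curve while $|E(f)| \leq 2$, and moreover $0, \infty \in E(f)$ are the only candidates here and they do not lie on $\Hq$ since $\Hq$ separates them. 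Then $\cup_{k\geq 0} f^{-k}(z_0) \subseteq \cup_{k\geq 0} f^{-k}(\Hq)$ is dense in $J(f)$, which gives the claim.

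An alternative, more self-contained route for the reverse inclusion, closer in spirit to the surgery picture already established, is the following. By Proposition \ref{bubble-structure}(1) the map $f$ is conjugate on $\overline{Y^\infty}$ to $P_\infty$ restricted to the complement of its Siegel disk $Z_\infty$, and it is classical (again \cite{M06}) that for a polynomial with a bounded-type Siegel disk whose boundary carries all finite critical points, the Julia set equals the closure of the union of all preimages of $\partial Z_\infty$ — the filled Julia set is the complement of the basin of $\infty$, and the grand orbit of the Siegel boundary is dense in $\partial(\text{filled Julia set}) = J(P_\infty)$ because every Fatou component of $P_\infty$ is eventually mapped into $Z_\infty$ or into the basin of $\infty$. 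Transporting this back via $\psi_\infty$ shows $J(f) \cap \overline{Y^\infty} \subseteq \overline{\cup_k f^{-k}(\Hq)}$, symmetrically for $Y^0$, and since $J(f) \subseteq \overline{Y^0} \cup \overline{Y^\infty} \cup \Hq$ (the two basins together with their common boundary cover nothing — rather, $J(f)$ is disjoint from the open basins, so $J(f) \subseteq \overline{Y^0} \cup \overline{Y^\infty}$... ) one concludes. In practice I would use whichever of these the paper's conventions make cleanest.

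The only genuine obstacle is the first step — verifying $\Hq \subseteq J(f)$ — and specifically, in Case \ref{case:herman-curve}, ruling out that $\Hq$ meets a Fatou component. This is exactly where the hypothesis that $\Hq$ is a genuine Herman \emph{curve} (not contained in the closure of a rotation domain) is used: a point of $\Hq$ in a Siegel disk or Herman ring would force $\Hq$ into the closure of that domain by the invariance and topology of $\Hq$, contradicting Definition \ref{herman-curve-defn}; a point of $\Hq$ in an attracting or parabolic basin is impossible since $f|_\Hq$ is a rotation and $\Hq$ is forward invariant and bounded away from the attracting/parabolic cycles; and $\Hq$ cannot meet a Baker-type or wandering component since $f$ is rational. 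Once $\Hq \subseteq J(f)$ is secured, everything else is a routine application of complete invariance of $J(f)$ and density of backward orbits.
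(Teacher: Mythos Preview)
Your proposal is correct and aligns with the paper's treatment: the paper does not give a detailed proof of this proposition at all, merely prefacing it with ``By standard properties of Julia sets (see \cite{M06}), bubbles generate the Julia set $J(f)$ of $f$.'' Your main argument (complete invariance and closedness of $J(f)$ for one inclusion, density of backward orbits of a non-exceptional point for the other) is precisely the ``standard properties'' being invoked, so there is nothing to compare.

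One small remark: your alternative route via the polynomial models $P_\bullet$ gets a bit tangled at the end (the $Y^\bullet$ in the paper are the components of $\RS\setminus\Hq$, so $\overline{Y^0}\cup\overline{Y^\infty}=\RS$ trivially), but since you do not rely on it this does not affect the correctness of the proposal.
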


Let $\phi : \Hq \to \T$ be the quasisymmetric conjugacy between $f|_\Hq$ and $R_\theta$. By pushing forward inner and outer critical points of $f$ under $\phi$, we again obtain a well-defined element $\text{comb}(f)\in \mathcal{C}_{d_0,d_\infty}$.

\begin{definition}
\label{def:combinatorics}
    The \emph{combinatorics} of $f \in \HQspace_{d_0, d_\infty,\theta}$ is the element $\text{comb}(f)$ in $\mathcal{C}_{d_0,d_\infty}$.
\end{definition}

For bounded type rotation number $\theta$, showing that $\HQspace_{d_0, d_\infty,\theta}$ is non-empty and that any prescribed combinatorics is realizable is the heart of Theorem \ref{main-theorem-02}. 

\subsection{Irrational rotations}
\label{ss:irrational-rotations}
Consider the following general setup. Let $f : \Hq \to \Hq$ be a homeomorphism on a closed annulus $\Hq \subset \RS$. Suppose $f$ is topologically conjugate via $\phi: \Hq \to A$ to the rigid rotation $R_\theta(z) = e^{2\pi i\theta} z$ on a closed round annulus $A = \{1 \leq |z| \leq R\}$. Via the projection $\psi: A \to \T, z \mapsto \frac{1}{2\pi}\text{arg}(z)$, we can equip $\Hq$ with the pullback under $\psi \circ \phi$ of the Euclidean metric on $\T: = \R / \Z$, called the \emph{combinatorial pseudometric} of $\Hq$.

A closed set $I \subset \Hq$ is called a \emph{piece} in $\Hq$ if it is of the form $(\psi \circ \phi)^{-1}(I')$ for some closed interval $I' \subset \T$. Define the \emph{combinatorial length} $|I|$ of a piece $I$ to be the diameter of $I$ with respect to the combinatorial pseudometric.

For any two distinct points $x, y \in \Hq$, we denote by $[x,y]$ the unique combinatorially shortest piece that contains both $x$ and $y$. Note that if $\psi(\phi(x))=\psi(\phi(y))$, then $[x,y]$ is a radial segment in $\Hq$ with zero combinatorial length.

Let $\{ \frac{p_n}{q_n} \}$ be the sequence of best rational approximations of $\theta = [0;a_1,a_2,\ldots]$. These can be determined by the recurrence relation $p_{n} = a_{n} p_{n-1} + p_{n-2}$ and $q_{n} = a_{n} q_{n-1} + q_{n-2}$ where $p_0= q_{-1}=0$ and $q_0 = p_{-1} = 1$.

\begin{definition}
    A \emph{combinatorial piece} of \emph{level $n$} is a piece of the form $[x, f^{q_n}(x)]$ for some $x \in \Hq$.
\end{definition} 

Every combinatorial piece of level $n$ has the same combinatorial length equal to
\[
l_n: = |p_n - q_n\theta|.
\]
The bounded type assumption controls the rate of decrease of the $l_n$'s.

\begin{proposition}
\label{bounded-type}
    There exist a pair of constants $\tilde{C}, C>1$ depending only on $\beta(\theta)$ such that for every positive integer $n$, 
    \[
    \tilde{C} l_{n+1} \leq l_n \leq C l_{n+1}.
    \]
\end{proposition}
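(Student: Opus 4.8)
The plan is to derive the two-sided estimate directly from the standard recursion for the quantities $l_n = |p_n - q_n\theta|$. Recall that the $l_n$ satisfy $l_{n-1} = a_{n+1} l_n + l_{n+1}$, equivalently $l_{n+1} = l_{n-1} - a_{n+1} l_n$, and that the sequence is positive and strictly decreasing. Since $a_{n+1} \geq 1$ always, we immediately get $l_{n-1} \geq l_n + l_{n+1} > l_n$, which already gives $l_n < l_{n-1}$; more to the point, from $l_{n-1} = a_{n+1} l_n + l_{n+1} \geq l_n + l_{n+1} \geq 2 l_{n+1}$ (using $l_n \geq l_{n+1}$) we would get $l_{n-1} \geq 2 l_{n+1}$, and we want something cleaner relating consecutive terms.

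The cleanest route is to work with the ratio $\rho_n := l_n / l_{n-1} \in (0,1)$. Dividing the recursion $l_{n-1} = a_{n+1} l_n + l_{n+1}$ by $l_n$ gives $1/\rho_n = a_{n+1} + \rho_{n+1}$, so $\rho_{n+1} = \rho_n^{-1} - a_{n+1}$. First I would establish the lower bound: since $\rho_{n+1} > 0$ we have $\rho_n^{-1} > a_{n+1}$, hence $\rho_n < 1/a_{n+1} \leq 1$; but I want an upper bound on $\rho_n$ away from $1$ and a lower bound away from $0$. For the upper bound, $\rho_n = l_n/l_{n-1}$ and $l_{n-1} = a_{n+1} l_n + l_{n+1} > a_{n+1} l_n \geq l_n$, so in fact $\rho_n < 1/a_{n+1} \le 1$; and more precisely $\rho_n \le \frac{l_n}{a_{n+1} l_n} \cdot \frac{1}{1 + l_{n+1}/(a_{n+1}l_n)} $ — rather than fuss with this, note $l_{n-1} = a_{n+1}l_n + l_{n+1} \ge l_n + l_{n+1} > l_n$, giving $\rho_n < 1$, but I need a uniform gap. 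Use instead: $l_{n-1} \ge a_{n+1} l_n \ge l_n$, so combining with $l_{n-1} > l_n + l_{n+1}$ and $l_{n+1} < l_n$ does not yet suffice. The honest statement is $\rho_n \le \frac{1}{a_{n+1}} \le 1$, and separately $l_{n-1} = a_{n+1}l_n + l_{n+1} \le a_{n+1} l_n + l_n = (a_{n+1}+1) l_n \le (\beta(\theta)+1) l_n$, which gives $\rho_n \ge \frac{1}{\beta(\theta)+1}$. Together: $\frac{1}{\beta(\theta)+1} \le \rho_n = \frac{l_n}{l_{n-1}} \le 1$, hence in particular $l_n \le l_{n-1} \le (\beta(\theta)+1) l_n$. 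Re-indexing $n \mapsto n+1$ yields $l_{n+1} \le l_n \le (\beta(\theta)+1) l_{n+1}$, so the proposition holds with $\tilde C = 1 + \varepsilon$ for any $\varepsilon > 0$ (or strictly: $l_n > l_{n+1}$ gives $\tilde C$ strictly above $1$ is not automatic, but $l_n = a_{n+1}l_{n+1} + l_{n+2} \ge l_{n+1} + l_{n+2} > l_{n+1}$, and one can take $\tilde C = 1 + 1/\beta(\theta)$ say, noting $l_{n+2}/l_{n+1} \ge 1/(\beta(\theta)+1)$ so $l_n \ge (1 + \tfrac{1}{\beta(\theta)+1}) l_{n+1}$). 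Thus one may take $C = \beta(\theta)+1$ and $\tilde C = 1 + \tfrac{1}{\beta(\theta)+1}$, both $>1$ and depending only on $\beta(\theta)$.

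I do not expect any serious obstacle here: everything reduces to the recursion $l_{n-1} = a_{n+1} l_n + l_{n+1}$ together with positivity, monotonicity, and $1 \le a_{n+1} \le \beta(\theta)$. The only mild care needed is to make sure the constant $\tilde C$ is genuinely $>1$ rather than just $\ge 1$; this is handled by going one more step in the recursion as indicated, using that $l_{n+2} > 0$ and in fact $l_{n+2} \ge l_{n+1}/(\beta(\theta)+1)$. Writing it up, I would state the recursion (citing any standard reference on continued fractions, e.g. Khinchin), then carry out the two displayed chains of inequalities for the upper and lower bounds, and conclude with the explicit values $C = \beta(\theta)+1$ and $\tilde C = 1 + (\beta(\theta)+1)^{-1}$.
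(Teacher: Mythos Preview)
Your argument is correct: the recursion $l_{n-1}=a_{n+1}l_n+l_{n+1}$ together with $1\le a_{n+1}\le\beta(\theta)$ and strict monotonicity of $(l_n)$ immediately yields $l_n\le(\beta(\theta)+1)l_{n+1}$ and, via one further step, $l_n\ge\bigl(1+\tfrac{1}{\beta(\theta)+1}\bigr)l_{n+1}$. The paper itself offers no proof of this proposition, treating it as a standard continued-fraction fact; your derivation is exactly the expected elementary one, so there is nothing to compare.
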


The sequence $q_n$'s are precisely the first return times for $R_\theta$ (and thus for $f|_\Hq$ too) in an alternating fashion with respect to the cyclic order:
\[
R^{q_1}_\theta(x) < R^{q_3}_\theta(x) < R^{q_5}_\theta(x) < \ldots < x < \ldots < R^{q_6}_\theta(x) < R^{q_4}_\theta(x) < R^{q_2}_\theta(x).
\]

\begin{proposition}
\label{renormalization-tiling}
    For any $x \in \Hq$ and $n \in \N$,
    \[
    \Hq = \bigcup_{i=0}^{q_{n+1}-1} f^i \left([x, f^{q_n} (x)]\right) \cup \bigcup_{j=0}^{q_n-1} f^j\left([f^{q_{n+1}}(x), x]\right).
    \]
    All the pieces in the expression above have pairwise disjoint interiors, and all the level $n+1$ combinatorial pieces above are pairwise disjoint.
\end{proposition}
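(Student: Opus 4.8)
\emph{The plan.} Since $f|_\Hq$ is, via $\pi := \psi\circ\phi$, semiconjugate to the rigid rotation $R_\theta$ on $\T$ (that is, $\pi\circ f = R_\theta\circ\pi$), I would first transport the statement to the circle. By definition a piece is exactly a set $\pi^{-1}(I')$ with $I'\subset\T$ a closed arc or a point, and $[x,y]=\pi^{-1}$ of the shorter arc bounded by $\pi(x)$ and $\pi(y)$; hence, writing $\bar x:=\pi(x)$, the pieces in the displayed formula are the $\pi$-preimages of $R_\theta^i\bigl([\bar x,R_\theta^{q_n}\bar x]\bigr)$ for $0\le i\le q_{n+1}-1$ and of $R_\theta^j\bigl([R_\theta^{q_{n+1}}\bar x,\bar x]\bigr)$ for $0\le j\le q_n-1$ (shortest arcs). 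The map $\pi$ is continuous and surjective, and $\pi^{-1}$ of a point is a cross-cut of the annulus, hence nowhere dense in $\Hq$; so a covering of $\T$ by such arcs pulls back to a covering of $\Hq$, arcs with pairwise disjoint interiors pull back to pieces with pairwise disjoint interiors (the preimage of their common endpoints is a finite union of cross-cuts, so nowhere dense), and disjoint closed arcs pull back to disjoint closed pieces. As both sides of the identity are unchanged by rotating $\bar x$, one may take $\bar x=0$. It therefore suffices to prove the three assertions for $R_\theta$ and the orbit of $0$.

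\emph{Reformulation as towers.} Set $M:=[0,q_n\theta]$ and $L:=[q_{n+1}\theta,0]$ (shortest arcs). Since $q_n$ and $q_{n+1}$ are successive best-approximation denominators of $\theta$, the points $q_n\theta$ and $q_{n+1}\theta$ lie on opposite sides of $0$ with $\operatorname{dist}_\T(0,q_k\theta)=l_k$, so $|M|=l_n$, $|L|=l_{n+1}$, and $M\cup L$ is an arc around $0$ of combinatorial length $l_n+l_{n+1}$. The collection in question is then the union of the Rokhlin tower $\{R_\theta^i M\}_{i=0}^{q_{n+1}-1}$ of height $q_{n+1}$ over the base $M$ and the tower $\{R_\theta^j L\}_{j=0}^{q_n-1}$ of height $q_n$ over the base $L$, these being precisely the two towers of the first-return map of $R_\theta$ to $M\cup L$. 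Using the determinant identity $q_{k+1}p_k-q_kp_{k+1}=(-1)^{k+1}$ one obtains $q_{n+1}l_n+q_nl_{n+1}=1$, so the total combinatorial length of the collection is exactly $1$; consequently, once disjointness of interiors is established, the union is a closed set of full measure, hence all of $\T$, and the covering assertion follows.

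\emph{Disjointness.} It remains to show the $q_n+q_{n+1}$ arcs have pairwise disjoint interiors and that the short arcs $R_\theta^j L$ are pairwise disjoint. Two arcs $R_\theta^iM$ and $R_\theta^{i'}M$ (resp.\ $R_\theta^jL$ and $R_\theta^{j'}L$) overlap in more than an endpoint only if some $R_\theta^k$ with $0<k<q_{n+1}$ (resp.\ $0<k<q_n$) moves $M$ (resp.\ $L$) off itself by less than its own length, i.e.\ only if $\|k\theta\|<l_n$ (resp.\ $\|k\theta\|<l_{n+1}$); the best-approximation property of the $q_k$ forbids this, and for the $L$-tower it even yields strict disjointness since there $\|k\theta\|\ge l_{n-1}>l_{n+1}$. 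The cross terms $R_\theta^iM$ versus $R_\theta^jL$ are handled in the same way, using that $M$ and $L$ sit on opposite sides of $0$. Equivalently, the whole picture follows by induction on $n$: passing from level $n$ to level $n+1$ keeps each $R_\theta^jL$ as a long arc of the refined partition and subdivides each $R_\theta^iM$ into $a_{n+2}$ translates of the level-$(n+1)$ short base followed by one more short arc, via the identities $q_{n+2}=a_{n+2}q_{n+1}+q_n$ and $l_n=a_{n+2}l_{n+1}+l_{n+2}$.

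\emph{Main obstacle.} There is no essential difficulty here: the content is the classical combinatorics of the dynamical partition of an irrational rotation (Ostrowski expansions / the three-distance theorem), and the only thing requiring genuine care is bookkeeping the cyclic order of the orbit points $R_\theta^k0$ — equivalently, tracking on which side of $0$ each closest return $R_\theta^{q_k}0$ lies — when checking the cross terms or running the induction, together with the low-$n$ edge cases in which a ``short'' arc can have length comparable to $1/2$. The transfer through the non-injective map $\pi$ in the first step is routine once one observes that cross-cuts of an annulus are nowhere dense.
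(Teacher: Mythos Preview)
The paper states this proposition without proof, treating it as a classical fact from the combinatorics of irrational rotations (the dynamical/renormalization partition). Your argument is correct and is precisely the standard one: transport to $\T$ via the semiconjugacy $\pi$, identify the collection with the two Rokhlin towers of the first-return map of $R_\theta$ to the interval $M\cup L$, use the determinant identity $q_{n+1}l_n+q_nl_{n+1}=1$ to get full measure, and use the best-approximation inequality $\|k\theta\|\ge l_m$ for $0<k<q_{m+1}$ to rule out unwanted overlaps. Nothing is missing.
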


The decomposition of $\Hq$ above is called the $n^{\text{th}}$ \emph{renormalization tiling} induced by $x \in \Hq$. Keeping only the level $n$ pieces from the renormalization tiling gives us an \emph{almost tiling} whose gaps have length $l_{n+1}$. We will also often apply the weaker fact that for any $n \geq 3$, the orbit $\{f^i(x)\}_{i=0,\ldots, q_n}$ partitions $\Hq$ into pieces of length between $l_{n}$ and $l_{n-2}$.

\subsection{Encoding degeneration}
\label{ss:encoding-degeneration}
For every $\alpha\geq 3$ and piece $I \subset \Hq$ of length $|I|< \frac{1}{\alpha}$, we will use the following notation:
\begin{itemize}
    \item[$\rhd$] $I^c =$ the closure of $\Hq\backslash I$;
    \item[$\rhd$] $\alpha I =$ the combinatorial rescaling of $I$ by the factor of $\alpha$, that is, the unique piece in $\Hq$ of length $\alpha |I|$ having the same combinatorial mid-segment as $I$;
    \item[$\rhd$] $\mathcal{F}_\alpha (I) =$ the set of proper curves in $\hat{\mathbb{C}} \backslash \left(I \cup (\alpha I)^c\right) $ connecting $I$ and $(\alpha I)^c$;
    \item[$\rhd$] $W_\alpha(I) = $ the \emph{$\alpha$-width} of $I$, that is, the extremal width of $\mathcal{F}_\alpha(I)$.
\end{itemize}

\begin{figure}
    \centering
    
    \begin{tikzpicture}
    \node[anchor=south west,inner sep=0] (image) at (0,0) {\includegraphics[width=0.8\linewidth]{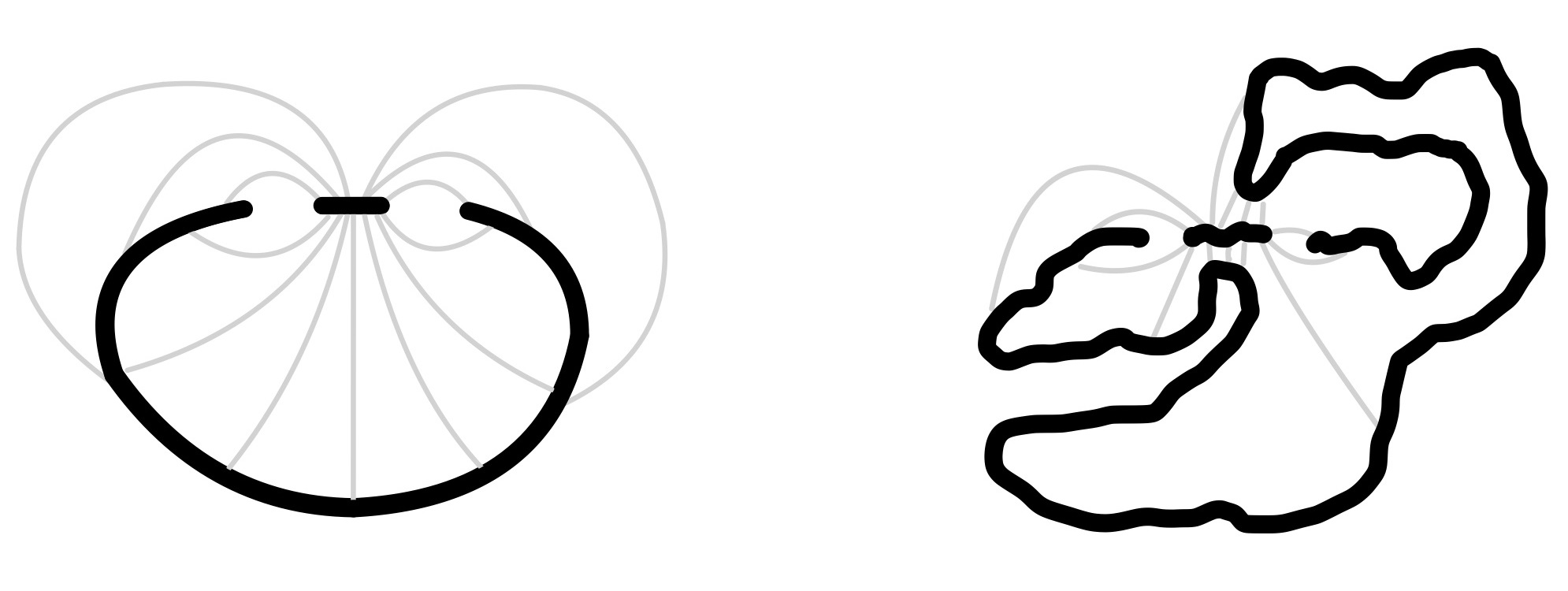}};
    \begin{scope}[
        x={(image.south east)},
        y={(image.north west)}
    ]
        \node [black, font=\bfseries] at (0.23,0.74) {$I$};
        \node [black, font=\bfseries] at (0.765,0.7) {$I$};
        \node [black, font=\bfseries] at (0.38,0.19) {$(\alpha I)^c$};
        \node [black, font=\bfseries] at (0.95,0.30) {$(\alpha I)^c$};
    \end{scope}
\end{tikzpicture}
    
    \caption{On the left, $I$ has small $\alpha$-width. On the right, $I$ has large $\alpha$-width.}
    \label{fig:comparison}
\end{figure}

When $R=1$, $\Hq$ is a Jordan curve, the combinatorial pseudometric is a metric on $\Hq$, and every piece in $\Hq$ is a genuine interval. Additionally, when the conjugacy $\phi$ is quasiconformal, we have the following. (Compare with \cite[Lemma 11.3]{DL22}.)

\begin{proposition}
    \label{main-prop}
    Let $\phi: \RS \to \RS$ be a quasiconformal map that maps a quasicircle $\Hq$ onto the unit circle $\T$. Equip $\Hq$ with the combinatorial metric induced by $\phi$.
    \begin{enumerate}[label=\textnormal{(\arabic*)}]
        \item For every $\alpha \geq 3$, there is a constant $\mathbf{K}$ depending on $\alpha$ and the dilatation of $\phi$ such that every interval $I\subset \Hq$ of combinatorial length $|I| < (2\alpha)^{-1}$ satisfies $W_\alpha(I) \leq \mathbf{K}$.
        \item Conversely, if there are some constants $\alpha \geq 3$, $\varepsilon \in (0,1)$, and $\mathbf{K}>0$ such that $W_\alpha(I) \leq \mathbf{K}$ for every interval $I \subset \Hq$ of combinatorial length at most $\varepsilon$, then the dilatation of $\Hq$ depends only on $\alpha$, $\varepsilon$ and $\mathbf{K}$.
    \end{enumerate}
\end{proposition}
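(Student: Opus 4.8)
The plan is to prove the two implications separately: (1) is a soft consequence of quasi-invariance of extremal width, while (2) carries the real content.

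For (1), the point is that the combinatorial metric is by construction the $\phi$-pullback of the Euclidean metric on $\T$, so $\phi$ carries each interval $I$ and its rescaling $\alpha I$ to genuine round arcs of $\T$ of the same lengths, hence carries the configuration $\big(I,(\alpha I)^c\big)$ to $\big(\phi(I),(\phi(\alpha I))^c\big)$, together with a homeomorphism of the corresponding annular complements; in particular $\mathcal F_\alpha(I)$ is mapped to $\mathcal F_\alpha(\phi(I))$. Since a $\mathbf{K}_\phi$-quasiconformal map distorts extremal width by at most a factor $\mathbf{K}_\phi$ (where $\mathbf{K}_\phi$ is the dilatation of $\phi$), we get $W_\alpha(I)\le \mathbf{K}_\phi\, W_\alpha(\phi(I))$. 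It then suffices to bound $W_\alpha(J)$ for an arbitrary round arc $J\subset\T$ with $|J|<(2\alpha)^{-1}$ by a constant depending only on $\alpha$. But $W_\alpha(J)$ is precisely the reciprocal of the modulus of the ring domain $\hat{\mathbb C}\setminus\big(J\cup(\alpha J)^c\big)$, whose two complementary arcs are separated along $\T$ by the two gap arcs of length $\tfrac{\alpha-1}{2}|J|\ge|J|$ (using $\alpha\ge 3$); an explicit admissible metric built from Euclidean collars of these gaps — or equivalently the observation that this modulus is a M\"obius invariant of the four endpoints of $J$ and $\alpha J$, whose cross-ratio ranges over a compact family of nondegenerate values once $\alpha\ge 3$ and $\alpha|J|<\tfrac12$ — gives $W_\alpha(J)\le\mathbf{K}_0(\alpha)$. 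Hence $\mathbf{K}=\mathbf{K}_\phi\,\mathbf{K}_0(\alpha)$ works.

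For (2), I would first unwind $W_\alpha(I)\le\mathbf{K}$ into the modulus bound $\operatorname{mod}\big(\hat{\mathbb C}\setminus(I\cup(\alpha I)^c)\big)\ge 1/\mathbf{K}$, valid for every interval $I\subset\Hq$ of combinatorial length $\le\varepsilon$: the interval $I$ and the combinatorially distant part $(\alpha I)^c$ of $\Hq$ are separated by an annulus of definite modulus, and in particular the two arcs $L,R$ of $\Hq$ comprising $\alpha I\setminus I$, which thread across this annulus, each carry a definite share of it. The goal is to upgrade this into the Ahlfors three-point (bounded-turning) condition for $\Hq$ with constant depending only on $\alpha,\varepsilon,\mathbf{K}$; by the Ahlfors characterization of quasicircles together with standard quasiconformal extension results (Tukia--V\"ais\"al\"a), this yields a straightening homeomorphism $\hat{\mathbb C}\to\hat{\mathbb C}$ taking $\Hq$ to $\T$ with controlled dilatation. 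Concretely, given $x\in\Hq$ and an interval $I\ni x$ of small combinatorial length $\ell$, one forms a chain $I=I_0\subset I_1\subset\cdots\subset I_N$ of intervals around $x$ with $|I_{j+1}|\ge\alpha|I_j|$ and $|I_N|\le\varepsilon$, applies the modulus bound to each link, and runs a serial/Gr\"otzsch estimate together with a Teichm\"uller-type modulus--distance comparison to conclude that the spherical diameter $\operatorname{diam}_\sigma(I)$ is small relative to $\operatorname{diam}_\sigma(I_N)$ by a factor controlled in terms of $N\asymp\log_\alpha(\varepsilon/\ell)$. Pairing this with the symmetric estimate applied to adjacent intervals of equal combinatorial length gives $\operatorname{diam}_\sigma(I)\asymp\operatorname{diam}_\sigma(J)$ for all such adjacent $I,J$, i.e. quasisymmetry of $\phi|_\Hq$ and the bounded-turning condition; intervals of length $>\varepsilon$, of which only a bounded number tile $\Hq$, are then handled by concatenating the small-scale estimates after the usual rescaling normalization, so that nothing in the final bound depends on $\mathbf{K}_\phi$.

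The step I expect to be the main obstacle is the telescoping in (2). Since $\mathbf{K}$ is allowed to be large, the single-scale modulus bound $\operatorname{mod}\ge 1/\mathbf{K}$ is too weak to feed a Teichm\"uller estimate on its own, so one genuinely has to add up the contributions of many scales; and for that one needs the ring domains $\hat{\mathbb C}\setminus(I_j\cup(\alpha I_j)^c)$ along the chain to interact in an essentially serial fashion even though, as subsets of $\hat{\mathbb C}$, they are neither literally nested nor disjoint (a connecting curve from $I_0$ to $I_N^c$ may dip through the complement of $\Hq$ rather than pass ``over'' each intermediate arc). Making this chain-of-annuli bookkeeping precise, with every constant depending only on $\alpha$, $\varepsilon$ and $\mathbf{K}$, is the crux; once the diameter comparisons are in hand, the rest is routine.
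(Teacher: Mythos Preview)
Your argument for (1) is correct and matches the paper's proof essentially verbatim.

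For (2), your approach differs from the paper's, and the obstacle you flag is a genuine gap that you have not closed. The ring domains $\hat{\mathbb C}\setminus\big(I_j\cup(\alpha I_j)^c\big)$ all contain both complementary disks $Y^0,Y^\infty$ of $\Hq$ (minus the relevant boundary arcs), so they overlap massively; no Gr\"otzsch-type additivity is available, and a single-scale bound $\operatorname{mod}\ge 1/\mathbf{K}$ with $\mathbf{K}$ large is far too weak to feed into a Teichm\"uller distance estimate on its own. You correctly identify this as ``the crux,'' but your proposal supplies no mechanism for the serial bookkeeping, and there is no obvious one within this framework: any curve from $I_0$ to $I_N^c$ can live entirely in $Y^0$ or $Y^\infty$, never crossing the intermediate arcs of $\Hq$, so the chain does not decompose the problem.

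The paper sidesteps this by a shorter and conceptually different route. First, a pigeonhole reduction replaces $\alpha$ by $3$: if some short $I$ had $W_3(I)>\alpha\mathbf{K}$, partition $I$ into $\lfloor\alpha\rfloor$ equal subpieces and observe that one of them has $W_\alpha>\mathbf{K}$, a contradiction. Second, work on each complementary disk $Y^\bullet$ separately: the one-sided bound $W_3^\bullet(I)\le\alpha\mathbf{K}$, fed into the Log-Rule (Proposition~\ref{log-rule}), yields $\hm_\bullet(I)\le M\cdot\min\{\hm_\bullet(L),\hm_\bullet(R)\}$ for the two halves $L,R$ of $3I\setminus I$, with $M=M(\alpha\mathbf{K})$. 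This says the harmonic measure $\hm_\bullet$ is quasisymmetric to the combinatorial metric; hence the two harmonic measures $\hm_0,\hm_\infty$ are quasisymmetric to each other, and conformal welding gives the quasicircle bound directly. The point is that harmonic measure compares adjacent intervals at a \emph{single} scale, so no telescoping is needed; the welding criterion then packages the two-sided comparison into the conclusion in one step.
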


\begin{proof} 
Pick any $\alpha \geq 3$ and any interval $I \subset \Hq$ of length $|I|<(2\alpha)^{-1}$. On the circle, $\phi(I)$ has width $W_\alpha(\phi(I)) \leq M$ for some constant $M=M(\alpha)>1$. Therefore, on $\Hq$, the interval $I$ has width $W_\alpha(I) \leq k M$, where $k$ denotes the dilatation of $\phi$, and so (1) holds.

To show the converse, we first claim that every interval $I$ of length at most $\varepsilon$ must satisfy $W_3 (I) \leq \alpha \mathbf{K}$. If otherwise, then we could partition $I$ into $\lfloor \alpha \rfloor$ pieces $I_1, \ldots, I_{\lfloor \alpha \rfloor}$ of equal combinatorial length. Since each $(\alpha I_i)^c$ contains $(3I)^c$,
$$
\sum_{i=1}^{\lfloor \alpha \rfloor} W_\alpha(I_i) \geq \sum_{i=1}^{\lfloor \alpha \rfloor} W(I_i, (3I)^c) \geq W_3(I) > \alpha \mathbf{K}.
$$
Then, at least one of the pieces $I_j$ satisfies $W_\alpha (I_j) > \mathbf{K}$, which is a contradiction.

Assume without loss of generality that $\Hq$ separates $0$ and $\infty$. For any $\bullet \in \{0,\infty\}$, we denote by $Y^\bullet$ the component of $\RS \backslash \Hq$ containing $\bullet$. For any interval $J \subset \Hq$, let $\hm_{\bullet}(J)$ denote the harmonic measure of $J$ on $Y^{\bullet}$ about $\bullet$ and let $W_3^{\bullet} (J)$ denote the width of the family of proper curves in $Y^{\bullet}$ connecting $J$ and $(3J)^c$. Since $W_3 (I) \leq \alpha \mathbf{K}$, then $W_3^\bullet (I)\leq \alpha \mathbf{K}$ for $\bullet \in \{0,\infty\}$. 

Denote by $L$ and $R$ the two connected components of $\overline{3 I \backslash I}$. For $\bullet \in \{0,\infty\}$, by Proposition \ref{log-rule},
\[
\hm_{\bullet}(I) < M \cdot  \min\{ \hm_{\bullet}(L), \hm_{\bullet}(R) \}
\]
for some $M = M(\alpha \mathbf{K})\geq 1$. Thus, any two neighboring combinatorial intervals $I$ and $J$ of equal combinatorial length satisfy 
\[
M^{-1} \hm_{\bullet}(J) < \hm_{\bullet}(I) < M \hm_{\bullet}(J).
\]
As such, the inner and outer harmonic measures are quasisymmetrically equivalent to the combinatorial measure, and consequently to each other as well. By conformal welding, this implies (2).
\end{proof}

In practice, to bound the dilatation of a quasicircle $\Hq$, it suffices to establish a bound on the $\alpha$-width of sufficiently deep intervals in $\Hq$ and for some $\alpha\geq 3$. Degeneration is encoded by the presence of an interval with very large $\alpha$-width.

\subsection{Setup and notation}
\label{ss:setup}
This subsection provides a list of notations and conventions that will be used throughout the remainder of this paper. 

\subsubsection{Notation}
\label{sss:notation}

Unless otherwise stated, we always fix a bounded type irrational $\theta$ and integers $d_0, d_\infty \geq 2$. Any dependence on $d_0$, $d_\infty$ and $\beta(\theta)$ will always be implicit. In our analysis, we will often use the following notation:
\begin{itemize}
    \item[$\rhd$] $p_n/q_n =$ $n$\textsuperscript{th} best rational approximation of $\theta$ for $n\geq 1$;
    \item[$\rhd$] $l_n = |p_n - q_n\theta|$ for any $n\geq 1$;
    \item[$\rhd$] $x \oplus y := (x^{-1}+y^{-1})^{-1}$;
    \item[$\rhd$] $g = O(h)$ when $g,h >0$ and $g \leq c h$ for some implicit constant $c>0$;
    \item[$\rhd$] $g \succ h$ when $g,h >0$ and $g \geq c h$ for some implicit constant $c > 0$;
    \item[$\rhd$] $g \asymp h$ when $g \succ h$ and $h \succ g$;
\end{itemize}

Given a family of curves $\mathcal{F}$, denote by $W(\mathcal{F})$ the extremal width of $\mathcal{F}$. Given a compact subset $K$ of a Riemann surface $U$ with boundary, we denote:
\begin{itemize}
    \item[$\rhd$] $\mathcal{F}(U,K) =$ the family of proper curves in $U\backslash K$ connecting $\partial U$ and $K$;
    \item[$\rhd$] $\mathcal{F}^h_{\textnormal{can}}(U,K) =$ the set of leaves of the canonical lamination of $U \backslash K$ that are \emph{horizontal} (both endpoints are on $K$);
    \item[$\rhd$] $\mathcal{F}^v_{\textnormal{can}}(U,K) =$ the set of leaves of the canonical lamination of $U \backslash K$ that are \emph{vertical} (connects $\partial U$ and $K$);
    \item[$\rhd$] $W(U,K) =$ the extremal width of $\mathcal{F}(U,K)$.
\end{itemize}
Refer to Appendix \ref{sec:near-degenerate-regime} for details. 

We will be investigating degeneration for both Herman rings and Herman quasicircles at the same time. Throughout Sections \S\ref{sec:spreading-degeneration}--\ref{sec:a-priori-bounds}, we will always assume that we are in one of the following situations:

\begin{enumerate}[label=\wackyenum*]
    \item\label{case:herman-curve} $f \in \HQspace_{d_0, d_\infty, \theta}$ and $\Hq$ is the Herman quasicircle of $f$;
    \item\label{case:herman-ring} $f \in \HRspace_{d_0, d_\infty, \theta}$ and $\Hq$ is the closure of the Herman ring $\He$ of $f$;
    \item\label{case:boundary-ring} $f \in \HRspace_{d_0, d_\infty, \theta}$ and $\Hq$ is the outer boundary component of the Herman ring $\He$ of $f$. (The treatment for the inner boundary is analogous.)
\end{enumerate}
In Section \S\ref{sec:bubble-wave-argument}, only \ref{case:herman-curve} and \ref{case:herman-ring} are considered. For each of three cases above, we let 
\begin{itemize}
    \item[$\rhd$] $d := \text{deg}(f) = d_0 + d_\infty - 1$;
    \item[$\rhd$] $Y^{\bullet} :=$ the connected component of $\RS \backslash \Hq$ containing $\bullet$, for $\bullet \in \{0,\infty\}$.
\end{itemize}
For any piece $I \subset \Hq$,
\begin{itemize}
    \item[$\rhd$] $|I| :=$ the combinatorial length of $I \subset \Hq$;
    \item[$\rhd$] $I^c :=$ the closure of $\Hq\backslash I$.
\end{itemize}
Moreover, for any $\alpha \in (1, |I|^{-1})$,
\begin{itemize}
    \item[$\rhd$] $\alpha I :=$ the piece in $\Hq$ of length $\alpha |I|$ that shares the same mid-segment as $I$;
    \item[$\rhd$] $\mathcal{F}_\alpha(I) := \mathcal{F}( \RS \backslash (\alpha I)^c, I)$;
    \item[$\rhd$] $W_\alpha(I) := W( \RS \backslash (\alpha I)^c, I)$, a measure of (near-)degeneracy of $\Hq$ at $I$.
\end{itemize}
When $W_\alpha(I) \geq K$ for some $K>1$, we say that $I$ is \emph{$[K,\alpha]$-wide}.

Fix the constant\footnote{The reader may wish to assign a different value for $\tau$ as long as it is a sufficiently large integer.} $\tau := 10$. Local degeneration will be represented by two quantities, namely the \emph{$\tau$-degeneration} $W_\tau(I) \gg 1$ and the \emph{$\lambda$-degeneration} $W_\lambda(I) \gg 1$ at a piece $I \subset \Hq$ for some large parameter $\lambda \gg \tau$. We will take $\lambda$ to be sufficiently large for our analysis to work (it will eventually be fixed in Theorem \ref{amplification}), and we will emphasize whenever other constants depend on $\lambda$ throughout Sections \S\ref{sec:spreading-degeneration}--\ref{sec:loss-of-horizontal-width}. One particular parameter that will appear frequently is $\thres_\lambda$ defined below.

\begin{definition}
\label{threshold}
    For any $\lambda >1$, denote by $\thres_\lambda$ the smallest integer such that for any combinatorial piece $I \subset \Hq$ of level $\geq \thres_\lambda$, the pieces $2\lambda I$, $2\lambda f(I)$, and $2\lambda f^2(I)$ are pairwise disjoint.
\end{definition}

\subsubsection{The modulus and the Siegel scale}
\label{sss:modulus-Siegel-scale}

In Cases \ref{case:herman-ring} and \ref{case:boundary-ring}, we denote by $\mu$ the modulus of the Herman ring $\He$. In Case \ref{case:boundary-ring}, we impose the additional assumption that any interval $I \subset \Hq$ we consider is always at the \emph{Siegel scale}, i.e. $|I|\leq\mu$.

\begin{lemma}
    \label{negligibility}
    In Case \ref{case:boundary-ring}, for any interval $I \subset \Hq$, the width of curves in the Herman ring $\He$ connecting $I$ and the inner boundary component $H^0$ is at most $5$.
\end{lemma}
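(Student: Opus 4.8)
The plan is to transfer the whole question to a flat cylinder and then finish with a one-line extremal-length estimate using a single test density. Since $\He$ has modulus $\mu$ and $\partial\He$ consists of two quasicircles (Corollary~\ref{old-bounds}\,(1)), the linearization $\Phi\colon\overline{\He}\to A:=\{1\le|z|\le e^{2\pi\mu}\}$ is conformal on the open ring $\He$ and extends to a homeomorphism of the closures. Composing with $z\mapsto\tfrac{1}{2\pi i}\log z$ turns $A$ into the flat cylinder $C=[0,\mu]\times(\R/\Z)$, with the inner boundary $H^0$ corresponding to $\{u=0\}$ and $\Hq=H^\infty$ to $\{u=\mu\}$. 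By construction the combinatorial length of a piece of $\Hq$ equals the normalized angular measure of its $\Phi$-image — here one uses that the conjugacy of $f|_{H^\infty}$ to $R_\theta$ is unique up to a rotation, hence agrees with $\Phi/e^{2\pi\mu}$ on $H^\infty$ — so the Siegel-scale interval $I$, with $|I|\le\mu$, corresponds to an arc $I'=\{\mu\}\times J$ of length $|J|=|I|\le\mu$. Since extremal width is a conformal invariant, it suffices to bound the width $W(\mathcal F)$ of the family $\mathcal F$ of proper curves in $C$ joining $I'$ to $\{u=0\}$.

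For this I would use one test density. Let $J^{+}\supset J$ be the concentric arc in $\R/\Z$ of length $3\mu$ (it fits since $\mu\ll1$), put $S=[0,\mu]\times J^{+}$, and take $\rho=\mu^{-1}\mathbf 1_S$, so that $\iint\rho^{2}=\mu^{-2}\operatorname{area}(S)=3$. Any $\gamma\in\mathcal F$ starts on $I'\subset\{\mu\}\times J$ and reaches $\{u=0\}$: if $\gamma$ stays inside $S$ it has $u$-extent $\mu$; otherwise it must leave $S$ through a vertical side $[0,\mu]\times\partial J^{+}$, so within $S$ it has $v$-extent at least $\operatorname{dist}(J,\partial J^{+})\ge\mu$. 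In either case $\int_{\gamma}\rho\ge\mu^{-1}\cdot\mu=1$, and hence, by definition of extremal width,
\[
W(\mathcal F)\;\le\;\frac{\iint\rho^{2}}{\bigl(\inf_{\gamma\in\mathcal F}\int_{\gamma}\rho\bigr)^{2}}\;\le\;3\;\le\;5.
\]

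I do not expect any serious obstacle here; the only point that requires a careful (but entirely routine) verification is the identification in the first paragraph of the combinatorial metric on $H^\infty$ with the angular coordinate of the conformal model of $\He$, i.e. that the circle conjugacy used to define $|I|$ really is the boundary restriction of $\Phi$. Conceptually, the lemma says that although the Herman ring is a very thin annulus — so that crossing it is "cheap" for the whole outer boundary — the Siegel-scale hypothesis forces the opening at $I$ to be a bottleneck of the same size $\asymp\mu$ as the thickness, which caps the width by a universal constant; this is exactly why such curves contribute negligibly to the width estimates on all of $\RS$ in Case~\ref{case:boundary-ring}.
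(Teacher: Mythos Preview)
Your argument is correct and actually yields the sharper bound $W(\mathcal F)\le 3$. The paper works in exactly the same flat model and with exactly the same rectangle $S=\widetilde{3I}$, but instead of a single test density it splits the family in two: curves that stay in $S$ are bounded by $W(\widetilde{3I})=3$, and curves that exit $S$ must contain a horizontal crossing of one of the two adjacent unit squares $\tilde L,\tilde R$, contributing at most $1+1=2$ more, for a total of $5$. Your direct use of the length--area inequality avoids this decomposition (and the appeal to an overflow/non-crossing principle), which is why you beat the constant. Conceptually the two proofs are the same; yours is just the more economical packaging.
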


\begin{proof}
    For any interval $J$ in the outer boundary component $\Hq$, let $\tilde{J}$ denote the corresponding piece in $\overline{\He}$ such that $J=\tilde{J} \cap \Hq$. It comes with a canonical structure of a conformal rectangle with horizontal sides $\tilde{J} \cap \partial \He$.
    
    At the Siegel scale, it is sufficient to prove the lemma for any interval $I$ of length $|I| = \mu$. Let $L$ and $R$ denote the two intervals in $\Hq$ adjacent to $I$ that have the same length $\mu$. Then, $W(\widetilde{L}) = W(\widetilde{R}) = W(\tilde{I}) = 1$. The family $\mathcal{F}^0$ of curves in $\He$ connecting $I$ and $H^0$ is contained in the union $\mathcal{F}_1 \cup \mathcal{F}_2$, where $\mathcal{F}_1$ consists of vertical curves of $\widetilde{3 I}$ and $\mathcal{F}_2 := \mathcal{F}^0 \backslash \mathcal{F}_1$. Observe that $W(\mathcal{F}_1) = W(\widetilde{3I}) = 3$. Since every curve in $\mathcal{F}_2$ must cross either of the two rectangles $\widetilde{L}$ and $\widetilde{R}$, then by Proposition \ref{non-crossing-principle}, $W(\mathcal{F}_2) \leq 2$. Therefore, $W\left(\mathcal{F}^0\right) \leq 5$.
\end{proof}

Intervals at the Siegel scale are conformally far from the inner boundary component $H^0$ of the Herman ring $\He$. As such, this situation is comparable to that of an interval on the boundary of a Siegel disk, in which the width between $I$ and the inner component, which is the singleton consisting of the center, is $0$.

We always assume that the modulus $\mu$ of $\He$ is sufficiently small. (Otherwise, a priori bounds can be obtained from Corollary \ref{old-bounds}.) More precisely, we assume that 
\[
\mu \leq l_{\thres_\lambda + \step_\lambda}
\]
where $\thres_\lambda$ is from Definition \ref{threshold} and $\step_\lambda$ is some constant depending on $\lambda$ which will appear later in Theorem \ref{demotion} when the notion of shallow and deep scales are introduced. In particular, intervals at the Siegel scale are always on the deep scale.

The arguments we present in Sections \S\ref{sec:bubble-wave-argument}--\ref{sec:loss-of-horizontal-width} will mainly address Cases \ref{case:herman-curve} and \ref{case:herman-ring} using only the combinatorial and dynamical properties of $\Hq$. The modulus $\mu$ will not play any major role until Sections \S\ref{sec:a-priori-bounds}--\ref{sec:construction-of-herman-curves}. Most of the arguments in Sections \S\ref{sec:spreading-degeneration}--\ref{sec:loss-of-horizontal-width} apply to Case \ref{case:boundary-ring} with a few adjustments presented as separate remarks.


\section{Waves}
\label{sec:bubble-wave-argument}

In Sections \S\ref{sec:trading} and \S\ref{sec:amplifying-tau-degeneration}, we will encounter degeneration witnessed by a combinatorial piece $I$ that is either \emph{shallow}, i.e. has level bounded above by some constant, or \emph{deep}, i.e. not shallow. In the shallow case, we will need to rule out the presence of wide waves. Waves are defined as follows.

\begin{definition}
    For $\bullet \in \{0,\infty\}$ and a piece $A \subset \Hq$, we say that a curve $\gamma$ \emph{protects $A$ from $\bullet$} if it is a proper curve in $Y^\bullet$ such that every curve in $Y^\bullet$ joining $\bullet$ and $A$ must intersect $\gamma$.
    We say that a lamination $\Omega$ is a \emph{wave} if it is a proper lamination in $Y^\bullet$ for some $\bullet \in \{0,\infty\}$ such that there exists a piece $A$ that is protected from $\bullet$ by every leaf. 
    If $\bullet = 0$, it is called an \emph{inner wave}; if $\bullet = \infty$, it is called an \emph{outer wave}. The \emph{(combinatorial) length} $|\Omega|$ of a wave $\Omega$ is the maximum combinatorial length $|A|$ of pieces $A$ protected by $\Omega$.
\end{definition} 

In this section, our aim is to convert a wide wave into $\tau$-degeneration which increases with the length and width of the wave and is witnessed by a combinatorial piece of a controlled level.

\begin{proposition}[Wide waves $\longrightarrow$ $\tau$-degeneration]
\label{prop:wave-degeneration}
    There exists an absolute constant $m \in \mathbb{N}$ such that the following holds. For every $n \in \N$ and $\alpha \geq 1$, there exists some $\mathbf{K} = \mathbf{K}(n)>1$ such that if 
    \[
        \text{there exists a wave } \Omega \text{ of length } |\Omega| \geq \alpha l_n\text{ and width }  W(\Omega) \geq \mathbf{K} ,
    \]
    then 
    \[
    \text{there exists a level }n+m\text{ combinatorial piece }J\text{ with } W_\tau (J) \succ \alpha W(\Omega).
    \]
\end{proposition}

The constant $m$ above actually depends on the separation constant $\tau$, which we fix to be equal to $10$. Later in \S\ref{ss:trading-shallow} and \S\ref{ss:amplifying-shallow}, we will apply this proposition choosing $\alpha$ to be sufficiently large. 

Around the same time this paper is written, the notion of waves also appears in \cite{DL23} in which the authors prove \emph{a priori bounds} for infinitely renormalizable quadratic maps with bounded satellite combinatorics. In particular, \cite[Wave Lemma]{DL23} is an analog of Proposition \ref{prop:wave-degeneration}.

Here is the rough idea of the proof. As Figure \ref{fig:bubbles-and-waves} illustrates, most of the wave has to travel through roughly $5 \alpha$ disjoint bubble chains of generation up to $q_{n+a}$, where $a$ is some positive uniform constant. These bubble chains split the wave into multiple parts, which, after being pushed forward, induce an amplified wave with shorter length (Lemma \ref{bubble-wave-argument}). If the conclusion of Proposition \ref{prop:wave-degeneration} is not satisfied, then we can remove an inner buffer of the wave to increase the length back to the original size and the amplification argument can be repeated.

\subsection{Amplifying waves} 
\label{ss:amplifying-waves}
We first argue that a combinatorially long wide wave induces an even wider wave of smaller but controlled length. Refer to Figure \ref{fig:bubbles-and-waves}.

\begin{lemma}
\label{bubble-wave-argument}
    There exists an absolute constant $m' \in \mathbb{N}$ such that the following holds. For every $n \in \N$ and $\alpha \geq 1$, there exists some $\mathbf{K} = \mathbf{K}(n)>1$ such that if 
    \[
        \text{there exists a wave } \Omega \text{ of length } |\Omega| \geq \alpha l_n\text{ and width }  W(\Omega) \geq \mathbf{K} ,
    \]
    then 
    \[
    \text{there is another wave }\Omega'\text{ of length }|\Omega'| \geq l_{n+m'}\text{ and width  } W(\Omega') \geq 2\alpha \: W(\Omega).
    \]
\end{lemma}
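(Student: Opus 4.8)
The plan is to iterate a single amplification step: given a wave $\Omega$ of length $|\Omega| \geq \alpha l_n$ and large width, produce a wave of comparable (but controlled) length and roughly twice the width, then repeat $\lceil \log_2 \alpha \rceil$ times to absorb the factor $\alpha$ at the cost of a bounded shift in level. So the crux is the following one-step claim: if $\Omega$ is a wave in $Y^\bullet$ protecting a piece $A$ with $l_n \leq |A| < 2 l_n$ and $W(\Omega) \geq \mathbf{K}(n)$, then there is a wave $\Omega'$ protecting a piece $A'$ with $l_{n+m_0} \leq |A'|$ and $W(\Omega') \geq 2\,W(\Omega)$, for an absolute constant $m_0$. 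Granting this, starting from a wave of length $\geq \alpha l_n$ I first trim it (using the non-crossing principle, Proposition \ref{non-crossing-principle}, to split the protected piece into subpieces of comparable length each protected by a sublamination carrying a proportional share of the width) down to a wave of length in $[l_n, 2l_n)$ of width $\succ W(\Omega)$; the loss here is absolute. Then I apply the one-step claim $k := \lceil \log_2 \alpha\rceil$ times. Each application raises width by a factor $2$ and lowers the level index by $m_0$, so after $k$ steps the width is $\succ 2^k W(\Omega) \succeq \alpha W(\Omega)$ — wait, I need it at least $2\alpha W(\Omega)$, so I should run the iteration $k := \lceil \log_2(2\alpha)\rceil$ times and keep $\mathbf{K}(n)$ large enough (depending on $n$, hence on $k_{\max}$ once $n$ is fixed) that all intermediate waves remain above the threshold needed for the one-step claim. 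The level drops to $n + m_0 k$, but since we get to choose $m'$ depending only on an absolute quantity… actually $k$ depends on $\alpha$, which is a problem — so instead the iteration must go the other way: each step should *increase* the level while keeping width growing. I will set up the one-step claim so that $\Omega'$ has length $\geq l_{n+m_0}$ (a deeper, shorter piece), and I stop as soon as the length first drops below $2 l_N$ for the target level; the key point from Proposition \ref{bounded-type} is that consecutive $l_n$ are comparable with constants depending only on $\beta(\theta)$, so "halving the length" costs a bounded number $m_0$ of levels, and absorbing a factor $\alpha$ costs $\asymp \log \alpha$ steps — hence I must push the $\alpha$ into the *width bound* $\mathbf{K}(n)$ rather than into $m'$, which is exactly what the statement allows since $\mathbf{K}$ is permitted to depend on $n$ (and I will additionally let it absorb the finitely many constants generated along the way, noting the final target level $n+m'$ is what is required, with $m'$ absolute).

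The one-step claim is proved by the bubble interaction announced in the text. Fix a wave $\Omega$ in $Y^\bullet$ protecting a piece $A$ of length $\asymp l_n$. Consider the bubbles attached to $\Hq$ of generation up to some absolute $g$: by Proposition \ref{bubble-structure}(2) (the covering structure), $f^{-1}(\Hq) \cap \overline{Y^\bullet}$ consists of $\Hq$ together with $d_\bullet - 1$ Jordan curves each meeting $\Hq$ at a single critical point, and iterating, the bubbles of generation $\leq g$ rooted on $\Hq$ subdivide $\overline{Y^\bullet}$ into regions. The key estimate — (\ref{s3.claim2}) in the paper's phrasing — is that the bubbles up to generation $g(n)$ attached along $A$ have harmonic measure about $\bullet$ that is controlled (bounded below) in terms of $n$; equivalently, curves in $Y^\bullet$ from $A$ to $\bullet$ that avoid all these bubbles carry only a bounded amount of width, say $\leq \mathbf{K}_0(n)$. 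Therefore, once $W(\Omega) \geq \mathbf{K}(n) \gg \mathbf{K}_0(n)$, at least half the width of $\Omega$ (as a sublamination) consists of leaves that enter one of these bubbles. By pigeonhole over the boundedly many bubbles of generation $\leq g(n)$ meeting $A$, one bubble $B$ catches width $\succ W(\Omega)$ of the wave. Now pull back: $f^{k}$ maps $B$ homeomorphically (in the appropriate sense) onto $\Hq$ for the generation $k \leq g(n)$ of $B$, and the portion of $\Omega$ trapped in $B$ pushes forward under $f^k$ to a wave in $Y^{\bullet'}$ (for $\bullet'$ determined by which side $B$ sits on) protecting a piece $A'$ that is the $f^k$-image of $A \cap$(root region of $B$). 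The length of $A'$ is bounded below: $A \cap B$ has combinatorial length $\asymp l_n$ (up to a factor depending on $n$ via $g(n)$), and $f^k$ expands combinatorial length in a controlled way at the Herman scale, so $|A'| \geq l_{n+m_0}$ for an absolute $m_0$ once we also account — via Proposition \ref{bounded-type} — for the level shift caused by $k \leq g(n)$; the width is preserved (or improved) under the conformal push-forward $f^k$. To get the factor $2$ rather than merely $\succ 1$, I split $A \cap B$ into two halves and observe each half is still protected and still carries width $\succ W(\Omega)$, then… more simply: restricting to the trapped sublamination and pushing forward already multiplies the *length-normalized* width, and the clean factor $2$ is achieved by choosing $\mathbf{K}(n)$ so large that "half the width survives" still exceeds $2 W(\Omega) \oplus (\text{loss})$ — I will phrase the bookkeeping in terms of $W(\Omega) \geq \mathbf{K}(n)$ so that all subtractive losses are negligible and a genuine doubling comes out.

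The main obstacle I anticipate is establishing the harmonic-measure control of shallow bubbles (the analogue of (\ref{s3.claim2})): one must show that for curves in $Y^\bullet$ landing on a piece $A$ of length $\asymp l_n$, a definite proportion of the modulus/width must pass through a bubble of generation $\leq g(n)$, i.e. that "going around" all such bubbles is conformally expensive. This should follow from the fact that $f|_{\Hq}$ is (quasisymmetrically, by Petersen's theorem / Proposition [\cite{Pe04}]) conjugate to a rotation, so the combinatorial positions of the generation-$\leq g$ roots are equidistributed at scale $\asymp l_{n+g}$ along $\Hq$, together with a Grötzsch-type / extremal-length lower bound using that each bubble, being a preimage of $\Hq$, "fills" a definite conformal annulus in $Y^\bullet$ near its root. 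This is where the argument is genuinely delicate — the estimate must be uniform in the geometry of $Y^\bullet$, which a priori we do not control, so it has to be purely combinatorial/topological, relying only on Proposition \ref{bubble-structure}, Proposition \ref{renormalization-tiling}, and width-comparison principles (Propositions \ref{log-rule}, \ref{non-crossing-principle}) from the appendix. Everything else — the trimming, the pigeonhole, the push-forward, and the iteration with $\mathbf{K}(n)$ absorbing the accumulated constants — is routine extremal-width bookkeeping.
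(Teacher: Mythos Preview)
Your proposal has a genuine gap, and you actually put your finger on it yourself: the number of iterations $k \asymp \log_2 \alpha$ depends on $\alpha$, so the final level shift $m_0 k$ depends on $\alpha$, but the lemma demands that $m'$ be an \emph{absolute} constant (independent of $\alpha$; only $\mathbf{K}$ may depend on $n$). Your attempted fix, ``push the $\alpha$ into $\mathbf{K}(n)$,'' does not help: $\mathbf{K}$ is a width threshold, not a level shift, and no choice of $\mathbf{K}$ can make the accumulated level drop $m_0 \lceil \log_2(2\alpha)\rceil$ independent of $\alpha$. Your trimming step is also suspect: a wave protecting a piece $A$ of length $\alpha l_n$ automatically protects every subpiece of $A$, so trimming to length $l_n$ costs you nothing in width---but also gains you nothing; the factor $\alpha$ has simply been thrown away, and you are left needing to recover a factor $2\alpha$ by iteration, which brings you back to the $\alpha$-dependent level shift.

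The missing idea is that the factor $\alpha$ should be obtained in a \emph{single} step via the series law, not by iterated doubling. Because $|A| \geq \alpha l_n$ and bubble roots (preimages of critical points up to generation $\asymp q_{n+O(1)}$) are spaced along $\Hq$ at scale $\asymp l_n$, the piece $A$ carries $\asymp \alpha$ such roots, and the associated bubbles cut $A$ into $\asymp \alpha$ subpieces of length $\geq l_{n+m'}$ for an absolute $m'$. Claim~2 (your harmonic-measure obstacle) shows that, after discarding width $O_n(1)$, every surviving leaf of $\Omega$ hits all of these bubbles in order; hence $\Omega$ admits $\asymp \alpha$ pairwise disjoint restrictions $\mathcal{G}_i$, each connecting two consecutive bubbles. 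The series law then gives $\max_i W(\mathcal{G}_i) \geq (k-2)\, W(\hat\Omega) \succ \alpha\, W(\Omega)$. Pushing the widest $\mathcal{G}_i$ forward by $f^g$ (where $g$ is the generation of the bubble it lands on, which is $\leq q_{n+O(1)}$) yields the new wave $\Omega'$ with width $\geq 2\alpha\, W(\Omega)$ and length $\geq l_{n+m'}$---with $m'$ absolute because the generation bound and the spacing constants depend only on $d_0,d_\infty,\beta(\theta)$, not on $\alpha$.
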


\begin{proof} 
We shall first introduce two absolute constants $m'',m' \in \N$ satisfying
\begin{align}
    \label{ineq:m''} 10 \:l_{n+m''} &< l_n, \text{ and} \\
    \label{ineq:m'} 2d\: l_{n+m'} &\leq l_{n+m''+2}.
\end{align}
Set 
\[
t:= q_{n+m''+2}.
\]

Suppose $\Omega$ is an outer wave of length $\geq \alpha l_n$ and width $\geq \mathbf{K}$. Denote by $A$ the longest piece protected by $\Omega$. For every outer critical point $c \in \Hq$, denote by $\mathcal{O}_c:= \{ f|_\Hq^{-i}(c)\}_{i=0,\ldots, t-1} \cap A$ the set of preimages of $c$ up to time $t-1$ that lie on $A$. The union $\mathcal{O} := \cup_c \mathcal{O}_c$ partitions $A$ into $N$ pieces $P_1, \ldots, P_N$ of positive length.

\begin{claim1} 
    \namedlabel{s3.claim1}{Claim 1}
    There exist $k\geq 5\alpha$ distinct pieces $P_{n_1}, \ldots, P_{n_k}$ length $\geq l_{n+m'}$. 
\end{claim1}

\begin{proof}
    Since $|A| \geq \alpha l_n$, it is sufficient to show that the claim is true for $k \geq 5|A|/ l_n$. Suppose otherwise. Then, the number of pieces $P_i$ of length $< l_{n+m'}$ is more than $N-5|A|/l_n$ and the rest have length between $l_{n+m'}$ and $l_{n+m''}$. In particular,
    \[
    |A| < \left(N-\frac{5|A|}{l_n}\right) l_{n+m'} + \frac{5|A|}{l_n} l_{n+m''}.
    \]
    By (\ref{ineq:m''}), this simplifies to
    \[
    1< \left(\frac{2N}{|A|}-\frac{10}{l_n}\right) l_{n+m'}.
    \]
    For every critical point $c$, adjacent points in $\mathcal{O}_c$ have distance at least $l_{n+m''+2}$, so $\mathcal{O}_c$ has cardinality at most $|A|/l_{n+m''+2}$. Since $f$ has less than $d$ outer critical points, we deduce that $N < d|A|/l_{n+m''+2}$. As such,
    \[
    1 < \frac{2d \: l_{n+m'}}{l_{n+m''+2}} - \frac{10 \:l_{n+m'}}{l_n}.
    \]
    However, this implies that $2d \:l_{n+m'} > l_{n+m''+2}$, which contradicts (\ref{ineq:m'}).
\end{proof}

Denote by $Y^\infty_t$ the connected component of $f^{-t}(Y^\infty)$ that contains $\infty$. The map $f^t: Y^\infty_t \to Y^\infty$ is a degree $d_\infty^t$ covering map branched only at $\infty$. For every point $x$ in $\mathcal{O}$, denote by $B_x$ the connected component of the closure of $\partial Y^\infty_t \backslash \Hq$ that contains $x$. Each $B_x$ is the outer boundary of a union of finite chains of bubbles of generation up to $t$. We will remove part of the wave $\Omega$ that skips the $B_x$'s.

\begin{claim2}
\namedlabel{s3.claim2}{Claim 2}
    Consider a point $x$ in $\mathcal{O}$ and a proper lamination $\mathcal{L}$ in $Y^\infty_t$ protecting $B_x$, i.e. every curve in $Y^\infty_t$ joining $\infty$ and $B_x$ has to intersect every leaf of $\mathcal{L}$. Then, the width of $\mathcal{L}$ is at most some positive constant $K_n$ depending on $n$.
\end{claim2} 

\begin{proof}
    The set $B_x$ contains the outer boundary of a bubble of generation $t=q_{n+m''+2}$, so the harmonic measure of $B_x$ in $Y^\infty_t$ about $\infty$ is at least $d_\infty^{-t}$. The claim immediately follows from Proposition \ref{log-rule}.
\end{proof}

For each $i \in \{1,\ldots,k\}$, we will denote by $x_i$ and $y_i$ the pair of points in $\mathcal{O}$ such that $P_{n_i} \cap \partial Y^\infty = [x_i, y_i]$. By \ref{s3.claim2}, we can take $\mathbf{K}$ to be sufficiently high depending on $n$ and assume that the sublamination $\hat{\Omega}$ consisting of leaves of $\Omega$ that consecutively intersect $B_x$ for $x \in \mathcal{O}$ has width 
\begin{equation}
\label{ineq:new-wave}
W\left(\hat{\Omega}\right) \geq \frac{4}{5} \, W(\Omega).
\end{equation}
In particular, there exist pairwise disjoint proper laminations $\mathcal{G}_2, \mathcal{G}_3,\ldots,\mathcal{G}_{k-1}$ in $Y_t^\infty$ such that for every $i \in \{2,3,\ldots,k-1\}$, $\mathcal{G}_i$ is a \emph{restriction} of $\hat{\Omega}$ (refer to Appendix \ref{ss:extremal-width}) and connects $B_{x_i}$ to $B_{y_i}$. See Figure \ref{fig:bubbles-and-waves}. 

\begin{figure}
    \centering
    
    \begin{tikzpicture}
    \node[anchor=south west,inner sep=0] (image) at (0,0) {\includegraphics[width=0.95\linewidth]{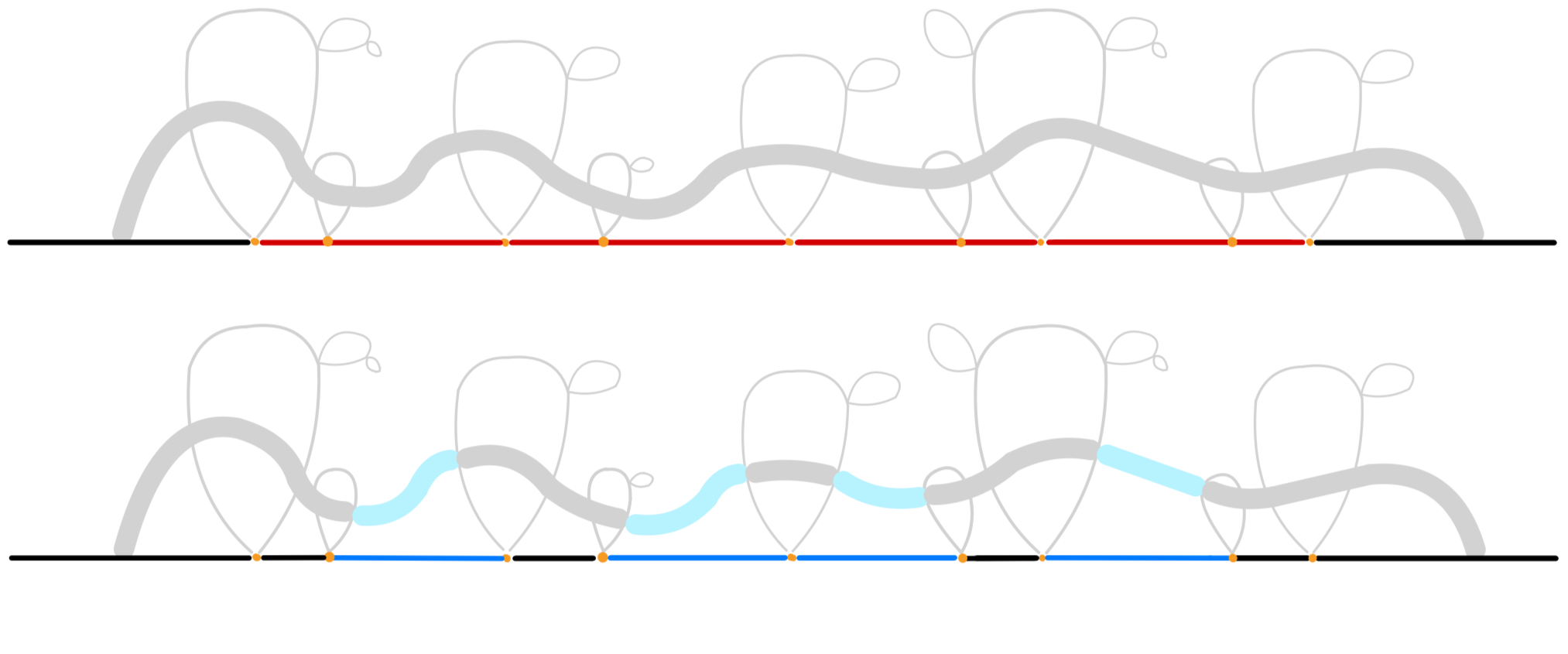}};
    \begin{scope}[
        x={(image.south east)},
        y={(image.north west)}
    ]
        \node [gray, font=\bfseries] at (0.56,0.80) {$\hat{\Omega}$};
        \node [red, font=\bfseries] at (0.53,0.575) {$A$};
        
        \node [blue!40!cyan, font=\bfseries] at (0.25,0.30) {$\mathcal{G}_{2}$};
        \node [blue!40!cyan, font=\bfseries] at (0.43,0.275) {$\mathcal{G}_{3}$};
        \node [blue!40!cyan, font=\bfseries] at (0.56,0.30) {$\mathcal{G}_{4}$};
        \node [blue!40!cyan, font=\bfseries] at (0.74,0.335) {$\mathcal{G}_{5}$};
        \node [blue, font=\bfseries] at (0.27,0.09) {$P_{n_2}$};
        \node [blue, font=\bfseries] at (0.45,0.09) {$P_{n_3}$};
        \node [blue, font=\bfseries] at (0.56,0.09) {$P_{n_4}$};
        \node [blue, font=\bfseries] at (0.73,0.09) {$P_{n_5}$};
    \end{scope}
\end{tikzpicture}
    
    \caption{The outer wave $\hat{\Omega}$ and the laminations $\mathcal{G}_{i}$'s connecting finite bubble chains attached to the endpoints of $P_{n_i}$'s.}
    \label{fig:bubbles-and-waves}
\end{figure}

\begin{claim3}
\namedlabel{s3.claim3}{Claim 3}
    There exists a positive constant $K_n$ depending on $n$ such that for every $i \in \{2,3,\ldots,k-2\}$, there exists a sublamination $\mathcal{G}_i^{\textnormal{new}}$ of $\mathcal{G}_i$ with width
    \[
        W(\mathcal{G}_i^{\textnormal{new}}) \geq W(\mathcal{G}_i) - K_n
    \]
    such that $f^t$ univalently pushes forward $\mathcal{G}_i^{\textnormal{new}}$ to an outer wave protecting $f^t(P_{n_i})$.
\end{claim3} 

\begin{proof}
    Consider a Riemann mapping $\psi : Y^\infty_t \to \C \backslash \overline{\D}$ fixing $\infty$ and equip $\partial \D$ with the harmonic measure about $\infty$. Consider the set $\mathcal{S}_i$ of leaves $\gamma$ in the lamination $\mathcal{G}_i$ such that $\psi(\gamma)$ protects an interval on $\partial{\D}$ of harmonic measure at least $d_\infty^{-t}$. By Proposition \ref{log-rule}, the width of $\mathcal{S}_i$ is at most some constant $K'_n>0$. We set $\mathcal{G}_i^{\textnormal{new}}$ to be $\mathcal{G}_i \backslash \mathcal{S}_i$ minus the outer $1$-buffer. By Proposition \ref{non-crossing-principle}, $\mathcal{G}_i^{\textnormal{new}}$ is disjoint from $\omega(\mathcal{G}_i^{\textnormal{new}})$ where $\omega$ is any non-trivial element of the deck group of $f^t : Y^\infty_t \to Y^\infty$.
\end{proof}

Among $\mathcal{G}_2, \ldots , \mathcal{G}_{k-1}$, suppose the widest one is $\mathcal{G}_s$ for some $s$. By Propositions \ref{series law} and \ref{harmonicsum},
\begin{equation}
\label{ineq:series-law-first}
    W\left(\hat{\Omega}\right) 
    \leq W(\mathcal{G}_2) \oplus \ldots \oplus W(\mathcal{G}_{k-1}) 
    \leq \frac{W(\mathcal{G}_s)}{k-2} .
\end{equation}
By (\ref{ineq:new-wave}), (\ref{ineq:series-law-first}), and the assumption that $\alpha \geq 1$,
\[
    W(\mathcal{G}_s) \geq (k-2) W\left(\hat{\Omega}\right) 
    \geq  \left( 5\alpha-2\right) \cdot \frac{4}{5} \, W(\Omega) 
    \geq 2.4 \, \alpha \: W(\Omega).
\]
By taking $\mathbf{K}$ to be sufficiently high depending on $n$, the sublamination $\mathcal{G}_s^{\textnormal{new}}$ described in \ref{s3.claim3} has width at least $2 \alpha W(\Omega)$. The image $\Omega' = f^t(\mathcal{G}_s^{\textnormal{new}})$ is the wave we are looking for. Indeed, the width of $\Omega'$ is at least $2\alpha \: W(\Omega)$ and, because of \ref{s3.claim1}, the length of $\Omega'$ is at least $ l_{n+m'}$.
\end{proof}

\subsection{Wide waves yield $\tau$-degeneration} 
\label{ss:wide-waves-yields-degeneration}
By an inductive argument, we can now obtain a $\tau$-degeneration out of a wide wave.

\begin{proof}[Proof of Proposition \ref{prop:wave-degeneration}] 
Fix $n \in \N$ and $\alpha \geq 1$. Let $m'$ and $\mathbf{K}$ be the constants from Lemma \ref{bubble-wave-argument} and set $m \in \N$ to be the smallest integer such that $\frac{\tau-1}{2} l_{n+m} \leq l_{n+m'} $. Let $\Omega$ be a wave of combinatorial length $\geq \alpha l_n$ and width $W(\Omega) \geq \mathbf{K}$.

\begin{claim}
    Either there exists a level $n+m$ combinatorial piece $J$ satisfying $W_\tau(J) \succ \alpha W(\Omega)$, or for every $t \geq 1$, there exists a wave $\Omega_t$ of length $\geq l_n$ and width
\begin{equation}
\label{ineq:wave-00}
W(\Omega_t) \geq \left( \frac{3}{2} \right)^{t} \alpha \, W(\Omega).
\end{equation}
\end{claim}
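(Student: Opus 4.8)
The plan is to argue by contradiction: suppose there is no level $n+m$ combinatorial piece $J$ with $W_\tau(J) \succ \alpha W(\Omega)$, and build the sequence $\Omega_t$ by iterating Lemma \ref{bubble-wave-argument}. The base case is $\Omega_0 := \Omega$, which has length $\geq \alpha l_n \geq l_n$ and width $W(\Omega) \geq \mathbf{K}$. For the inductive step, suppose $\Omega_t$ has been constructed with $|\Omega_t| \geq l_n$ and width $W(\Omega_t) \geq (3/2)^t \alpha W(\Omega) \geq \mathbf{K}$ (the latter since $\alpha \geq 1$ and $W(\Omega) \geq \mathbf{K}$). First I would check which of the two alternatives holds for $\Omega_t$. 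If $|\Omega_t| \geq l_{n+m}$ is moderately short — specifically if $\Omega_t$ protects a piece $A_t$ whose length lies between $l_{n+m}$ and, say, $l_n$ — then $A_t$ contains a level $n+m$ combinatorial piece $J$, and since $(\tau-1) l_{n+m}/2 \leq l_{n+m'} \leq \dots$, a standard extremal-width estimate (using that $\Omega_t$ protects $J$ and that the leaves of $\Omega_t$ must travel a definite combinatorial distance to reach $(\tau J)^c$) gives $W_\tau(J) \succ W(\Omega_t) \geq (3/2)^t \alpha W(\Omega) \succ \alpha W(\Omega)$, contradicting our standing assumption. So we may assume $\Omega_t$ protects a piece $A_t$ with $|A_t| \geq l_n$; that is, $|\Omega_t| \geq l_n$, which is exactly the hypothesis needed to feed $\Omega_t$ into Lemma \ref{bubble-wave-argument} with the parameter $\alpha$ there set equal to $1$.

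The key step is then the application of Lemma \ref{bubble-wave-argument} to $\Omega_t$: with length parameter $\alpha_{\mathrm{Lem}} = 1$ and level $n$, and with $W(\Omega_t) \geq \mathbf{K}(n)$, the lemma produces a wave $\Omega_{t+1} := \Omega'$ with
\[
|\Omega_{t+1}| \geq l_{n+m'} \qquad \text{and} \qquad W(\Omega_{t+1}) \geq 2 W(\Omega_t) \geq 2 \left(\tfrac{3}{2}\right)^t \alpha W(\Omega) \geq \left(\tfrac{3}{2}\right)^{t+1} \alpha W(\Omega).
\]
The only remaining gap is that $\Omega_{t+1}$ a priori only has length $\geq l_{n+m'}$, whereas we want length $\geq l_n$ to continue the induction. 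Here I would again split into cases: either $\Omega_{t+1}$ already protects a piece of length $\geq l_n$, and we are done; or the longest piece it protects has length strictly between $l_{n+m'}$ and $l_n$, in which case (since $l_{n+m'} \geq (\tau-1) l_{n+m}/2$, so the protected piece is comfortably longer than a level $n+m$ piece) it contains a level $n+m$ combinatorial piece $J$, and the same extremal-width estimate as above yields $W_\tau(J) \succ W(\Omega_{t+1}) \geq (3/2)^{t+1}\alpha W(\Omega) \succ \alpha W(\Omega)$ — again contradicting our standing assumption. Thus under the contradiction hypothesis the induction goes through and produces $\Omega_t$ for all $t \geq 1$, with the width bound \eqref{ineq:wave-00} (using $2 > 3/2$ to absorb the slack at each step).

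The main obstacle I anticipate is the bookkeeping around the length of the waves: Lemma \ref{bubble-wave-argument} degrades the guaranteed length from "$\geq \alpha l_n$" to "$\geq l_{n+m'}$" at each iteration, so one cannot naively re-apply it with the same level $n$ indefinitely. The resolution — which is the conceptual heart of the argument — is that a \emph{short} wave is not a liability but an asset: a wave of length comparable to a level $n+m$ combinatorial piece but of large width directly witnesses $\tau$-degeneration at a controlled level, which is the conclusion we are after. So every time the length "fails to be large", we win outright; and whenever it stays large we can iterate. The proof of Proposition \ref{bubble-wave} is then completed from the Claim by observing that the second alternative is impossible: for $t$ large enough the right-hand side of \eqref{ineq:wave-00} exceeds any a priori bound on the width of waves of bounded length (which is finite by Proposition \ref{main-prop}, or by the fact that $\Omega_t$ lives in $Y^\bullet \subset \RS$ and protects a piece of bounded combinatorial length), so the first alternative must hold, yielding the desired level $n+m$ combinatorial piece $J$ with $W_\tau(J) \succ \alpha W(\Omega)$.
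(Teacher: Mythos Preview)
Your overall strategy---iterate Lemma~\ref{bubble-wave-argument} and, whenever the resulting wave is too short to continue, extract $\tau$-degeneration---is the same as the paper's. But two steps do not go through as written.

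First, the factor $\alpha$ is lost at the base of the induction. Your inductive hypothesis at step $t$ reads $W(\Omega_t) \geq (3/2)^t \alpha\, W(\Omega)$; for $t=0$ this says $W(\Omega) \geq \alpha\, W(\Omega)$, which fails whenever $\alpha>1$. Because you apply Lemma~\ref{bubble-wave-argument} with $\alpha_{\mathrm{Lem}}=1$ throughout, the factor $\alpha$ never enters the chain. The paper handles this by treating the step $t=0$ separately: since $|\Omega_0|\geq \alpha l_n$, one invokes the lemma with parameter $\alpha$ there (obtaining $W(\Omega'_0)\geq 2\alpha\, W(\Omega)$), and with parameter $1$ for all $t\geq 1$.

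Second, and more substantially, the conversion ``short wave $\Rightarrow$ $\tau$-degeneration'' is not correct as you state it. You choose $J$ to be a level $n+m$ combinatorial piece \emph{inside} the maximal protected piece $A$, and assert $W_\tau(J)\succ W(\Omega_{t+1})$. But the leaves of a wave protecting $A$ are proper curves in $Y^\bullet$ with both endpoints on $A^c$; they never meet $J\subset A$, and hence contain no subcurve in $\mathcal{F}_\tau(J)$ (whose members must land on $J$). The paper's fix is to split the wave rather than the protected piece: take a protected piece $J_t$ of length $l_{n+m'}$, let $I_{t+1}$ be the level~$n$ piece with the same mid-segment, and write $\Omega'_t=\Omega_{t+1}\cup\Omega''_t$, where $\Omega_{t+1}$ consists of the leaves that also protect $I_{t+1}$ and $\Omega''_t$ of those that \emph{land} on $I_{t+1}\setminus J_t$. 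If $W(\Omega_{t+1})\geq \tfrac{3}{4}W(\Omega'_t)$ the induction continues (this is the origin of the constant $3/2=\tfrac{3}{4}\cdot 2$); otherwise a level $n+m$ subpiece $J\subset \overline{I_{t+1}\setminus J_t}$ catches a definite share of $\Omega''_t$, and since each such leaf starts on $J$ and ends on the far side of $J_t$ (hence in $(\tau J)^c$, by the choice of $m$), it genuinely lies in $\mathcal{F}_\tau(J)$.
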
 

\begin{proof}
    We will proceed by induction. Suppose there exists a wave $\Omega_t$ of length $\geq l_n$ satisfying (\ref{ineq:wave-00}) for some $t \in \N$. We will also include the initial case $t=0$, in which $\Omega_0:= \Omega$ has length $\geq \alpha l_n$ and width $W(\Omega)$. By Lemma \ref{bubble-wave-argument}, there is a wave $\Omega'_t$ of width
    \begin{equation}
    \label{ineq:wave-01}
    W(\Omega'_t) \geq \begin{cases}
    2 \: W(\Omega_t) & \text{ if } t\geq 1,\\
    2\alpha \: W(\Omega) & \text{ if } t=0,
    \end{cases}
    \end{equation}
    protecting a piece $J_t$ of length $l_{n+m'}$. Note that by (\ref{ineq:wave-00}) and (\ref{ineq:wave-01}), we have
    \begin{equation}
    \label{ineq:wave-02}
        W(\Omega'_t) \geq 2 \alpha \:W(\Omega).
    \end{equation}
    
    Let $I_{t+1}$ be the level $n$ combinatorial piece that shares the same combinatorial mid-segment as $J_t$. We present $\Omega'_t$ as $\Omega_{t+1} \cup \Omega''_t$ where $\Omega_{t+1}$ is the set of leaves of $\Omega'_t$ that protect $I_{t+1}$ and $\Omega''_t$ is the set of leaves that land on $I_{t+1} \backslash J_t$.
    
    If $W(\Omega_{t+1}) \geq \frac{3}{4} W(\Omega'_t)$, then by combining this with (\ref{ineq:wave-00}) and (\ref{ineq:wave-01}), the wave $\Omega_{t+1}$ satisfies (\ref{ineq:wave-00}) and we are done. Suppose instead that 
    \begin{equation}
        \label{ineq:wave-03}
    W(\Omega''_t) > \frac{1}{4} W(\Omega'_t).
    \end{equation}
    There exists a level $n+m$ combinatorial piece $J \subset \overline{I_{t+1} \backslash J_t}$ such that amongst every level $n+m$ subpiece of $\overline{I_{t+1}\backslash J_t}$, the width of leaves of $\Omega'_t$ landing on $J$ is the widest. Our choice of $m$ guarantees that leaves of $\Omega''_t$ that land on $J$ lie in $\mathcal{F}_\tau(J)$, yielding
    \begin{equation}
    \label{ineq:wave-04}
        W_\tau(J) \geq \frac{|J|}{|I_{t+1} \backslash J_t |} W(\Omega''_t) \succ W(\Omega''_t).
    \end{equation}
    Therefore, by combining (\ref{ineq:wave-02}), (\ref{ineq:wave-03}), and (\ref{ineq:wave-04}), we obtain $W_\tau(J) \succ \alpha W(\Omega)$.
\end{proof}

The proposition holds because if otherwise, the claim above would give us an infinite sequence of waves $\Omega_t$ of uniformly bounded length and exponentially increasing width, which contradicts the compactness of $\Hq$.
\end{proof}


\section{Spreading degeneration}
\label{sec:spreading-degeneration}

Recall from Proposition \ref{renormalization-tiling} that for any level $n$ piece $I$, the pieces $I$, $f(I)$, $f^2(I), \ldots, f^{q_{n+1}-1}(I)$ have pairwise disjoint interior.

\begin{definition}
    The \emph{level $n$ almost tiling} $\mathcal{I}$ generated by a level $n$ combinatorial piece $I \subset \Hq$ is the collection of iterated pieces $\{f^i(I)\}_{i=0, \ldots, q_{n+1}-1}$.
\end{definition}

In this section, we will spread a given $\lambda$-degeneration to an almost tiling consisting of pieces that are all comparably $\lambda$-degenerate relative to the original. Recall the threshold parameter $\thres_\lambda$ defined in \S\ref{sss:notation}.

\begin{proposition}
\label{spreading-lambda}
    For any $\Xi > 1$ and $\lambda > \tau$, there are some $\mathbf{K} = \mathbf{K}(\Xi, \lambda) >1$ and $\xi=\xi(\Xi) > 0$ such that if there is a $[K,\lambda]$-wide level $n$ combinatorial piece $I$ where $n \geq \thres_\lambda$ and $K \geq \mathbf{K}$, then either
    \begin{enumerate}[label=\textnormal{(\arabic*)}]
        \item there is a $[\Xi K,\tau]$-wide combinatorial piece of level $n$, or
        \item there is a level $n$ almost tiling consisting of $[\xi K, \lambda]$-wide pieces. 
    \end{enumerate}
\end{proposition}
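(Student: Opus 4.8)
The plan is to propagate the $\lambda$-degeneration at $I$ around the whole annulus by pushing it forward under $f$, and then apply the Covering Lemma to control how much width is lost at each step. Write $\mathcal{I} = \{f^i(I)\}_{0\le i\le q_{n+1}-1}$ for the level $n$ almost tiling generated by $I$, which by Proposition \ref{renormalization-tiling} consists of pieces with pairwise disjoint interiors. The key point is that $f$ restricted to $\Hq$ is (quasisymmetrically) conjugate to the rotation $R_\theta$, so $f$ maps the piece $I$ onto the piece $f(I)$ preserving combinatorial length; moreover, off of $\Hq$, the covering structure of $f$ is given by Proposition \ref{bubble-str}/\ref{bubble-structure}: $f$ is a branched cover of degree $d$ whose critical values (away from $0,\infty$) all lie on $\Hq$. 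The threshold hypothesis $n\ge\thres_\lambda$ guarantees (Definition \ref{threshold}) that $2\lambda I$, $2\lambda f(I)$, $2\lambda f^2(I)$ are pairwise disjoint, so the enlarged pieces stay in "general position" under a single iterate of $f$ and the collar $(\lambda I)^c$ of $I$ maps appropriately across $f$.

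The main steps are as follows. First, I would set up the transfer of width under one application of $f$: given that $I$ is $[K,\lambda]$-wide, I want to conclude that $f(I)$ (or a combinatorial piece comparable to it) is $[\xi'K,\lambda']$-wide for some definite $\xi'$ and some $\lambda'$ only slightly smaller than $\lambda$, \emph{unless} a large amount of the width $W_\lambda(I)$ is "absorbed" into a bubble — in which case the corresponding preimage configuration produces a wide wave, and Proposition \ref{bubble-wave} converts this into the $[\Xi K,\tau]$-wide combinatorial piece of alternative (1). This dichotomy is the heart of the argument: the Covering Lemma is applied with the pullback of the relevant curve family under $f$, using that the degree is bounded by $d$ and that the "bad" part of the curve family — the part that wraps around a critical point or escapes into a bubble — either has small width (ruled out by taking $K$ large) or exhibits macroscopic degeneration detectable as a wave. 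Second, I would iterate: starting from $I = f^0(I)$ and repeatedly applying the one-step transfer, I obtain pieces $f^i(I)$ that are $[\xi_i K, \lambda_i]$-wide. The subtlety is that a naive iteration $q_{n+1}$ times would let $\xi_i$ decay geometrically to something uncontrollable, since $q_{n+1}$ is unbounded. To avoid this I would instead argue that if \emph{any} single $f^i(I)$ fails to be $[\xi K,\lambda]$-wide for the target constant $\xi=\xi(\Xi)$, then at that step a definite fraction of the width was lost into a bubble or across a critical point, which again triggers Proposition \ref{bubble-wave} (via the resulting wave) and yields alternative (1) with the amplification factor $\Xi$; the amplification comes precisely from accumulating the "lost" width across the (few) steps where loss occurs, exactly as in the inductive scheme of the proof of Proposition \ref{bubble-wave}. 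Combined with the combinatorial length control of Proposition \ref{bounded-type} (so that all $l_n$ at a given level are comparable and bounded type keeps the number of distinct scales bounded), this caps the total multiplicative loss by an absolute constant depending only on $\Xi$, giving the uniform $\xi = \xi(\Xi)$ in alternative (2).

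I expect the main obstacle to be the careful bookkeeping in the Covering Lemma application at a single step: one must set up the right "admissible" configuration of pieces (the roles of $I$, $\lambda I$, the intermediate shell, and their $f$-preimages), verify the moduli/width hypotheses of the Covering Lemma using $n\ge\thres_\lambda$ and $K\ge\mathbf{K}(\Xi,\lambda)$, and correctly identify the complementary "escaping" part of the curve family as a genuine wave (proper lamination in $Y^0$ or $Y^\infty$ protecting a piece of definite combinatorial length) so that Proposition \ref{bubble-wave} applies with the right length lower bound $\alpha l_{n'}$ and produces a combinatorial piece of the \emph{same} level $n$ as required. A secondary technical point is choosing $\lambda$ slightly larger than the final target so that the small shrinkage of the "inner enlargement factor" at each transfer step never drops below the $\lambda$ appearing in the statement; this is the standard trick of running the argument with a margin, and it is consistent with the fact that $\mathbf{K}$ is allowed to depend on both $\Xi$ and $\lambda$.
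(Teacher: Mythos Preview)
Your plan has a genuine gap in its core mechanism. The paper does \emph{not} iterate one step at a time, and it does not use Proposition~\ref{bubble-wave} at all here. Instead, for each target piece $I_a = f^a(I)$ with $2 \le a \le q_{n+1}+1$, it applies the Covering Lemma \emph{once} to the full iterate $f^b:(U,\Lambda,I)\to(V,B,I_b)$ (where $b\in\{a-2,a-1,a\}$ is chosen via Lemma~\ref{cuts} so that cuts $\gamma_0\cup\gamma_\infty$ exist for $\lambda I_b$), with $V=\RS\setminus((\lambda I_b)^c\cup\gamma_0\cup\gamma_\infty)$ and the inner shell $B=V\setminus(\tau I_b)^c$. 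The crucial point you are missing is Lemma~\ref{counting-critical}: the degree of $f^b$ on the \emph{inner} shell $\Lambda\to B$ is bounded by a constant $M(\tau)$ depending only on $\tau$, \emph{uniformly in $b$ and in $\lambda$}. This is what makes the Covering Lemma output a constant $\xi_1=(2\kappa M(\tau)^2)^{-1}$ depending only on $\Xi$ (via $\kappa$); the $\lambda$-dependence is absorbed entirely into the threshold $\mathbf{K}(\Xi,\lambda)$ through the outer degree $M(\lambda)$. No iteration, no geometric decay, no bubble-wave.

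Your proposed fix for the decay---detecting ``lost width'' as a wave and invoking Proposition~\ref{bubble-wave}---fails on two counts. First, width lost under a single branched cover is not in general organized as a wave (a proper lamination in $Y^\bullet$ protecting a piece); there is no mechanism in your sketch that produces one. Second, even if it did, Proposition~\ref{bubble-wave} yields a $[\Xi K,\tau]$-wide piece of level $n+m$, not of level $n$ as required by alternative~(1). The paper's alternative~(1) arises instead directly from the Covering Lemma dichotomy: if $W(B,I_b)>(d^2\Xi+1)K$, then after discarding the width to the cuts (at most $10$) and pushing forward by $f^{a-b}$ (degree $\le d^2$), the level-$n$ piece $I_a$ itself is $[\Xi K,\tau]$-wide. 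You should replace the step-by-step iteration entirely with this single-shot pullback from $I$ to each $I_a$, using cuts to simply-connect the target and Lemma~\ref{counting-critical} to bound the inner degree.
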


In the proof, we will apply the Covering Lemma (Lemma \ref{covering-lemma}) to spread $\lambda$-degeneration around $\Hq$. We will introduce \emph{cuts} (Lemma \ref{cuts}) to bound the degree of the appropriate branched covering in terms of $\lambda$.

\subsection{Spreading $\tau$-degeneration} 
\label{ss:spreading-tau-degeneration}
We will first discuss what we can do with $\tau$-degeneration. This can be seen as a special case of Proposition \ref{spreading-lambda} when $\lambda=\tau$.

\begin{proposition}
    \label{spreading-tau}
    There are absolute constants $0< \varepsilon <1$ and $\mathbf{K}>1$ such that for any $[K,\tau]$-wide combinatorial piece $I \subset \Hq$ of level $n$ where $n \geq \thres_\tau$ and $K \geq \mathbf{K}$, every piece in the almost tiling generated by $f^2 (I)$ is $[\varepsilon K, \tau]$-wide.
\end{proposition}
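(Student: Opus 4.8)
The plan is to spread $\tau$-degeneration by pushing forward the curve family witnessing $W_\tau(I)$ under iterates of $f$, using the fact that $f|_\Hq$ is a rotation (so it maps pieces to pieces of the same combinatorial length) while $f$ itself is an honest branched cover of $\RS$ of degree $d = d_0+d_\infty-1$, branched only over the grand orbit of the superattracting points $0,\infty$ — and in particular unbranched over a neighborhood of $\Hq$ away from the roots of bubbles. First I would fix the $[K,\tau]$-wide level $n$ combinatorial piece $I$ and look at $f^2(I)$; the reason for the shift by $2$ is that the three pieces $I$, $f(I)$, $f^2(I)$ (and their $2\tau$-enlargements) are pairwise disjoint once $n\geq\thres_\tau$, so when we pull back the configuration around $f^2(I)$ we can keep the relevant enlarged pieces away from the critical values lying on $f(I)\cup I$. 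Then for each $0\le i\le q_{n+1}-1$ I would consider the iterate $f^i$ restricted to an appropriate neighborhood of $\RS\setminus(\tau\, f^2(I))^c$ mapping onto a neighborhood of $\RS\setminus(\tau\, f^{i+2}(I))^c$, and transport the width.

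The key steps, in order: (1) Record the disjointness: for $n\geq\thres_\tau$, the pieces $2\tau f^j(I)$ for $j=0,1,2$ are pairwise disjoint, and hence (using Proposition \ref{renormalization-tiling}, so that $I,f(I),\dots,f^{q_{n+1}-1}(I)$ have disjoint interiors) the enlargements $\tau f^{i+2}(I)$, as $i$ ranges, overlap in a controlled, bounded way. (2) Set up the covering: let $g := f^i$ and consider the proper map from (a component of) $g^{-1}\big(\RS\setminus(\tau f^{i+2}(I))^c\big)$ onto $\RS\setminus(\tau f^{i+2}(I))^c$; since $f$ is branched only over $\{0,\infty\}$ and their preimages — which lie inside the bubbles, well inside $Y^0\cup Y^\infty$ and away from $\Hq$ except at bubble roots — this map has degree at most $d^i$ but, more importantly, is \emph{unbranched over the curve family we care about} except possibly where a pushed-forward curve is forced to wind around a bubble root. (3) Apply the Covering Lemma (Lemma \ref{covering-lemma}) — or in the unbranched direction simply the fact that extremal width is preserved by conformal maps and does not increase under a restriction — to conclude that the component of $g^{-1}\big(\RS\setminus(\tau f^{i+2}(I))^c\big)$ attached to $f^2(I)$ carries a curve family of width $\succ K$ connecting $f^2(I)$ to the outer boundary, i.e. that $f^2(I)$ is $[\varepsilon' K,\tau]$-wide for some absolute $\varepsilon'$; symmetrically, feeding $I$ itself through $f^{i-2}$ we transport the original $[K,\tau]$-width of $I$ forward to each $f^{i}(I)$, giving $[\varepsilon K,\tau]$-wideness of every piece in the almost tiling generated by $f^2(I)$. (4) Collect constants: the loss factor $\varepsilon$ is absolute because the degree bound enters the Covering Lemma only through the ``mild'' regime (the relevant spaces are disjoint and the degree-dependence is via $d_0,d_\infty$, which are implicit constants per \S\ref{ss:setup}), and the geometric overlap of the enlarged pieces is bounded by an absolute constant via the non-crossing principle (Proposition \ref{non-crossing-principle}).

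I expect the main obstacle to be controlling what happens when a transported curve is forced to pass near, or wind around, the root of a bubble — i.e. ensuring that the relevant pullback component is actually an annulus/disk on which $f^i$ behaves like the kind of covering the Covering Lemma can digest, rather than something with bad topology relative to the curve family. This is precisely where the hypothesis $n\geq\thres_\tau$ and the shift-by-$2$ trick are doing real work: they guarantee that $\tau f^{i+2}(I)$ misses the critical \emph{values} on $I\cup f(I)$, so that the first one or two pullbacks are unbranched near $\Hq$, and thereafter the branching over $0,\infty$ is ``deep inside'' the bubbles and can be absorbed into the degree count without the curve family interacting with critical points. A secondary, more bookkeeping-type difficulty is making the ``disjoint enlargements overlap boundedly'' claim quantitative enough that summing widths via the series law (Proposition \ref{series law}) only costs an absolute multiplicative constant; this should follow because each point of $\Hq$ lies in at most a bounded number of the $\tau f^i(I)$ (bounded in terms of $\tau$ alone), but it needs to be stated carefully.
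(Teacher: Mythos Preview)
Your plan has a genuine gap at its core: you have misidentified where the branching of $f$ lives. The free critical points of $f$ lie \emph{on} $\Hq$ (this is assumption \ref{def:4}/\ref{def:HQ4}), not in the interiors of the bubbles; the points $0$ and $\infty$ are superattracting fixed points that stay far from $\Hq$. So the covering $f^a$ restricted to any neighborhood of a piece of $\Hq$ is branched precisely over the forward orbit of those free critical points along $\Hq$, and your claim that ``$f$ is branched only over the grand orbit of the superattracting points $0,\infty$'' is false. Consequently your degree estimate ``at most $d^i$'' is useless for $i$ up to $q_{n+1}-1$: the Covering Lemma has $\mathbf{K}$ depending on the total degree $D$, so an exponential degree gives no uniform conclusion.

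What the paper actually does is quite different from your sketch. First, it introduces \emph{cuts} (Lemma \ref{cuts}): rays $\gamma_0\subset\overline{Y^0}$ and $\gamma_\infty\subset\overline{Y^\infty}$ from $0$ and $\infty$ to a point of $(\tau I_a)^c$, chosen so that the width of curves from $I_a$ to $\gamma_0\cup\gamma_\infty$ is $O(1)$. The role of the shift (the $t\in\{0,1,2\}$) is \emph{not} to dodge critical values; it is purely to guarantee, by pigeonhole on harmonic measure, that such cuts exist for $\tau I_{a}$. Second, with $U':=\RS\setminus\big((\tau I_a)^c\cup\gamma_0\cup\gamma_\infty\big)$ now a disk in $\C^*$, the pullback $U\ni I$ under $f^a$ is a disk in $\C^*$ as well, and the degree of $f^a:U\to U'$ is bounded by an \emph{absolute} constant via Lemma \ref{counting-critical}: each of the finitely many critical points on $\Hq$ can be visited only $O(1)$ times by the orbit $U,f(U),\dots,f^{a-1}(U)$, because $a\le q_{n+1}$ and each $f^j(U)\cap\Hq$ has length $\asymp\tau l_n$. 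Third, the paper does not use the Covering Lemma here at all; the elementary transformation law (Proposition \ref{transformation law}) gives $K\le W(U,I)\le M\cdot W(U',I_a)$, and after discarding the $O(1)$ width of curves landing on the cuts one gets $W_\tau(I_a)\succ K$. Your anticipated difficulties about ``winding around bubble roots'' and ``bounded overlap of enlargements'' never arise in this argument.
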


To be more precise, the constant $\varepsilon$ above depends on the small separation constant $\tau = 10$. Throughout this paper, any dependence on $\tau$ will be suppressed.

We will apply Proposition \ref{transformation law} as the main tool to compare the $\tau$-widths of a piece $I$ and its iterate $f^i(I)$. This motivates us to first estimate the degree of $f^i$ near $\tau I$, which we can deduce in a more general way as follows.

\begin{lemma}
\label{counting-critical}
    Suppose $f^a : U \to U'$ is a branched covering map between two open disks $U$ and $U'$ in $\C^*$ where $a \leq q_{n+k}$ and $\overline{U'} \cap \Hq$ is a piece of length $\rho l_n$ for some positive integers $k$ and $n$, and some constant $\rho \geq 1$. Then, 
    \[
    \text{deg}(f^a: U \to U') \leq M
    \]
    for some $M = M(k,\rho)>1$.
\end{lemma}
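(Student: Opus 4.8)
The plan is to bound the degree of $f^a : U \to U'$ by counting, with multiplicity, the critical points of $f^a$ that lie in $U$, via the Riemann–Hurwitz formula: since $U$ and $U'$ are disks, $\deg(f^a : U \to U') = 1 + \#\{\text{critical points of } f^a \text{ in } U, \text{ counted with multiplicity}\}$. Every critical point of $f^a$ is an iterated preimage $f^{-j}(c)$ of a critical point $c$ of $f$ for some $0 \le j \le a-1$, and by Proposition \ref{bubble-str} (resp. Proposition \ref{bubble-structure}) the only critical points relevant to the covering structure near $\Hq$ are the free critical points on $\Hq$, together with $0$ and $\infty$ which are not in $\C^*$ and hence not in $U \subset \C^*$. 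So it suffices to count, for each free critical point $c \in \Hq$ and each $0 \le j \le a-1$, how many points of $f^{-j}(c) \cap \overline{U}$ there can be; equivalently, since such preimages either lie on $\Hq$ or on bubbles attached to iterated preimages of $c$, how many of the points $f|_\Hq^{-j}(c)$ fall inside the piece $\overline{U'} \cap \Hq$ pulled back appropriately — but it is cleanest to work directly on $\Hq$ using the combinatorial metric.

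The key combinatorial input is the control on return times from \S\ref{ss:irrational-rotations}: for the rigid rotation $R_\theta$ (hence for $f|_\Hq$), the first return time to any interval of combinatorial length comparable to $l_{n+k}$ is $\asymp q_{n+k}$, and more precisely, any two distinct points of the form $f|_\Hq^{-j}(c)$ with $0 \le j \le a - 1 \le q_{n+k} - 1$ that lie in a common piece must be at combinatorial distance $\ge l_{n+k}$ from one another — this is exactly the alternating-return-time structure (Proposition \ref{renormalization-tiling} and the remark following it). Therefore the number of such preimages of a \emph{single} $c$ landing in a piece of length $\rho l_n$ is at most $\rho l_n / l_{n+k} + 1$, and by bounded type (Proposition \ref{bounded-type}) we have $l_n / l_{n+k} \le C^k$, so this count is at most $\rho C^k + 1$. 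Summing over the at most $d_0 + d_\infty - 2 < d$ free critical points $c$ gives
\[
\deg(f^a : U \to U') \;\le\; 1 + (d_0 + d_\infty - 2)\bigl(\rho C^k + 1\bigr) \;=:\; M(k, \rho),
\]
as desired; note the implicit dependence on $d_0, d_\infty, \beta(\theta)$ is absorbed per the convention of \S\ref{ss:setup}.

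The one point requiring care — and the main obstacle — is passing from "preimages of $c$ on $\Hq$" to "critical points of $f^a$ in the disk $U$", since a priori $U$ is just some disk in $\C^*$ with $\overline{U'} \cap \Hq$ a piece, not obviously respecting the bubble structure. The resolution is that a critical point of $f^a$ in $U$ maps under $f^a$ into $U'$, and its intermediate images $f^i$ of it, being (pre)critical, lie on the grand orbit of $\mathcal{C} \subset \Hq$; by Proposition \ref{bubble-str}/\ref{bubble-structure} such a precritical point either lies on $\Hq$ or is the root of a bubble, and in either case it has a well-defined "combinatorial position" on $\Hq$ (the root, or the point itself). Two distinct critical points of $f^a$ in $U$ with the same combinatorial position on $\Hq$ would have to be separated by $f^a$ into $U'$ along two different bubbles over the same boundary point — but $f$ is univalent on the interior of each generation-one bubble and the bubbles over a given point are nested/disjoint (Proposition \ref{bubble-str}(3)–(4)), so the count of critical points with a given combinatorial position is bounded by an absolute constant (at most the local degree, $\le \max(d_0,d_\infty)$). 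Folding this bounded multiplicity into the estimate only changes the constant $M(k,\rho)$, completing the argument.
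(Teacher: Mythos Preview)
Your combinatorial count is correct and matches the paper's: for each free critical point $c$, the number of times $0\le t\le a-1$ with $c\in U_t\cap\Hq$ (equivalently, the number of $j$ with $(f|_\Hq)^{-j}(c)$ in the piece $\overline U\cap\Hq$) is at most $\rho l_n/l_{n+k}+1\le \rho C^k+1$. The gap is in how you convert this count into a degree bound.

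Riemann--Hurwitz gives $\deg(f^a:U\to U')=1+\sum_{p}(e_p-1)$, summed over \emph{all} critical points $p$ of $f^a$ in $U$, not just those on $\Hq$. A point $p\in U$ with $f^t(p)=c$ for some free critical point $c$ need not lie on $\Hq$: the fiber $f^{-t}(c)\cap U$ has exactly $\deg(f^t:U\to U_t)$ points, only one of which is $(f|_\Hq)^{-t}(c)\in\Hq$; the rest sit on bubbles. Concretely, if each intermediate map $f:U_t\to U_{t+1}$ has a single simple critical point for $m$ consecutive values of $t$, then $\deg(f^a)=2^m$ and the total ramification is $2^m-1$, while your count on $\Hq$ gives only $m$. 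So the bound $\deg\le 1+(d_0+d_\infty-2)(\rho C^k+1)$ is false in general; the correct bound is exponential in the count, not linear.

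Your attempted patch in the last paragraph does not work: the number of critical points of $f^a$ in $U$ sharing a given ``combinatorial position'' on $\Hq$ is not bounded by $\max(d_0,d_\infty)$ but by the degree $\deg(f^t:U\to U_t)$ accumulated up to that step, which is exactly what you are trying to bound. (Also, an arbitrary precritical point on a bubble is not a root of that bubble, so the position is not well-defined as stated.)

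The paper's fix is to avoid Riemann--Hurwitz entirely and instead write $\deg(f^a)=\prod_{t=0}^{a-1}\deg(f:U_t\to U_{t+1})$, where $U_t$ is the intermediate pullback. Since each $U_t\subset\C^*$ (as $0,\infty$ are fixed and $U'\subset\C^*$), the only critical points of $f$ in $U_t$ are free critical points on $\Hq$, and your count shows there are at most $C^k\rho(d-1)$ pairs $(c,t)$ with $c\in U_t$. Using $1+m\le 2^m$ for each factor gives $\deg(f^a)\le 2^{C^k\rho(d-1)}=:M(k,\rho)$.
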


\begin{proof}
    For $t=0,1,\ldots, a$, let $U_t := f^t(U)$. Observe that each $\overline{U_t} \cap \Hq$ must be a piece of length $\rho l_n$. Let $C>1$ be the constant from Proposition \ref{bounded-type}, then $ l_n \leq C^k l_{n+k}$. Since $a \leq q_{n+k}$, for every critical point $c \in \Hq$, there are at most $C^k \rho$ values of $t \in \{0,1,\ldots, a\}$ such that $U_t$ contains $c$. Since $f$ has $d-1$ free critical points counting multiplicity, the number of different pairs $(c,t)$ such that $U_t$ contains a free critical point $c$ is at most $C^k\rho(d-1)$. Therefore, the degree of $f^a : U \to U'$ is at most $2^{C^k\rho(d-1)}$.
\end{proof}

\begin{remark}
\label{omit-y0}
    In Case \ref{case:boundary-ring} (as outlined in \S\ref{ss:setup}), in order for the lemma above to work, we shall assume additionally that $U$ and $U'$ are disjoint from the connected component $\hat{Y}^0$ of $\RS \backslash \He$ containing $0$ so that every critical value of the mapping $f^a: U \to U'$ lies on the outer boundary $\Hq$. 
\end{remark}

Next, we have to pick the disk $U$ containing $\tau I$ carefully. In particular, we would like to restrain the local degree of an iterate $f^i$ on $U$ so that it is independent of $i$.

\begin{lemma}[Cuts]
\label{cuts}
    For any piece $I$ such that $I$, $f(I)$, and $f^2(I)$ are pairwise disjoint, there exist some $t \in \{0,1,2\}$ and a pair of closed rays $\gamma_0 \subset \overline{Y^0}$ and $\gamma_\infty \subset \overline{Y^\infty}$ connecting a point in $\left(f^t(I)\right)^c$ to $0$ and $\infty$ respectively such that the width of curves in $\RS \backslash (\Hq \cup \gamma_0 \cup \gamma_\infty)$ connecting $f^t(I)$ and $\gamma_0 \cup \gamma_\infty$ is at most $10$.
\end{lemma}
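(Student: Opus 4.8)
The goal is to find, among the three disjoint pieces $I$, $f(I)$, $f^2(I)$, one — call it $A = f^t(I)$ — together with two rays to $0$ and $\infty$ so that the family $\mathcal{F}$ of curves in $\RS\setminus(\Hq\cup\gamma_0\cup\gamma_\infty)$ joining $A$ to $\gamma_0\cup\gamma_\infty$ has width $\leq 10$. The point of cutting along rays is that once $Y^0$ (resp. $Y^\infty$) is slit by $\gamma_0$ (resp. $\gamma_\infty$) from a point of $A^c$ out to the superattracting fixed point, the slit disk becomes simply connected and the dynamics of any iterate $f^i$ near $A$ cannot ``wrap around'' $0$ or $\infty$; this is what makes the degree bound in Lemma \ref{counting-critical} applicable uniformly in $i$. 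So the slit is a device; the content of the present lemma is purely the geometric width estimate, and the freedom to choose $t\in\{0,1,2\}$ is exactly what gives us room to do it.

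First I would reduce to a statement about harmonic measure. Split $\mathcal{F}$ as $\mathcal{F}^0\cup\mathcal{F}^\infty$, the curves in $Y^0$ hitting $\gamma_0$ and those in $Y^\infty$ hitting $\gamma_\infty$; by the series/parallel laws it suffices to bound each of $W(\mathcal{F}^0)$ and $W(\mathcal{F}^\infty)$ by $5$. Fix $\bullet\in\{0,\infty\}$ and consider $Y^\bullet$, a disk with $\bullet$ inside. A curve in $Y^\bullet$ from $A$ to a radial ray $\gamma_\bullet$ emanating from a point $p\in A^c$: its width is controlled, via Proposition \ref{log-rule} (the ``$\log$-rule''), by the harmonic measures about $\bullet$ of $A$ and of the complementary arc on which $\gamma_\bullet$ is rooted. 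Concretely, if $\gamma_\bullet$ is rooted at the combinatorial midpoint of $A^c$, then the two components of $A^c\setminus\{p\}$ each have harmonic measure about $\bullet$ bounded below once we know the harmonic measure of $A$ itself is bounded below; and then the $\log$-rule gives $W(\mathcal{F}^\bullet)=O(1)$, with an absolute constant. So everything comes down to: can we choose $t\in\{0,1,2\}$ so that $A=f^t(I)$ has harmonic measure about \emph{both} $0$ and $\infty$ bounded below by an absolute constant?

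This is where the choice of $t$ does the work, and it is the main obstacle. The three pieces $I,f(I),f^2(I)$ are pairwise disjoint subintervals of $\Hq$ of equal combinatorial length, so their combined combinatorial length is at most $1$; individually each could be combinatorially short, and a combinatorially short piece might a priori have tiny harmonic measure about one of the two sides. The key observation is that $f$ acts on $\Hq$ as a rotation in the combinatorial coordinate but \emph{distorts} harmonic measure in a controlled, dynamically coherent way: $f:\Hq\to\Hq$ extends to maps $f:\overline{Y^0}\to\overline{Y^0}$ and $f:\overline{Y^\infty}\to\overline{Y^\infty}$ which are proper of degree $d_0$ and $d_\infty$ respectively (Proposition \ref{bubble-structure}(2) / Proposition \ref{bubble-str}), branched only over $0$ (resp. $\infty$). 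Hence $\hm_0(f(J)) \asymp \hm_0(J)^{1/d_0}$-type comparisons hold at the level of the extremal-length/Green's-function estimates — more precisely, pulling back the Green's function of $Y^\bullet$ multiplies harmonic measure by a bounded factor unless a branch point is enclosed, and among $I,f(I),f^2(I)$ at most finitely many (bounded by $d_0+d_\infty$) can ``see'' a critical orbit. I would make this precise using the counting argument already present in the proof of Lemma \ref{counting-critical}: only a bounded number of the three iterates can contain, or have a preimage of, a free critical point on the relevant side, so at least one value of $t\in\{0,1,2\}$ avoids the bad configuration on \emph{both} sides simultaneously (a pigeonhole on $\{0,1,2\}$ against two sides, using $\tau=10$ large enough, really $3$ large enough compared to how many iterates are spoiled). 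For that $t$, $A=f^t(I)$ has $\hm_0(A)\succ 1$ and $\hm_\infty(A)\succ 1$, and rooting $\gamma_0,\gamma_\infty$ at the combinatorial midpoints of the respective arcs of $A^c$ and invoking Proposition \ref{log-rule} yields $W(\mathcal{F}^0)\le 5$ and $W(\mathcal{F}^\infty)\le 5$, hence $W(\mathcal{F})\le 10$. (One checks the numerical constant $10$ is comfortably achieved; if not, one enlarges $\tau$, which the footnote permits.)

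A remark on Case \ref{case:boundary-ring}: there $Y^0$ is not literally a Jordan disk around $0$ but the component of $\RS\setminus\Hq$ on the inner side, and ``$0$'' should be read as any interior point of it (e.g. the center of the would-be Siegel disk after Douady–Ghys surgery); the harmonic-measure estimates go through verbatim, and by Remark \ref{omit-y0} one routes $\gamma_0$ to stay in the appropriate component. The only step that genuinely needs care is the pigeonhole over $t\in\{0,1,2\}$: I would state it as ``among the three iterates, the number whose harmonic measure about $0$ fails to be $\succ 1$ is at most one, and similarly for $\infty$, so some $t$ works for both,'' and prove that sub-claim by the critical-orbit count, which is the technical heart.
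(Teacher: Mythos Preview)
There is a genuine gap: you have the direction of the harmonic-measure bound reversed. What is needed is that $\hm_\bullet(A) < \tfrac12$, not that $\hm_\bullet(A) \succ 1$. If $\hm_\bullet(A)$ is close to $1$, then $A^c$ is conformally tiny as seen from $\bullet$, and \emph{any} ray $\gamma_\bullet$ rooted in $A^c$ lies conformally close to $A$, forcing $W_{Y^\bullet}(A,\gamma_\bullet)$ to be large (think of the model: $Y^\bullet=\D$, $\bullet=0$, $A$ an arc of angular measure $2\pi-2\epsilon$, $\gamma_\bullet$ the radius to the tiny complementary arc; the width blows up like $\log(1/\epsilon)$). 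Conversely, once $\hm_\bullet(A)<\tfrac12$, a classical estimate (the paper cites \cite[Chapter~IV, Theorem~5.2]{GM05}) produces a ray $\gamma_\bullet$ from $\partial Y^\bullet$ to $\bullet$ with $W_{Y^\bullet}(A,\gamma_\bullet)\le 5$.

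With the correct direction, the pigeonhole is immediate and purely measure-theoretic: for each $\bullet\in\{0,\infty\}$ the three pairwise disjoint pieces $I,f(I),f^2(I)$ have harmonic measures in $Y^\bullet$ about $\bullet$ summing to at most $1$, so at most one of the three can have measure $\ge\tfrac12$; with two sides and three iterates, some $t\in\{0,1,2\}$ satisfies $\hm_0(f^t(I))<\tfrac12$ and $\hm_\infty(f^t(I))<\tfrac12$ simultaneously. No dynamics, no critical-orbit counting, no use of the branched-cover structure of $f|_{Y^\bullet}$ is needed here. Your proposed sub-claim (``at most one iterate fails $\hm_\bullet\succ 1$'') is actually false --- take $|I|$ very small and all three iterates have tiny harmonic measure --- and the critical-point argument you sketch does not rescue it. Finally, rooting $\gamma_\bullet$ at the \emph{combinatorial} midpoint of $A^c$ presupposes a comparison between combinatorial length and harmonic measure, which is exactly the a priori bound you are ultimately trying to prove.
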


\begin{proof}
    Since $I$, $f(I)$, and $f^2(I)$ are pairwise disjoint, there is some $t \in \{0,1,2\}$ such that for $\bullet \in \{0,\infty\}$, the harmonic measure of $f^t(I) \cap \partial Y^\bullet$ in $Y^\bullet$ about $\bullet$ is less than $\frac{1}{2}$. Then, \cite[Chapter IV Theorem 5.2]{GM05} guarantees the existence of a pair of such rays $\gamma_0$ and $\gamma_\infty$ where the width of curves in $Y^0$ (resp. $Y^\infty$) connecting $f^t(I)$ and $\gamma_0$ (resp. $\gamma_\infty$) is at most $5$.
\end{proof}

The rays $\gamma_0$ and $\gamma_\infty$ satisfying the above will be called \emph{cuts} for the piece $f^t(I)$. These cuts will help us define the appropriate disks.

\begin{proof}[Proof of Proposition \ref{spreading-tau}]
   Let $I \subset \Hq$ be a $[K,\tau]$-wide combinatorial piece of level $n \geq \thres_\tau$ and let $I_s:= f^s(I)$ for any $s \geq 0$. Pick any integer $a \in [2, q_{n+1}+1]$. We can assume that there exist cuts $\gamma_0$ and $\gamma_\infty$ for $\tau I_a$. (Otherwise, replace $I_a$ with $I_{a-i}$ for some $i \in \{1,2\}$ and apply Proposition \ref{transformation law}.)
   
   Let $U'$ denote the open disk $\RS \backslash \left( (\tau I_a)^c \cup \gamma_0 \cup \gamma_\infty \right)$, and let $U$ be the connected component of $f^{-a}(U')$ containing $I$. By Proposition \ref{transformation law},
    \[
        K \leq W(U, I) \leq \text{deg}(f^a: U \to U') \cdot W(U',I_a).
    \]
    By Lemma \ref{counting-critical}, the inequality implies $W(U',I_a) \succ K$. Curves in $\mathcal{F}(U', I_a)$ connect $I_a$ to either $(\tau I_a)^c$ or the cuts $\gamma_0 \cup \gamma_\infty$. The width of those landing at $\gamma_0 \cup \gamma_\infty$ is at most $10$, so when $K \geq \mathbf{K}$ and $\mathbf{K}$ is sufficiently high, we have $W_\tau(I_a) \succ K$. 
\end{proof}

\begin{remark}
\label{removal}
    In Case \ref{case:boundary-ring}, we shall modify the proof above by replacing the topological disk $U'$ with $U' \backslash \hat{Y}^0$. The removal of $\hat{Y}^0$ is necessary in order to apply Lemma \ref{counting-critical} (see Remark \ref{omit-y0}), and harmless because the width of curves in $\mathcal{F}(U', I_a)$ that land on $\hat{Y}^0$ is negligible due to Lemma \ref{negligibility}. 
\end{remark}

\subsection{Spreading $\lambda$-degeneration}
\label{sec:spreading-lambda-degeneration}
Even though the proof of the previous lemma can also be applied to $\lambda$-degeneration, the corresponding multiplicative factor would depend on $\lambda$. We will employ a different spreading approach by applying the Covering Lemma as follows. (See \cite[\S8.1]{DL22} in the case of quadratic Siegel disks.)

\begin{proof}[Proof of Proposition \ref{spreading-lambda}]
Let $I \subset \Hq$ be a $[K,\lambda]$-wide combinatorial piece of level $n \geq \thres_\lambda$, where $K \geq \mathbf{K}$, and let $I_s:= f^s(I)$ for any $s \geq 0$. 

Pick an integer $a \in [2, q_{n+1}+1]$. Since $n \geq \thres_\lambda$, by Lemma \ref{cuts}, there exist cuts $\gamma_0$ and $\gamma_\infty$ for $\lambda I_b$ for some $b \in \{a-2,a-1,a\}$. Then, consider the iterate $f^b : (U,\Lambda, I) \to (V, B, I_b)$ where
\begin{itemize}
    \item[$\rhd$] $V := \RS \backslash \left( (\lambda I_b)^c \cup \gamma_0 \cup \gamma_\infty \right)$;
    \item[$\rhd$] $B := V \backslash (\tau I_b)^c$;
    \item[$\rhd$] $U:=$ the connected component of $f^{-b}(V)$ containing $I$;
    \item[$\rhd$] $\Lambda :=$ the connected component of $f^{-b}(B)$ containing $I$.
\end{itemize}
By Lemma \ref{counting-critical},
\[
\text{deg}(f^b:\Lambda \to B) \leq M(\tau), \quad \text{and} \quad \text{deg}(f^b:U \to V) \leq M(\lambda).
\]

Fix the constant $\Xi > 1$. Since $\partial U$ contains $(\lambda I)^c$, we have $W(U, I) \geq K$. By Lemma \ref{covering-lemma}, for sufficiently high $\mathbf{K}$ depending on $\Xi$ and $\lambda$, either 
\[
W(B, I_b)> (d^2 \Xi +1) K \quad \text{or} \quad W(V, I_b) > \xi_1 K,
\]
where $\xi_1 \in (0,1)$ depends only on $\Xi$. By Lemma \ref{cuts}, the width of curves in $\mathcal{F}_\lambda(I_b)$ landing at the cuts $\gamma_0 \cup \gamma_\infty$ is at most $10$. Therefore, for sufficiently high $\mathbf{K}$, either
\[
W_\tau(I_b) \geq d^2 \Xi K \quad \text{or} \quad W_\lambda(I_b) \geq \xi_2 K,
\]
for some $\xi_2 \in (0,1)$ depending only on $\Xi$. After pushing forward by $f^{a-b}$, we conclude that the piece $I_a$ is either $[\Xi K, \tau]$-wide or $[\xi K, \lambda]$-wide, where $\xi = d^{-2} \xi_2$. Therefore, if there is no $2\leq a \leq q_{n+1}+1$ such that $I_a$ is $[\Xi K, \tau]$-wide, then $I_2$ generates an almost tiling consisting of $[\xi K, \lambda]$-wide pieces.
\end{proof}

\begin{remark}
    \label{removal1}
    In Case \ref{case:boundary-ring}, the proof above needs to be modified by replacing the disk $V$ with $V \backslash \hat{Y}^0$, similar to Remark \ref{removal}.
\end{remark}


\section{Trading \texorpdfstring{$\lambda$}{l}-degeneration for a \texorpdfstring{$\tau$}{t}-degeneration}
\label{sec:trading}

Given a $\lambda$-degeneration, the previous section tells us how to spread and obtain an almost tiling of $\lambda$-degenerate pieces. Next, we would like to convert such an almost tiling into a much larger $\tau$-degeneration with a multiplicative factor that grows with $\lambda$. The main result of this section is the following theorem.

\begin{theorem}
\label{demotion}
    For all sufficiently large $\lambda$, there are parameters $\step_\lambda, \mathbf{K}, \Pi_\lambda>1$ all depending on $\lambda$ where $\lim_{\lambda \to \infty}\Pi_\lambda = +\infty$ such that if 
    \begin{center}
        there is an almost tiling $\mathcal{I}$ consisting of $[K,\lambda]$-wide pieces of level $n\geq \thres_\lambda$
    \end{center}
    where $K\geq \mathbf{K}$, then 
    \begin{center}
        there is a $[\Pi_\lambda K, \tau]$-wide combinatorial piece $J$ of level $n' \geq \thres_\lambda$
    \end{center}
    where $|n'-n| \leq \step_\lambda$.
\end{theorem}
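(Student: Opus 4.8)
The plan is to fix the constant $C>1$ of Proposition~\ref{bounded-type} and the constant $m\in\N$ of Proposition~\ref{bubble-wave}, and, for each large $\lambda$, to choose an integer $j_0=j_0(\lambda)$ (specified at the very end) and set $N_\lambda:=\thres_\lambda+j_0$ and $\step_\lambda:=j_0+m+1$. Given a level $n$ almost tiling $\mathcal I=\{I_k\}_{k=0}^{q_{n+1}-1}$ of $[K,\lambda]$-wide pieces with $n\ge\thres_\lambda$, I would separate the \emph{deep case} $n\ge N_\lambda$ from the \emph{shallow case} $\thres_\lambda\le n<N_\lambda$. The reason for the split is that the bubble--wave threshold $\mathbf K(n)$ in Proposition~\ref{bubble-wave} is only controlled when $n$ is bounded in terms of $\lambda$, so in the deep case one must argue instead with the Quasi-Additivity Law, whose threshold depends only on $\lambda$. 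In Case~\ref{case:boundary-ring} both arguments below would be run with the usual Siegel-scale modifications: one removes the component $\hat Y^0$ of $\RS\setminus\He$ from every disk involved, as in Remarks~\ref{omit-y0} and~\ref{removal}, which is harmless by Lemma~\ref{negligibility}.

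\textbf{Deep case.} Here I would \emph{concentrate} the degeneration of the whole almost tiling into one long combinatorial piece. Fix any level $n-j_0$ combinatorial piece $\mathbf J=[x,f^{q_{n-j_0}}(x)]$; this is admissible since $n-j_0\ge\thres_\lambda\ge\thres_\tau$, and by Proposition~\ref{bounded-type} one has $|\mathbf J|=l_{n-j_0}\succ C^{j_0}l_n$. Hence $\mathbf J$ contains $\asymp C^{j_0}$ members of $\mathcal I$, and all but $O(\lambda)$ of them, forming a subcollection $\mathcal J$ with $\#\mathcal J\succ C^{j_0}$, satisfy $\lambda I_k\subset\mathbf J$. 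The pieces of $\mathcal J$ have pairwise disjoint interiors, are $\lambda$-nested inside $\mathbf J$, their $\lambda$-enlargements have overlap multiplicity $\le\lambda+1$, and each is $[K,\lambda]$-wide with $K\ge\mathbf K$ above the Quasi-Additivity threshold for the parameter $\lambda$. Feeding this configuration to the Quasi-Additivity Law (together with the routine passages between the $\tau$- and $\lambda$-enlargements, the trimming of curves that stop inside $\mathbf J$, and the replacement of $\tfrac1\tau\mathbf J$ by a combinatorial subpiece) produces a combinatorial piece $J$ of level $n'$ with $|n'-(n-j_0)|\le 1$ such that
\[
W_\tau(J)\;\ge\;c(\lambda)\sum_{I_k\in\mathcal J}W_\lambda(I_k)\;\ge\;c(\lambda)\,(\#\mathcal J)\,K\;\succ\;c(\lambda)\,C^{j_0}\,K,
\]
where $c(\lambda)>0$ is the (at most polynomially small in $\lambda$) constant from the Quasi-Additivity Law. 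Since $\#\mathcal J\asymp C^{j_0}$ can be made arbitrarily large by enlarging $j_0$, I would now \emph{choose} $j_0=j_0(\lambda)$ to be the least integer with $c(\lambda)\,C^{j_0}\ge\lambda$, so that $\Pi_\lambda:=c(\lambda)C^{j_0}\ge\lambda\to\infty$, $j_0(\lambda)=O(\log\lambda)$, $|n'-n|\le j_0+1\le\step_\lambda$, and $n'\ge n-j_0\ge\thres_\lambda$.

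\textbf{Shallow case.} Here a single piece suffices: take $I:=I_0\in\mathcal I$, which is $[K,\lambda]$-wide of level $n<N_\lambda$. The curves of $\mathcal F_\lambda(I)$ staying inside $\overline{\lambda I}$ carry width $O(1)$ (in fact $\asymp\mu/(\lambda l_n)=o(1)$ in Cases~\ref{case:herman-ring} and~\ref{case:boundary-ring}), so by the standard restriction arguments, for $\mathbf K$ large the leaves entering $Y^0\cup Y^\infty$ and joining $I$ to $(\lambda I)^c$ have total width $\succ K$; at least half of this lies in one of $Y^0,Y^\infty$, say $Y^\infty$, giving a lamination $\Omega_0\subset Y^\infty$ of width $\succ K$ all of whose leaves join $I$ to $(\lambda I)^c$. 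Since such a leaf runs from inside $\lambda I$ to outside $\lambda I$, the piece it protects from $\infty$ contains one of the two components of $\overline{\lambda I\setminus I}$; grouping the leaves accordingly, at least half the width protects a common piece, namely one of these two components $A$, which has length $\tfrac{\lambda-1}{2}\,l_n$. This yields a wave $\Omega$ with $|\Omega|\ge\tfrac{\lambda-1}{2}l_n$ and $W(\Omega)\succ K$. Because $n<N_\lambda$, Proposition~\ref{bubble-wave} applies with $\alpha:=\tfrac{\lambda-1}{2}\ge1$ and with threshold $\mathbf K(n)\le\max\{\mathbf K(n'):\thres_\lambda\le n'<N_\lambda\}=:\mathbf K'(\lambda)$ (so we require $\mathbf K\ge\mathbf K'(\lambda)$); it produces a level $n+m$ combinatorial piece $J$ with
\[
W_\tau(J)\;\succ\;\alpha\,W(\Omega)\;\succ\;\lambda K,
\]
and $|n'-n|=m\le\step_\lambda$, $n'=n+m\ge\thres_\lambda$. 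Thus $\Pi_\lambda\succ\lambda$ in this case as well.

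Combining the two cases, one sets $\Pi_\lambda$ to be the minimum of the two gains above (still $\to\infty$ as $\lambda\to\infty$) and $\mathbf K:=\max\{K_0(\lambda),\mathbf K'(\lambda)\}$, where $K_0(\lambda)$ is the Quasi-Additivity threshold. The step I expect to be the main obstacle is the deep case: verifying carefully that the family $\{I_k\in\mathcal J\}$ meets the hypotheses of the Quasi-Additivity Law (the correct notion of $\lambda$-nesting inside $\mathbf J$ together with the bounded overlap), and extracting from it a lower bound for $W_\tau(J)$ proportional to $K$ whose coefficient provably grows with $\lambda$ once $j_0$ is chosen as above.
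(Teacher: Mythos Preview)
Your split into deep and shallow cases matches the paper, but both halves have genuine gaps.

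\textbf{Shallow case.} The step ``at least half of this lies in one of $Y^0,Y^\infty$, say $Y^\infty$, giving a lamination $\Omega_0\subset Y^\infty$'' is incorrect: a curve in $\mathcal F_\lambda(I)$ need not lie in a single $Y^\bullet$. A curve from $I$ to $(\lambda I)^c$ can zigzag across $\Hq$, making many short excursions alternately into $Y^0$ and $Y^\infty$, each excursion protecting only a piece of length $\ll l_n$. Such snake-like curves produce no wave of length $\asymp\lambda l_n$, so Proposition~\ref{bubble-wave} cannot be invoked from them. The paper handles exactly this dichotomy: either some wave of length $\ge\sqrt[3]{\lambda}\,l_n$ has width $\ge K/10$ (then Proposition~\ref{bubble-wave} gives gain $\succ\sqrt[3]{\lambda}$), or under the no-wide-wave assumption the radial foliation of $\RS\setminus(I\cup(\lambda I)^c)$ is forced to cross a chain of $\asymp\lambda^{2/3}$ intermediate pieces in consecutive order, and the series law on the resulting restrictions $\mathcal G_1,\ldots,\mathcal G_N$ yields the gain. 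Your argument skips the second alternative entirely.

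\textbf{Deep case.} The Quasi-Additivity Law as stated in the paper (Lemma~\ref{quasi-additivity-law}) requires a single ambient disk $S$ containing all the $I_k$, and to get $W(S,I_k)\ge W_\lambda(I_k)$ one needs $\partial S\subset(\lambda I_k)^c$ for every $k$, i.e.\ $S\cap\Hq\subset\bigcap_k\lambda I_k$. This forces the span of $\{I_k\}$ to have length $\lesssim\lambda l_n$, hence $\#\mathcal J\lesssim\lambda$; you cannot take $\#\mathcal J\asymp C^{j_0}$ with $C^{j_0}\gg\lambda$. Your ``overlap multiplicity $\le\lambda+1$'' observation is not a hypothesis of Lemma~\ref{quasi-additivity-law}, and the displayed bound $W_\tau(J)\ge c(\lambda)\sum W_\lambda(I_k)$ is also the wrong shape: the QAL outputs $\max\{X,Z_1,\ldots,Z_N\}\ge\frac{1}{\sqrt{2N}}\sum W(S,I_k)$, so the gain is $\sqrt{N}$, not $N$ times a constant. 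The paper therefore simply takes $N=\lfloor\lambda/(3\tau^2)\rfloor$ pieces with $\tau$-gaps so that each $\lambda I_j$ contains $\tau P$ (where $P$ is the covering piece), applies Lemma~\ref{quasi-additivity-law} with the single disk $S=\RS\setminus\bigcup_j(\lambda I_j)^c$, and reads off $\Pi_\lambda\asymp\sqrt{\lambda}$. Your proposed choice of $j_0(\lambda)$ with $c(\lambda)C^{j_0}\ge\lambda$ does not survive once these two corrections are made.
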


The proof will be split into two cases:
\begin{description}[leftmargin=0.5in]
    \item[deep case] $n \geq \thres_\lambda + \step_\lambda$;
    \item[shallow case] $\thres_\lambda \leq n < \thres_\lambda + \step_\lambda$.
\end{description}
The threshold level $\thres_\lambda$ is essential because we will apply the theorem above inductively in Section \S\ref{sec:a-priori-bounds}.

\subsection{Deep case} 
\label{ss:trading-deep}
A deep almost tiling can be handled through a straightforward application of the Quasi-Additivity Law (Lemma \ref{quasi-additivity-law}).

\begin{proof}[Proof of Theorem \ref{demotion} in the deep case]

Assume $\lambda \gg \tau^2$ and set $N := \lfloor \frac{\lambda}{3\tau^2} \rfloor$. Suppose there is a level $n$ almost tiling $\mathcal{I}$ consisting of $[K,\lambda]$-wide pieces where $K \geq \mathbf{K}$. There exists a sequence $\{I_j\}_{j = 1,\ldots, N}$ of distinct pieces in the almost tiling $\mathcal{I}$, labelled in consecutive order, such that for every $j \in \{1,2,\ldots, N-1\}$, $I_j$ and $I_{j+1}$ have controlled combinatorial distance: 
\[
(\tau-1) l_n < \dist(I_j, I_{j+1}) \leq  \tau l_n.
\]
This ensures that $\tau I_j$ and $\cup_{i\neq j} I_i$ are always disjoint but not too far apart.

Let $P$ be the unique shortest piece containing $\bigcup_{j=1}^N I_j$. We set $\step_\lambda$ to be the largest integer less than $n$ such that $|P| \geq l_{n-\step_\lambda}$. Our choice of $N$ ensures that each $\lambda I_j$ contains $\tau P$. Consider the disk $S := \RS \backslash \bigcup_{j=1}^N (\lambda I_j)^c$. Following \cite{A06}, we will use the notation $\mathcal{H} < \mathcal{G}$ to denote that $\mathcal{G}$ \emph{overflows} $\mathcal{H}$. (See Appendix \ref{ss:extremal-width}.) Then, for every $j \in \{1,\ldots, N \}$,
\[
\mathcal{F}(S, I_j) < \mathcal{F}_{\lambda}(I_j), 
\quad \mathcal{F}_\tau (I_j) < \mathcal{F}\left(S\backslash \bigcup_{i\neq j} I_i, I_j\right), 
\quad \mathcal{F}_\tau (P) < \mathcal{F}\left(S, \bigcup_{i=1}^N I_i\right).
\]

We are under the assumption that for each $j$,
\[
W(S, I_j) \geq W_\lambda(I_j) \geq K.
\]
For sufficiently large $\mathbf{K}$, we can apply Lemma \ref{quasi-additivity-law} and obtain
\[
\max\{W_\tau(P), W_\tau(I_1),\ldots, W_\tau(I_N)\} \geq \frac{1}{\sqrt{2N}}\sum_{j=1}^N W(S,I_j) \geq \sqrt{\frac{N}{2}}K.
\]
Since $N \asymp \lambda$, we conclude that either 
\[
W_\tau(P) \succ\sqrt{\lambda} K, \quad \text{or} \quad W_\tau(I_j) \succ \sqrt{\lambda} K
\]
for some $j$. If the former, there exists a combinatorial subpiece $J \subset P$ of level $n-\step_\lambda$ and $\tau$-width $W_\tau(J) \succ \sqrt{\lambda} K$.
\end{proof}

Notice that, in the proof above, the piece $J$ is longer than the original piece $I$. This justifies the need for a different approach when $n$ is shallow.

\subsection{Shallow case} 
\label{ss:trading-shallow}
The main ingredient in the shallow case is Proposition \ref{prop:wave-degeneration}. Given a wide lamination, we are split into two different situations: either it forms combinatorially long wide waves or it intersects $\Hq$ frequently in a snake-like pattern (see Figure \ref{fig:no-wide-waves}). Both cases will produce a large $\tau$-degeneration.

\begin{proof}[Proof of Theorem \ref{demotion} in the shallow case] 

Fix $\mathbf{K}$, and suppose there is a $[K,\tau]$-wide piece $I$ of shallow level $n$ (e.g. by picking any piece from the almost tiling $\mathcal{I}$ in the hypothesis) where $K \geq \mathbf{K}$. 

If there is a wave $\Omega$ of width $\geq K/10$ and length $\geq \sqrt[3]{\lambda} l_n$ , then by Proposition \ref{prop:wave-degeneration}, there is a level $n+m$ a combinatorial piece $J$ such that
\[
W_\tau(J) \succ \sqrt[3]{\lambda} \cdot \frac{K}{10}
\]
and we are done, assuming $\mathbf{K}$ is sufficiently high depending on $\thres_\lambda + \step_\lambda$, and $\lambda$ is sufficiently large such that $\step_\lambda \geq m$. As such, we proceed under the following assumption. 
\vspace{0.1in}

\noindent \emph{No-wide-wave assumption:} Every wave of length $\geq \sqrt[3]{\lambda} l_n$ has width $\leq K/10$. 
\vspace{0.1in}

Let us decompose $(\lambda I)^c$ into $T^+ \cup T \cup T^-$, where $T^+$ and $T^-$ are the left and rightmost pieces of $(\lambda I)^c$ of length $\sqrt[3]{\lambda} l_n$, and $T:=(\lambda I)^c \backslash (T^+ \cup T^-)$. Let $\mathcal{F}$ be the set of leaves of the canonical radial foliation of the conformal annulus $\RS \backslash (I \cup (\lambda I)^c)$ that never restrict to curves protecting $T^\dagger$ from $\bullet$ for any $\bullet \in\{0,\infty\}$ and $\dagger \in \{+,-\}$. 

\begin{claim1} 
\namedlabel{s5.claim1}{Claim 1}
    Every leaf of $\mathcal{F}$ connects $I$ and $T^+ \cup T^-$.
\end{claim1} 

\begin{proof}
    If a radial leaf lands on $T$, then it must restrict to a subcurve that protects either $T^+$ or $T^-$.
\end{proof}

We can decompose $\mathcal{F}$ into $\mathcal{F}^+ \cup \mathcal{F}^-$ according to whether leaves land on $T^+$ or $T^-$. From the no-wide-wave assumption, the width $W(\mathcal{F})$ of $\mathcal{F}$ is at least $6K/10$. Without loss of generality, assume that $\mathcal{F}^+$ is wider, so then 
\begin{equation}
    \label{ineq-f+}
W\left(\mathcal{F}^+\right) \geq \frac{3}{10}K.
\end{equation}

Recall the notion of conformal rectangles and buffers from Appendix \ref{ss:extremal-width}. We say that a lamination $\mathcal{L}$ is \emph{rectangular} if it is a sublamination of the vertical foliation of a conformal rectangle $R$. Moreover, a sublamination of a rectangular lamination $\mathcal{L}$ is a \emph{buffer} of $\mathcal{L}$ if it is the set of leaves of $\mathcal{L}$ that lie in a buffer of $R$.

\begin{claim2}
    $\mathcal{F}^+$ is rectangular\footnote{In fact, $\mathcal{F}^+$ is the vertical foliation of a conformal rectangle. An approach similar to \ref{s5.claim3} can be used to prove this.}.
\end{claim2}

\begin{proof}
By construction, the set $\tilde{\mathcal{F}}^+$ of leaves in the radial foliation of $\RS \backslash (I \cup (\lambda I)^c)$ that land on $T^+$ forms a single conformal rectangle. By \ref{s5.claim1}, $\mathcal{F}^+$ is a sublamination of $\tilde{\mathcal{F}}^+$.
\end{proof}

Consider a finite sequence of distinct pieces $I_0:=I, I_1,\ldots, I_N$ labelled in consecutive order such that for each $j=1,\ldots, N$,
\begin{enumerate}
    \item[(i)] $I_j$ is a subpiece of $\lambda I$ located between $I$ and $T^+$;
    \item[(ii)] $|I_j| = \sqrt[3]{\lambda} l_n$;
    \item[(iii)] $I_j$ is of distance at least $\frac{\tau-1}{2}l_n$ away from $I_i$ for all $i\neq j$.
\end{enumerate}
We pick $N$ to be the maximum possible integer such that (i)-(iii) holds.

\begin{claim3}
\namedlabel{s5.claim3}{Claim 3}
    For any $\bullet \in \{0,\infty\}$ and $j \in \{1,\ldots, N\}$, the set $\mathcal{F}^+_{j,\bullet}$ of leaves of $\mathcal{F}^+$ that contain a subcurve protecting $I_j$ from $\bullet$ is a buffer of $\mathcal{F}^+$.
\end{claim3}

\begin{proof}
    Every leaf of $\mathcal{F}^+$ cannot contain a subcurve protecting $(\lambda I)^c$, because otherwise it would protect $T^+$. As such, for each $\bullet \in \{0,\infty\}$, there exists a ray $\sigma^\bullet$ in $Y^\bullet$ that is disjoint from $\mathcal{F}^+$ and connects $\bullet$ and $T$. Let $D := \RS \backslash \big( (\lambda I)^c \cup \overline{\sigma^0 \cup \sigma^\infty}\big)$.
    
\begin{figure}
    \centering    
    \begin{tikzpicture}
    \node[anchor=south west,inner sep=0] (image) at (0,0) {\includegraphics[width=0.9\linewidth]{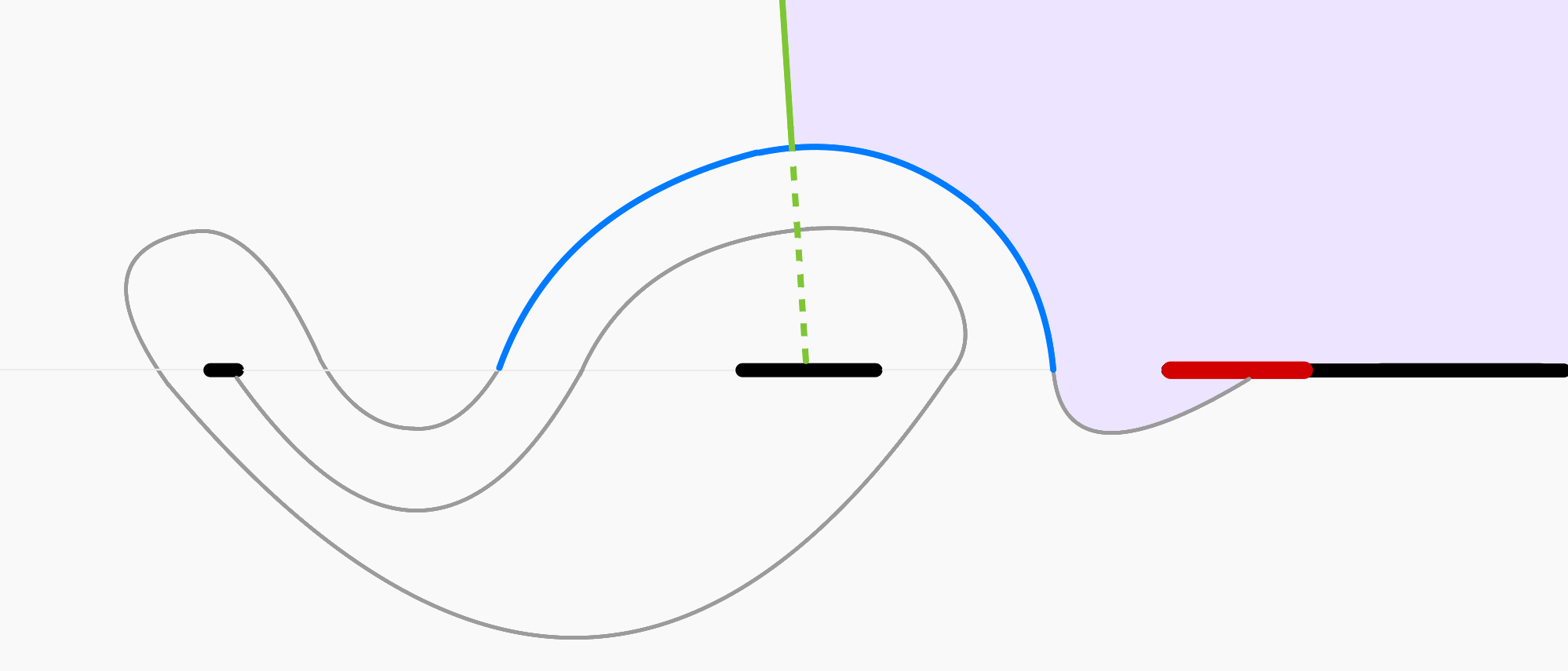}};
    \begin{scope}[
        x={(image.south east)},
        y={(image.north west)}
    ]
        \node [black, font=\bfseries] at (0.145,0.52) {$I$};
        \node [black, font=\bfseries] at (0.52,0.37) {$I_j$};
        \node [blue!50!cyan, font=\bfseries] at (0.335,0.66) {$\gamma'$};
        \node [red!50!blue, font=\bfseries] at (0.75,0.70) {$U$};
        \node [red!75!black, font=\bfseries] at (0.8,0.37) {$T^+$};
        \node [black, font=\bfseries] at (0.92,0.365) {$T$};
        \node [green!55!black, font=\bfseries] at (0.52,0.91) {$\tilde{\sigma}$};
        \node [white!40!black, font=\bfseries] at (0.50,0.10) {$\gamma$};
    \end{scope}
\end{tikzpicture}
    
    \caption{Domain $U$ defined by the ray $\sigma$ and the leaf $\gamma$.}
    \label{fig:u-gamma-sigma}
\end{figure}
    
    Suppose a leaf $\gamma$ of $\mathcal{F}^+$ contains a subcurve protecting $I_j$ from $\bullet$; label by $\gamma'$ the corresponding subcurve of $\gamma$ that is furthest from $I_j$. Pick any simple curve $\sigma$ in $D \cap Y^\bullet$ connecting $\bullet$ and a point on $I_j$. Then, $\sigma$ intersects $\gamma'$ and contains a subcurve $\tilde{\sigma}$ that connects $\bullet$ and a point $w \in \gamma' \cap \sigma$ and is disjoint from $\gamma$ away from $w$. Clearly, $\tilde{\sigma}$ splits the disk $D\backslash (I \cup \gamma)$ into two components, one of which, labelled by $U$, has closure that is disjoint from $I$. See Figure \ref{fig:u-gamma-sigma}.
    
    The leaf $\gamma$ splits $\mathcal{F}^+$ into two rectangular sublaminations on opposite sides of $\gamma$. One of the sublaminations, labelled by $\mathcal{F}^+_\gamma$, has support that intersects $U$. Since every leaf of $\mathcal{F}^+_\gamma$ must land on $I$ and avoid $\gamma$, then every leaf of $\mathcal{F}^+_\gamma$ must intersect $\sigma$. As $\sigma$ is arbitrary, this implies that every leaf of $\mathcal{F}^+_\gamma$ contains a subcurve protecting $I_j$ from $\bullet$. Then, the claim follows from the fact that $\mathcal{F}^+_\gamma$ is a buffer of $\mathcal{F}^+$.
\end{proof}

Let $\mathcal{G}$ be the sublamination of $\mathcal{F}^+$ consisting of all leaves that intersect all the $I_j$'s in consecutive order ($I_{j-1}$ before $I_j$). If a leaf $\gamma$ of $\mathcal{F}^+$ is not in $\mathcal{G}$, then it must contain a subcurve protecting some $I_j$. Therefore, by \ref{s5.claim3}, there exist pairs $j,k \in \{1,\ldots, N\}$ and $\sharp, \flat \in \{0,\infty\}$ such that $\mathcal{F}^+ \backslash \mathcal{G}$ is contained in a union of two maximal buffers $\mathcal{F}^+_{j,\sharp}$ and $\mathcal{F}^+_{k,\flat}$. In particular, $\mathcal{F}^+ \backslash \mathcal{G}$ overflows a union of two waves of length at least $\sqrt[3]{\lambda} l_n$. By (\ref{ineq-f+}) and the no-wide-wave wave assumption,
\[
     W(\mathcal{G}) \geq \frac{K}{10}.
\]

\begin{figure}
    \centering
    
    \begin{tikzpicture}
    \node[anchor=south west,inner sep=0] (image) at (0,0) {\includegraphics[width=0.8\linewidth]{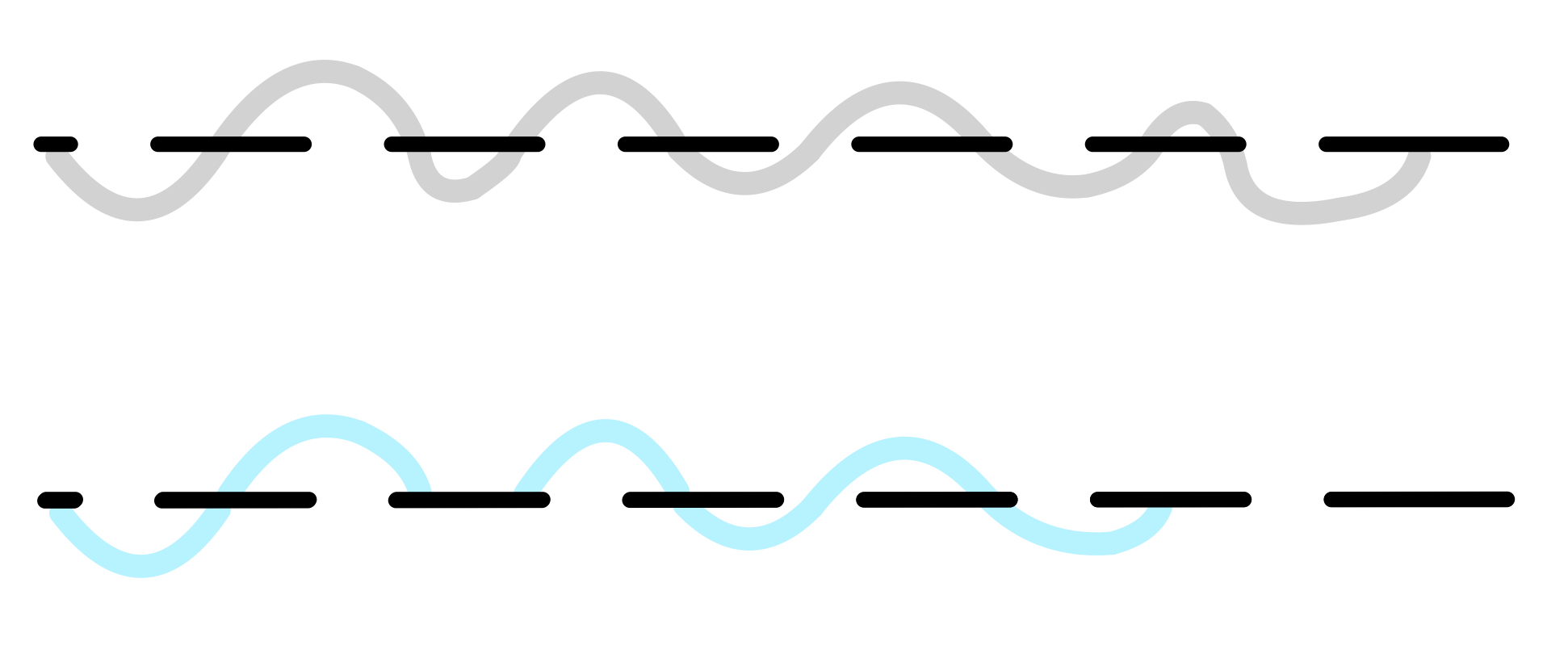}};
    \begin{scope}[
        x={(image.south east)},
        y={(image.north west)}
    ]
        \node [black, font=\bfseries] at (0.04,0.67) {$I_0$};
        \node [black, font=\bfseries] at (0.16,0.67) {$I_1$};
        \node [black, font=\bfseries] at (0.30,0.67) {$I_2$};
        \node [black, font=\bfseries] at (0.44,0.67) {$I_3$};
        \node [black, font=\bfseries] at (0.59,0.67) {$I_4$};
        \node [black, font=\bfseries] at (0.75,0.67) {$I_5$};
        \node [black, font=\bfseries] at (0.91,0.67) {$T^+$};
        
        \node [gray, font=\bfseries] at (0.25,0.91) {$\mathcal{G}$};
        \node [blue!40!cyan, font=\bfseries] at (0.06,0.10) {$\mathcal{G}_{1}$};
        \node [blue!40!cyan, font=\bfseries] at (0.22,0.405) {$\mathcal{G}_{2}$};
        \node [blue!40!cyan, font=\bfseries] at (0.40,0.40) {$\mathcal{G}_{3}$};
        \node [blue!40!cyan, font=\bfseries] at (0.48,0.11) {$\mathcal{G}_{4}$};
        \node [blue!40!cyan, font=\bfseries] at (0.69,0.105) {$\mathcal{G}_{5}$};
    \end{scope}
\end{tikzpicture}
    
    \caption{The sublamination $\mathcal{G} \subset \mathcal{F}^+$ intersects all $I_i$'s in order.}
    \label{fig:no-wide-waves}
\end{figure}

There exist pairwise disjoint laminations $\mathcal{G}_1, \ldots , \mathcal{G}_N$ such that each $\mathcal{G}_j$ is a restriction of $\mathcal{G}$ and connects $I_{j-1}$ and $I_j$. See Figure \ref{fig:no-wide-waves} for illustration. Suppose $\mathcal{G}_s$ has the largest width amongst all the $\mathcal{G}_i$'s. By Propositions \ref{series law} and \ref{harmonicsum}, 
\[
W(\mathcal{G}_s) \geq \frac{N}{10} K.
\]
Since the gaps between the $I_j$'s are at least $\frac{\tau -1 }{2} l_n$ in length, there must be a level $n$ combinatorial subpiece $J \subset I_s$ such that 
\[
W_\tau (J) \geq \frac{1}{\sqrt[3]{\lambda}} W(\mathcal{G}_s) .
\]
For sufficiently high $\lambda$, the maximum possible value of $N$ satisfies $N \asymp \sqrt[3]{\lambda^2}$. By combining the two inequalities above, we are done:
\[
W_\tau (J) \geq \frac{N}{10 \sqrt[3]{\lambda}} K \asymp \sqrt[3]{\lambda} K. \qedhere
\]
\end{proof}


\section{Amplifying \texorpdfstring{$\tau$}{t}-degeneration}
\label{sec:amplifying-tau-degeneration}
In this section, we work our way towards the amplification of a $\tau$-degeneration. More precisely, we aim to find a way to promote a $\tau$-degeneration in $\Hq$ into either a significantly larger $\tau$-degeneration or a comparable $\lambda$-degeneration. Unlike the previous section, the multiplicative factor will be independent of $\lambda$.

\begin{theorem}
    \label{promotion}
    There are absolute constants $\chi \in (0,1)$ and $\step \in \N$ such that for sufficiently large $\lambda $, there is some $\mathbf{K} = \mathbf{K}(\lambda) >1$ such that if 
    \begin{center}
        there is a $[K,\tau]$-wide combinatorial piece $I$ of level $n\geq \thres_\lambda$
    \end{center}
    where $K \geq \mathbf{K}$, then 
    \begin{center}
        there is a combinatorial piece $J$ of level $n'\geq \thres_\lambda$, where $|n'-n| \leq \step$, \\
        that is either $[\chi K, \lambda]$-wide or $[2K, \tau]$-wide.
    \end{center}
\end{theorem}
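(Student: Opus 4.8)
The plan is to run the by-now standard dichotomy on the level $n$ of the given piece $I$: the \emph{shallow case} $\thres_\lambda \le n < \thres_\lambda + \step$ and the \emph{deep case} $n \ge \thres_\lambda + \step$, where $\step \in \N$ is an absolute constant to be fixed later (in particular $\step \ge m$, with $m$ the constant of Proposition \ref{bubble-wave}). The shallow case is handled just as in the shallow case of Theorem \ref{demotion} (\S\ref{ss:trading-shallow}): either there is a wave of length $\ge \sqrt[3]{\lambda}\, l_n$ and width $\succ K$, in which case Proposition \ref{bubble-wave} produces a level-$(n+m)$ combinatorial piece $J$ with $W_\tau(J) \succ \sqrt[3]{\lambda}\, K$; or the no-wide-wave alternative holds, and the rectangular snake argument of \S\ref{ss:trading-shallow} produces a level-$n$ combinatorial subpiece $J$ with $W_\tau(J) \asymp \sqrt[3]{\lambda}\, K$. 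Since $\sqrt[3]{\lambda} \ge 2$ once $\lambda$ is large, in either case $J$ is $[2K,\tau]$-wide, of level $n' \ge \thres_\lambda$ with $|n'-n| \le m \le \step$, and the required threshold $\mathbf{K} = \mathbf{K}(\lambda)$ is controlled because $n$ and $n+m$ are bounded in terms of $\thres_\lambda + \step$. So from now on we may assume that we are in the deep case.

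In the deep case I would adapt Kahn's push-forward argument from \cite[\S7]{K06}. First, spread the $\tau$-degeneration along the orbit $I, f(I), \dots, f^{\step}(I)$: as in the proof of Proposition \ref{spreading-tau}, the Transformation Law (Proposition \ref{transformation law}) together with the cuts of Lemma \ref{cuts} and the absolute degree bound of Lemma \ref{counting-critical} gives $W_\tau(f^i(I)) \succ K$ for every $i \le \step$. Next, at each such iterate decompose this $\tau$-width, via the canonical lamination of $\RS \setminus (\tau f^i(I))^c$ relative to $f^i(I)$, into a \emph{vertical} part — leaves that already reach $(\lambda f^i(I))^c$, a sublamination of $\mathcal{F}_\lambda(f^i(I))$ hence witnessing $W_\lambda(f^i(I))$ — and a \emph{horizontal} part, carried by the collar $\lambda f^i(I) \setminus \tau f^i(I)$. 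If the vertical part has width $\ge \chi K$ for some $i \le \step$, we land in the $[\chi K,\lambda]$-alternative. Otherwise almost the whole $\tau$-width is horizontal at every one of these iterates; here I would invoke Proposition \ref{pos-entropy}, the substitute for the positive-entropy input in Kahn's scheme, to the effect that horizontal width cannot be preserved under iteration of $f$: after a bounded number of pushforwards a definite fraction of it is destroyed. Since the total $\tau$-width is conserved along the orbit up to an absolute constant, this loss must resurface either as vertical width — giving $W_\lambda(J) \ge \chi K$ at a nearby combinatorial piece — or as a concentration of $\tau$-width onto a shorter combinatorial subpiece, which after a Quasi-Additivity Law argument (Lemma \ref{quasi-additivity-law}) yields $W_\tau(J) \ge 2K$. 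In all cases $J$ has level $n'$ with $|n'-n| \le \step$, and working in the range $n \ge \thres_\lambda + \step$ guarantees $n' \ge \thres_\lambda$, with $\chi$ and $\step$ absolute and $\mathbf{K}$ depending only on $\lambda$.

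The main obstacle is the deep case, and within it the content of Proposition \ref{pos-entropy}: quantifying the loss of horizontal width uniformly in $\lambda$. In Kahn's original argument the driving input is the positivity of the core entropy of primitive renormalization, which makes pushed-forward mass accumulate; the rotational action of $f$ on $\Hq$ has zero entropy, so there is no mixing to exploit, and one must instead extract the needed gain from the combinatorial rigidity of the bounded-type rotation — the alternating return-time structure of the $q_n$'s and the controlled harmonic measure of the bubbles attached to $\Hq$ — which is the work of \S\ref{sec:loss-of-horizontal-width}. A secondary difficulty is that the bubble-wave argument, freely available in the shallow case, cannot be used in the deep case because its constant $\mathbf{K}(n)$ grows with $n$; any use of waves in the deep range must therefore first push the wave forward to lower its level, and the width bookkeeping has to be arranged so that the cost is an absolute constant rather than a $\lambda$-dependent one.
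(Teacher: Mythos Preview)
Your overall shallow/deep split matches the paper, and you correctly identify Proposition \ref{pos-entropy} as the replacement for Kahn's entropy input. But in each case the transplant you propose is not quite right.

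\textbf{Shallow case.} You cannot reuse \S\ref{ss:trading-shallow} verbatim. That argument begins from a $[K,\lambda]$-wide piece, so the snake $I_0,\ldots,I_N$ with $|I_j|=\sqrt[3]{\lambda}\,l_n$ lives inside $\lambda I$, where there is room for $N\asymp\lambda^{2/3}$ such pieces. Here you only have $W_\tau(I)\ge K$: the radial foliation connects $I$ to $(\tau I)^c$, and the available strip between $I$ and $T^+$ has length $\frac{\tau-1}{2}l_n<5\,l_n$, so for large $\lambda$ you cannot place even one $I_j$ of length $\sqrt[3]{\lambda}\,l_n$. The paper's \S\ref{ss:amplifying-shallow} runs the same snake argument but at an \emph{absolute} scale: the $I_j$'s have level $n+m'$ for a fixed $m'$, the no-wide-wave threshold is $l_{n+m'}$, and the gain is $\asymp\tilde C^{m'}$ rather than $\sqrt[3]{\lambda}$. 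That is only enough for the $[2K,\tau]$ alternative, but that is all the theorem asks for.

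\textbf{Deep case.} Your decomposition of $\mathcal{F}_\tau(f^i(I))$ into ``leaves reaching $(\lambda f^i(I))^c$'' versus ``leaves landing in the collar $\lambda f^i(I)\cap(\tau f^i(I))^c$'' is a reasonable first step, but it does not produce the input to Proposition \ref{pos-entropy}. That proposition requires \emph{two} pieces $L,R$ with $|L|\asymp|R|\asymp\dist(L,R)\asymp l_n$ and a disk $U^0$ with $\Hq\setminus U^0=(\lambda L)^c$; its horizontal lamination $\mathcal{F}^0=\mathcal{F}^h_{\textnormal{can}}(U^0,L\cup R)$ consists of curves in $U^0\setminus(L\cup R)$ connecting $L$ and $R$ or looping around one of them, which is not the same as ``curves landing somewhere in the collar''. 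The paper bridges this with a localization step (Lemma \ref{introducing-rho}): starting from the almost tiling of $[\varepsilon K,\tau]$-wide pieces, a Quasi-Additivity argument at an \emph{absolute} intermediate scale $\rho$ either already yields a $[2K,\tau]$-wide piece, or singles out a specific $L^*\in\mathcal{I}$ for which curves in $\mathcal{F}_\tau(L^*)$ landing on one component $R^*$ of $\rho L^*\cap(\tau L^*)^c$ carry width $\succ K$. Only after this does one set $L=f^t(L^*)$, $R=f^t(R^*)$, build $U^0$ via cuts, and verify $W(\mathcal{F}^0)\succ K$ (Lemma \ref{ineq:assumption-on-X0}); a further technical thickening (Lemma \ref{new-pieces}) is needed before Proposition \ref{pos-entropy} and the push-forward can be applied. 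Without the localization step you have no pair $(L,R)$ to feed into the horizontal-width machinery.
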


Similar to the previous section, we shall split the proof into two cases:
\begin{description}
    \item[deep case] $n\geq \thres_\lambda + \step$;
    \item[shallow case] $\thres_\lambda \leq n < \thres_\lambda + \step$.
\end{description}
This splitting is slightly different than the one presented in \S\ref{sec:trading}. Here, the increment $\step$ is smaller than the increment $\step_\lambda$ in Theorem \ref{demotion}.

\subsection{Shallow case}
\label{ss:amplifying-shallow}
The shallow case can again be handled using waves almost the exact same way as our treatment in \S\ref{ss:trading-shallow}. Any repeated details will be spared.

\begin{proof}[Proof of Theorem \ref{promotion} in the shallow case]

Fix $\mathbf{K}$ and suppose $I$ is a $[K,\tau]$-wide level $n$ combinatorial piece where $K \geq \mathbf{K}$ and $n$ is shallow. Fix a pair of positive integers $m'$ and $m''$; both are independent of $\lambda$ and will be determined later.

Assume that $m''$ is high enough such that $l_{n+m'} \geq \tau l_{n+m'+m''}$. If there is a wave of width $\geq K/10$ and combinatorial length $\geq l_{n+m'}$, then by Proposition \ref{prop:wave-degeneration}, there is a level $n+m+ m'+m''$ combinatorial piece $J$ such that
    \[
    W_\tau(J) \succ \frac{ l_{n+m'}}{l_{n+m'+m''}} \cdot \frac{K}{10} \succ \tilde{C}^{m''} K,
    \]
for some absolute constants $\tilde{C}>1$ and $m \in \N$. By picking a sufficiently high $m''$ such that $J$ is $[2K, \tau]$-wide, and by picking the threshold increment $\step \in \N$ such that $\step \geq m+m'+m''$, we are done. As such, we will proceed under the following assumption. 
\vspace{0.1in}

\noindent \emph{No-wide-wave assumption:} Every wave of length $\geq l_{n+m'}$ has width $\leq K/10$.
\vspace{0.1in}

Let $T^+$ and $T^-$ be the leftmost and rightmost level $n+m'$ combinatorial subpieces of $(\tau I)^c$. Let $\mathcal{F}$ be the set of leaves of the canonical radial foliation of the conformal annulus $\RS \backslash (I \cup (\tau I)^c)$ that never restrict to curves protecting $T^\dagger$ from $\bullet$ for any $\bullet \in\{0,\infty\}$ and $\dagger \in \{+,-\}$. Since leaves of $\mathcal{F}$ must connect $I$ and $T^+ \cup T^-$, we can decompose $\mathcal{F}$ into $\mathcal{F}^+ \cup \mathcal{F}^-$ according to whether leaves land on $T^+$ or $T^-$. Without loss of generality, assume that $\mathcal{F}^+$ is wider.

Consider a finite sequence of distinct combinatorial pieces $I_0:=I, I_1,\ldots, I_N$ labelled in consecutive order such that for each $j=1,\ldots, N$,
\begin{enumerate}
    \item[(i)] $I_j$ is a subpiece of $\tau I$ located between $I$ and $T^+$;
    \item[(ii)] $I_j$ is of level $n+m'$;
    \item[(iii)] $I_j$ is of distance at least $\frac{\tau-1}{2}l_{n+m'}$ away from $I_i$ for all $i\neq j$.
\end{enumerate}
We pick $N$ to be the maximum possible integer such that (i)-(iii) holds.

Similar to the argument in Section \S\ref{ss:trading-shallow}, the no-wide-wave assumption implies that there exists some $s \in \{1,\ldots, N\}$ and a lamination $\mathcal{G}_s$ connecting $I_{s-1}$ and $I_s$ such that 
\[
W(\mathcal{G}_s) \geq \frac{N}{10}K.
\]
The $I_j$'s are constructed such that $(\tau I_s)^c$ contains every $I_i$ for $i \neq s$. In particular, $\mathcal{G}_s$ overflows $\mathcal{F}_\tau(I_s)$ and thus the piece $I_s$ is $[\frac{N}{10}K, \tau]$-wide. Since $N \succ \tilde{C}^{m'}$, we can pick $m'$ to be just high enough such that $N \geq 20$. Hence, $I_s$ is a $[2 K , \tau]$-wide combinatorial piece of level $n+m'$.
\end{proof}

\subsection{Deep case}
\label{ss:amplifying-deep}
In the deep case, our approach below is inspired by Kahn's push-forward argument in \cite[\S7]{K06}. The proof below contains a series of reductive steps before we finally adapt the push-forward argument at the very end. We would like to emphasize that Lemma \ref{positive entropy} will be established in \S\ref{sec:loss-of-horizontal-width} (Remark \ref{rem:final-proof}).

\begin{proof}[Proof of Theorem \ref{promotion} in the deep case]
    Suppose there is a $[K,\tau]$-wide combinatorial piece in $\Hq$ of some deep level $n$ with $K \geq \mathbf{K}$. By Proposition \ref{spreading-tau}, we have a level $n$ almost tiling $\mathcal{I}$ consisting of $[\varepsilon K, \tau]$-wide pieces for some absolute constant $0 < \varepsilon <1$.
    
\begin{lemma}[Localization of $\tau$-degeneration]
\label{introducing-rho}
        There are absolute constants $\rho$, $m_0$, $m_* \in \mathbb{N}_{>1}$, where $\rho \gg\tau$, such that for sufficiently large $\mathbf{K}$ and for $n \geq m_*$, either
    \begin{enumerate}[label=\textnormal{(\arabic*)}]
        \item there is a $[2K, \tau]$-wide combinatorial piece of level between $n-m_0$ and $n$, or
        \item there is some $L^* \in \mathcal{I}$ such that the width of curves in $\mathcal{F}_\tau(L^*)$ that land on $\rho L^* \cap ( \tau L^*)^c$ is greater than $\varepsilon K/2$.
    \end{enumerate}
\end{lemma}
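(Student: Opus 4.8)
The plan is to prove the dichotomy by contraposition, reducing it to the deep case of Theorem~\ref{demotion}: assuming alternative (2) fails, I will upgrade the almost tiling $\mathcal{I}$ into a $[2K,\tau]$-wide combinatorial piece via the Quasi-Additivity Law, with the free parameter $\lambda$ there replaced by a large \emph{absolute} constant $\rho$.

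\emph{From failure of (2) to $\rho$-degeneration of every tile.} Suppose that for every $L\in\mathcal{I}$ the width of the curves in $\mathcal{F}_\tau(L)$ landing on $\rho L\cap(\tau L)^c$ is at most $\varepsilon K/2$. Partition $\mathcal{F}_\tau(L)$ into those curves and the complementary subfamily $\mathcal{H}_L$ of curves landing on $(\rho L)^c$. By Proposition~\ref{spreading-tau}, $W_\tau(L)\geq\varepsilon K$, so by subadditivity of width over this partition, $W(\mathcal{H}_L)\geq\varepsilon K-\varepsilon K/2=\varepsilon K/2$. Each leaf of $\mathcal{H}_L$ joins $L$ to a point of $(\rho L)^c$, hence contains an initial subcurve lying in $\RS\setminus(\rho L)^c$ that joins $L$ and $(\rho L)^c$; therefore $\mathcal{H}_L$ overflows $\mathcal{F}_\rho(L)$, so that $W_\rho(L)\geq W(\mathcal{H}_L)\geq\varepsilon K/2$. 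In other words, every piece of $\mathcal{I}$ is $[\varepsilon K/2,\rho]$-wide.

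\emph{Applying the Quasi-Additivity Law.} Fix $\rho\gg\tau$ to be an absolute constant, chosen large enough in terms of $\varepsilon$ and the implicit constant below, and run the deep case of Theorem~\ref{demotion} verbatim with $\lambda:=\rho$ and with $K$ replaced by $\varepsilon K/2$: choose $N:=\lfloor\rho/(3\tau^2)\rfloor$ consecutive pieces $I_1,\dots,I_N\in\mathcal{I}$ with $(\tau-1)l_n<\dist(I_j,I_{j+1})\leq\tau l_n$, let $P$ be the shortest piece containing $\bigcup_j I_j$, and let $m_0$ be the largest integer with $|P|\geq l_{n-m_0}$. Since $|P|\asymp\rho l_n$, Proposition~\ref{bounded-type} shows $m_0$ depends only on $\rho$ (not on $n$), and the choice of $N$ guarantees $\tau P\subset\rho I_j$ for all $j$. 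Applying Lemma~\ref{quasi-additivity-law} to $S:=\RS\setminus\bigcup_j(\rho I_j)^c$ — legitimate once $\mathbf{K}$ is large enough — gives
\[
\max\{W_\tau(P),W_\tau(I_1),\dots,W_\tau(I_N)\}\ \geq\ \frac{1}{\sqrt{2N}}\sum_{j=1}^N W(S,I_j)\ \geq\ \sqrt{N/2}\cdot\frac{\varepsilon K}{2}\ \succ\ \sqrt{\rho}\,\varepsilon K.
\]
Choosing $\rho$ so large that the right-hand side is $\geq 2K$, we obtain either some $I_j$ that is $[2K,\tau]$-wide (of level $n$), or $W_\tau(P)\geq 2K$, and in the latter case — exactly as in Theorem~\ref{demotion} — a combinatorial subpiece $J\subset P$ of level $n-m_0$ is $[2K,\tau]$-wide. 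Either way we land in alternative (1) with the level between $n-m_0$ and $n$. Finally, take $m_*:=m_0+1$ (or larger) so that level $n-m_0$ and all the bounded-type estimates are meaningful whenever $n\geq m_*$.

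The argument is essentially bookkeeping layered on tools already in place, so I do not anticipate a serious obstacle; the one point requiring care is that $\rho$ and the level drop $m_0$ can be taken independent of $n$ and of the parameter $\lambda$ of Theorem~\ref{promotion}. This is precisely what the bounded-type estimate $|P|\asymp\rho l_n$ together with Proposition~\ref{bounded-type} delivers, and it is what keeps all constants produced downstream — including the $\chi$ and $\step$ of Theorem~\ref{promotion} — absolute, in contrast with the $\lambda$-dependent spreading of \S\ref{sec:spreading-degeneration}.
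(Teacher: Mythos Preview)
Your proof is correct and follows essentially the same approach as the paper: assume (2) fails, deduce $W_\rho(L)\geq\varepsilon K/2$ for every tile, then apply the Quasi-Additivity Law to a chain of $N\asymp\rho$ well-spaced tiles $I_1,\dots,I_N$ inside a piece $P$ of level $n-m_0$, exactly as in the deep case of Theorem~\ref{demotion}. The only cosmetic difference is the order in which constants are fixed: the paper chooses $N$ first and then defines $\rho=\rho(N)$ and $m_0=m_0(N)$, whereas you choose $\rho$ first and set $N=\lfloor\rho/(3\tau^2)\rfloor$; both are equivalent bookkeeping.
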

    
    Roughly speaking, if (2) fails, then we apply the Quasi-Additivity Law to the family $\mathcal{F}_\rho(I)$ (for a fixed $\rho$) to obtain (1) in a way that is similar to Section \S\ref{ss:trading-deep}.
    
\begin{proof}
    Fix $m_*$. There exists a finite sequence of distinct pieces $I_1, \ldots, I_N$ in $\mathcal{I}$ labelled in consecutive order such that any pair of adjacent pieces $I_j$ and $I_{j+1}$ have bounded combinatorial distance:
    \[
    (\tau-1) l_n \leq \dist(I_j, I_{j+1}) \leq \tau l_n.
    \]
    This condition ensures that for each $j$, $\tau I_j$ and $\cup_{i\neq j} I_i$ are disjoint but not too far apart. The integer $N \geq 2$ will be specified later, but nonetheless it must be bounded above by some constant depending on $m_*$. 
    
    Let $P$ be the unique shortest piece containing $\bigcup_{j=1}^N I_j$. We set $m_0 = m_0(N)$ to be the largest integer such that $|P| \geq l_{n-m_0}$. Also, set $\rho=\rho(N)$ to be the smallest integer such that for every $j$, $\rho I_j$ contains $\tau P$. Let $S := \RS \backslash \bigcup_{j=1}^N (\rho I_j)^c$. We will again use the notation $\mathcal{H} < \mathcal{G}$ to denote that $\mathcal{G}$ overflows $\mathcal{H}$. Then, for every $j \in \{1,\ldots, N \}$,
    \[
    \mathcal{F}(S, I_j) < \mathcal{F}_{\rho}(I_j), 
    \quad \mathcal{F}_\tau (I_j) < \mathcal{F}\left( S\backslash \bigcup_{i\neq j} I_i, I_j \right), 
    \quad \mathcal{F}_\tau (P) < \mathcal{F}\left(S, \bigcup_{i=1}^N I_i \right).
    \]

    Suppose (2) does not hold. For each $j$, the width of curves connecting $I_j$ and $(\rho I_j)^c$ exceeds $\varepsilon K/2$ and consequently, 
    \[
        W(S, I_j) \geq W_{\rho}( I_j) \geq \frac{ \varepsilon K}{2}.
    \]
    For sufficiently large $\mathbf{K}$, we can apply Lemma \ref{quasi-additivity-law} and obtain
    \[
    \max\{W_\tau(P), W_\tau( I_1), \ldots, W_\tau( I_N) \} \geq \frac{1}{\sqrt{2N}} \sum_{j=1}^N W(S, I_j) \succ \sqrt{N}  K.
    \]
    Suppose the maximum $\tau$-width is attained by $J' \in \{P, I_1, \ldots, I_N\}$. Then, there is a combinatorial subpiece $J \subset J'$ of width $W_\tau(J) \succ \sqrt{N} K$ and level between $n-m_0$ and $n$. Finally, pick $N$ (and ultimately $m_*$) to be just high enough such that $J$ is $[2K, \tau]$-wide. This leads to (1).
\end{proof}

Let $\lambda$ be sufficiently large such that $\lambda > \rho$ and $\thres_\lambda \geq m_0+ m_*$. Then, by the lemma, it is sufficient to consider case (2). In this case, there is a connected component $R^*$ of $\rho L^* \cap (\tau L^*)^c$ such that the family $\mathcal{F}(L^*,R^*)$ of curves connecting $L^*$ and $R^*$ has width 
\begin{equation}
    \label{ineq:F*}
    W\left( \mathcal{F}(L^*,R^*) \right) \geq \frac{\varepsilon}{4}K.
\end{equation}

\begin{lemma}
\label{lemma-set-G}
    There exist $t \in \{0,1,2\}$ and a closed set $G \subset \RS$ such that
    \begin{enumerate}[label=\textnormal{(\arabic*)}]
        \item $U^0 := \RS \backslash \big( (\lambda f^t(L^*))^c \cup G\big)$ is a topological disk containing $U^0 \cap \Hq=\lambda f^t(L^*)$;
        \item for all $j \in \N$, every critical value of $f^j$ in $U^0$ lies inside $U^0 \cap \Hq$;
        \item the width of curves in $\RS \backslash (\Hq \cup G)$ connecting $\lambda f^t(L^*)$ and $G$ is $O(1)$.
    \end{enumerate}
\end{lemma}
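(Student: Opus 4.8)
The plan is to obtain $G$ and $t$ directly from the cut construction of Lemma \ref{cuts}, applied not to $L^*$ itself but to the enlarged piece $\lambda L^*$. Conditions (1) and (3) of the lemma are precisely what cuts provide, so the only genuinely new ingredient is condition (2), which I would deduce from the postcritical structure of $f$.

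First I would verify the hypothesis of Lemma \ref{cuts}. Since $L^*$ is a piece of a level-$n$ almost tiling, it is itself a level-$n$ combinatorial piece, and $n \geq \thres_\lambda$; hence by Definition \ref{threshold} the pieces $2\lambda L^*$, $2\lambda f(L^*)$, $2\lambda f^2(L^*)$ are pairwise disjoint, and in particular $\lambda L^*$, $\lambda f(L^*)$, $\lambda f^2(L^*)$ are pairwise disjoint. Because $f|_\Hq$ acts combinatorially as the rigid rotation $R_\theta$, it commutes with the combinatorial enlargement operation, so $f^i(\lambda L^*) = \lambda f^i(L^*)$; thus $\lambda L^*$, $f(\lambda L^*)$, $f^2(\lambda L^*)$ are pairwise disjoint and Lemma \ref{cuts} applies to $I := \lambda L^*$. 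It furnishes $t \in \{0,1,2\}$ and simple closed rays $\gamma_0 \subset \overline{Y^0}$, $\gamma_\infty \subset \overline{Y^\infty}$ joining a point of $(\lambda f^t(L^*))^c$ to $0$ and to $\infty$ respectively, with the property that curves in $\RS \setminus (\Hq \cup \gamma_0 \cup \gamma_\infty)$ joining $\lambda f^t(L^*)$ to $\gamma_0 \cup \gamma_\infty$ have width at most $10$. I would set $G := \gamma_0 \cup \gamma_\infty$, which makes (3) immediate.

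For (1): the set $(\lambda f^t(L^*))^c \cup G$ is a compact, connected, simply connected (\emph{tree-like}) subset of $\RS$ — namely the arc $(\lambda f^t(L^*))^c$ with the two simple arcs $\gamma_0,\gamma_\infty$ attached to it at single points — so its complement $U^0$ in the sphere is a topological disk. Since the cuts are simple arcs whose interiors lie in $Y^0$, resp. $Y^\infty$, each $\gamma_\bullet$ meets $\Hq$ only at its endpoint on $(\lambda f^t(L^*))^c$, so $U^0 \cap \Hq = \Hq \setminus (\lambda f^t(L^*))^c = \lambda f^t(L^*)$ (up to endpoints). For (2): by properties \ref{def:HQ1} and \ref{def:HQ4} (resp. \ref{def:1} and \ref{def:4}) the only critical points of $f$ other than $0$ and $\infty$ lie on $\Hq$, and since $f$ fixes $0$ and $\infty$ and satisfies $f(\Hq) = \Hq$, the entire forward orbit of every critical point of $f$ is contained in $\{0,\infty\} \cup \Hq$. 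The critical values of any iterate $f^j$ are among the forward images $f^k(c)$ with $1 \le k \le j$ and $c$ a critical point of $f$, hence they all lie in $\{0,\infty\} \cup \Hq$; as $0 \in \gamma_0$ and $\infty \in \gamma_\infty$ are removed from $U^0$, any such critical value lying in $U^0$ must lie in $\Hq \cap U^0 = U^0 \cap \Hq$. In Case \ref{case:boundary-ring} there may in addition be critical points on the inner boundary component $H^0$ of $\He$, whose orbits stay inside $\hat{Y}^0$; there I would further excise $\hat{Y}^0$ from $U^0$ (equivalently, enlarge $G$ by $\overline{\hat{Y}^0}$), which leaves (3) valid by Lemma \ref{negligibility}, exactly as in Remarks \ref{omit-y0}, \ref{removal} and \ref{removal1}.

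I do not expect a substantial obstacle here: once the bookkeeping identifying cuts for $\lambda f^t(L^*)$ with an application of Lemma \ref{cuts} to $\lambda L^*$ is in place, everything is formal. The only points needing any care are the planar-topology check in (1) — that the sphere minus the tree $(\lambda f^t(L^*))^c \cup G$ is a disk meeting $\Hq$ in precisely $\lambda f^t(L^*)$ — and remembering to delete $\hat{Y}^0$ in Case \ref{case:boundary-ring}.
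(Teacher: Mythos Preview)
Your proposal is correct and follows essentially the same route as the paper: apply Lemma \ref{cuts} to the piece $\lambda L^*$ (using $n\geq\thres_\lambda$ to verify its hypothesis), set $G=\gamma_0\cup\gamma_\infty$ in Cases \ref{case:herman-curve} and \ref{case:herman-ring}, and adjoin $\hat{Y}^0$ to $G$ in Case \ref{case:boundary-ring} with Lemma \ref{negligibility} handling (3). Your write-up is in fact more careful than the paper's, which leaves the verification of the hypothesis of Lemma \ref{cuts} and of items (1) and (2) implicit.
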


\begin{proof}
    By Lemma \ref{cuts}, there exist $t \in \{0,1,2\}$ and a pair of closed rays $\gamma_0 \subset \overline{Y^0}$ and $\gamma_\infty \subset \overline{Y^\infty}$ connecting a point in $\left(\lambda f^t(L^*)\right)^c$ to $0$ and $\infty$ respectively such that the family of curves in $\RS \backslash (\Hq \cup \gamma_0 \cup \gamma_\infty)$ connecting $\lambda f^t(L^*)$ and $\gamma_0 \cup \gamma_\infty$ has width at most $10$. In Cases \ref{case:herman-ring} and \ref{case:herman-curve}, the desired closed set is $G:=\gamma_0 \cup \gamma_\infty$. In Case \ref{case:boundary-ring}, we can set $G:= \gamma_0 \cup \gamma_\infty \cup \hat{Y}^0$, where $\hat{Y}^0$ is the connected component of $\RS \backslash \He$ containing $0$. See Lemma \ref{negligibility} and Remark \ref{removal}.
\end{proof}

Set $U^0$ to be the disk in Lemma \ref{lemma-set-G} and set
\[
    L := f^t(L^*), \quad R := f^t(R^*).
\]
For every $j \in \N$, define the corresponding lifts under $f^j$:
\begin{enumerate}[label=\text{(\alph*)}]
    \item\label{item1} $L^j :=$ the connected component of $f^{-j}(I)$ intersecting $\Hq$;
    \item\label{item2} $R^j :=$ the connected component of $f^{-j}(L)$ intersecting $\Hq$;
    \item\label{item3} $\Upsilon^j := L^j \cup R^j$;
    \item\label{item4} $U^j :=$ the connected component of $f^{-j} (U^0)$ containing $\Upsilon^j$.
    \item\label{item5} $\mathcal{F}^j :=$ the canonical horizontal lamination $\mathcal{F}_{\textnormal{can}}^h(U^j , \Upsilon^j)$ of $U^j \backslash \Upsilon^j$.
\end{enumerate}

\begin{lemma}[Width of $\mathcal{F}^0$]
\label{ineq:assumption-on-X0}
    There exists an absolute constant $\varepsilon_1 >0$ such that for sufficiently large $\mathbf{K}$, either
    \begin{enumerate}[label=\textnormal{(\arabic*)}]
        \item $W_\lambda(I) \succ K$, or
        \item $W\left(\mathcal{F}^0\right) \geq \varepsilon_1 K$.
    \end{enumerate}
\end{lemma}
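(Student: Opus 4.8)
The plan is to transplant the wide family of (\ref{ineq:F*}) onto the disk $U^0$ by the bounded-degree iterate $f^t$ and then sort its curves by how they leave $U^0$. Write $I:=f^t(L^*)$, so that $U^0\cap\Hq=\lambda I$ and $\partial U^0=(\lambda I)^c\cup G$, and $R:=f^t(R^*)\subset\rho I\cap(\tau I)^c\subset\lambda I$, so that $\Upsilon^0=I\cup R\subset\overline{U^0}$. Since $t\le 2$, the map $f^t$ has degree at most $d^2$, so pushing the family $\mathcal F(L^*,R^*)$ forward by $f^t$ yields (Proposition~\ref{transformation law}) a family $\Psi$ of curves joining $I$ and $R$ with $W(\Psi)\ge d^{-2}\,W(\mathcal F(L^*,R^*))\ge\tfrac{\varepsilon}{4d^2}K$.

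\textbf{Decomposition.} Following each curve of $\Psi$ from its endpoint on $I$, write $\Psi=\Psi_{\mathrm{in}}\cup\Psi_G\cup\Psi_{\mathrm{esc}}$, where $\Psi_{\mathrm{in}}$ consists of the curves that never leave $U^0$, $\Psi_G$ of those whose first exit from $U^0$ is through $G$, and $\Psi_{\mathrm{esc}}$ of those whose first exit is through $(\lambda I)^c$. Since $\partial U^0=(\lambda I)^c\cup G$ this is a covering of $\Psi$, so by subadditivity of the modulus $W(\Psi)\le W(\Psi_{\mathrm{in}})+W(\Psi_G)+W(\Psi_{\mathrm{esc}})$, and I would bound the three summands. \emph{(i)} A curve of $\Psi_{\mathrm{in}}$ joins two points of the disjoint pair of arcs $\Upsilon^0=I\cup R$ inside $U^0$, hence contains a subcurve of $U^0\setminus\Upsilon^0$ joining $\Upsilon^0$ to itself; thus $\Psi_{\mathrm{in}}$ overflows such a family, and by the maximality of the canonical horizontal lamination recalled in the appendix, $W(\Psi_{\mathrm{in}})\le W(\mathcal F^h_{\textnormal{can}}(U^0,\Upsilon^0))=W(\mathcal F^0)$. \emph{(ii)} A curve of $\Psi_G$ contains a subcurve of $\RS\setminus(\Hq\cup G)$ joining $\lambda I$ to $G$, so $\Psi_G$ overflows the family of Lemma~\ref{lemma-set-G}(3) and $W(\Psi_G)=O(1)$. \emph{(iii)} A curve of $\Psi_{\mathrm{esc}}$, truncated between its last visit to $I$ and its first hit of $(\lambda I)^c$, contains a proper curve of $\RS\setminus((\lambda I)^c\cup I)$ joining $I$ to $(\lambda I)^c$, i.e. a member of $\mathcal F_\lambda(I)$, so $\Psi_{\mathrm{esc}}$ overflows $\mathcal F_\lambda(I)$ and $W(\Psi_{\mathrm{esc}})\le W_\lambda(I)$.

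\textbf{Conclusion.} Putting the three estimates together, $\tfrac{\varepsilon}{4d^2}K\le W(\mathcal F^0)+O(1)+W_\lambda(I)$. Taking $\mathbf K$ large enough that the $O(1)$ term is at most $\tfrac{\varepsilon}{16d^2}K$, we get the dichotomy: either $W_\lambda(I)\ge\tfrac{\varepsilon}{16d^2}K$, which is alternative~(1), or $W_\lambda(I)<\tfrac{\varepsilon}{16d^2}K$ and then $W(\mathcal F^0)\ge\tfrac{\varepsilon}{8d^2}K$, which is alternative~(2) with the absolute constant $\varepsilon_1:=\tfrac{\varepsilon}{8d^2}$.

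\textbf{Main obstacle.} The delicate step is~(iii): since $\Psi$ is a push-forward, a curve $f^t(\gamma)$ can meet $\Hq$ far out on $(\lambda I)^c$ even though the source curve $\gamma$ stays at the combinatorial scale $\tau L^*$ -- it may run along the $f^t$-image of a bubble rooted near $\tau L^*$, which $f^t$ wraps onto all of $\Hq$ -- so one must verify carefully that after removing the $G$-excursions (and, in Case~\ref{case:boundary-ring}, the $\hat{Y}^0$-excursions, using $\hat{Y}^0\subset G$ and Lemma~\ref{negligibility}) and truncating at the appropriate visits to $I$ and $(\lambda I)^c$, what remains is a bona fide member of $\mathcal F_\lambda(I)$, rather than a curve overflowing merely a bubble or a piece of $G$. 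The remaining ingredients -- the maximality of the canonical horizontal lamination in step~(i) (cited from the appendix), the factor-$d^2$ cost of the push-forward, and the fact that the three sub-families cover $\Psi$ -- are routine.
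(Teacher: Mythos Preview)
Your proof is correct and follows essentially the same route as the paper's: both push forward the wide family $\mathcal F(L^*,R^*)$ by $f^t$ (paying a factor $d^2$), then split according to whether curves stay inside $U^0$ or exit through $G$ versus $(\lambda I)^c$, invoking Proposition~\ref{maximality} for the inside part, Lemma~\ref{lemma-set-G}(3) for the $G$-part, and the overflow into $\mathcal F_\lambda(I)$ for the escaping part. Your three-way decomposition with subadditivity is a mild refinement of the paper's two-way ``at least half'' dichotomy, but the ingredients and constants are identical (note that Proposition~\ref{maximality} gives the bound on $W(\Psi_{\mathrm{in}})$ only up to an additive $O(1)$, which your final step already absorbs).
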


\begin{proof}
    Let $\mathcal{S}$ denote the family of curves connecting $L$ and $R$. Since $\mathcal{S}$ contains $f^t\left(\mathcal{F}(L^*,R^*)\right)$, then by Proposition \ref{transformation law} and inequality (\ref{ineq:F*}),
    \[
    W(\mathcal{S}) \geq \frac{\varepsilon}{4d^2} K.
    \]
    Let $\mathcal{G}$ be the set of curves in $\mathcal{S}$ that lie within $U^0$. If more than half of curves in $\mathcal{S}$ are in $\mathcal{G}$, then
    \[
        W(\mathcal{G}) > \frac{1}{2} W(\mathcal{S}) \geq \frac{\varepsilon}{8 d^2} K.
    \]
    By Proposition \ref{maximality}, this inequality yields (2). Otherwise, at least half of curves in $\mathcal{S}$ intersect either $(\lambda L)^c$ or $G$. Therefore, by Lemma \ref{lemma-set-G} (3),
    \[
        W_\lambda(I) \geq \frac{1}{2} W(\mathcal{S}) - O(1) \geq \frac{\varepsilon}{8 d^2} K - O(1).
    \]
    For sufficiently high $\mathbf{K}$, this yields (1).
\end{proof}

To proceed, it is then sufficient to consider case (2) of the lemma above.

Fix a positive integer $r > n$ that is to be determined later. We would like to show that since every piece is almost invariant under $f^{q_r}$, a definite amount of canonical horizontal leaves of $\mathcal{F}^0$ should restrict to vertical curves in $U^{q_r} \backslash \Upsilon^{q_r}$. To do this, some technical adjustments are required.

Let us define 
\[
\hat{L} := L \cup f^{q_r}(L), \quad \hat{R} := R \cup f^{q_r}(R), \quad \hat{U}^0 := U^0 \backslash f^{q_r}\left((\lambda L)^c\right).
\]
The thickened pieces $\hat{L}$, $\hat{R}$ and the new domain $\hat{U}^0$ are combinatorially very close to $L$, $R$, and $U^0$ respectively. They come with new separation constants $1< \hat{\tau} \ll \hat{\rho} \ll \hat{\lambda}$ such that $\overline{\hat{U}^0 \cap \Hq} = \hat{\lambda} \hat{L}$, $\hat{R}$ is a component of $\overline{\hat{\rho}\hat{L}\backslash \hat{\tau} \hat{L}}$, and $\hat{\tau}\asymp \tau$, $\hat{\rho}\asymp \rho$ and $\hat{\lambda}\asymp \lambda$.

Similar to \ref{item1}--\ref{item5}, we denote for every $j\in \N$ the corresponding lifts:
\begin{enumerate}[label=\text{($\hat{\text{\alph*}}$)}]
    \item\label{item1hat} $\hat{L}^j :=$ the connected component of $f^{-j}\left(\hat{L}\right)$ intersecting $\Hq$;
    \item\label{item2hat} $\hat{R}^j :=$ the connected component of $f^{-j}\left(\hat{R}\right)$ intersecting $\Hq$;
    \item\label{item3hat} $\hat{\Upsilon}^j := \hat{L}^j \cup \hat{R}^j$;
    \item\label{item4hat} $\hat{U}^j :=$ the connected component of $f^{-j}\left(\hat{U}^0\right)$ containing $\hat{\Upsilon}^j$;
    \item\label{item5hat} $\hat{\mathcal{F}}^j := \mathcal{F}^h_{\textnormal{can}}\left(\hat{U}^j , \hat{\Upsilon}^j\right)$.
\end{enumerate}
Note the following relations: 
\begin{align}
    \label{ineq:relation-01} \hat{\Upsilon}^0 \supset \Upsilon^0, \quad \partial \hat{U}^0 &\supset \partial U^0, \quad \hat{U}^0 \subset U^0,\\
    \label{ineq:relation-02} \hat{U}^{q_r} \backslash \hat{\Upsilon}^{q_r} &\subset U^0 \backslash \Upsilon^0.
\end{align}

The relationship between $\hat{\mathcal{F}}^0$ and $\mathcal{F}^0$ is not trivial. Nonetheless, the following lemma states that we can reduce the problem to the case where the widths of $\hat{\mathcal{F}}^0$ and $\mathcal{F}^0$ are comparable.

\begin{lemma}[Comparability between $\hat{\mathcal{F}}^0$ and $\mathcal{F}^0$]
    \label{new-pieces}
    There is an absolute constant $\varepsilon_2>1$ such that for sufficiently large $\lambda \gg \rho$ and $\mathbf{K}$, either
    \begin{enumerate}[label=\textnormal{(\arabic*)}]
        \item there is a level $r$ combinatorial piece $J$ such that either $W_\tau(J) \geq 2K$ or $W_\lambda (J) \succ K$, or
        \item $\frac{1}{2} W\left(\mathcal{F}^0\right) \leq W\left(\hat{\mathcal{F}}^0\right) \leq \varepsilon_2 W\left(\mathcal{F}^0\right)$.
    \end{enumerate}
\end{lemma}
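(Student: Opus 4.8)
The plan is to compare $\hat{\mathcal F}^0$ and $\mathcal F^0$ head on, obtaining the lower bound of alternative (2) for free and spending all of the effort on the upper bound, so that a failure of the upper bound is cashed in for alternative (1). The lower bound costs nothing. By \eqref{ineq:relation-01}, $\hat U^0$ is obtained from $U^0$ by deleting from the crosscut $\lambda L = U^0\cap\Hq$ the short arc $f^{q_r}((\lambda L)^c)\cap U^0\subset\Hq$, which has combinatorial length $l_r$ and abuts one endpoint of $\lambda L$, and by enlarging the marked set from $\Upsilon^0$ to $\hat\Upsilon^0\supset\Upsilon^0$. Since $\lambda\gg\rho$, the pieces $L$ and $R$ sit well inside $\lambda L$, far from its endpoints; hence every leaf of $\mathcal F^0$ — a curve joining $L$ to $R$ which, being horizontal, meets $\Hq$ only at its two endpoints — avoids the deleted arc and is still a curve in $\hat U^0$ joining $L\subset\hat L$ to $R\subset\hat R$. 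By maximality of the canonical horizontal lamination (Proposition \ref{maximality}), $W(\hat{\mathcal F}^0)\ge W(\mathcal F^0)\ge\tfrac12 W(\mathcal F^0)$.

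For the upper bound, write $\hat L=L\cup J_L$ and $\hat R=R\cup J_R$ with $J_L:=\overline{\hat L\setminus L}$ and $J_R:=\overline{\hat R\setminus R}$; since $f^{q_r}$ shifts $\Hq$ combinatorially by $l_r$ in one direction, $\hat L,\hat R$ are one-sided combinatorial $l_r$-enlargements of $L,R$, so $J_L,J_R$ are level-$r$ combinatorial pieces. As $\hat L$ and $\hat R$ are disjoint arcs, no essential horizontal leaf of $\hat{\mathcal F}^0=\mathcal F^h_{\textnormal{can}}(\hat U^0,\hat\Upsilon^0)$ joins $\hat L$, or $\hat R$, to itself, so every such leaf joins $\hat L$ to $\hat R$; refining first by the $\hat L$-endpoint ($L$ vs.\ $J_L$) and then, on the $L$-part, by the $\hat R$-endpoint ($R$ vs.\ $J_R$) splits $\hat{\mathcal F}^0$ into three parallel sublaminations, whence $W(\hat{\mathcal F}^0)=W(\mathcal F_{LR})+W(\mathcal F_{LJ_R})+W(\mathcal F_{J_L\hat R})$. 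The leaves of $\mathcal F_{LR}$ lie in $\hat U^0\subset U^0$ and join $\Upsilon^0$ to itself, so $W(\mathcal F_{LR})\le W(\mathcal F^0)$ by Proposition \ref{maximality}. For the other two, assume $r$ is large enough (possible with $r-n$ at most an absolute constant, by Proposition \ref{bounded-type}) that $\tau l_r\ll\dist(L,R)\asymp l_n$; then $\hat R\subset(\tau J_L)^c$ and $L,J_L\subset(\tau J_R)^c$, so the initial segment of any leaf of $\mathcal F_{J_L\hat R}$ up to its first hit of $(\tau J_L)^c$ is a curve in $\mathcal F_\tau(J_L)$, giving $\mathcal F_\tau(J_L)<\mathcal F_{J_L\hat R}$ and hence $W(\mathcal F_{J_L\hat R})\le W_\tau(J_L)$, and likewise $W(\mathcal F_{LJ_R})\le W_\tau(J_R)$. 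Altogether $W(\hat{\mathcal F}^0)\le W(\mathcal F^0)+W_\tau(J_L)+W_\tau(J_R)$.

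It remains to run the dichotomy. Recall that here $W(\mathcal F^0)\ge\varepsilon_1 K$ by Lemma \ref{ineq:assumption-on-X0}(2), the alternative we are in, and set $\varepsilon_2:=1+4/\varepsilon_1$. If $W(\hat{\mathcal F}^0)\le\varepsilon_2 W(\mathcal F^0)$, then combined with the lower bound this is alternative (2). Otherwise $W_\tau(J_L)+W_\tau(J_R)>(\varepsilon_2-1)W(\mathcal F^0)\ge 4K$, so one of $J_L,J_R$ — a level-$r$ combinatorial piece — has $\tau$-width exceeding $2K$, which is alternative (1). Note that $\varepsilon_1$, and therefore $\varepsilon_2$, is an absolute constant.

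The substance, and the only delicate part, is the upper bound: (i) verifying that the endpoint-type refinement of $\hat{\mathcal F}^0$ is genuinely a decomposition into parallel sublaminations (so that their widths add) and that there are no essential self-leaves on $\hat L$ or $\hat R$ — this requires unwinding the structure of the canonical lamination of $\hat U^0\setminus\hat\Upsilon^0$; and (ii) securing the quantitative separation $\tau l_r\ll\dist(L,R)$ that places each truncated excess leaf inside $\mathcal F_\tau(J_L)$ or $\mathcal F_\tau(J_R)$, which is precisely where the hierarchy $\hat\tau\asymp\tau\ll\hat\rho\asymp\rho\ll\hat\lambda\asymp\lambda$ and the bounded-type control (Proposition \ref{bounded-type}) on how far $r$ must exceed $n$ enter. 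One should also note that the truncated leaves stay pairwise disjoint, which is automatic since truncation only shortens leaves of an existing lamination, so that the overflow inequalities $\mathcal F_\tau(J_\bullet)<\mathcal F_{\cdots}$ are legitimate.
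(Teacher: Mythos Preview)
Your upper bound is essentially the paper's argument (the paper's $\mathcal S_0,\mathcal S_1,\mathcal S_2$ split is your endpoint refinement), but there is a genuine gap in your lower bound, and a smaller one in the upper bound.

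\textbf{Lower bound.} The claim that a horizontal leaf of $\mathcal F^0$ ``meets $\Hq$ only at its two endpoints'' is false. ``Horizontal'' in this paper means only that both endpoints lie on $\Upsilon^0$; the interior of a leaf lives in $U^0\setminus\Upsilon^0$ and is free to cross $\Hq$ at any point of $\lambda L\setminus(L\cup R)$, which is interior to $U^0\setminus\Upsilon^0$. In particular, leaves can cross the level-$r$ piece $P_3:=\overline{\partial\hat U^0\setminus\partial U^0}$ sitting at one end of $\lambda L$, and any such leaf is \emph{not} a curve in $\hat U^0$. So the inequality $W(\hat{\mathcal F}^0)\ge W(\mathcal F^0)$ does not come for free. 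The paper handles exactly this obstruction: letting $\mathcal S_3$ be the leaves of $\mathcal F^0$ that meet $P_3$, one has $W(\mathcal F^0)\le W(\hat{\mathcal F}^0)+W(\mathcal S_3)+O(1)$, and since $\mathcal S_3$ overflows $\mathcal F_\lambda(P_3)$, either $W(\mathcal S_3)<\tfrac13 W(\mathcal F^0)$ (yielding the lower bound in (2)) or $W_\lambda(P_3)\succ K$, which is precisely the $W_\lambda(J)\succ K$ clause of alternative~(1). Your argument never produces this $\lambda$-degeneration, so it cannot account for the $W_\lambda$ option in the statement; that option is there for a reason.

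\textbf{Upper bound.} The assertion that ``no essential horizontal leaf of $\hat{\mathcal F}^0$ joins $\hat L$, or $\hat R$, to itself'' is not correct: a disk with two disjoint compact sets removed admits nontrivial proper arcs from one set to itself that wind around the other (the type~\ref{type-B} arcs of \S\ref{sec:loss-of-horizontal-width}). This is easy to patch---leaves with both endpoints on $L$ (resp.\ $R$) land in your $\mathcal F_{LR}$-style term via Proposition~\ref{maximality}, and those with an endpoint on $J_L$ or $J_R$ still overflow $\mathcal F_\tau(J_L)$ or $\mathcal F_\tau(J_R)$ since a type~B curve must cross $(\tau J_\bullet)^c$---but as written your threefold split is incomplete. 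The paper's decomposition by ``has an endpoint on $P_1$'', ``has an endpoint on $P_2$'', ``neither'' sidesteps this issue.
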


In the proof, we will show that either such a piece $J$ in (1) can be found from the symmetric difference between $\partial U^0 \cup \Upsilon^0$ and $\partial \hat{U}^0 \cup \hat{\Upsilon}^0$, or (2) holds.

\begin{proof}
Suppose (1) does not hold. Let us present the canonical horizontal lamination $\hat{\mathcal{F}}_0$ as the union $\mathcal{S}_1 \cup \mathcal{S}_2 \cup \mathcal{S}_3$ where: 
\begin{itemize}
    \item[$\rhd$] $\mathcal{S}_1 =$ set of leaves in $\hat{\mathcal{F}}_0$ that has an endpoint on $P_1 := \overline{\hat{L} \backslash L}$;
    \item[$\rhd$] $\mathcal{S}_2 =$ set of leaves in $\hat{\mathcal{F}}_0$ that has an endpoint on $P_2 := \overline{\hat{R}\backslash R}$;
    \item[$\rhd$] $\mathcal{S}_0 = \hat{\mathcal{F}}_0 \backslash ( \mathcal{S}_1 \cup \mathcal{S}_2)$.
\end{itemize}
Note that $P_1$ and $P_2$ are combinatorial pieces of level $r$. See Figure \ref{fig:adjustment}.

For each $i \in \{1,2\}$, $\mathcal{S}_i$ restricts to a sublamination of $\mathcal{F}_\tau(P_i)$ because the combinatorial distance between $\hat{L}$ and $\hat{R}$ is greater than $\frac{\tau -1}{2}l_r$. We can assume that $W(\mathcal{S}_i) < 2  \varepsilon_1^{-1} W\left(\mathcal{F}^0\right)$ because otherwise, by Lemma \ref{ineq:assumption-on-X0} (2), we would have
\[
W_{\tau}(P_i) \geq W(\mathcal{S}_i) \geq 2  \varepsilon_1^{-1} W\left(\mathcal{F}^0\right) \geq 2K
\]
and this would yield (1) instead. Meanwhile, since every leaf of $\mathcal{S}_0$ is a horizontal curve in $U^0 \backslash \Upsilon^0$, then by Proposition \ref{maximality},
\begin{align*}
    W(\mathcal{S}_0) &\leq W\left(\mathcal{F}^0\right) + O(1).
\end{align*}
Therefore,
\[
    W\left(\hat{\mathcal{F}}^0\right) \leq \sum_{i=0}^2 W(\mathcal{S}_i) < (4  \varepsilon_1^{-1} + 1) W\left(\mathcal{F}^0\right) + O(1).
\]
For sufficiently large $\mathbf{K}$, the inequality above yields the upper bound in (2).

Next, to obtain the lower bound in (2), we will consider the set $\mathcal{S}_3$ of leaves of $\mathcal{F}^0$ that intersect the level $r$ combinatorial interval $P_3: =\overline{\partial \hat{U}^0 \backslash \partial U^0}$. By (\ref{ineq:relation-01}), every horizontal leaf of $\mathcal{F}^0$ either intersects $P_3$ or restricts to a horizontal curve in $\hat{U}^0 \backslash \hat{\Upsilon}^0$. As such,
\begin{equation}
    \label{ineq:s3}
    W\left(\mathcal{F}^0\right) \leq W\left(\hat{\mathcal{F}}^0\right) + W(\mathcal{S}_3) + O(1)
\end{equation}
Observe that $\mathcal{S}_3$ overflows $\mathcal{F}_\lambda(P_3)$. We can assume that $W(\mathcal{S}_3) < \frac{1}{3} W\left(\mathcal{F}^0\right)$ because otherwise 
\[
W_\lambda(P_3) \geq \frac{1}{3} W\left(\mathcal{F}^0\right) \succ K.
\]
which would again yield (1). By applying this assumption to (\ref{ineq:s3}),
\begin{align*}
    \frac{2}{3} W\left(\mathcal{F}^0\right) \leq W\left(\hat{\mathcal{F}}^0\right) + O(1).
\end{align*}
Hence, for sufficiently high $\mathbf{K}$, we immediately obtain the lower bound in (2).
\end{proof}

\begin{figure}
    \centering
    \begin{tikzpicture}
    \node[anchor=south west,inner sep=0] (image) at (0,0) {\includegraphics[width=\linewidth]{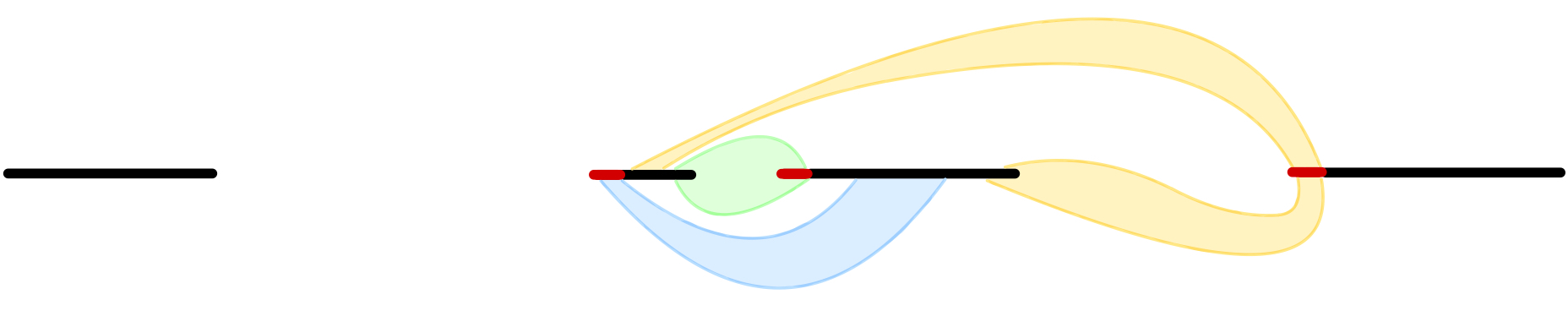}};
    \begin{scope}[
        x={(image.south east)},
        y={(image.north west)}
    ]
        \node [orange, font=\bfseries] at (0.70,0.86) {\Small $\mathcal{S}_3$};
        \node [green!60!black, font=\bfseries] at (0.47,0.48) {\Small $\mathcal{S}_2$};
        \node [cyan!50!black, font=\bfseries] at (0.53,0.265) {\Small $\mathcal{S}_1$};
        \node [black, font=\bfseries] at (0.42,0.385) {\Small $L$};
        \node [black, font=\bfseries] at (0.61,0.385) {\Small $R$};
        \node [red, font=\bfseries] at (0.39,0.57) {\Small $P_1$};
        \node [red, font=\bfseries] at (0.51,0.57) {\Small $P_2$};
        \node [red, font=\bfseries] at (0.833,0.57) {\Small $P_3$};
        \node [black, font=\bfseries] at (0.075,0.375) {\Small $(\lambda L)^c$};
        \node [black, font=\bfseries] at (0.92,0.375) {\Small $(\lambda L)^c$};
    \end{scope}
\end{tikzpicture}
    
    \caption{$\mathcal{S}_1$, $\mathcal{S}_2$, and $\mathcal{S}_3$.}
    \label{fig:adjustment}
\end{figure}

It is sufficient to proceed under the assumption that Lemma \ref{new-pieces} (2) holds. In this case, $W(\hat{\mathcal{F}}^0) \geq \frac{\varepsilon_1}{2} K$.

Before finishing the proof of the theorem, the following important lemma is needed. It states that either $\tau$-degeneration doubles or there is significant loss in horizontal width after a certain number of pullbacks.

\begin{lemma}[Loss of horizontal width]
    \label{positive entropy}
    There exist absolute constants $m_1, m_2 \in \mathbb{N}$ such that for sufficiently large $\lambda \gg \rho$ and $\mathbf{K}$, either 
    \begin{enumerate}[label=\textnormal{(\arabic*)}]
        \item there is a $[2K, \tau]$-wide combinatorial piece of level $n+m_1$, or
        \item $W\left(\hat{\mathcal{F}}^{q_{n+m_2}}\right) < (2\varepsilon_2)^{-1} W\left(\hat{\mathcal{F}}^0\right)$.
    \end{enumerate} 
\end{lemma}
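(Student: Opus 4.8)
The plan is to assume that alternative (1) fails and deduce (2). Set $r:=n+m_2$ for an absolute constant $m_2$ to be fixed at the end; by Proposition \ref{bounded-type} the return time $q_r$ differs from $q_n$ only by a bounded-type–controlled factor, so $l_n\asymp l_r$ up to that factor. Since (1) fails, every combinatorial piece of level between $n$ and $n+m_1$ is $[2K,\tau]$-narrow, and I will use this freely to discard any sublamination whose width would be realized as the $\tau$-width of such a piece at the level $2K$. Taking $m_1$ large enough we may in particular assume the second alternative of Lemma \ref{new-pieces} holds, so $W(\hat{\mathcal F}^0)\asymp W(\mathcal F^0)$; by Lemma \ref{ineq:assumption-on-X0}(2) the latter is $\geq \tfrac{\varepsilon_1}{2}K$, and it suffices to bound $W(\hat{\mathcal F}^{q_r})$ by a small multiple of $W(\mathcal F^0)$.

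The heart is an entropy-type mechanism replacing Kahn's use of core entropy. Although $f|_\Hq$ is a rotation and carries no entropy, the bubble tree attached to $\Hq$ proliferates exponentially, and this is what forces horizontal curves to turn vertical under a $q_r$-fold pullback. By relation (\ref{ineq:relation-02}) the canonical horizontal leaves of $\hat{\mathcal F}^{q_r}$ are curves in $U^0\setminus\Upsilon^0$ joining the enlarged pieces $\hat L^{q_r}$ and $\hat R^{q_r}$; these are bubble-decorated versions of $L$ and $R$, each consisting of a central $\Hq$-arc of length $\asymp l_n$ together with sub-arcs of all bubbles of generation $\leq q_r$ whose roots land on that arc. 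A count based on Propositions \ref{renormalization-tiling} and \ref{bounded-type} shows there are a number, growing to infinity with $m_2$, of such bubble-arcs on each component, distributed on both sides $Y^0$ and $Y^\infty$, and a definite fraction of them reach a definite conformal depth into the domain, by harmonic-measure control reminiscent of the bubble-wave argument. Proposition \ref{pos-entropy}, proved in \S\ref{sec:loss-of-horizontal-width}, then supplies the geometric input: these bubble-arcs obstruct all but a small fraction of the curves joining $L$ and $R$, so that a canonical horizontal leaf of $\hat{\mathcal F}^{q_r}$ landing on the central arcs must thread a narrow channel between consecutive bubble-arcs on one side. By the series law (Proposition \ref{series law}) and the harmonic-sum estimate (Proposition \ref{harmonicsum}), the total width of the threading leaves is at most a multiple of $W(\mathcal F^0)$ that tends to $0$ as $m_2\to\infty$. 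The remaining leaves of $\hat{\mathcal F}^{q_r}$ either land on the short $l_r$-extensions of $\hat L^{q_r}$, $\hat R^{q_r}$ or meet one of the bubble-arcs; in either case they overflow waves of controlled length, so the no-wide-wave reductions used in \S\ref{ss:amplifying-shallow} together with Proposition \ref{bubble-wave} bound their contribution by an $O(1)$ error once $\mathbf K$ is large.

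Adding the contributions, $W(\hat{\mathcal F}^{q_r})$ is at most a small-in-$m_2$ multiple of $W(\mathcal F^0)$ plus an $O(1)$ error. I then choose $m_2$ to be an absolute constant large enough that this multiple falls below $(4\varepsilon_2)^{-1}$, and $m_1\geq m_2$ large enough to dominate every level-shift produced by the alternatives above (those of Lemma \ref{new-pieces}, of Proposition \ref{bubble-wave}, and of the combinatorial count); for $\mathbf K$ sufficiently large, which we allow to depend on $\lambda$ since $\lambda$ bounds the size of the domains and hence the $O(1)$ error, this gives $W(\hat{\mathcal F}^{q_r})<(2\varepsilon_2)^{-1}W(\hat{\mathcal F}^0)$, which is (2). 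The main obstacle is Proposition \ref{pos-entropy} itself. In the Siegel setting of \cite{DL22} one side of the analogous arc is the Siegel disk, whose trivial geometry lets the snake lemma confine the competing curves via a modulus bound on that side; here the geometry on both sides of $\Hq$ is unknown, so the confinement must be extracted purely from the combinatorial arrangement of the exponentially many bubble-arcs, using the non-crossing principle (Proposition \ref{non-crossing-principle}) and the series law, and one must carefully isolate the threading, escaping, and wave-like sublaminations and bound each separately — this bookkeeping is what \S\ref{sec:loss-of-horizontal-width} carries out.
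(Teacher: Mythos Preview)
Your proposal conflates two distinct tasks. The paper's proof of Lemma~\ref{positive entropy} is a one-line application of Proposition~\ref{pos-entropy}: apply it to the hatted data $(\hat L,\hat R,\hat U^0)$ (which satisfy the standing hypotheses of \S\ref{sec:loss-of-horizontal-width} since $\hat\tau\asymp\tau$, $\hat\rho\asymp\rho$, $\hat\lambda\asymp\lambda$) with parameters $\Delta=4\varepsilon_1^{-1}$ and $\delta=(2\varepsilon_2)^{-1}$; see Remark~\ref{rem:final-proof}. You correctly identify Proposition~\ref{pos-entropy} as the real content, but instead of simply invoking it you attempt to re-sketch its proof, and that sketch does not match what \S\ref{sec:loss-of-horizontal-width} actually does.

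The mechanism in \S\ref{sec:loss-of-horizontal-width} is not ``bubble-arcs obstruct threading curves through narrow channels'' combined with the no-wide-wave reductions from \S\ref{ss:amplifying-shallow}. Rather, the paper introduces the \emph{asymmetric width} $Z^j=W(\mathcal A^j)+2W(\mathcal B^j)$, builds separating curves $\beta_L,\beta,\beta_R$ in $U^r$ via lifts of carefully chosen arcs $\boldsymbol b_L,\boldsymbol b,\boldsymbol b_R$ (Lemmas~\ref{lem:middle-curve}--\ref{lem:left-and-right}), and proves a key estimate (Proposition~\ref{prop:change-in-width}) showing $Z^r$ drops unless a definite fraction of the horizontal thick-thin rectangles are \emph{persistent} (cross both $\beta_L$ and $\beta_R$). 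Persistence is then converted to $\tau$-degeneration via the homotopy rigidity of Lemma~\ref{homotopy} and the fences construction of Lemma~\ref{greater-deg}; an induction (Lemma~\ref{lem:deg-or-loss-of-width}) iterates the drop. Your description of $\hat L^{q_r},\hat R^{q_r}$ as ``bubble-decorated'' is incorrect here (in the context of \S\ref{ss:amplifying-deep} they are just pieces in $\Hq$), and the appeals to Proposition~\ref{bubble-wave} and the series-law threading picture are not how the loss is established. The proposal is therefore not a valid proof as written: either cite Proposition~\ref{pos-entropy} directly, or replace the heuristic sketch with the actual machinery of \S\ref{sec:loss-of-horizontal-width}.
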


This lemma is a replacement for Kahn's entropy argument in \cite[\S6.3]{K06}, and it will directly follow from Proposition \ref{pos-entropy} in the next section. See Remark \ref{rem:final-proof}. Let us finally set $r=n+m_2$ and assume that Lemma \ref{positive entropy} (2) holds. We are now in position to adapt the push-forward argument.

From the embedding in (\ref{ineq:relation-02}), horizontal leaves in $U^0 \backslash \Upsilon^0$ restrict to either horizontal or vertical curves in $\hat{U}^{q_r} \backslash \hat{\Upsilon}^{q_r}$. In other words, 
\[
    W\left(\mathcal{F}^0\right) \leq W\left((\mathcal{F}_{\textnormal{can}}^v(\hat{U}^{q_r} , \hat{\Upsilon}^{q_r})\right)+ W\left(\hat{\mathcal{F}}^{q_r}\right) + O(1).
\]
By Lemma \ref{new-pieces} (2) and Lemma \ref{positive entropy} (2),
\begin{align*}
    W\left(\mathcal{F}^0\right) &\leq W\left((\mathcal{F}_{\textnormal{can}}^v(\hat{U}^{q_r} , \hat{\Upsilon}^{q_r})\right) + \frac{1}{2} W\left(\mathcal{F}^0\right) + O(1).
\end{align*}
For sufficiently high $\mathbf{K}$, the inequality simplifies to
\[
W\left((\mathcal{F}_{\textnormal{can}}^v(\hat{U}^{q_r} , \hat{\Upsilon}^{q_r})\right) \geq \frac{1}{3} W\left(\mathcal{F}^0\right).
\]
By Lemma \ref{ineq:assumption-on-X0} (2), this implies that $W \left(\hat{U}^{q_r}, \hat{J}^{q_r}\right) \succ K$ for some $\hat{J} \in \left\{ \hat{L}, \hat{R}\right\}$.

Consider the iterate
\[
f^{q_r}: \left(\hat{U}^{q_r}, \hat{U}^{q_r}_\tau , \hat{J}^{q_r}\right) \to \left(\hat{U}^0, \hat{U}^0_\tau, \hat{J}\right)
\]
where $\hat{U}^0_\tau := \hat{U}^0 \backslash \left(\tau \hat{J} \right)^c$ and $\hat{U}^{q_r}_\tau$ is the pullback of $\hat{U}^0_\tau$ under $f^{q_r}$ containing $\hat{J}^{q_r}$. By Lemma \ref{counting-critical}, the degree of $f^{q_r}$ on $\hat{U}^{q_r}_\tau$ remains independent of $\lambda$. By Lemma \ref{covering-lemma}, for sufficiently high $\mathbf{K}$, either 
\[
W \left(\hat{U}^0, \hat{J} \right) \succ K, \quad \text{or} \quad W \left(\hat{U}^0_\tau,\hat{J} \right) \geq (2 C+1) K,
\]
where $C$ is the constant from Proposition \ref{bounded-type}. In either case, Lemma \ref{lemma-set-G} asserts that the width of curves that land on $G$ is negligible. Hence, for sufficiently large $\mathbf{K}$, there exists a combinatorial subpiece $J \subset \hat{J}$ such that either
\[
    W_\lambda(J) \succ K, \quad \text{or} \quad W_\tau(J) \geq 2K.
\]

At last, pick the threshold increment $\step \in \N$ such that $\step\geq \max\{m_0,m_1,m_2\}$ and that $n- \step$ is less than the level of $\hat{R}$. This concludes the proof of Theorem \ref{promotion}.
\end{proof}


\section{Loss of horizontal width}
\label{sec:loss-of-horizontal-width}
Let us fix $\lambda \gg \tau$ and let $n \in \mathbb{N}$ be such that $2\lambda l_n < 1$. The key players of this section are as follows. 
\begin{itemize}
    \item[$\rhd$] $L$ and $R$ are pieces in $\Hq$ of combinatorial distance $\dist(I,L) \asymp l_n$ and length at least $l_n$ satisfying $|L| \asymp |R| \asymp l_n$;
    \item[$\rhd$] $U^0 \subset \C^*$ is a topological disk containing $L \cup R$ such that $\Hq \backslash U^0 = (\lambda L)^c$.
\end{itemize}

\begin{remark}
\label{adjustment-in-case-c}
In Case \ref{case:boundary-ring} (see \S\ref{ss:setup}), we will also impose the additional assumption that $U^0$ is disjoint from the connected component of $\RS \backslash \He$ containing $0$, so that for all $j$, every critical value of $f^j$ in $U^0$ must lie in $U^0 \cap \Hq$.
\end{remark}

Similar to \ref{item1}--\ref{item5} in \S6.2, we define the corresponding lifts under $f^j$ for $j \in \N$:
\begin{enumerate}[label=\text{(\alph*.)}]
    \item\label{item01} $L^j :=$ the connected component of $f^{-j}(L)$ intersecting $\Hq$;
    \item\label{item02} $R^j :=$ the connected component of $f^{-j}(R)$ intersecting $\Hq$;
    \item\label{item03} $\Upsilon^j := L^j \cup R^j$;
    \item\label{item04} $U^j :=$ the connected component of $f^{-j} (U^0)$ containing $\Upsilon^j$.
    \item\label{item05} $\mathcal{F}^j :=$ the canonical horizontal lamination $\mathcal{F}_{\textnormal{can}}^h(U^j , \Upsilon^j)$ of $U^j \backslash \Upsilon^j$.
\end{enumerate}
We are restricting our map $f$ to a sequence of branched coverings 
\[
\ldots \xrightarrow[]{\enspace f \enspace} \left(U^3, \Upsilon^3\right) \xrightarrow[]{\enspace f \enspace} \left(U^2, \Upsilon^2\right) \xrightarrow[]{\enspace f \enspace} \left(U^1,\Upsilon^1\right) \xrightarrow[]{\enspace f \enspace} \left(U^0, \Upsilon^0\right)
\]
and we will study how the width of $\mathcal{F}^j$ behaves as $j$ increases. The goal of this section is to prove the following proposition.

\begin{proposition}
\label{pos-entropy}
    For any $\Delta>1$, $\delta\in(0,1)$, and sufficiently large $\lambda$,  there are some constants $m_1 = m_1(\Delta, \delta) \in \mathbb{N}$, $m_2= m_2(\Delta, \delta) \in \mathbb{N}$ and $\mathbf{K} = \mathbf{K}(\lambda, \Delta, \delta) > 1$ such that if $W(\mathcal{F}^0) = K \geq \mathbf{K}$, then either
\begin{enumerate}[label=\textnormal{(\arabic*)}]
        \item\label{goal-number-1} there is a level $n+m_1$ combinatorial piece $J$ of width $W_\tau(J) \geq \Delta K$, or
        \item\label{goal-number-2} there is significant loss in horizontal width: $W(\mathcal{F}^{q_{n+m_2}}) < \delta K$.
\end{enumerate}
\end{proposition}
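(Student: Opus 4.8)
The plan is to prove the contrapositive: assuming that \ref{goal-number-1} fails, so that no combinatorial piece of level $n+m_1$ is $[\Delta K,\tau]$-wide, I will extract the horizontal width loss \ref{goal-number-2}. The constants are fixed in the order $m_1$, then $m_2\gg m_1$, then $\lambda$ sufficiently large, and finally $\mathbf K=\mathbf K(\lambda,\Delta,\delta)$ so large that every additive $O(1)$ error below is negligible. Write $r:=n+m_2$ and $q:=q_r$. The overall strategy mirrors Kahn's push-forward argument in \cite[\S7]{K06}: a definite proportion of horizontal width should be destroyed under the long pull-back $f^{q}\colon(U^{q},\Upsilon^{q})\to(U^{0},\Upsilon^{0})$, with the positivity of the renormalization core entropy used there replaced by the exponential proliferation of bubbles of $f$ attached to $\Hq$.

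First I would record the combinatorial near-invariance that makes the comparison of $\mathcal F^{q}$ with $\mathcal F^{0}$ possible. Since $q=q_r$ is a return time for $f|_\Hq$, the map $f^{q}$ displaces every point of $\Hq$ by combinatorial length $l_r\ll l_n$, so $L^{q}$, $R^{q}$, and $U^{q}\cap\Hq$ agree with $L$, $R$, and $\overline{U^{0}\cap\Hq}$ up to the addition of combinatorial pieces of level $r$; by Lemma \ref{counting-critical} the degree $\deg(f^{q}\colon U^{q}\to U^{0})$ is bounded by a constant $M(m_2,\lambda)$ (in Case \ref{case:boundary-ring}, after the adjustment of Remark \ref{adjustment-in-case-c}). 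Peeling off the canonical horizontal leaves sensitive to this discrepancy, by a comparison of canonical horizontal laminations as in Lemma \ref{new-pieces}: a leaf meeting the symmetric difference of the relevant cores restricts to a curve overflowing $\mathcal F_\tau$ of a combinatorial piece of level within a bounded distance of $n$, so either those leaves carry $O(1)$ width---negligible once $\mathbf K$ is large---or that piece is $[\Delta K,\tau]$-wide; absorbing the bounded level shift into $m_1$, the latter contradicts the failure of \ref{goal-number-1}. It then suffices to bound the width of the remaining, ``genuine'', horizontal leaves.

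\textbf{The crux.} Push $\mathcal F^{q}$ forward by $f^{q}$. A leaf $\gamma$ of $\mathcal F^{q}$ has both endpoints on $\Upsilon^{q}$, but along the way it may cross the components of $(f^{q})^{-1}(\Upsilon^{0})$ other than $\Upsilon^{q}$, which by Proposition \ref{bubble-structure} are arcs of $L$ and $R$ carried on bubbles of generation $\le q$ rooted on $U^{q}\cap\Hq$; cutting $f^{q}(\gamma)$ at the images of those crossings realizes it as a concatenation of horizontal arcs of $U^{0}\setminus\Upsilon^{0}$. Fix a generation threshold $g=g(m_1)$ so that the roots of bubbles of generation $\le g$ form an $l_{n+m_1}$-dense subset of $U^{q}\cap\Hq$. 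The dichotomy I would establish is: \emph{either} a definite proportion of $W(\mathcal F^{q})$ is ``caught'' by these bubble-arcs---by Proposition \ref{log-rule}, a horizontal leaf forced to skirt a bubble-arc of bounded generation must shed a definite portion of its width toward $\partial U^{q}$, i.e.\ that width becomes vertical---\emph{or} a definite proportion of $W(\mathcal F^{q})$ avoids all of them and is therefore trapped between two neighboring roots, hence concentrated on a subpiece of $U^{q}\cap\Hq$ of level $\ge n+m_1$. In the first case, iterating the loss across the bounded number of ``branching epochs'' of the orbit of $\Upsilon$ (bounded via Lemma \ref{counting-critical} and Proposition \ref{bounded-type}) gives, for $m_2$ large, $W(\mathcal F^{q})<\delta K$, which is \ref{goal-number-2}. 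In the second case, pushing the concentrated lamination forward by the relevant generation and applying Proposition \ref{maximality} together with the Covering Lemma (Lemma \ref{covering-lemma}) produces a $[\Delta K,\tau]$-wide combinatorial piece of level $n+m_1$, contradicting the standing assumption; hence this case does not arise, and we land in the first.

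I expect the quantitative exclusion of the second alternative---equivalently, the estimate forbidding the canonical horizontal lamination from simultaneously retaining almost all its width, avoiding the exponentially many bubble-arcs of generation $\le g$, and failing to concentrate on a short piece---to be the main obstacle. This estimate is the replacement for the entropy argument of \cite[\S6.3]{K06}, and, since $f|_\Hq$ carries no entropy, it must be supplied purely by the geometry of the bubble tree of $f$: one balances the exponential count of bubbles against their harmonic measures (Proposition \ref{log-rule}) and the uniformly bounded covering degrees (Lemma \ref{counting-critical}), with $g$ tuned so that $l_g$ corresponds to level $n+m_1$. Organizing this bookkeeping---the content of the detailed outline in \S\ref{sss:rough-outline}---is the heart of the section; once it is in place, Proposition \ref{pos-entropy} follows by combining it with the comparison set up in the first two steps.
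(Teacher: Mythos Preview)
Your high-level plan---a dichotomy between amplified $\tau$-degeneration and strict horizontal-width loss, iterated until the loss accumulates to the factor $\delta$---matches the paper's. But the central mechanism you propose for the width loss is not the one that works, and as stated it contains a genuine gap.

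The claim that a horizontal leaf ``caught'' by a bubble-arc of bounded generation must ``shed a definite portion of its width toward $\partial U^{q}$'' is unjustified. A horizontal leaf of $\mathcal F^{q}$ that crosses a component of $(f^{q})^{-1}(\Upsilon^0)$ other than $\Upsilon^{q}$ does push forward to a concatenation of horizontal arcs in $U^0\setminus\Upsilon^0$, but this alone yields no loss: each piece is still horizontal, and Proposition~\ref{log-rule} provides no mechanism by which any of it becomes vertical. More fundamentally, without additional structure you cannot even conclude the monotonicity $W(\mathcal F^{q})\le W(\mathcal F^{0})+O(1)$: the comparison via Lemma~\ref{domination-property} only controls $\mathcal F^{q}$ by the total width of rectangles in $\mathcal T'$ it must traverse, and that total can exceed $W(\mathcal F^0)$.

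The paper's mechanism is different, and the missing ingredient is the \emph{asymmetric width} $Z^j:=W(\mathcal A^j)+2W(\mathcal B^j)$, with type-\ref{type-B} leaves weighted by $2$. A separating curve $\beta$ in $U^{r}$ is crossed once by type-\ref{type-A} and twice by type-\ref{type-B} leaves, so the rectangles of $\mathcal T'$ meeting $\beta$ have total width exactly $A+2B$; this yields the monotonicity $Z^r\le Z^0+O(1)$ (Corollary~\ref{cor:monotonicity}). Two further separating curves $\beta_L,\beta_R$ split $\mathcal T'[\beta]$ into \emph{persistent} rectangles (crossing both) and the rest; a series-law computation (Proposition~\ref{prop:change-in-width}) shows that unless the persistent part carries width $\ge 0.1K$, one has $Z^r\le\nu Z^0$ for some $\nu<1$ depending only on a bound for $D$. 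The bubbles enter only in the \emph{persistent} case, and with the opposite role from the one you assign them: by Lemma~\ref{homotopy} all persistent leaves are homotopic rel $\mathrm{CP}(f^r)$, so they must thread a common gate in each of many ``fences'' built from inner and outer bubbles, and the series law across those fences produces a piece $J$ of level $n+m$ with $W_\tau(J)\ge\Delta K$ (Lemma~\ref{greater-deg}). In short, in the paper bubbles convert persistence into \emph{degeneration}, while the width loss comes from the harmonic-sum estimate on the non-persistent part; your proposal inverts these roles, and that inversion does not go through without the asymmetric-width and separating-curve apparatus.
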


\begin{remark}
\label{rem:final-proof}
    Recall that the final missing ingredient of the proof of Theorem \ref{promotion} is Lemma \ref{positive entropy}. We can apply Proposition \ref{pos-entropy} in the context of Lemma \ref{positive entropy} (e.g. setting $\Delta = 4\varepsilon_1^{-1}$ and $\delta = (2 \varepsilon_2)^{-1}$), thereby proving the lemma immediately.
\end{remark}

\subsubsection{Outline}
\label{sss:rough-outline}

For every $j \in \N$, since $U^j\backslash \Upsilon^j$ is a disk with two connected compact sets removed, the leaves of $\mathcal{F}^j$ belong to at most two proper homotopy classes in $U^j \backslash \Upsilon^j$. We say that a homotopically non-trivial proper curve in $U^j \backslash \Upsilon^j$ is of type
\begin{enumerate}[label=\text{\Alph*}]
    \item\label{type-A} if it connects $L^j$ and $R^j$, and
    \item\label{type-B} if it starts and ends at the same component of $\Upsilon^j$.
\end{enumerate}
Naturally, we split $\mathcal{F}^j$ into a disjoint union of type \ref{type-A} and \ref{type-B} sublaminations $\mathcal{A}^j \cup \mathcal{B}^j$. See Figure \ref{fig:wad00}.

To illustrate the main idea, let us consider the unbranched covering map 
\[
f^r : U^{r} \backslash f^{-r}(\text{CV}) \to U^0 \backslash \text{CV}
\]
of large degree (depending on $\lambda$, $\delta$, and $\Delta$) where $r=r(\delta,\Delta)\geq 1$ is a fixed integer and $\text{CV}$ is the set of critical values of $f^r$. Consider the horizontal thick-thin decomposition $\tilde{\mathcal{T}}$ of $U^0 \backslash ( \Upsilon^0 \cup \text{CV})$. (Refer to Appendix \ref{ss:canonical-lamination}.) It has the key property that, up to an additive constant, curves in $\mathcal{F}^r$ must travel through the canonical rectangles in $\tilde{\mathcal{T}}'=(f^r)^*\tilde{\mathcal{T}}$.

We present $\tilde{\mathcal{T}}$ as $\mathcal{A} \cup \mathcal{B} \cup \mathcal{P}$ where $\mathcal{A}$ consists of type \ref{type-A} rectangles, $\mathcal{B}$ consists of type \ref{type-B} rectangles, and $\mathcal{P}$ consists of \emph{peripheral} rectangles, i.e. those which become trivial when the punctures at the critical values are forgotten. See Figure \ref{fig:wad01}. The widths of $\mathcal{A}$ and $ \mathcal{B}$ are essentially the widths of $\mathcal{A}^0$ and $\mathcal{B}^0$ respectively, whereas the width of $\mathcal{P}$ may depend on the degree of $f^r$. To tame $\mathcal{P}$, we further split it into $\mathcal{D} \cup \mathcal{E}$ where the canonical leaves of $\mathcal{E}$ are disjoint from the complement of either $\eta L^0$ or $\eta R^0$ for some small $\eta >1$. We eliminate $\mathcal{E}$ by absorbing it into $\Upsilon^0 = L^0 \cup R^0$, giving us a thickening $\mathbf{\Upsilon} = \mathbf{L} \cup \mathbf{R}$. The width of the remainder $\mathcal{D}$ can be assumed to be bounded in terms of $K$ independent of the degree of $f^r$ (Lemma \ref{lem:DE-degeneration}). 

There is a unique (up to homotopy) proper curve $\boldsymbol{b}$ in $U^0 \backslash \mathbf{\Upsilon}$ separating $\mathbf{L}$ and $\mathbf{R}$. The curve $\boldsymbol{b}$ can be assumed to be disjoint from $\mathcal{D}$, is crossed by $\mathcal{A}$ exactly once and by $\mathcal{B}$ twice. This observation implies that the \emph{asymmetric width}
\[
    Z^j := W\left(\mathcal{A}^j\right) + 2 W\left(\mathcal{B}^j\right)
\]
is non-increasing (Corollary \ref{cor:monotonicity}). To upgrade this monotonicity to a strict loss, we construct ``leftmost`` and ``rightmost`` separating proper curves $\beta_L$ and $\beta_R$ in $U^{r} \backslash f^{-r}(\mathbf{\Upsilon} \cup \text{CV})$ such that both $f^r(\beta_L)$ and $f^r(\beta_R)$ are homotopic to $\boldsymbol{b}$ in $U^0 \backslash \mathbf{\Upsilon}$. Unlike $\boldsymbol{b}$, the images of $\beta_L$ and $\beta_R$ will cross $\mathcal{D}$. See Figure \ref{fig:wad02} for an illustration. 

We say that a rectangle in $\tilde{\mathcal{T}}'$ is \emph{persistent} if it crosses both $\beta_L$ and $\beta_R$. Ultimately, the analysis is split into two cases.

\begin{enumerate}[leftmargin=0.6in] 
    \item[Case 1:] Persistent rectangles are wide ($\geq 0.1K$). \\
    All persistent canonical leaves are homotopic rel critical points of $f^r$ (Lemma \ref{homotopy}) and they submerge in and out through $\Hq$ (Figures \ref{fig:fences-01} and \ref{fig:fences-02}) many times. This leads to amplification as stated in Proposition \ref{pos-entropy} \ref{goal-number-1}.
    \item[Case 2:] Persistent rectangles are not too wide ($< 0.1K$). \\
    Non-persistent rectangles are subject to the pull-off principle in a similar vein as \cite[\S6.3]{K06}. By taking into account the bound on the width of $\mathcal{D}$, we show that the asymmetric width $Z^j$ is strictly decreasing (the second case of Lemma \ref{lem:deg-or-loss-of-width}). Then, we run an inductive process to get the desired shrinking factor $\delta$ in Proposition \ref{pos-entropy} \ref{goal-number-2}. 
\end{enumerate}

\begin{figure}
    \centering
    \begin{tikzpicture}
    \node[anchor=south west,inner sep=0] (image) at (0,0) {\includegraphics[width=0.75\linewidth]{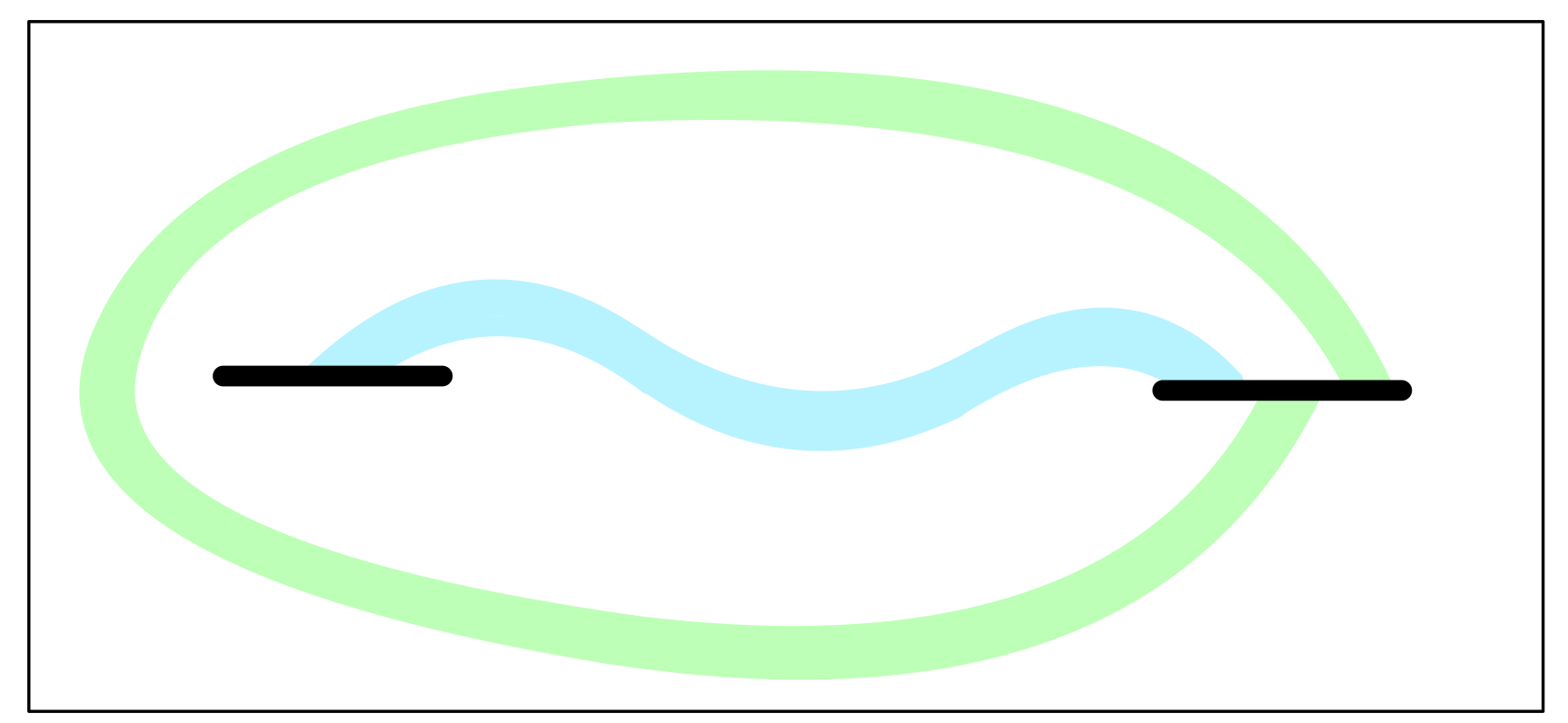}};
    \begin{scope}[
        x={(image.south east)},
        y={(image.north west)}
    ]
        \node [black, font=\bfseries] at (0.18,0.42) {$L^j$};
        \node [black, font=\bfseries] at (0.86,0.405) {$R^j$};
        \node [black, font=\bfseries] at (0.95,0.08) {$U^j$};
        
        \node [green!50!black, font=\bfseries] at (0.20,0.21) {$\mathcal{B}^j$};
        \node [blue!60!cyan, font=\bfseries] at (0.54,0.43) {$\mathcal{A}^j$};
        \draw[red, line width=0.1pt, dashed] (0.45,0.04) .. controls (0.42,0.4) and (0.42,0.6) .. (0.45,0.96);
        \node [red, font=\bfseries] at (0.41,0.70) {\small $\boldsymbol{b}$};
    \end{scope}
\end{tikzpicture}
    
    \caption{The decomposition of $\mathcal{F}^j$. The intersection number between $\boldsymbol{b}$ and $\mathcal{A}^j$ is one, and that between $\boldsymbol{b}$ and $\mathcal{B}^j$ is two.}
    \label{fig:wad00}
\end{figure}

\subsection{Decomposition of canonical laminations}
\label{ss:decomposition}

Let us fix 
\[
K := W\left(\mathcal{F}^0\right) \quad \text{and} \quad r := q_{n+m}
\]
for some sufficiently large integer $m \in \N$. From now on until the end of \S\ref{ss:persistence}, we consider the unbranched covering map 
\[
f^r : U^{r} \backslash f^{-r}(\text{CV}) \to U^0 \backslash \text{CV}
\]
where $\text{CV}=\text{CV}(f^r)$ denotes the set of critical values of $f^r$ in $U^0$.

\begin{lemma}
    \label{lem:counting-cv}
    There is an absolute constant $C>1$ such that $|\textnormal{CV}| = O(\lambda C^m)$. In particular, the degree of $f^r$ depends only on $m$ and $\lambda$.
\end{lemma}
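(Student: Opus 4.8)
The plan is to bound $|\textnormal{CV}|$ by counting how often the forward orbits of the free critical points of $f$ return to the piece $U^0\cap\Hq$, and then to read off the degree bound directly from Lemma \ref{counting-critical}.

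First I would observe that every critical value of $f^r$ lying in $U^0$ in fact lies in $U^0\cap\Hq$, a piece $P_0$ of combinatorial length $\lambda|L|\asymp\lambda l_n$: in Case \ref{case:boundary-ring} this is part of the standing assumption on $U^0$ (Remark \ref{adjustment-in-case-c}), and in the remaining cases it holds because each free critical point of $f$ lies on the forward-invariant set $\Hq$ while the orbits of $0$ and $\infty$ avoid $\C^*\supset U^0$. Moreover, every critical value of $f^r$ has the form $f^k(c)$ with $c$ a free critical point of $f$ and $1\le k\le r=q_{n+m}$: a critical point $z$ of $f^r$ satisfies $f^i(z)=c$ for some critical point $c$ of $f$ and some $0\le i\le r-1$, forcing $f^r(z)=f^{r-i}(c)$, and $c$ must be free since otherwise $f^{r-i}(c)\in\{0,\infty\}\not\subset U^0$.

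The crux is then the combinatorial estimate: for each fixed free critical point $c$,
\[
\#\bigl\{\,k\in\{1,\dots,q_{n+m}\}:\ f^k(c)\in P_0\,\bigr\}=O(\lambda C^m).
\]
By the renormalization tiling (Proposition \ref{renormalization-tiling}, in the weaker form recorded just after it, applicable since $n+m\ge 3$), the orbit $\{f^i(c)\}_{0\le i\le q_{n+m}}$ partitions $\Hq$ into pieces of combinatorial length at least $l_{n+m}$, so the piece $P_0$, of length $\asymp\lambda l_n$, contains at most $O(\lambda l_n/l_{n+m})+1$ of these points; and Proposition \ref{bounded-type} applied $m$ times gives $l_n\le C^m l_{n+m}$. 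Summing over the $d_0+d_\infty-2$ free critical points of $f$, counted with multiplicity (Propositions \ref{bubble-str} and \ref{bubble-structure}), yields $|\textnormal{CV}|=O(\lambda C^m)$.

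Finally, for the degree I would apply Lemma \ref{counting-critical} to the branched covering $f^r:U^r\to U^0$ of topological disks in $\C^*$ (here $U^r$ is the component of $f^{-r}(U^0)$ containing $\Upsilon^r$, which avoids $0$ and $\infty$ since their orbits avoid $U^0$), taking $a=r=q_{n+m}$, i.e. $k=m$, and $\rho:=\lambda|L|/l_n\asymp\lambda\ge 1$; this bounds $\deg(f^r:U^r\to U^0)$ by $M(m,\rho)$, which depends only on $m$ and $\lambda$. In Case \ref{case:boundary-ring} one invokes Remark \ref{omit-y0}, noting that $U^0$ — and hence $U^r$ — is disjoint from the component $\hat{Y}^0$ of $\RS\backslash\He$ containing $0$. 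No step presents a genuine obstacle; the only point demanding care is the bookkeeping in the orbit‑counting estimate, balancing the combinatorial length of $P_0$ against the return time $q_{n+m}$ through the bounded‑type comparison $l_n\le C^m l_{n+m}$.
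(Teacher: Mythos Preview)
Your proof is correct and follows essentially the same approach as the paper: both arguments reduce to counting how many points of an orbit of length $q_{n+m}$ land in the piece $\overline{U^0}\cap\Hq$ of length $\asymp\lambda l_n$, using the renormalization tiling and the bounded-type estimate $l_n\le C^m l_{n+m}$. The paper phrases the count via backward iterates of a critical value $v$ (i.e.\ $(f|_\Hq)^{-i}(v)$) while you use forward iterates of a critical point $c$, but since $f|_\Hq$ is a homeomorphism conjugate to a rotation these are interchangeable and yield the same bound; your explicit appeal to Lemma~\ref{counting-critical} for the degree statement makes precise what the paper leaves implicit in its ``In particular''.
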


\begin{proof}
    Consider the piece $J=\overline{U^0} \cap \Hq$. It suffices to fix a critical value $v \in \Hq$ of $f$ and estimate the size of $\mathcal{O}_v := \{ (f|_{\Hq})^{-i}(v) \in J \: : \: 0 \leq i \leq q_{n+m}-1\}$. By Proposition \ref{renormalization-tiling}, $\mathcal{O}_v$ divides $J$ into $|\mathcal{O}_v|+1$ pieces $J_1,\ldots,J_{|\mathcal{O}_v|+1}$ of length at least $l_{n+m}$. Let $C>1$ be the constant from Proposition \ref{bounded-type}. Then, 
    \[
    \lambda l_n \asymp |J| = \sum_i |J_i| \geq |\mathcal{O}_v| \cdot l_{n+m} \geq |\mathcal{O}_v| \cdot C^{-m} l_n
    \]
    which implies that $|\mathcal{O}_v| =O(\lambda C^m)$.
\end{proof}

Let $\tilde{\mathcal{T}}:= \textnormal{TTD}^h(U^0,\Upsilon^0 \cup \text{CV})$ denote the horizontal thick-thin decomposition of $U^0 \backslash (\Upsilon^0 \cup \text{CV})$. (Refer to Appendix \ref{ss:canonical-lamination} for details.) According to Proposition \ref{naturality}, the horizontal thick-thin decomposition $\tilde{\mathcal{T}}'$ of $U^r \backslash f^{-r}(\Upsilon^0 \cup \text{CV})$ is the full lift $(f^r)^*\tilde{\mathcal{T}}$ of $\tilde{\mathcal{T}}$. 

By taking into account the critical values of $f^r$, we enrich the canonical lamination with the presence of peripheral arcs. We say that a proper curve in $U^0\backslash \left(\Upsilon^0 \cup \text{CV}\right)$ is \emph{peripheral} if it has a trivial proper homotopy class in $U^0 \backslash \Upsilon^0$. We will decompose $\tilde{\mathcal{T}}$ into a disjoint union
\[
    \tilde{\mathcal{T}} := \mathcal{A} \cup \mathcal{B} \cup \mathcal{P} 
\]
where $\mathcal{A}$ consists of canonical rectangles of type \ref{type-A} (leaves of $\mathcal{F}(\mathcal{A})$ are of type \ref{type-A}) in $U^0\backslash \Upsilon^0$, $\mathcal{B}$ consists of canonical rectangles of type \ref{type-B}, and $\mathcal{P}$ consists of peripheral rectangles. Denote by $A$ and $B$ the total widths of $\mathcal{A}$ and $\mathcal{B}$ respectively.

Observe that the width of $\mathcal{F}^0$ should be close to $A+B$. The next lemma follows directly from Proposition \ref{maximality} and Lemma \ref{lem:counting-cv}.

\begin{lemma}
\label{lem:correction}
    There is some $C = C(m,\lambda)>0$ such that
    \[
        \left| A - W\left(\mathcal{A}^0\right)  \right|  \leq C \quad \text{and} \quad \left| B- W\left(\mathcal{B}^0\right) \right|  \leq C
    \]
\end{lemma}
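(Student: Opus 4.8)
The plan is to regard $\mathcal{T}=\textnormal{TTD}^h(U^0,\Upsilon^0 \cup \text{CV})$ as the refinement of the canonical horizontal lamination $\mathcal{F}^0=\mathcal{F}^h_{\textnormal{can}}(U^0,\Upsilon^0)$ produced by puncturing $U^0 \backslash \Upsilon^0$ at the finitely many points of $\text{CV}$, and to bound how much each proper homotopy class can gain or lose width under this operation. Both laminations split into disjoint canonical rectangles indexed by their non-peripheral proper homotopy classes, so widths are additive: $W(\mathcal{F}^0)=W(\mathcal{A}^0)+W(\mathcal{B}^0)$ and $W(\mathcal{T})=A+B+W(\mathcal{P})$, where $\mathcal{P}$ collects the peripheral rectangles that the punctures create. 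Erasing the marked points of $\text{CV}$ turns a type~\ref{type-A} (resp. type~\ref{type-B}) curve into one of the same type in $U^0 \backslash \Upsilon^0$, so the two non-peripheral classes carried by $\mathcal{T}$ are exactly \ref{type-A} and \ref{type-B}.

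For the upper bounds I would observe that the leaves of $\mathcal{A}$ (resp. $\mathcal{B}$) are disjoint embedded curves that are unchanged by forgetting the punctures, hence form a lamination of type~\ref{type-A} (resp. type~\ref{type-B}) horizontal curves in $U^0 \backslash \Upsilon^0$; since the canonical rectangle $\mathcal{A}^0$ (resp. $\mathcal{B}^0$) has maximal width within its proper homotopy class, Proposition~\ref{maximality} gives $A=W(\mathcal{A})\le W(\mathcal{A}^0)+O(1)$ and $B=W(\mathcal{B})\le W(\mathcal{B}^0)+O(1)$. For the matching lower bound I would perturb the leaves of $\mathcal{F}^0$ to avoid $\text{CV}$: clearing one puncture from a canonical lamination costs at most a universal constant in width (Proposition~\ref{maximality}), and a small perturbation does not change the type of a leaf, so the perturbed family is a lamination of type~\ref{type-A} and type~\ref{type-B} curves in $U^0 \backslash (\Upsilon^0 \cup \text{CV})$ of width at least $W(\mathcal{A}^0)+W(\mathcal{B}^0)-O(|\text{CV}|)$; applying maximality of $\mathcal{A}$ and $\mathcal{B}$ within their classes yields $A+B\ge W(\mathcal{A}^0)+W(\mathcal{B}^0)-O(|\text{CV}|)$.

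Combining the two upper bounds with this lower bound isolates $|A-W(\mathcal{A}^0)|=O(|\text{CV}|)+O(1)$ and $|B-W(\mathcal{B}^0)|=O(|\text{CV}|)+O(1)$. By Lemma~\ref{lem:counting-cv}, $|\text{CV}|=O(\lambda C^m)$, so the right-hand side is a constant depending only on $m$ and $\lambda$, which is the assertion.

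The only point that needs care is the homotopy bookkeeping: verifying that every non-peripheral proper class of $\mathcal{T}$ is one of \ref{type-A}, \ref{type-B}, and that perturbing a leaf off $\text{CV}$ cannot render a type~\ref{type-B} leaf peripheral. Both are routine topological facts about proper arcs in a disk with two distinguished compacta and finitely many additional punctures; once they are granted, the statement reduces to additivity of extremal width over the canonical rectangles together with the per-puncture estimate supplied by Proposition~\ref{maximality}.
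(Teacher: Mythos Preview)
Your proposal is correct and matches the paper's approach exactly: the paper states the lemma as ``an immediate consequence of Proposition~\ref{maximality} and Lemma~\ref{lem:counting-cv}'' without further detail, and your argument is precisely the natural way to unpack that sentence. One small point worth tightening: for the lower bound you do not need to ``perturb'' leaves at all---simply delete from $\mathcal{A}^0$ (resp.\ $\mathcal{B}^0$) the finitely many leaves passing through $\text{CV}$; the remaining leaves split into at most $|\text{CV}|+1$ disjoint sub-rectangles, each carrying a single arc in $U^0\backslash(\Upsilon^0\cup\text{CV})$, and Proposition~\ref{maximality} applied to each sub-rectangle gives the bound with loss $2(|\text{CV}|+1)$.
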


Let us pick a definite constant $\eta>1$ such that the combinatorial distance between $\eta I$ and $\eta L$ is still $\asymp l_n$. Let us consider collections of disjoint rectangles 
\[
    \mathcal{D} = \mathcal{D}_L \cup \mathcal{D}_R \quad \text{ and } \quad \mathcal{E} = \mathcal{E}_L \cup \mathcal{E}_R
\]
defined as follows. Consider any peripheral rectangle $\mathcal{R}$ in $\mathcal{P}$ attached to $L^0$.
Let us split $\mathcal{R}$ into two conformal subrectangles $\mathcal{R}_{\mathcal{D}}$ and $\mathcal{R}_{\mathcal{E}}$ where every leaf of $\mathcal{F}(\mathcal{R}_{\mathcal{D}})$ intersects $(\eta L^0)^c$ and every leaf of $\mathcal{F}(\mathcal{R}_{\mathcal{E}})$ is disjoint from $(\eta L^0)^c$. 
The rectangle $\mathcal{R}_{\mathcal{D}}$ is then added to $\mathcal{D}_L$. If $W(\mathcal{R}_{\mathcal{E}}) > 1$, let us remove the outer $1$-buffer of $\mathcal{R}_{\mathcal{E}}$ and add the remaining subrectangle into $\mathcal{E}_L$. In a similar manner, we define the collections $\mathcal{D}_R$ and $\mathcal{E}_R$ out of splitting peripheral rectangles attached to $R^0$.

Peripheral rectangles in $\mathcal{D}$ are a source of $\tau$-degeneration. Let us denote the widths of $\mathcal{D}$, $\mathcal{D}_L$, and $\mathcal{D}_R$ by $D$, $D_L$, and $D_R$ respectively.

\begin{lemma}
\label{lem:DE-degeneration}
    There is an absolute constant $m_0 \in \N$ such that if
    \[
        m \geq m_0 \quad \text{ and } \quad D \geq \kappa K
    \]
    for some $\kappa >1$, then there is a combinatorial piece $J$ of level $n+m$ and width $W_\tau(J) \geq \varepsilon \kappa K$ for some constant $\varepsilon=\varepsilon(m)>0$.
\end{lemma}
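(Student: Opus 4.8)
The plan is to show that if the total width $D$ of the peripheral rectangles in $\mathcal{D} = \mathcal{D}_L \cup \mathcal{D}_R$ is at least $\kappa K$, then one of the many level $n+m$ combinatorial subpieces between $\eta L^0$ and $(\eta L^0)^c$ (or the analogous range on the $R$-side) must witness a $\tau$-width comparable to $\kappa K$. First I would dispose of the trivial bookkeeping: since $D = D_L + D_R$, one of $D_L, D_R$ is at least $\kappa K / 2$; say it is $D_L$. Every leaf of $\mathcal{F}(\mathcal{D}_L)$ is a peripheral curve attached to $L^0$ that intersects $(\eta L^0)^c$, so it contains a subcurve joining $L^0$ to $(\eta L^0)^c$ while staying homotopically trivial in $U^0 \backslash \Upsilon^0$ — in particular each such subcurve must travel combinatorial distance $\asymp l_n$ inside $U^0 \cap \Hq$-adjacent territory, crossing a block of consecutive level $n+m$ combinatorial pieces lying in $\overline{\eta L^0 \backslash L^0}$. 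Here the hypothesis $m \geq m_0$ is used so that there are enough such pieces: the number of level $n+m$ pieces in a range of combinatorial length $\asymp l_n$ is $\asymp l_n / l_{n+m} \asymp \tilde{C}^m$ by Proposition~\ref{bounded-type}, and we need this to exceed a fixed absolute constant so that the pigeonhole step below produces a genuine gain.

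Next I would run the by-now-standard restriction-and-series-law argument (the same mechanism as in the proof of Lemma~\ref{bubble-wave-argument} and in \S\ref{ss:trading-shallow}). Pick a maximal sequence of distinct level $n+m$ combinatorial pieces $J_1, \ldots, J_M$ inside $\overline{\eta L^0 \backslash L^0}$, labelled in consecutive order, each pair separated by combinatorial distance $\geq \tfrac{\tau-1}{2} l_{n+m}$; then $M \asymp \tilde{C}^m$. Since each leaf of $\mathcal{F}(\mathcal{D}_L)$ crosses all of them in order, $\mathcal{F}(\mathcal{D}_L)$ admits pairwise disjoint restrictions $\mathcal{G}_1, \ldots, \mathcal{G}_M$ with $\mathcal{G}_i$ connecting $J_{i-1}$ and $J_i$ (taking $J_0 := L^0$). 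By Propositions~\ref{series law} and \ref{harmonicsum}, the widest restriction $\mathcal{G}_s$ satisfies $W(\mathcal{G}_s) \geq M \cdot W(\mathcal{F}(\mathcal{D}_L)) \geq M D_L \geq \tfrac{M}{2}\kappa K$. Because the pieces $J_i$ are mutually $\tfrac{\tau-1}{2} l_{n+m}$-separated, $(\tau J_s)^c$ contains every $J_i$ with $i \neq s$, so $\mathcal{G}_s$ overflows $\mathcal{F}_\tau(J_s)$ and hence $W_\tau(J_s) \geq \tfrac{M}{2}\kappa K$. Since $M \asymp \tilde{C}^m$, we may absorb the absolute constants and write $W_\tau(J_s) \geq \varepsilon \kappa K$ with $\varepsilon = \varepsilon(m) > 0$, and $J_s$ has level $n+m$ as required; taking $J := J_s$ finishes the argument. (Remark~\ref{removal} handles the adjustment in Case~\ref{case:boundary-ring}, where $U^0$ is arranged to avoid $\hat Y^0$ so that the restrictions stay in the relevant region.)

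The main obstacle I anticipate is not the extremal-width combinatorics — that is routine — but verifying the geometric claim that every leaf of $\mathcal{F}(\mathcal{D}_L)$ really does force a crossing of a block of level $n+m$ pieces of combinatorial length $\asymp l_n$, uniformly, rather than sneaking through via a homotopically trivial excursion that accumulates no combinatorial length. This is where one must use that such a leaf is \emph{peripheral} in $U^0 \backslash \Upsilon^0$ yet, by the definition of $\mathcal{D}_L$, must reach $(\eta L^0)^c$: being attached to $L^0$ and reaching a point at combinatorial distance $\asymp l_n$ from $L^0$, while remaining trivial rel $\Upsilon^0$, pins down its proper homotopy class and forces it to sweep across the intervening combinatorial pieces near $\Hq$. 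Making this precise requires a short topological argument isolating the subcurve "furthest from $L^0$" (compare the use of $\gamma'$ in Claim~3 of \S\ref{ss:trading-shallow}) and checking that it lands in $\mathcal{F}_\tau(J_i)$ for each intermediate $J_i$; one also needs $m \geq m_0$ precisely so that $\eta L^0$ and $(\eta L^0)^c$ are far enough apart for the threshold-type separation conditions on the $J_i$'s to be satisfiable simultaneously.
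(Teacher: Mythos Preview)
Your series-law approach has a genuine gap at exactly the point you flag. The pieces $J_1,\ldots,J_M$ you place in $\overline{\eta L^0 \backslash L^0}$ are subsets of $\Hq$, not separating arcs in $U^0$: a leaf of $\mathcal{F}(\mathcal{D}_L)$ can travel from $L^0$ out to a point of $(\eta L^0)^c$ entirely through one of the half-disks $U^0 \cap Y^\bullet$ without ever meeting $\Hq$ in between, so there is no reason it should cross any $J_i$. Being peripheral means the leaf is \emph{trivial} rel $\Upsilon^0$, so its homotopy class pins down nothing. Your appeal to Claim~3 of \S\ref{ss:trading-shallow} does not transfer: there the curves ran between two distinct pieces $I$ and $T^+$, so skipping an intermediate $I_j$ forced a wave protecting $I_j$, which the no-wide-wave assumption then controlled; here both endpoints lie on $L^0$ and no analogous dichotomy is available.

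The paper's argument avoids all of this by using the Parallel Law rather than the Series Law. It covers $L^0$ itself (not the gap $\overline{\eta L^0 \backslash L^0}$) by level $n+m$ pieces $J_1,\ldots,J_N$ and sorts leaves of $\mathcal{F}(\mathcal{D}_L)$ according to which $J_i$ they \emph{start} on. The constant $m_0$ is chosen so that $\tau J_i \subset \eta L^0$, hence $(\eta L^0)^c \subset (\tau J_i)^c$; every leaf starting on $J_i$ therefore already overflows $\mathcal{F}_\tau(J_i)$, no intermediate crossings required. Then
\[
\tfrac{\kappa}{2}K \leq D_L \leq \sum_{i=1}^N W_\tau(J_i) \leq N \max_i W_\tau(J_i),
\]
which gives the conclusion with $\varepsilon = \tfrac{1}{2N(m)}$. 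Note that $\varepsilon(m)$ decays rather than grows in $m$, which is all the lemma asks for; your attempted amplification by a factor $M \asymp \tilde C^m$ is stronger than needed and is precisely what cannot be had without the missing crossing claim.
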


\begin{proof}
    Suppose without loss of generality that $D_L \geq \frac{\kappa}{2} K$. There exists a constant $m_0 \in \N$ depending on $\eta$ such that for any level $n+m_0$ combinatorial subpiece $J$ of $L^0$, the thickened piece $\tau J$ is contained in $\eta L^0$. Let us assume $m \geq m_0$. The piece $L^0$ can be covered by $N$ level $n+m$ combinatorial pieces $J_1,\ldots, J_N$ for some integer $N=N(m) \in \N$. Let us split $\mathcal{F}(\mathcal{D}_L)$ into sublaminations $\mathcal{L}_1,\ldots, \mathcal{L}_N$ where leaves of $\mathcal{L}_i$ start at points on $J_i$. For each $I$, since $\mathcal{D}_L$ crosses $(\eta L^0)^c$ which is contained in $(\tau J_i)^c$, then $\mathcal{L}_i$ overflows the curve family $\mathcal{F}_\tau(J_i)$. Therefore,
    \[
        \frac{\kappa}{2} K \leq D_L = \sum_{i=1}^N W(\mathcal{L}_i) \leq \sum_{i=1}^N W_\tau(J_i) \leq N \max_i W_\tau(J_i).
    \]
    Consequently, there is some $i \in \{1,\ldots,N\}$ such that $W_\tau(J_i)\geq \frac{\kappa}{2N} K$. 
\end{proof}

In contrast, peripheral rectangles in $\mathcal{E}$ are combinatorially close to $\Upsilon^0$. We will remove $\mathcal{E}$ from consideration by absorbing it into $\Upsilon^0$ as follows. Let us define $\mathbf{L}$ to be the hull of the union of $L^0$ and $\mathcal{E}_L$, i.e. the smallest compact full subset of $U^0$ containing $L^0 \cup \mathcal{E}_L$. Similarly, we define $\mathbf{R}$ to be the hull of $R^0 \cup \mathcal{E}_R$. Denote by $\mathbf{L}'$ and $\mathbf{R}'$ the connected components of $f^{-r}\left(\mathbf{L}\right)$ and $f^{-r}\left(\mathbf{R}\right)$ that contain $L^0$ and $R^0$
respectively. Let 
\[
    \mathbf{\Upsilon} := \mathbf{L} \cup \mathbf{R} \quad \text{ and } \quad \mathbf{\Upsilon}' := \mathbf{L}' \cup \mathbf{R}'.
\]
By construction, we have the following properties.
\begin{itemize}
    \item[$\rhd$] The intersection of $\mathbf{\Upsilon}$ and $\mathbf{H}$ is within $\eta L^0 \cup \eta R^0$.
    \item[$\rhd$] The thick-thin decompositions of $U^0 \backslash (\mathbf{\Upsilon} \cup \text{CV})$ and $U^r \backslash f^{-r}(\mathbf{\Upsilon} \cup \text{CV})$ are essentially 
    \[
        \mathcal{T} := \mathcal{A} \cup \mathcal{B} \cup \mathcal{D} \quad \text{ and } \quad
        \mathcal{T}' := (f^r)^* \mathcal{T}
    \]
    respectively. By Lemma \ref{lem:counting-cv}, the difference between $\mathcal{F}(\mathcal{T})$ and $\mathcal{F}\left(\tilde{\mathcal{T}}\right)$ has width bounded by some constant depending on $m$ and $\lambda$.
\end{itemize}
See Figure \ref{fig:wad01} for an illustration. 

\begin{figure}
    \centering
    \begin{tikzpicture}
    \node[anchor=south west,inner sep=0] (image) at (0,0) {\includegraphics[width=0.85\linewidth]{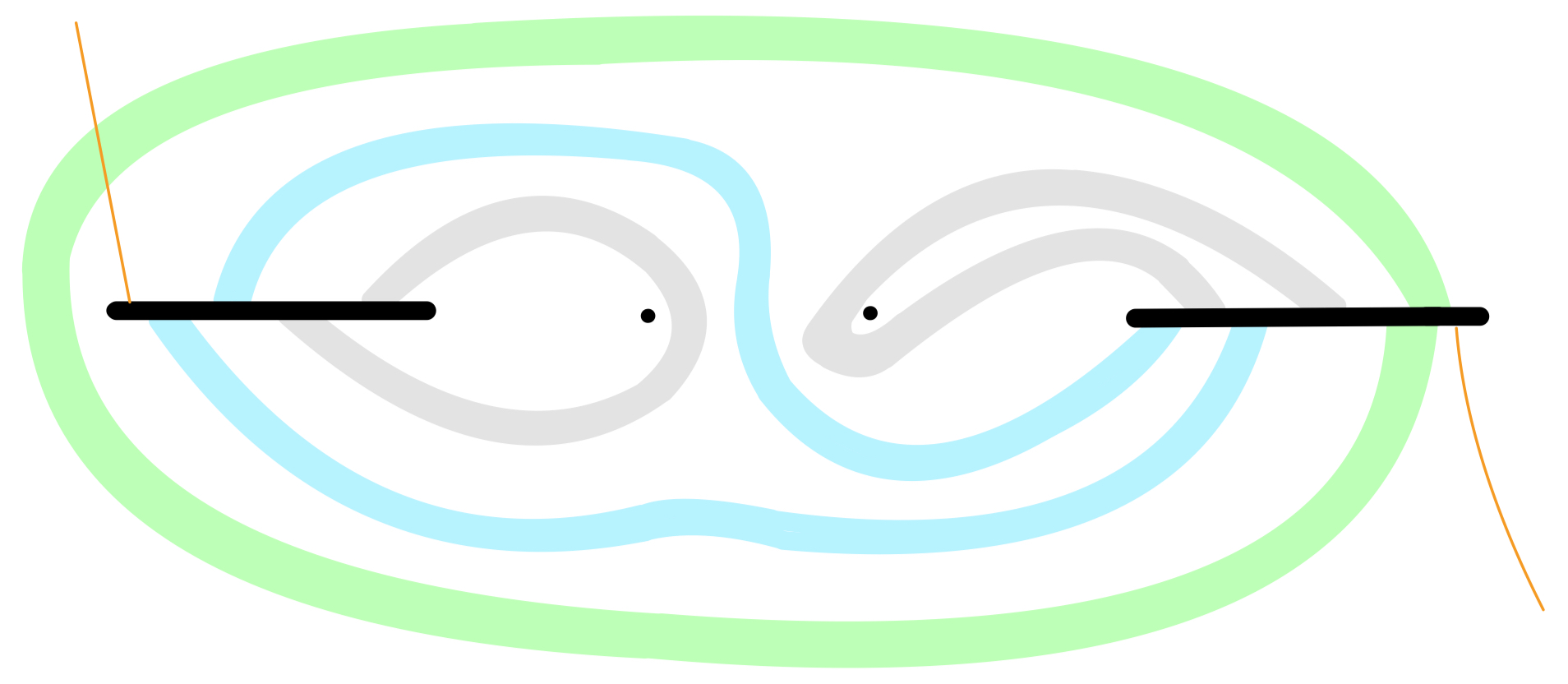}};
    \begin{scope}[
        x={(image.south east)},
        y={(image.north west)}
    ]
        \node [black, font=\bfseries] at (0.17,0.485) {$\mathbf{L}$};
        \node [black, font=\bfseries] at (0.85,0.48) {$\mathbf{R}$};
        \node [orange, font=\bfseries] at (0.025,0.92) {$\boldsymbol{a}_L$};
        \node [orange, font=\bfseries] at (0.95,0.13) {$\boldsymbol{a}_R$};
        
        \node [green!50!black, font=\bfseries] at (0.20,0.16) {$\mathcal{B}$};
        \node [blue!60!cyan, font=\bfseries] at (0.60,0.34) {$\mathcal{A}$};
        \node [blue!60!cyan, font=\bfseries] at (0.65,0.24) {$\mathcal{A}$};
        \node [gray!80!black, font=\bfseries] at (0.40,0.40) {$\mathcal{D}_L$};
        \node [gray!80!black, font=\bfseries] at (0.59,0.65) {$\mathcal{D}_R$};
    \end{scope}
\end{tikzpicture}
    
    \caption{An example of the decomposition of $\mathcal{T}$. Note that $\mathcal{E}$ is inside of $\mathbf{\Upsilon} = \mathbf{L} \cup \mathbf{R}$.}
    \label{fig:wad01}
\end{figure}

As we apply Lemma \ref{domination} to the inclusion $U^r \backslash f^{-r}(\Upsilon^0 \cup \text{CV}) \subset U^r \backslash \Upsilon^r$, we obtain the following fundamental property relating the widths of $\mathcal{F}^r$ and $\mathcal{T}'$.

\begin{lemma}
    \label{domination-property}
    There exist some sublamination $\mathcal{F}^r_{\textnormal{sub}} \subset \mathcal{F}^r$ and some constant $C=C(m,\lambda)>0$ such that 
    \[
        W\left(\mathcal{F}^r\right) - C \leq W(\mathcal{F}^r_{\textnormal{sub}}),
    \]
    and for every leaf $\gamma$ of $\mathcal{F}^r_{\textnormal{sub}}$, every component of $\gamma \backslash f^{-r}(\Upsilon^0 )$ is either
    \begin{enumerate}[label=\textnormal{(\arabic*)}]
        \item a homotopically trivial proper curve in $U^r \backslash f^{-r}(\Upsilon^0 \cup \textnormal{CV})$, or
        \item a proper curve in a rectangle in $\mathcal{T}'$.
    \end{enumerate}
\end{lemma}

\subsection{Separating curves}
\label{sss:strips}

From now on, let us assume without loss of generality that $\mathcal{B}$ starts from and ends at $R^0$. Given a proper curve $\alpha$ in $U^r\backslash\Upsilon^r$, we will denote by $\mathcal{T}'[\alpha]$ the union of rectangles in $\mathcal{T}'$ that intersect $\alpha$. 

Let us fix vertical rays $\boldsymbol{a}_L$ and $\boldsymbol{a}_R$ in $U^0 \backslash \left(\Upsilon^0 \cup \text{CV}\right)$ where
\begin{itemize}
    \item[$\rhd$] $\boldsymbol{a}_L$ connects $\partial U^0$ to $L^0$ and $\boldsymbol{a}_R$ connects $\partial U^0$ to $R^0$;
    \item[$\rhd$] $\boldsymbol{a}_L$ is crossed by $\mathcal{B}$ exactly once and is disjoint from $\mathcal{T} \backslash \mathcal{B}$;
    \item[$\rhd$] $\boldsymbol{a}_R$ is disjoint from $\mathcal{T} $.
\end{itemize}
The first assumption states that $\boldsymbol{a}_L$ and $\boldsymbol{a}_R$ are vertical cuts of $U^0\backslash\left(\Upsilon^0\cup \text{CV}\right)$, whereas the other two assumptions state that the minimal intersection number relative to $\mathcal{T}$ is achieved. See Figure \ref{fig:wad01}. 

For $J \in \{L,R\}$, let $\alpha_{J,+}$ and $\alpha_{J,-}$ be the unique pair of lifts of $\boldsymbol{a}_J$ under $f^r$ that are attached to $\mathbf{J}'$ and are closest to $\mathbf{\Upsilon}'\backslash \mathbf{J}'$. Such lifts exist because $f^r: J^r \to J^0$ is a branched covering of degree at least $2$.

To estimate the width of $\mathcal{F}^r$, we will identify rectangles in $\mathcal{T}'$ that cross a number of proper curves in $U^r$ separating $\mathbf{L}'$ and $\mathbf{R}'$. These curves are constructed with the aid of $\boldsymbol{a}_L$ and $\boldsymbol{a}_R$ as follows. Refer to Figure \ref{fig:wad02} for a schematic picture.

\begin{lemma}[Middle curve]
\label{lem:middle-curve}
    There exist a proper curve $\boldsymbol{b}$ in $U^0$ and a proper curve $\beta$ in $U^r$ with the following properties.
    \begin{enumerate}[label=\textnormal{(\arabic*)}]
        \item $\boldsymbol{b}$ disjoint from $\mathbf{\Upsilon} \cup \textnormal{CV} \cup \boldsymbol{a}_L \cup \boldsymbol{a}_R$ and separates $\mathbf{L}$ from $\mathbf{R}$.
        \item $\mathcal{B}$ crosses $\boldsymbol{b}$ twice, $\mathcal{A}$ crosses $\boldsymbol{b}$ once, and $\mathcal{P}$ is disjoint from $\boldsymbol{b}$.
        \item $\beta$ is a lift of $\boldsymbol{b}$ under $f^r$ that separates $\mathbf{L}'$ from $\mathbf{R}'$.
        \item Every rectangle in $\mathcal{T}'$ crosses $\beta$ at most once, and $W(\mathcal{T}'[\beta]) = A+2B$.
    \end{enumerate}
\end{lemma}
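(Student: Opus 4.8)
The plan is to construct $\boldsymbol{b}$ first as a vertical arc of the canonical lamination of $U^0 \backslash (\mathbf{\Upsilon} \cup \text{CV})$ and then lift it carefully. Since $U^0 \backslash \mathbf{\Upsilon}$ is a disk with two compact full sets removed, there is a unique free proper homotopy class (rel $\partial U^0$) of arcs separating $\mathbf{L}$ from $\mathbf{R}$; choose $\boldsymbol{b}$ to be a representative that is simultaneously disjoint from the punctures $\text{CV}$ and from the vertical rays $\boldsymbol{a}_L, \boldsymbol{a}_R$. This is possible because $\boldsymbol{a}_L, \boldsymbol{a}_R$ are attached to $\mathbf{L}$ and $\mathbf{R}$ respectively and both are vertical cuts; we may slide $\boldsymbol{b}$ off them within its homotopy class while keeping it in the complement of the finite set $\text{CV}$. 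This gives property (1). Property (2) is then a purely topological count: any arc separating $\mathbf{L}$ from $\mathbf{R}$ must cross a type~\ref{type-A} leaf (which runs from $\mathbf{L}$ to $\mathbf{R}$) exactly once and a type~\ref{type-B} leaf based at $\mathbf{R}$ exactly twice, and it can be chosen disjoint from every peripheral leaf of $\mathcal{P}$, since those are null-homotopic rel $\mathbf{\Upsilon}$ (we already absorbed the combinatorially close peripheral rectangles $\mathcal{E}$ into $\mathbf{L},\mathbf{R}$, and the remaining ones in $\mathcal{D}$ can be pushed to one side of $\boldsymbol{b}$). The intersection numbers realized this way are minimal, which we record for use in part (4).

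For the lift, I would use that $f^r : \mathbf{\Upsilon}' \to \mathbf{\Upsilon}$ restricts to $f^r : \mathbf{L}' \to \mathbf{L}$ and $f^r : \mathbf{R}' \to \mathbf{R}$, each a branched covering of degree $\geq 2$ (since $0$ or $\infty$, or more precisely an interior point of the relevant hull, lies over itself with local degree $\geq 2$ coming from the superattracting fixed points — here one uses the covering structure from Proposition~\ref{bubble-structure} applied to the iterate). Because $\boldsymbol{b}$ separates $\mathbf{L}$ from $\mathbf{R}$ in $U^0$ and avoids $\text{CV}$, the full preimage $f^{-r}(\boldsymbol{b})$ consists of disjoint arcs in $U^r$ none of which crosses $f^{-r}(\text{CV})$, and the components of $U^r \backslash f^{-r}(\boldsymbol{b})$ are the preimages of the two sides of $\boldsymbol{b}$. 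Exactly one of these arcs, call it $\beta$, has $\mathbf{L}'$ on one side and $\mathbf{R}'$ on the other: indeed the component of $U^r \backslash f^{-r}(\boldsymbol{b})$ containing $\mathbf{L}'$ maps into the $\mathbf{L}$-side of $\boldsymbol{b}$, the one containing $\mathbf{R}'$ maps into the $\mathbf{R}$-side, and since $\mathbf{L}'$ and $\mathbf{R}'$ are connected and distinct these must be different components, whose common boundary arc is the desired $\beta$. This is property (3).

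For property (4), since $\mathcal{T}' = (f^r)^* \mathcal{T}$ is the full lift (Proposition~\ref{naturality}), every rectangle of $\mathcal{T}'$ maps by $f^r$ isomorphically (as a conformal rectangle) onto a rectangle of $\mathcal{T}$, and a vertical leaf of $\mathcal{T}'$ crosses $\beta$ iff its image crosses $\boldsymbol{b}$; by part (2) and the minimality of the intersection numbers, each rectangle of $\mathcal{A}$ contributes lifts crossing $\beta$ at most once and each rectangle of $\mathcal{B}$ contributes lifts crossing $\beta$ at most once as well, while peripheral rectangles contribute none. Counting widths, the rectangles of $\mathcal{T}'$ meeting $\beta$ are precisely those lying over rectangles of $\mathcal{A}$ together with those lying over the two ``strands'' of each rectangle of $\mathcal{B}$ separated by $\boldsymbol{b}$; since $f^r$ is conformal on each rectangle, widths are preserved leaf-by-leaf, giving $W(\mathcal{T}'[\beta]) = A + 2B$. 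The main obstacle I anticipate is precisely this last bookkeeping: making rigorous that $\boldsymbol{b}$ can be chosen to realize the minimal intersection numbers $1$ and $2$ simultaneously with being disjoint from $\text{CV}$ and $\boldsymbol{a}_L, \boldsymbol{a}_R$, and that ``crosses at most once'' holds for \emph{every} rectangle of $\mathcal{T}'$ rather than generically — this requires choosing $\boldsymbol{b}$ transverse to all of $\mathcal{T}$ and in taut position, which I would phrase in terms of the canonical (thick-thin) lamination so that transversality and minimal position are automatic from Appendix~\ref{ss:canonical-lamination}.
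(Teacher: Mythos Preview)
Your treatment of parts (1)--(3) is essentially the same as the paper's: the curve $\boldsymbol{b}$ is chosen topologically, and the separating lift $\beta$ is found by lifting the $\mathbf{L}$-side of $\boldsymbol{b}$ and observing that $\mathbf{R}'$ lies outside. One minor correction: your justification that $f^r:\mathbf{L}'\to\mathbf{L}$ has degree $\geq 2$ via the superattracting points $0,\infty$ is off --- neither $0$ nor $\infty$ lies in $U^0\subset\C^*$. The degree comes instead from the free critical point on $\Hq$ being hit along the orbit of length $r=q_{n+m}$. This fact is only used to define $\alpha_{L,\pm}$, not for (3) itself, so it does not break your argument for (3).

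Part (4), however, has a genuine gap. Your claim that ``a vertical leaf of $\mathcal{T}'$ crosses $\beta$ iff its image crosses $\boldsymbol{b}$'' is false in the backward direction: the image crossing $\boldsymbol{b}$ only tells you the leaf crosses \emph{some} lift of $\boldsymbol{b}$, not $\beta$ specifically. More importantly, a rectangle $\mathcal{R}\in\mathcal{B}$ crosses $\boldsymbol{b}$ twice, so a single lift $\mathcal{R}'$ crosses $f^{-r}(\boldsymbol{b})$ twice --- and nothing in your argument rules out both crossings being with $\beta$. Neither ``minimal intersection number with $\boldsymbol{b}$'' nor ``taut position in $U^0$'' addresses this, because the question is about how the two crossings distribute among the various lifts of $\boldsymbol{b}$, which is a question about the cover, not about $\boldsymbol{b}$ itself.

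The paper's fix is exactly where the rays $\boldsymbol{a}_L$ earn their keep. If some lift $\mathcal{R}'$ of $\mathcal{R}\in\mathcal{B}$ crossed $\beta$ twice, then $\mathcal{R}'$ would wind around $\mathbf{L}'$ and hence cross both $\alpha_{L,+}$ and $\alpha_{L,-}$; pushing down, $\mathcal{R}$ would cross $\boldsymbol{a}_L$ twice, contradicting the defining property that $\mathcal{B}$ crosses $\boldsymbol{a}_L$ exactly once. This forces each $\mathcal{R}\in\mathcal{B}$ to contribute two distinct lifts to $\mathcal{T}'[\beta]$, each crossing $\beta$ once, and the width count $A+2B$ follows. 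You set up $\boldsymbol{a}_L,\boldsymbol{a}_R$ in part (1) but never use them; they are precisely the missing ingredient.
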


\begin{proof}
    The existence of $\boldsymbol{b}$ satisfying (1) and (2) is clear. (See Figures \ref{fig:wad00} and \ref{fig:wad01}.) Let $Q$ denote the connected component of $U^0 \backslash \boldsymbol{b}$ containing $\mathbf{L}$. The unique lift $Q'$ of $Q$ under $f^r$ which contains $\mathbf{L}'$ must be disjoint from $\mathbf{R}'$. Since $f^r$ is a proper map on $U^r$, there exists a unique connected component $\beta$ of $\partial Q' \backslash \partial U^r$ that is a lift of $\boldsymbol{b}$ and separates $\mathbf{L}'$ and $\mathbf{R}'$.

    To prove (4), it suffices to show that every rectangle $\mathcal{R}$ in $\mathcal{B}$ admits exactly two distinct lifts in $\mathcal{T}'[\beta]$, and each of them crosses $\beta$ exactly once. If otherwise, then there would exist a unique lift $\mathcal{R}'$ of $\mathcal{R}$ in $\mathcal{T}'[\beta]$ which crosses $\beta$ exactly twice. In this case, $\mathcal{R}'$ would be crossing both $\alpha_{L,-}$ and $\alpha_{L,+}$, hence $\mathcal{R}$ would be crossing $\boldsymbol{a}_L$ twice, which is impossible.
\end{proof}

For $j \in \N$, let us consider the asymmetric width
\[
Z^j := W(\mathcal{A}^j) + 2 W(\mathcal{B}^j)
\]
on $U^j \backslash \Upsilon^j$. (One may compare with the notion of \emph{asymmetric modulus} in \cite{Lyu97}.)

\begin{corollary}[Monotonicity]
\label{cor:monotonicity}
    There exists a constant $C=C(m,\lambda)>0$ such that
    \[
    Z^r -C \leq Z^0.
    \]
\end{corollary}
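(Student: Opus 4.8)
The plan is to compare the horizontal lamination $\mathcal{F}^r$ with the lifted canonical rectangles $\mathcal{T}' = (f^r)^*\mathcal{T}$ across the middle curve $\beta$ produced in Lemma \ref{lem:middle-curve}. First I would invoke Lemma \ref{domination-property}: at the cost of an additive constant $C = C(m,\lambda)$ we may replace $\mathcal{F}^r$ by the sublamination $\mathcal{F}^r_{\textnormal{sub}}$ along which every leaf is cut by $f^{-r}(\Upsilon^0)$ into sub-curves, each being either homotopically trivial in $U^r \setminus f^{-r}(\Upsilon^0 \cup \textnormal{CV})$ or a proper curve in a canonical rectangle of $\mathcal{T}'$. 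Since $\boldsymbol{b}$ avoids $\mathbf{\Upsilon} \cup \textnormal{CV}$ and $\mathbf{\Upsilon} \supset \Upsilon^0$, its lift $\beta$ avoids $f^{-r}(\Upsilon^0 \cup \textnormal{CV})$, so $\beta$ is taut relative to $\mathcal{T}'$: by Lemma \ref{lem:middle-curve}(4) it crosses every canonical rectangle at most once, and, after pushing the homotopically trivial sub-curves off $\beta$, every intersection of a leaf of $\mathcal{F}^r_{\textnormal{sub}}$ with $\beta$ happens inside a rectangle of $\mathcal{T}'[\beta]$.

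Next I would set up the crossing count. By Lemma \ref{lem:middle-curve}(3), $\beta$ separates $\mathbf{L}'$ from $\mathbf{R}'$ and hence $L^r$ from $R^r$; therefore every leaf of $\mathcal{A}^r$ meets $\beta$ an odd number --- hence at least one --- of times, and every essential leaf of $\mathcal{B}^r$ meets $\beta$ an even, nonzero number --- hence at least two --- of times, the latter because a type \ref{type-B} arc, being essential, separates exactly one of $L^r, R^r$ from $\partial U^r$ and so crosses twice any arc separating the two. In other words $\mathcal{F}^r$ overflows $\mathcal{T}'[\beta]$, type \ref{type-A} leaves once and type \ref{type-B} leaves twice. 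To turn this into the sharp bound I would equip $\bigcup_{\mathcal{R} \in \mathcal{T}'[\beta]} \mathcal{R}$ with the juxtaposition $\rho$ of the extremal metrics of its constituent rectangles, so that its $\rho$-area is $W(\mathcal{T}'[\beta])$, and use that distinct $\beta$-crossings of one leaf lie in distinct rectangles (each being crossed by $\beta$ at most once). A leaf of $\mathcal{A}^r$ then contains one $\rho$-transversal of a rectangle of $\mathcal{T}'[\beta]$, while a leaf of $\mathcal{B}^r$ contains two disjoint such $\rho$-transversals; combining the series law (Proposition \ref{series law}) with the fact established in the proof of Lemma \ref{lem:middle-curve}(4) --- that each type \ref{type-B} rectangle of $\mathcal{T}$ contributes to $\mathcal{T}'[\beta]$ exactly two single-crossing lifts rather than one double-crossing lift --- and summing widths over $\mathcal{A}^r$ and over $\mathcal{B}^r$ separately gives
\[
    Z^r = W(\mathcal{A}^r) + 2\,W(\mathcal{B}^r) \;\le\; W(\mathcal{T}'[\beta]) + C,
\]
where $C = C(m,\lambda)$ absorbs the domination error and the contribution of the finitely many peripheral arcs, whose number is $O(\lambda C_0^{\,m})$ by Lemma \ref{lem:counting-cv}. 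Finally, Lemma \ref{lem:middle-curve}(4) gives $W(\mathcal{T}'[\beta]) = A + 2B$ and Lemma \ref{lem:correction} gives $A \le W(\mathcal{A}^0) + C$ and $B \le W(\mathcal{B}^0) + C$, whence $W(\mathcal{T}'[\beta]) \le Z^0 + C$; combining with the display and renaming $C$ yields $Z^r - C \le Z^0$.

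The hard part is the multiplicity bookkeeping in the middle step, and it is exactly the reason this section is organised around the \emph{asymmetric} width $Z^j = W(\mathcal{A}^j) + 2W(\mathcal{B}^j)$ rather than $W(\mathcal{F}^j)$: one must match the weight $2$ on $W(\mathcal{B}^r)$ against the geometry of $\mathcal{T}'[\beta]$ without incurring a $\tfrac32$-type loss, which is possible only because Lemma \ref{lem:middle-curve} is arranged so that $W(\mathcal{T}'[\beta])$ equals $A + 2B$ on the nose. The remainder is quantitative hygiene: checking that deleting the homotopically trivial sub-curves, the passage $\mathcal{F}^r_{\textnormal{sub}} \subset \mathcal{F}^r$, and the substitution of $A,B$ for $W(\mathcal{A}^0),W(\mathcal{B}^0)$ each cost at most a constant depending only on $m$ and $\lambda$ --- precisely the role of the critical-value count of Lemma \ref{lem:counting-cv} --- after which everything reduces to routine use of the extremal-width toolbox of the appendix.
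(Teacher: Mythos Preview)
Your proposal is correct and follows essentially the same route as the paper's proof. Both arguments pass to $\mathcal{F}^r_{\textnormal{sub}}$ via Lemma~\ref{domination-property}, use that $\mathcal{A}^r_{\textnormal{sub}}$ crosses $\beta$ once while $\mathcal{B}^r_{\textnormal{sub}}$ crosses it twice, extract disjoint restrictions lying in $\mathcal{T}'[\beta]$ (the paper names them $\mathcal{A}^r_{\textnormal{res}}$, $\mathcal{B}^r_{\textnormal{res},1}$, $\mathcal{B}^r_{\textnormal{res},2}$), apply the series law to the two $\mathcal{B}$-restrictions, and conclude via $W(\mathcal{T}'[\beta]) = A + 2B$ from Lemma~\ref{lem:middle-curve}(4) together with Lemma~\ref{lem:correction}.
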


\begin{proof}
    Consider $\mathcal{F}^r_{\text{sub}}$ from Lemma \ref{domination-property} and split them into $\mathcal{A}^r_{\text{sub}} \cup \mathcal{B}^r_{\text{sub}}$ according to the topological type. Observe that $\mathcal{A}^r_{\text{sub}}$ crosses $\beta$ once, whereas $\mathcal{B}^r_{\text{sub}}$ crosses $\beta$ twice. By Lemma \ref{lem:middle-curve} (4), $\mathcal{A}^r_{\text{sub}}$ admits a restriction $\mathcal{A}^r_{\text{res}}$ properly contained in $\mathcal{T}'[\beta]$, whereas $\mathcal{B}^r_{\text{sub}}$ admits two disjoint restrictions $\mathcal{B}^r_{\text{res},1}$ and $\mathcal{B}^r_{\text{res},2}$ that are properly contained in $\mathcal{T}'[\beta]$. Then,
    \begin{align*}
        Z^r - C 
        &\leq W(\mathcal{A}^r_{\text{res}}) + 2\left[W(\mathcal{B}^r_{\text{res},1}) \oplus W(\mathcal{B}^r_{\text{res},2}) \right] \\
        & \leq W(\mathcal{A}^r_{\text{res}}) + \frac{1}{2}\left[W(\mathcal{B}^r_{\text{res},1}) + W(\mathcal{B}^r_{\text{res},2})\right] \leq  W\left(\mathcal{T}'[\beta]\right) \leq A + 2B.
    \end{align*}
    At last, apply Lemma \ref{lem:correction} and we are done.
\end{proof}

\begin{figure}
    \centering
    \begin{tikzpicture}
    \node[anchor=south west,inner sep=0] (image) at (0,0) {\includegraphics[width=0.98\linewidth]{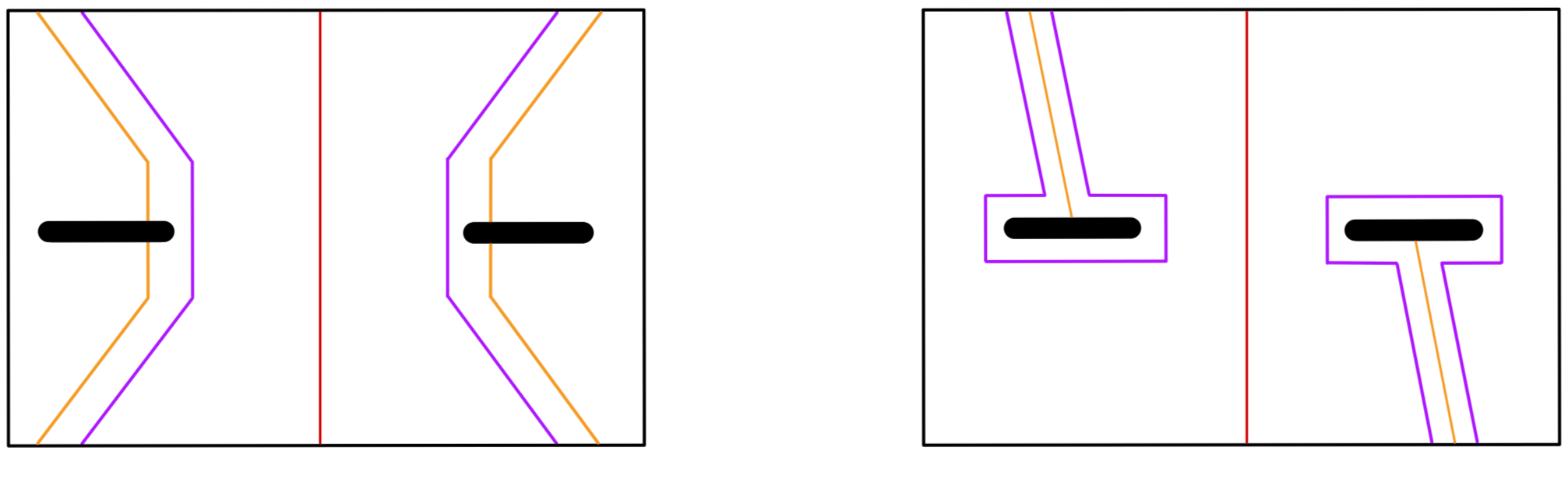}};
    \begin{scope}[
        x={(image.south east)},
        y={(image.north west)}
    ]
        \node [orange, font=\bfseries] at (0.66,1.025) {\small $\boldsymbol{a}_L$};
        \node [violet!70!white, font=\bfseries] at (0.69,0.415) {\small $\boldsymbol{b}_L$};
        \node [red, font=\bfseries] at (0.795,0.045) {\small $\boldsymbol{b}$};
        \node [violet!70!white, font=\bfseries] at (0.90,0.66) {\small $\boldsymbol{b}_R$};
        \node [orange, font=\bfseries] at (0.935,0.045) {\small $\boldsymbol{a}_R$};
        
        \node [orange, font=\bfseries] at (0.02,1.025) {\small $\alpha_{L,+}$};
        \node [orange, font=\bfseries] at (0.02,0.045) {\small $\alpha_{L,-}$};
        \node [violet!70!white, font=\bfseries] at (0.115,0.25) {\small $\beta_L$};
        \node [red, font=\bfseries] at (0.20,0.045) {\small $\beta$};
        \node [violet!70!white, font=\bfseries] at (0.30,0.25) {\small $\beta_R$};
        \node [orange, font=\bfseries] at (0.40,1.025) {\small $\alpha_{R,+}$};
        \node [orange, font=\bfseries] at (0.40,0.045) {\small $\alpha_{R,-}$};
        
        \draw[line width=0.5pt,-latex] (0.43,0.50) -- (0.57,0.50);
        \node [black, font=\bfseries] at (0.50,0.42) {$f^r$};

        \node [black, font=\bfseries] at (0.06,0.45) {\small $\mathbf{L}'$};
        \node [black, font=\bfseries] at (0.35,0.45) {\small $\mathbf{R}'$};        
        \node [black, font=\bfseries] at (0.44,0.90) {\small $U^r$};
        \node [black, font=\bfseries] at (0.565,0.90) {\small $U^0$};
    \end{scope}
\end{tikzpicture}
    
    \caption{A schematic diagram of the construction of separating proper curves $\beta$, $\beta_L$, and $\beta_R$.}
    \label{fig:wad02}
\end{figure}

Our next goal is to upgrade monotonicity to a strict loss. To do so, let us introduce two other separating curves $\boldsymbol{b}_L$ and $\boldsymbol{b}_R$. 

\begin{lemma}[Left and right curves]
\label{lem:left-and-right}
    There exist proper curves $\boldsymbol{b}_L$ and $\boldsymbol{b}_R$ in $U^0$ and proper curves $\beta_L$ and $\beta_R$ in $U^r$ with the following properties. For each $J \in \{L,R\}$, 
    \begin{enumerate}[label=\textnormal{(\arabic*)}]
        \item $\boldsymbol{b}_J$ is disjoint from $\mathbf{\Upsilon} \cup \textnormal{CV} \cup \boldsymbol{a}_L \cup \boldsymbol{a}_R \cup \boldsymbol{b}$ and separates $\mathbf{J}$ and $\boldsymbol{b}$;
        \item $\mathcal{B} \cup \mathcal{D}_J$ crosses $\boldsymbol{b}_J$ twice, $\mathcal{A}$ crosses $\boldsymbol{b}_J$ once, and $\mathcal{D} \backslash \mathcal{D}_J \cup \mathcal{E} $ is disjoint from $\boldsymbol{b}_J$;
        \item $\beta_J$ is a lift of $\boldsymbol{b}_J$ that separates $\mathbf{L}'$ and $\beta$ and is close to $\mathbf{J}' \cup \alpha_{J,+} \cup \alpha_{J,-}$;
        \item $W\left(\mathcal{T}'[\beta_J]\right) = A+2B+2D_J$.
    \end{enumerate}
    Moreover, the strip $\Pi \subset U^r$ cut out by $\beta_L$ and $\beta_R$ contains a piece $I$ of length $\asymp l_n$.
\end{lemma}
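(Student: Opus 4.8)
The plan is to follow the proof of Lemma~\ref{lem:middle-curve} almost verbatim. The only new ingredient is that $\boldsymbol{b}_J$ must be pushed flush against $\mathbf{J}$ so as to additionally intercept the peripheral family $\mathcal{D}_J$, and at the end one reads off from the positions of $\beta_L,\beta_R$ that the strip between them meets $\Hq$ in the gap between $\mathbf{L}'$ and $\mathbf{R}'$.

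\emph{Construction of $\boldsymbol{b}_J$ and $\beta_J$.} Work with $J=L$; the case $J=R$ is symmetric. Recall that $U^0\setminus(\Upsilon^0\cup\text{CV})$ is a disk with the two full sets $L^0,R^0$ and finitely many punctures removed, that $\boldsymbol{a}_L$ is a vertical cut from $\partial U^0$ to $L^0$ meeting $\mathcal{T}$ only in a single arc of $\mathcal{B}$, and that $\mathcal{E}_L\subset\mathbf{L}$ while by definition each leaf of $\mathcal{D}_L$ reaches past $(\eta L^0)^c$. Since $\mathbf{L}\cup\boldsymbol{a}_L$ is a lollipop attached to $\partial U^0$, its complement in $U^0$ is again a disk; inside it I choose $\boldsymbol{b}_L$ to be a proper arc that separates (the boundary arc coming from) $\mathbf{L}$ from $\boldsymbol{b}$, runs in the annular region between $\partial\mathbf{L}$ and $(\eta L^0)^c$, and avoids $\text{CV}$. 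By construction $\boldsymbol{b}_L$ is disjoint from $\mathbf{\Upsilon}\cup\text{CV}\cup\boldsymbol{a}_L\cup\boldsymbol{a}_R\cup\boldsymbol{b}$ and separates $\mathbf{L}$ from $\boldsymbol{b}$, which is (1); and inspecting the minimal intersection numbers in the planar picture (cf.\ Figures~\ref{fig:wad00} and~\ref{fig:wad01}) gives that $\mathcal{B}$ (which winds around $L^0$) and $\mathcal{D}_L$ (which reaches past $(\eta L^0)^c$) each cross $\boldsymbol{b}_L$ exactly twice, $\mathcal{A}$ exactly once, and $\mathcal{P}\setminus\mathcal{D}_L$ not at all, which is (2). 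For the lift, let $Q_L$ be the component of $U^0\setminus\boldsymbol{b}_L$ containing $\boldsymbol{b}$; exactly as in Lemma~\ref{lem:middle-curve} the component $Q_L'$ of $f^{-r}(Q_L)$ whose closure meets $\beta$ has a distinguished boundary arc that is a lift of $\boldsymbol{b}_L$ separating $\mathbf{L}'$ from $\beta$, and since $f^r$ has degree $\ge 2$ on $\mathbf{L}'$ (the fact already invoked to produce $\alpha_{L,\pm}$) I may take $\beta_L$ to be the innermost such lift, which then runs along $\mathbf{L}'\cup\alpha_{L,+}\cup\alpha_{L,-}$; this is (3).

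\emph{The width identity (4), and the main obstacle.} This is the crux, and it is handled as in part (4) of Lemma~\ref{lem:middle-curve}. First, every rectangle of $\mathcal{T}'$ meets $\beta_L$ at most once: if one met it twice it would project to a rectangle of $\mathcal{T}$ hitting $\boldsymbol{a}_L$ twice, contradicting the minimality of $\boldsymbol{a}_L$ relative to $\mathcal{T}$. Next, because $f^r$ restricts to a conformal isomorphism on each individual rectangle, $\mathcal{T}'[\beta_L]$ splits, via the parallel law, into one lift per $\mathcal{A}$-rectangle (total width $A$), two lifts per $\mathcal{B}$-rectangle (total $2B$), and — thanks to $\beta_L$ hugging $\mathbf{L}'$ on the far side of the $\mathcal{D}_L$-arcs — two lifts per $\mathcal{D}_L$-rectangle (total $2D_L$), while the adjacency of $\beta_L$ to $\mathbf{L}'\cup\alpha_{L,+}\cup\alpha_{L,-}$ rules out any contribution from $\mathcal{D}_R$, from $\mathcal{E}$, or from other peripheral rectangles of $\mathcal{T}'$. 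Hence $W(\mathcal{T}'[\beta_L])=A+2B+2D_L$. I expect the delicate point to be exactly this bookkeeping: certifying that the innermost lift $\beta_L$ has the homotopy class forcing it to cross each $\mathcal{D}_L$-rectangle twice and to pick up nothing extraneous. This is where the ``innermost/closest'' clause in (3) and the minimal-position property of $\boldsymbol{a}_L$ with respect to $\mathcal{T}$ are indispensable.

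\emph{The strip contains a piece of size $\asymp l_n$.} Since $\beta_L$ and $\beta_R$ are disjoint proper arcs of the disk $U^r$ (separated by $\beta$), they cut $U^r$ into three regions, and $\Pi$ is the middle one, the one containing $\beta$. Because $\beta_L$ hugs $\mathbf{L}'\cup\alpha_{L,\pm}$ and $\beta_R$ hugs $\mathbf{R}'\cup\alpha_{R,\pm}$, and because $\boldsymbol{a}_L,\boldsymbol{a}_R$ can be chosen to meet $\Hq$ only at their endpoints, the trace $\Pi\cap\Hq$ is precisely the sub-piece of $\Hq\cap U^r$ lying strictly between $\mathbf{L}'\cap\Hq$ and $\mathbf{R}'\cap\Hq$, namely the gap across which $\beta$ itself crosses $\Hq$. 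Now $r=q_{n+m}$ is a return time of $f|_{\Hq}$, so $f^r$ displaces every point of $\Hq$ by the combinatorial amount $l_{n+m}\ll l_n$; hence $\mathbf{L}'\cap\Hq$ and $\mathbf{R}'\cap\Hq$ are pieces of length $O(l_n)$ sitting within $l_{n+m}$ of $L$ and $R$. Combining this with $\dist(L,R)\asymp l_n$ and with the choice of $\eta$ — taken so that the $\eta$-enlargements of the three pieces $I$, $L$, $R$ have pairwise combinatorial distances $\asymp l_n$, enlarging the implicit constant if necessary — the gap between $\mathbf{L}'\cap\Hq$ and $\mathbf{R}'\cap\Hq$ still has length $\asymp l_n$ and in fact contains the piece $I$. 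This yields the final assertion and completes the proof.
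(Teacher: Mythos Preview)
Your proof is correct and follows essentially the same route as the paper: the paper simply sets $\boldsymbol{b}_J := \partial O_J \cap U^0$ where $O_J$ is a sufficiently small $\varepsilon$-neighborhood of $\mathbf{J}\cup\boldsymbol{a}_J$, declares (1)--(3) immediate, refers (4) back to the argument of Lemma~\ref{lem:middle-curve}, and deduces the existence of the piece $I\subset\Pi$ from the fact that $\mathbf{L}$ and $\mathbf{R}$ have combinatorial distance $\asymp l_n$. Your more verbose description of $\boldsymbol{b}_L$ as an arc hugging the lollipop $\mathbf{L}\cup\boldsymbol{a}_L$ is equivalent to this $\varepsilon$-neighborhood construction; the one place where your write-up is looser than it should be is the justification that each rectangle of $\mathcal{T}'$ crosses $\beta_L$ at most once (``would project to a rectangle hitting $\boldsymbol{a}_L$ twice'' needs the same positional argument via $\alpha_{L,\pm}$ as in Lemma~\ref{lem:middle-curve}, not a direct appeal to $\boldsymbol{a}_L$), but the paper is equally terse on this point.
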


\begin{proof}
    For $J \in \{L,R\}$, pick an extremely small $\varepsilon>0$ such that the $\varepsilon$-neighborhood $O_J$ of $\mathbf{J}\cup \boldsymbol{a}_J$ is disjoint from $\text{CV} \backslash \mathbf{J}$. Let us set $\boldsymbol{b}_J := \partial O_J \cap U^0$, then items (1)--(3) immediately follow. Item (4) follows in a similar manner as the proof of Lemma \ref{lem:middle-curve}. Moreover, the existence of $I \subset \Pi$ follows from the property that $\mathbf{L}$ and $\mathbf{R}$ have combinatorial distance $\asymp l_n$.
\end{proof}

\subsection{Non-persistence induces width loss}
\label{ss:new-width}
We say that a rectangle in $\mathcal{T}'$ is \emph{persistent} if it crosses both $\beta_L$ and $\beta_R$, i.e. it belongs in 
\[
\mathcal{T}'_{\text{per}} := \mathcal{T}'[\beta_L] \cap \mathcal{T}'[\beta_R].
\]
Denote the total widths of persistent and non-persistent rectangles in $\mathcal{T}'[\beta]$ by
\[
    Z_{\textnormal{per}} := W(\mathcal{T}'_{\textnormal{per}}) \quad \text{and} \quad Z_{\textnormal{non}} := A+2B-Z_{\textnormal{per}}
\]
respectively. In this subsection, we prove the following non-dynamical result.

\begin{proposition}[Key estimate]
    \label{prop:change-in-width}
    There exists some constant $C=C(m,\lambda)>0$ such that
    \[
        Z^r -C \leq Z_{\textnormal{per}} + Z_{\textnormal{non}} \oplus 2(Z_{\textnormal{non}}+D).
    \]
\end{proposition}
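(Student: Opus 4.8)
The plan is to transport the width of $\mathcal{F}^r$ across the unbranched cover $f^r: U^r\setminus f^{-r}(\textnormal{CV})\to U^0\setminus\textnormal{CV}$ and to account for it by the canonical rectangles of $\mathcal{T}'=(f^r)^*\mathcal{T}$ meeting the three separating curves $\beta_L$, $\beta$, $\beta_R$. First I would pass from $\mathcal{F}^r$ to the sublamination $\mathcal{F}^r_{\textnormal{sub}}$ of Lemma~\ref{domination-property}, at the cost of an additive $C=C(m,\lambda)$, and split it by topological type as $\mathcal{F}^r_{\textnormal{sub}}=\mathcal{A}^r_{\textnormal{sub}}\cup\mathcal{B}^r_{\textnormal{sub}}$; together with Lemma~\ref{lem:counting-cv} and Proposition~\ref{maximality} this gives $Z^r\le W(\mathcal{A}^r_{\textnormal{sub}})+2W(\mathcal{B}^r_{\textnormal{sub}})+O(1)$, with the error depending only on $m$ and $\lambda$. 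The bookkeeping device is to cut every leaf into \emph{half-leaves} — a type~\ref{type-A} leaf is its own half-leaf, a type~\ref{type-B} leaf is cut into two half-leaves at a point of the turnaround past $\beta$ — so that $W(\mathcal{A}^r_{\textnormal{sub}})+2W(\mathcal{B}^r_{\textnormal{sub}})$ is the total width of the family of half-leaves, and each half-leaf meets $\beta$ exactly once (a canonical rectangle of $\mathcal{T}'$ crosses $\beta$ at most once by Lemma~\ref{lem:middle-curve}(4)). A half-leaf is \emph{persistent} if the rectangle of $\mathcal{T}'$ carrying its $\beta$-crossing lies in $\mathcal{T}'_{\textnormal{per}}=\mathcal{T}'[\beta_L]\cap\mathcal{T}'[\beta_R]$, and \emph{non-persistent} otherwise; the two classes have disjoint supports, so their widths add.

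For the persistent half-leaves, their $\beta$-crossing arcs are pairwise disjoint restrictions contained in $\mathcal{T}'_{\textnormal{per}}$, whence their total width is at most $W(\mathcal{T}'_{\textnormal{per}})=Z_{\textnormal{per}}$; this is the $Z_{\textnormal{per}}$ summand, and it is the localization to persistent rectangles of the computation in the proof of Corollary~\ref{cor:monotonicity}. For the non-persistent half-leaves, the $\beta$-crossing arc of each lies in a rectangle of $\mathcal{T}'[\beta]\setminus\mathcal{T}'_{\textnormal{per}}$, and again these arcs are pairwise disjoint, so the family of such arcs has total width at most $W(\mathcal{T}'[\beta])-Z_{\textnormal{per}}=(A+2B)-Z_{\textnormal{per}}=Z_{\textnormal{non}}$. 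The key geometric point is that beyond its $\beta$-crossing a non-persistent half-leaf must carry a further \emph{return arc}: since its central rectangle fails to meet, say, $\beta_R$, while the half-leaf still has to reach the component of $\mathbf{\Upsilon}'$ lying on the far side of $\beta$, the return arc crosses $\beta_R$ in a rectangle that cannot also cross $\beta_L$ — a rectangle of $\mathcal{T}'_{\textnormal{per}}$ is $\beta$-spanning, and the half-leaf has already spent its unique $\beta$-crossing — so the return arc is confined, up to proper homotopy, to the \emph{wings} $\big(\mathcal{T}'[\beta_L]\cup\mathcal{T}'[\beta_R]\big)\setminus\mathcal{T}'_{\textnormal{per}}$. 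Using the crossing-width identities $W(\mathcal{T}'[\beta])=A+2B$ and $W(\mathcal{T}'[\beta_J])=A+2B+2D_J$ from Lemmas~\ref{lem:middle-curve}(4) and \ref{lem:left-and-right}(4) together with $\mathcal{T}'_{\textnormal{per}}\subseteq\mathcal{T}'[\beta_J]$, one computes $W(\textnormal{wings})=\big(W(\mathcal{T}'[\beta_L])-Z_{\textnormal{per}}\big)+\big(W(\mathcal{T}'[\beta_R])-Z_{\textnormal{per}}\big)=2(Z_{\textnormal{non}}+D)$. Hence the non-persistent half-leaves overflow a series composition of a family of width $\le Z_{\textnormal{non}}$ with a family of width $\le 2(Z_{\textnormal{non}}+D)$, and the series law (Proposition~\ref{series law}) together with the harmonic-sum estimate (Proposition~\ref{harmonicsum}) bounds their total width by $Z_{\textnormal{non}}\oplus 2(Z_{\textnormal{non}}+D)$. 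Adding the persistent and non-persistent contributions gives $W(\mathcal{A}^r_{\textnormal{sub}})+2W(\mathcal{B}^r_{\textnormal{sub}})\le Z_{\textnormal{per}}+Z_{\textnormal{non}}\oplus 2(Z_{\textnormal{non}}+D)$, and folding the $O(1)$ corrections into a single $C=C(m,\lambda)$ yields $Z^r-C\le Z_{\textnormal{per}}+Z_{\textnormal{non}}\oplus 2(Z_{\textnormal{non}}+D)$.

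I expect the main obstacle to be making rigorous the assertion that the return arc of a non-persistent half-leaf is confined to the wings rather than to persistent rectangles: one must verify this uniformly over the topological type of the parent leaf and over whether the central rectangle misses $\beta_L$ or $\beta_R$, carefully using that $\beta_L$ and $\beta_R$ hug $\mathbf{L}'$ and $\mathbf{R}'$ respectively (so that the last stretch of any arc reaching $\mathbf{J}'$ traverses $\mathcal{T}'[\beta_J]$) and that a type~\ref{type-A} leaf meets $\beta$ exactly once, and then organize the $\beta$-crossing arcs and return arcs into honestly disjoint families so that the series law applies with no rectangle of $\mathcal{T}'$ charged twice. A secondary nuisance is the propagation of the additive constants — from Lemma~\ref{domination-property}, from discarding homotopically trivial pieces of leaves, and from the critical values counted in Lemma~\ref{lem:counting-cv} — through each step; all of these feed only into $C(m,\lambda)$ and do not interact with $\lambda$ the way the main terms do.
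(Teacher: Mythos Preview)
Your proposal is correct and follows essentially the paper's route. The paper's lamination $\mathcal{G}=\mathcal{A}^r_{\text{sub}}\cup\mathcal{B}^r_1\cup\mathcal{B}^r_2$ is exactly your family of half-leaves (with the cut for type-\ref{type-B} leaves made at the first and last visits to the region $Q$ bounded by the lifts $\alpha_{L,\pm}$ of $\boldsymbol{a}_L$, rather than at an unspecified turnaround point), and the obstacle you flag is resolved precisely by splitting the non-persistent half-leaves into $\mathcal{G}_-$ (where $\gamma_0$ crosses $\beta_L$ but misses $\beta_R$, so the disjoint return arc $\gamma_R$ lies in $\mathcal{T}'[\beta_R]\setminus\mathcal{T}'_{\text{per}}$) and $\mathcal{G}_+$ (where $\gamma_0$ misses $\beta_L$, so the disjoint return arc $\gamma_L$ lies in $\mathcal{T}'[\beta_L]\setminus\mathcal{T}'_{\text{per}}$).
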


The idea is captured in Figure \ref{fig:wad03}. Most leaves of $\mathcal{F}^r$ travel through either $\mathcal{T}'_{\text{per}}$ (the left part of the figure) or $\mathcal{T}'[\beta] \backslash \mathcal{T}'_{\text{per}}$ (the middle and the right parts). The former case gives the term $Z_{\text{per}}$. In the latter case, they must also travel through $(\mathcal{T}'[\beta_L] \cup \mathcal{T}'[\beta_R] )\backslash\mathcal{T}'_{\text{per}}$, which has total width $2(Z_{\text{non}}+D)$, and thus the series law can be applied to generate the harmonic sum. In Section \ref{ss:proof-of-loss}, we will show from this inequality that $Z^r$ shrinks provided that $Z_{\textnormal{per}}$ and $D$ are small relative to $K$. 

\begin{proof}

Consider the sublaminations $\mathcal{A}^r_{\text{sub}} \subset \mathcal{A}^r$ and $\mathcal{B}^r_{\text{sub}} \subset \mathcal{B}^r$ from Lemma \ref{domination-property}. For some $C=C(m,\lambda)>0$,
\[
W(\mathcal{A}^r)-C \leq W\left(\mathcal{A}^r_{\text{sub}}\right) \quad \text{and} \quad W(\mathcal{B}^r)-C \leq W\left(\mathcal{B}^r_{\text{sub}}\right),
\]
and every leaf of $\mathcal{A}^r_{\text{sub}} \cup \mathcal{B}^r_{\text{sub}}$ travels through rectangles in $\mathcal{T}'$. 

Let us assume that $\mathcal{B}^r$ is attached to $R^r$; if otherwise, $\mathcal{B}^r_{\text{sub}}$ would be empty because no rectangles in $\mathcal{T}'$ can cross $\alpha_{R,+} \cup \alpha_{R,-}$. Let us define two disjoint restrictions $\mathcal{B}^r_1$ and $\mathcal{B}^r_2$ of $\mathcal{B}^r_{\text{sub}}$ as follows. Denote by $Q$ the connected component of $U^r\backslash(\alpha_{L,+}\cup\alpha_{L,-})$ containing $L^r$. For $\gamma \in \mathcal{B}^r_{\text{sub}}$, let us fix a parametrization $\gamma:(0,1)\to U^r$ travelling around $L^r$ in an anticlockwise manner. Consider the set $T_\gamma$ of times $t \in (0,1)$ such that $\gamma(t)$ is in $f^{-r}(\Upsilon^0)\cap Q$. Note that $T_\gamma$ is non-empty because no rectangle in $\mathcal{T}'$ crosses both $\alpha_{L,+}$ and $\alpha_{L,-}$ simultaneously. Let $t_{\gamma,1}:=\min T_{\gamma}$ and $t_{\gamma,2}:=\max T_{\gamma}$. Then, we define restrictions
\[
    \mathcal{B}^r_1 := \left\{ \gamma|_{(0,t_{\gamma,1})}\: : \: \gamma \in \mathcal{B}^r_{\text{sub}} \right\} \quad \text{and} \quad \mathcal{B}^r_2 := \left\{ \gamma|_{(t_{\gamma,2},1)}\: : \: \gamma \in \mathcal{B}^r_{\text{sub}} \right\}.
\]

Let us consider the lamination 
\[
    \mathcal{G}:=\mathcal{A}^r_{\text{sub}} \cup \mathcal{B}^r_1 \cup \mathcal{B}^r_2
\]
Every leaf of $\mathcal{G}$ crosses $\beta_L$, ends at $R^r$ (thus crosses $\beta_R$ too), and travels through rectangles in $\mathcal{T}'$. Moreover, there is some $C=C(m,\lambda)>0$ such that
\begin{equation}
\label{eqn:Z-and-G}
    Z^r - C \leq W(\mathcal{A}^r_{\text{sub}}) + 2 \left[ W(\mathcal{B}^r_1) \oplus W(\mathcal{B}^r_2) \right] \leq W(\mathcal{G}).
\end{equation}

For $\gamma \in \mathcal{G}$, let $\gamma_0$, $\gamma_L$, and $\gamma_R$ be the connected components of $\gamma\backslash f^{-r}\left(\Upsilon^0\right)$ that are crossing $\beta$, $\beta_L$, and $\beta_R$ respectively. Let us split $\mathcal{G}$ into a disjoint union of three sublaminations $\mathcal{G}_{\text{per}} \cup \mathcal{G}_- \cup \mathcal{G}_+$ defined as follows. For $\gamma \in \mathcal{G}$,
    \begin{itemize}
        \item[$\rhd$] $\gamma \in \mathcal{G}_{\text{per}}$ if $\gamma_0$ crosses both $\beta_L$ and $\beta_R$; 
        \item[$\rhd$] $\gamma \in \mathcal{G}_-$ if $\gamma_0$ crosses $\beta_L$ but not $\beta_R$;
        \item[$\rhd$] $\gamma \in \mathcal{G}_+$ if $\gamma_0$ does not cross $\beta_L$.
    \end{itemize}
For $\bullet \in \{+,-\}$ and $\text{x} \in \{0,L,R\}$, let us denote $\mathcal{G}_{\bullet,\text{x}}:= \{\gamma_{\text{x}} \: : \: \gamma \in \mathcal{G}_\bullet\}$. By design, $\mathcal{G}_{-,0}$ and $\mathcal{G}_{-,R}$ are disjoint, and $\mathcal{G}_{+,0}$ and $\mathcal{G}_{+,L}$ are disjoint. See Figure \ref{fig:wad03}. Then,
\begin{align}
W(\mathcal{G}) &\leq W(\mathcal{G}_{\text{per}}) + W(\mathcal{G}_{-,0}) \oplus W(\mathcal{G}_{-,R}) + W(\mathcal{G}_{+,0}) \oplus W(\mathcal{G}_{+,L}) \nonumber \\
&\leq W(\mathcal{G}_{\text{per}}) + W(\mathcal{G}_{-,0} \cup \mathcal{G}_{+,0}) \oplus \left[ W(\mathcal{G}_{-,R}) + W(\mathcal{G}_{+,L}) \right]. \label{eqn:G-into-three}
\end{align}
Since $\mathcal{G}_{\text{per}}$ travels through $\mathcal{T}'_{\text{per}}$ and $\mathcal{G}_{-,0} \cup \mathcal{G}_{+,0}$ travels through $\mathcal{T}'[\beta] \backslash \mathcal{T}'_{\text{per}}$, then 
\begin{equation}
\label{eqn:per-and-non}
W(\mathcal{G}_{\text{per}}) \leq Z_{\text{per}} \quad \text{and} \quad W(\mathcal{G}_{-,0} \cup \mathcal{G}_{+,0}) \leq Z_{\text{non}}.
\end{equation}
Since $\mathcal{G}_{-,R}$ travels through $\mathcal{T}'[\beta_R]\backslash \mathcal{T}'_{\text{per}}$ and $\mathcal{G}_{+,L}$ travels through $\mathcal{T}'[\beta_L]\backslash \mathcal{T}'_{\text{per}}$, then by Lemma \ref{lem:left-and-right},
\begin{equation}
\label{eqn:plus-minus-L-R}
W(\mathcal{G}_{-,R}) \leq Z_{\text{non}} + 2 D_R \quad \text{and} \quad W(\mathcal{G}_{+,L}) \leq Z_{\text{non}} + 2 D_L.
\end{equation}
Hence, combining (\ref{eqn:Z-and-G}), (\ref{eqn:G-into-three}), (\ref{eqn:per-and-non}), and (\ref{eqn:plus-minus-L-R}) gives us the desired inequality.
\end{proof}

\begin{figure}
    \centering
    \begin{tikzpicture}
    \node[anchor=south west,inner sep=0] (image) at (0,0) {\includegraphics[width=\linewidth]{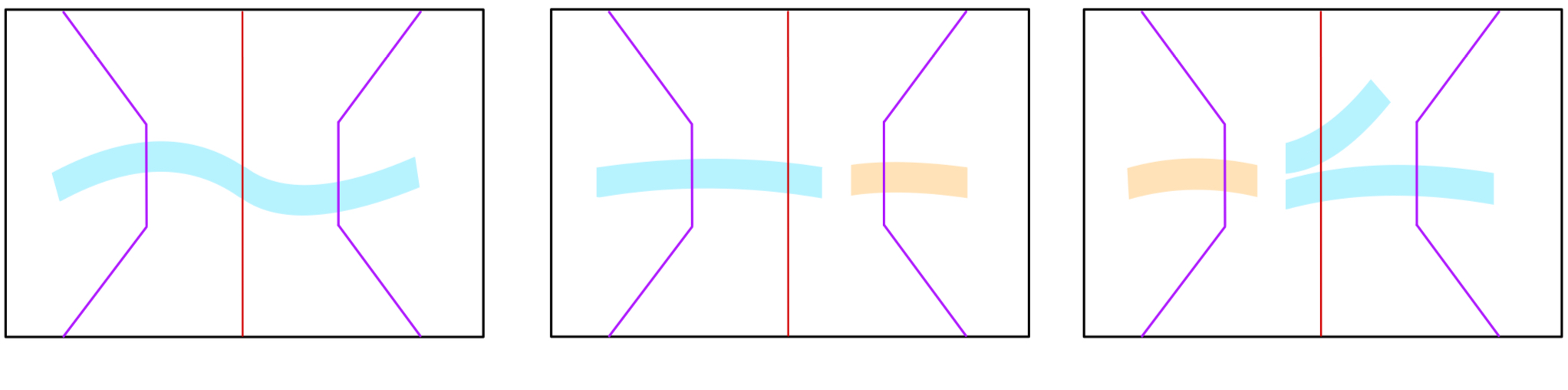}};
    \begin{scope}[
        x={(image.south east)},
        y={(image.north west)}
    ]
        \node [blue!40!cyan, font=\bfseries] at (0.04,0.46) {\footnotesize $\mathcal{G}_{\text{per}}$};
        \node [blue!40!cyan, font=\bfseries] at (0.385,0.455) {\footnotesize $\mathcal{G}_{-,0}$};
        \node [orange, font=\bfseries] at (0.60,0.44) {\footnotesize$\mathcal{G}_{-,R}$};
        \node [blue!40!cyan, font=\bfseries] at (0.875,0.42) {\footnotesize $\mathcal{G}_{+,0}$};
        \node [orange, font=\bfseries] at (0.725,0.45) {\footnotesize $\mathcal{G}_{+,L}$};
        
        \node [violet!70!white, font=\bfseries] at (0.05,0.04) {\small $\beta_L$};
        \node [red, font=\bfseries] at (0.155,0.04) {\small $\beta$};
        \node [violet!70!white, font=\bfseries] at (0.275,0.04) {\small $\beta_R$};
        
        \node [violet!70!white, font=\bfseries] at (0.39,0.04) {\small $\beta_L$};
        \node [red, font=\bfseries] at (0.5,0.04) {\small $\beta$};
        \node [violet!70!white, font=\bfseries] at (0.62,0.04) {\small $\beta_R$};
        
        \node [violet!70!white, font=\bfseries] at (0.725,0.04) {\small $\beta_L$};
        \node [red, font=\bfseries] at (0.84,0.04) {\small $\beta$};
        \node [violet!70!white, font=\bfseries] at (0.96,0.04) {\small $\beta_R$};
    \end{scope}
\end{tikzpicture}
    
    \caption{The lamination $\mathcal{G}=\mathcal{G}_{\text{per}}\cup\mathcal{G}_-\cup\mathcal{G}_+$ has width at least $Z^r$. $\mathcal{G}_{\text{per}}$ crosses both $\beta_L$ and $\beta_R$. In contrast, $\mathcal{G}_-$ overflows $\mathcal{G}_{-,0}$ and $\mathcal{G}_{-,R}$, whereas $\mathcal{G}_+$ overflows $\mathcal{G}_{+,0}$ and $\mathcal{G}_{+,L}$ .}
    \label{fig:wad03}
\end{figure}

\subsection{Persistence amplifies degeneration}
\label{ss:persistence}

Let us consider the strip $\Pi \subset U^r$ from Lemma \ref{lem:left-and-right}, and a proper lamination $\mathcal{L}_{\text{per}}$ in $\Pi$ that is a restriction of the canonical lamination of $\mathcal{T}'_{per}$. Clearly, $\mathcal{L}_{\text{per}}$ connects $\beta_L$ and $\beta_R$, and its width satisfies
\[
    W(\mathcal{L}_{\text{per}}) \geq Z_{\textnormal{per}}.
\]

Let us denote by $\text{CP}=\text{CP}(f^r)$ the set of critical points of $f^r$.

\begin{lemma}
    \label{homotopy}
    All leaves of $\mathcal{L}_{\textnormal{per}}$ are properly homotopic to each other in $\Pi \backslash \textnormal{CP}$.
\end{lemma}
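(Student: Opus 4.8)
The plan is to reduce the statement to two facts: (a) every leaf of $\mathcal{L}_{\textnormal{per}}$ is disjoint from $\textnormal{CP}$, and (b) the region of $\Pi$ lying between any two adjacent leaves of $\mathcal{L}_{\textnormal{per}}$ contains no point of $\textnormal{CP}$. Since $\Pi$ is a topological disk --- it is cut out of the disk $U^r$ by the two disjoint proper arcs $\beta_L$ and $\beta_R$ --- and the leaves of $\mathcal{L}_{\textnormal{per}}$ are pairwise disjoint proper arcs joining the $\beta_L$-part of $\partial\Pi$ to the $\beta_R$-part, facts (a) and (b) together show that the region between any two leaves is a quadrilateral in $\Pi\setminus\textnormal{CP}$, so those leaves are properly homotopic there; transitivity then handles all pairs.

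Fact (a), together with the auxiliary observation that $\beta$ is disjoint from $\textnormal{CP}$, follows from the covering structure. Each leaf of $\mathcal{L}_{\textnormal{per}}$ lies in a persistent rectangle $\mathcal{R}'\in\mathcal{T}'_{\textnormal{per}}$, and $\mathcal{T}'=(f^r)^*\mathcal{T}$, so $\mathcal{R}'$ is a connected component of $(f^r)^{-1}(\mathcal{R})$ for some $\mathcal{R}\in\mathcal{T}$. As $f^r\colon U^r\setminus f^{-r}(\Upsilon^0\cup\textnormal{CV})\to U^0\setminus(\Upsilon^0\cup\textnormal{CV})$ is an unbranched covering and $\mathcal{R}$ is simply connected, $f^r$ restricts to a homeomorphism $\mathcal{R}'\to\mathcal{R}$; in particular $\mathcal{R}'$ contains no critical point of $f^r$. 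The same computation applied to the middle curve shows that $\beta$, being a component of $(f^r)^{-1}(\boldsymbol{b})$ with $\boldsymbol{b}$ disjoint from $\textnormal{CV}$, is disjoint from $f^{-r}(\textnormal{CV})\supseteq\textnormal{CP}$.

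For fact (b) I would argue as follows. By Lemma \ref{lem:middle-curve}(4) every rectangle of $\mathcal{T}'[\beta]$ crosses $\beta$ at most once, and a persistent rectangle, crossing both $\beta_L$ and $\beta_R$, must cross the separating curve $\beta$, which lies between them; hence each leaf of $\mathcal{L}_{\textnormal{per}}$ crosses $\beta$ exactly once. Given adjacent leaves $\ell,\ell'$ of $\mathcal{L}_{\textnormal{per}}$, the region $Q\subset\Pi$ they bound together with subarcs of $\beta_L$ and $\beta_R$ is split by $\beta$ into a half $Q_L$ adjoining $\beta_L$ and a half $Q_R$ adjoining $\beta_R$, and $\beta$ carries no point of $\textnormal{CP}$. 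If some $c\in\textnormal{CP}$ lay in, say, $Q_L$, then pushing $Q_L$ forward by $f^r$ would exhibit $v:=f^r(c)\in\textnormal{CV}$ as a critical value trapped, together with the cut $\boldsymbol{a}_L$, between the curves $\boldsymbol{b}$ and $\boldsymbol{b}_L$ on the $\mathbf{L}$-side and separated from $L^0$. Using that $\boldsymbol{b}_L$ was taken to be the boundary of a small neighbourhood of $\mathbf{L}\cup\boldsymbol{a}_L$ chosen disjoint from $\textnormal{CV}\setminus\mathbf{L}$, together with the definitions of the hull $\mathbf{L}$ and of the splitting $\mathcal{P}=\mathcal{D}\cup\mathcal{E}$, this would force $v\in\mathbf{L}$, contradicting the fact that $\boldsymbol{b}_L$ separates $\mathbf{L}$ from $\boldsymbol{b}$ and is disjoint from $\textnormal{CV}$. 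The symmetric argument rules out $c\in Q_R$.

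The disk topology of $\Pi$ and the covering-space computations are routine. I expect the main obstacle to be fact (b): one has to propagate the constructions of the separating curves $\boldsymbol{b},\boldsymbol{b}_L,\boldsymbol{b}_R$ and of the hull $\mathbf{L}$ through the unbranched covering $f^r$ with enough care to exclude a critical value from the gap $Q$, and in particular to pin down precisely why a $\textnormal{CV}$-point caught between $\boldsymbol{b}_L$ and $\boldsymbol{b}$ on the $\mathbf{L}$-side must already have been absorbed into $\mathbf{L}$.
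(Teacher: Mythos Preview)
Your reduction to (a) and (b) is sound, and (a) is correctly argued. The gap is exactly where you flagged it: fact (b). The assertion that a critical value $v=f^r(c)$ caught between $\boldsymbol{b}_L$ and $\boldsymbol{b}$ must already lie in $\mathbf{L}$ is not justified, and in general is false. The neighbourhood $O_L$ was chosen disjoint from $\textnormal{CV}\setminus\mathbf{L}$, but $O_L$ lies on the $\mathbf{L}$-side of its boundary $\boldsymbol{b}_L$; the strip between $\boldsymbol{b}_L$ and $\boldsymbol{b}$ is on the \emph{other} side of $\boldsymbol{b}_L$ and may well contain critical values (for instance those responsible for the rectangles in $\mathcal{D}_L$, or any $v$ sitting combinatorially between $\eta L^0$ and $R^0$). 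More fundamentally, without already knowing that $f^r$ is univalent on $Q_L$ you cannot conclude that $f^r(Q_L)$ is contained in the region bounded by $f^r(\partial Q_L)$, so even the preliminary claim that $v$ is ``trapped'' between $\boldsymbol{b}_L$ and $\boldsymbol{b}$ is circular.

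The paper bypasses this topological bookkeeping with a degree count. For any two leaves $\gamma_1,\gamma_2\in\mathcal{L}_{\textnormal{per}}$, each of the four boundary arcs of the quadrilateral $O'\subset\Pi$ they bound is mapped \emph{injectively} by $f^r$: the $\gamma_i$-arcs because their ambient rectangles in $\mathcal{T}'$ map homeomorphically to rectangles of $\mathcal{T}$, and the $\beta_L,\beta_R$-arcs because those curves are lifts of $\boldsymbol{b}_L,\boldsymbol{b}_R$. The four image arcs assemble into a Jordan curve bounding a disk $O$. By the maximum principle $f^r\colon O'\to O$ is proper, and by the argument principle it has degree one, hence is univalent; thus $O'$ contains no critical points. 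This delivers (b) for arbitrary pairs of leaves in one stroke, without ever having to locate individual critical values.
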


\begin{proof}
    Pick any two distinct leaves $\gamma_1$ and $\gamma_2$ of $\mathcal{L}_{\text{per}}$. Then, $\gamma_1 \cup \gamma_2 \cup \beta_L \cup \beta_R$ must enclose a disk $O'$ contained in $\Pi$. Denote by $O$ the disk enclosed by $f^r(\gamma_1) \cup f^r(\gamma_2) \cup \boldsymbol{b}_L \cup \boldsymbol{b}_R$. By the maximum principle, $f^r: O' \to O$ is a proper holomorphic map, and by the argument principle, $f^r|_{O'}$ must be univalent. In particular, $O'$ contains no critical points of $f^r$.
\end{proof}

\begin{lemma}[Persistence $\longrightarrow$ $\tau$-degeneration]
\label{greater-deg}
    For any $M > 1$, there exist constants $m_0=m_0(M) \in \N$ and $\mathbf{K}_2=\mathbf{K}_2(M,\lambda)>0$ such that if
    \begin{equation}
    \label{eqn:per-assumption}
    m \geq m_0, \quad K \geq \mathbf{K}_2, \quad \text{and} \quad Z_\textnormal{per} \geq 0.1 K,
    \end{equation}      
    then there is a level $n+m$ combinatorial piece $J$ of width $W_\tau(J) \geq M K$.
\end{lemma}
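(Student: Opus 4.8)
The plan is to feed the wide lamination $\mathcal{L}_{\textnormal{per}}$ into the bubble--wave machinery of \S\ref{sec:bubble-wave-argument}: the homotopy rigidity of Lemma \ref{homotopy} will prevent its width from being squeezed onto a combinatorially negligible piece, while the deep return time $r=q_{n+m}$ will supply a magnification factor comparable to $l_n/l_{n+m}\succ\tilde C^{\,m}$, which dominates $M$ once $m\geq m_0(M)$.

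First I would describe how the leaves of $\mathcal{L}_{\textnormal{per}}$ meet $\Hq$. By Lemma \ref{homotopy} they are all properly homotopic rel $\textnormal{CP}(f^r)$ inside $\Pi$; all of $\textnormal{CP}(f^r)$ lies on the boundary arc $\overline{\Pi}\cap\Hq$ of combinatorial length $\asymp l_n$, while $\beta_L,\beta_R$ are the remaining two boundary arcs of $\Pi$. Fixing a geodesic representative $\gamma_0$ of this proper class and cutting the leaves along $\Hq$ presents $\mathcal{L}_{\textnormal{per}}$ as overflowing a chain $\mathcal{G}_1,\dots,\mathcal{G}_s$ of restrictions placed in series, where $s=|\gamma_0\cap\Hq|$, each $\mathcal{G}_i$ is an inner or outer wave protecting a piece $P_i\subset\overline{\Pi}\cap\Hq$, the $P_i$ have disjoint interiors, and $\bigcup_iP_i$ covers a sub-arc containing the piece $I$, so that $\sum_i|P_i|\asymp l_n$. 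Two cases arise. If $s$ is large relative to $M$, then, since a geodesic cannot cross $\Hq$ twice without a point of $\textnormal{CP}(f^r)$ in the intervening arc, at least one $\mathcal{G}_i$ joins two pieces at combinatorial distance $\le\tau l_{n+m}$ apart, hence overflows $\mathcal{F}_\tau$ of a level $\asymp n+m$ piece; combining this with the series law (Propositions \ref{series law} and \ref{harmonicsum}) exactly as in \S\ref{ss:trading-shallow} yields a combinatorial piece of level $\le n+m$ with $\tau$-width $\succ sK\ge MK$. If instead $s$ is bounded, then some $\mathcal{G}_{i_0}$ is a wave $\Omega$ with $W(\Omega)\succ W(\mathcal{L}_{\textnormal{per}})\geq 0.1K$ protecting a piece of length $|\Omega|\succ l_n$.

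In the second (and main) case I would apply Proposition \ref{bubble-wave} to $\Omega$, reading its length at the base level $n+m-m_{\mathrm{bw}}$, where $m_{\mathrm{bw}}$ is the absolute constant of that proposition; then the enlargement factor is $\alpha:=|\Omega|/l_{n+m-m_{\mathrm{bw}}}\succ l_n/l_{n+m}\succ\tilde C^{\,m}$, and the proposition delivers a level $n+m$ combinatorial piece $J$ with $W_\tau(J)\succ\alpha\,W(\Omega)\succ\tilde C^{\,m}K$. Choosing $m_0(M)$ so large that $\tilde C^{\,m_0(M)}$ exceeds $M$ times all the implicit constants, and $\mathbf{K}_2=\mathbf{K}_2(M,\lambda)$ large enough to absorb the additive $O(1)$ errors and to clear the threshold of Proposition \ref{bubble-wave}, one obtains $W_\tau(J)\geq MK$ for every $m\geq m_0$. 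The hard part, I expect, is the dichotomy in the second paragraph: quantifying how Lemma \ref{homotopy} couples the snake length $s$ to the protected lengths $|P_i|$, so as to rule out the scenario in which $\mathcal{L}_{\textnormal{per}}$ neither weaves enough to trigger the series-law amplification nor protects a piece of definite combinatorial size — and this is exactly where the $\succ\tilde C^{\,m}$ critical points of $f^r$ along $\overline{\Pi}\cap\Hq$, together with the fact that every leaf realizes the same homotopy class relative to them, must enter.
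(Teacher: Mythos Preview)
Your approach has a genuine gap in the second (main) case. Proposition \ref{bubble-wave} carries a threshold $\mathbf{K}=\mathbf{K}(n)$ that depends on the \emph{level} at which the wave is read: tracing through Claim 2 in the proof of Lemma \ref{bubble-wave-argument}, this threshold is governed by the harmonic measure $d_\infty^{-q_{n+m''+2}}$ of bubbles of generation up to $q_{n+m''+2}$, so $\mathbf{K}(n)$ grows super-exponentially in $n$. You propose to invoke Proposition \ref{bubble-wave} at base level $n+m-m_{\mathrm{bw}}$, but here $n$ is the \emph{arbitrary} deep level of Lemma \ref{greater-deg}, so the required threshold cannot be absorbed into a constant $\mathbf{K}_2(M,\lambda)$ independent of $n$. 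This is not a technicality: the bubble--wave machinery of \S\ref{sec:bubble-wave-argument} is intrinsically a \emph{shallow} tool (this is why it is only ever invoked in \S\ref{ss:trading-shallow} and \S\ref{ss:amplifying-shallow}, where $n<\thres_\lambda+\step_\lambda$), and the paper explicitly restricts \S\ref{sec:bubble-wave-argument} to Cases \ref{case:herman-curve} and \ref{case:herman-ring}, so even granting the threshold, Case \ref{case:boundary-ring} would remain untreated.

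The paper's proof bypasses Proposition \ref{bubble-wave} entirely. It splits $r=s+t$ with $t=q_{n+m-2}$ and builds, inside $\Pi$, an explicit family of $2N$ \emph{fences} $\#_i=\gamma_i^\infty\cup P_i\cup\gamma_i^0$ from bubbles of generation $\le s$ rooted at critical points of $f^s$ on $I$; these fences separate $\beta_L$ from $\beta_R$, and the $P_i$'s are chosen so that consecutive pairs have images under $f^s$ at combinatorial distance $\ge\frac{\tau-1}{2}l_{n+m-4}$. Lemma \ref{homotopy} then forces every leaf of $\mathcal{L}_{\textnormal{per}}$ to cross each fence through a single common \emph{gate} $G_i$ (a tile of the partition of $\#_i$ by $\textnormal{CP}(f^r)$), after which the series law across the $N$ gate pairs and a push-forward by $f^s$ give $W_\tau(J)\succ NK$ with $N$ growing with $m$. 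The point is that this argument never appeals to the harmonic measure of deep bubbles about $0$ or $\infty$; it only uses that fences physically separate $\beta_L$ from $\beta_R$ in $\Pi$, which is purely topological. Your dichotomy anticipates the right phenomenon --- the $\succ\tilde C^m$ critical points of $f^r$ along $\overline{\Pi}\cap\Hq$ do drive the amplification --- but the fence construction is what converts this into a usable series-law estimate without the level-dependent threshold.
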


\begin{proof}
    Let us set $t:= q_{n+m-2}$ and $s:= r-t$. Assume that (\ref{eqn:per-assumption}) holds, and so
    \begin{equation}
    \label{eqn:L-per-assumption}
        W(\mathcal{L}_{\text{per}}) \geq 0.1 K.
    \end{equation}
    
    For every critical point $c$ of $f$, the backward orbit $\{(f|_\Hq)^{-i}(c) \}_{i=0,\ldots, t-1}$ partitions $\Hq$ into pieces of length between $l_{n+m-2}$ and $l_{n+m-4}$. By lifting this tiling by $f^s$, observe that $\text{CP}$ partitions $f^{-s}(\Hq)$ into preimages of pieces of length at most $l_{n+m-4}$.
    
    Before we proceed, we will first sketch the idea behind our construction. The horizontal lift $\mathcal{L}_{\text{per}}$ of the persistent lamination must cross through a large number of \emph{fences}, which are connected subsets of $f^{-s}(\Hq)$ separating $\beta_L$ and $\beta_R$ in $\Pi$, as shown in Figure \ref{fig:fences-01}. These fences can be chosen such that their images under $f^s$ have alternating configuration shown in Figure \ref{fig:fences-02}. As these fences are tiled by $\text{CP}$, then by Lemma \ref{homotopy}, $\mathcal{L}_{\text{per}}$ must intersect a common tile $G_i$ from each fence $\#_i$. We then apply the series law to obtain a large $\tau$-degeneration.

\begin{figure}
        \centering
        \begin{tikzpicture}
    \node[anchor=south west,inner sep=0] (image) at (0,0) {\includegraphics[width=1\linewidth]{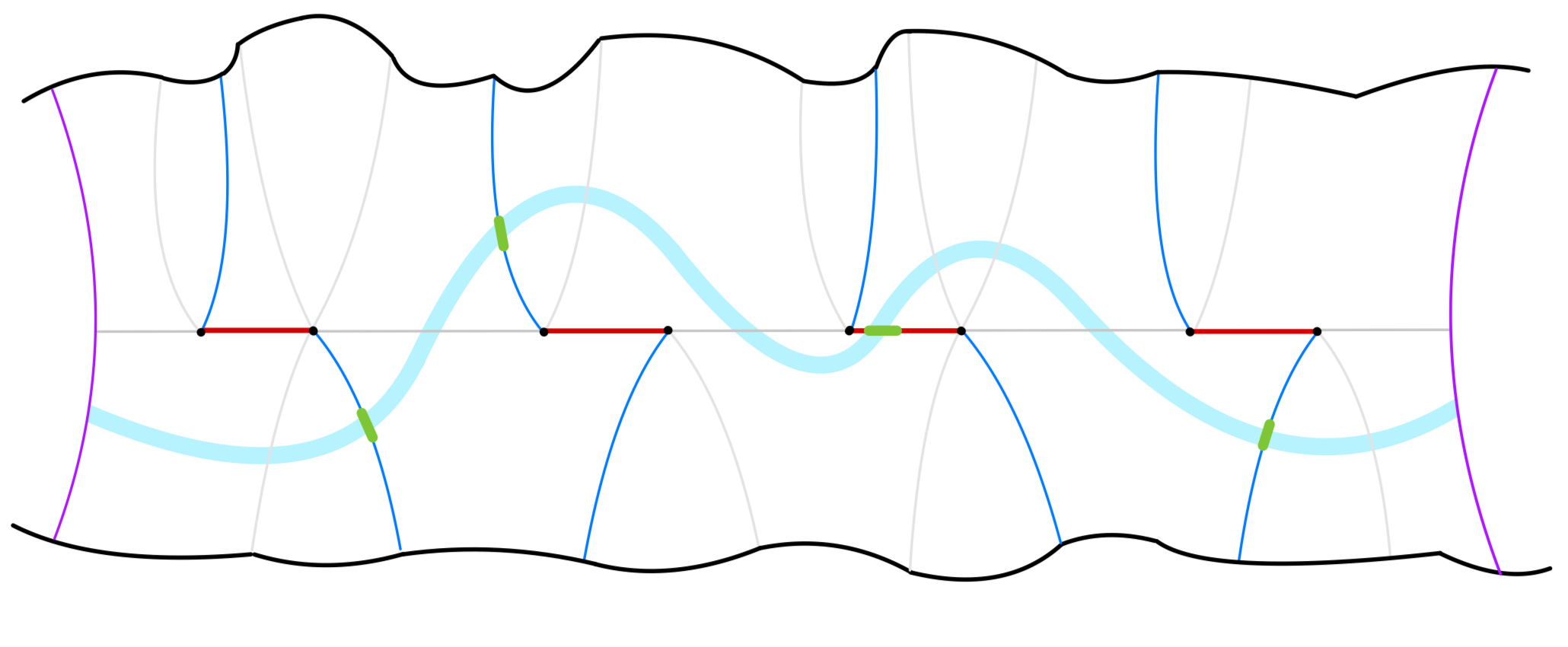}};
    \begin{scope}[
        x={(image.south east)},
        y={(image.north west)}
    ]
        \node [black, font=\bfseries] at (0.50,0.09) {$\Pi$};
        \node [blue!40!cyan, font=\bfseries] at (0.12,0.27) {\footnotesize{$\mathcal{L}_{\text{per}}$}};
        \node [violet!70!white, font=\bfseries] at (0.04,0.50) {$\beta_L$};
        \node [violet!70!white, font=\bfseries] at (0.95,0.50) {$\beta_R$};
        \node [red, font=\bfseries] at (0.17,0.44) {\footnotesize{$P_1$}};
        \node [red, font=\bfseries] at (0.38,0.44) {\footnotesize{$P_2$}};
        \node [red, font=\bfseries] at (0.59,0.44) {\footnotesize{$P_3$}};
        \node [red, font=\bfseries] at (0.795,0.44) {\footnotesize{$P_4$}};
        \node [blue, font=\bfseries] at (0.165,0.75) {\footnotesize{$\gamma_1^\infty$}};
        \node [blue, font=\bfseries] at (0.30,0.75) {\footnotesize{$\gamma_2^\infty$}};
        \node [blue, font=\bfseries] at (0.577,0.75) {\footnotesize{$\gamma_3^\infty$}};
        \node [blue, font=\bfseries] at (0.718,0.75) {\footnotesize{$\gamma_4^\infty$}};
        \node [blue, font=\bfseries] at (0.267,0.22) {\footnotesize{$\gamma_1^0$}};
        \node [blue, font=\bfseries] at (0.395,0.22) {\footnotesize{$\gamma_2^0$}};
        \node [blue, font=\bfseries] at (0.648,0.22) {\footnotesize{$\gamma_3^0$}};
        \node [blue, font=\bfseries] at (0.777,0.20) {\footnotesize{$\gamma_4^0$}};
        \node [green!70!black, font=\bfseries] at (0.26,0.34) {\footnotesize{$G_1$}};
        \node [green!70!black, font=\bfseries] at (0.345,0.62) {\footnotesize{$G_2$}};
        \node [green!70!black, font=\bfseries] at (0.565,0.535) {\footnotesize{$G_3$}};
        \node [green!70!black, font=\bfseries] at (0.825,0.28) {\footnotesize{$G_4$}};
    \end{scope}
\end{tikzpicture}

        \caption{The lamination $\mathcal{L}_{\text{per}}$ crosses \emph{fences} $\#_j = \gamma_j^\infty \cup P_j \cup \gamma_j^0$ through \emph{gates} $G_j$ in consecutive order. }
        \label{fig:fences-01}
\end{figure}
    
    Now, let us delve into the details. By Lemma \ref{lem:left-and-right}, there exists a piece $I$ in $\Pi$ of length $\asymp l_n$. Recall the three distinct cases \ref{case:herman-curve}, \ref{case:herman-ring}, and \ref{case:boundary-ring} introduced in \S\ref{ss:setup}.
    \vspace{0.1in}
    
    \noindent \textbf{Case \ref{case:herman-curve} or \ref{case:herman-ring}:} 
    Assuming $m$ is large enough (depending on $N$), there is a sequence 
    \[
        x_1^\infty, \, x_1^0, \, x_2^\infty, \, x_2^0, \, \ldots, \, x_{2N}^\infty, \, x_{2N}^0
    \]
    of critical points of $f^s$, written in consecutive order, with the following properties.
    \begin{itemize}
        \item[(i)] All the $x_i^\infty$'s and $x_i^0$'s are located on $I$, with $x_1^\infty$ being the closest to $\beta_L$ and $x_{2N}^0$ being the closest to $\beta_R$ combinatorially.
        \item[(ii)] Every $x_i^\infty$ (resp. $x_i^0$) is the root of an outer (resp. inner) bubble $\mathbf{B}_i^\infty$ (resp. $\mathbf{B}_i^0$) of generation at most $s$.
        \item[(iii)] The pieces $P_i:= [x_i^\infty, x_i^0]$ have length at least $l_{n+m-4}$ and are of distance at least $\frac{\tau-1}{2} l_{n+m-4}$ away from each other.
    \end{itemize}
    
    For every odd (resp. even) $i$ and $\bullet \in \{0,\infty\}$, the critical value $f^s(x_i^\bullet)$ partitions\footnote{In Case \ref{case:herman-ring}, we partition using the radial segment in $\Hq$ containing $f^s(x_i^\bullet)$.} $\overline{U^t} \cap \Hq$ into two pieces, one of which, which we will denote by $J_i^\bullet$, intersects $L^{t}$ (resp. $R^{t}$). Denote by $\gamma^\bullet_i$ the lift of $J_i^\bullet$ under $f^s$ that lies within the bubble $\mathbf{B}_i^\bullet$. By (ii), each $\gamma^\bullet_i$ intersects $\Hq$ precisely at the critical point $x^\bullet_i$. Define our \emph{fences} as
    \[
    \#_i := \gamma^\infty_i \cup P_i \cup \gamma^0_i.
    \]
    By (i) and (iii), they satisfy the following properties. (See Figures \ref{fig:fences-01} and \ref{fig:fences-02}.)
    \begin{enumerate}
        \item[(iv)] The $\#_i$'s are pairwise disjoint connected subsets of $\overline{\Pi} \cap f^{-s}(\Hq)$ which separate $\beta_L$ and $\beta_R$.
        \item[(v)] For each $l\in\{1,\ldots,N\}$, the images $f^s(\#_{2l-1})$ and $f^s(\#_{2l})$ are disjoint pieces in $\Hq$ that are at least $\frac{\tau-1}{2} l_{n+m-4}$ apart from each other.
    \end{enumerate}
    
    By property (iv), $\mathcal{L}_{\text{per}}$ crosses each fence in consecutive order, namely $\#_i$ first before $\#_{i+1}$. As $\text{CP}$ induces tiling on fences, Lemma \ref{homotopy} implies the existence of connected compact subsets $G_i \subset \#_i$ (the \emph{gates} of the fence) where the images $J_i:= f^s(G_i)$ are level $n+m-4$ combinatorial pieces and $\mathcal{L}_{\text{per}}$ crosses the $G_i$'s in consecutive order. 

\begin{figure}
        \centering

\begin{tikzpicture}
    \node[anchor=south west,inner sep=0] (image) at (0,0) {\includegraphics[width=0.95\linewidth]{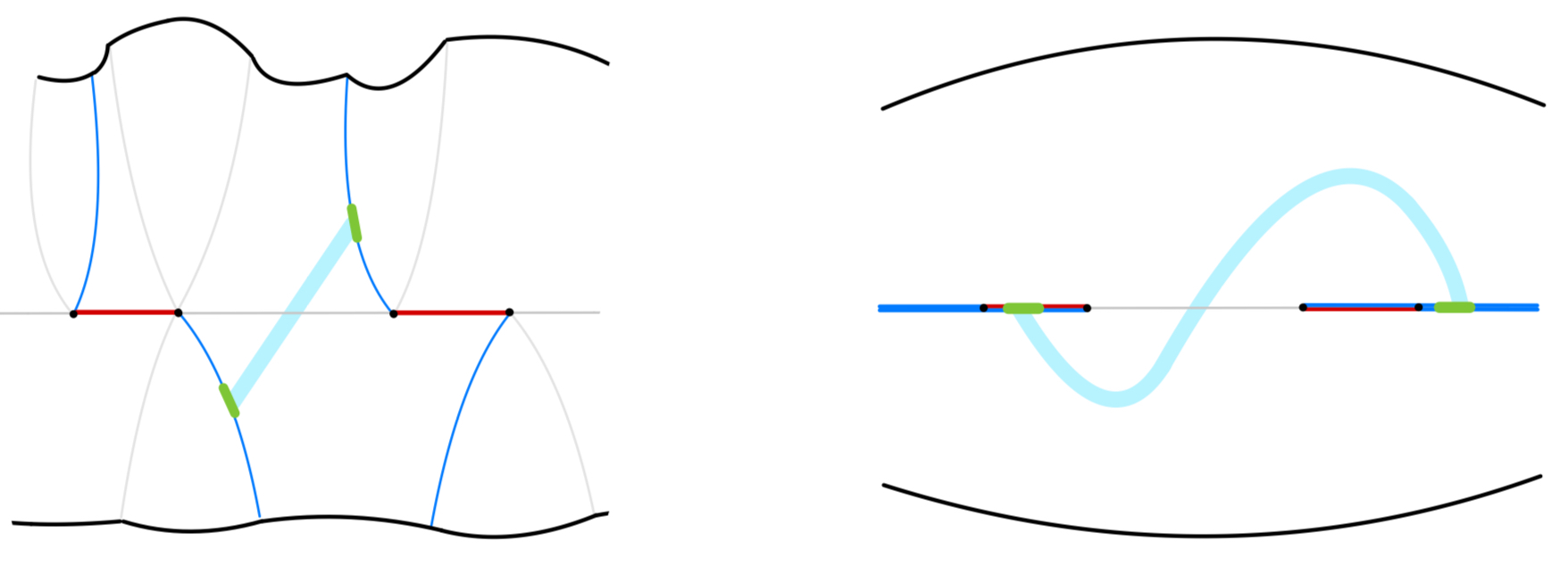}};
    \begin{scope}[
        x={(image.south east)},
        y={(image.north west)}
    ]
        \draw [black, -latex] (0.41,0.48)--(0.54,0.48);
        \node [black, font=\bfseries] at (0.47,0.39) {$f^s$};
        \node [blue, font=\bfseries] at (0.05,0.64) {\footnotesize{$\#_{2j-1}$}};
        \node [blue, font=\bfseries] at (0.315,0.27) {\footnotesize{$\#_{2j}$}};
        \node [green!75!black, font=\bfseries] at (0.65,0.535) {\footnotesize{$J_{2j-1}$}};
        \node [green!75!black, font=\bfseries] at (0.925,0.40) {\footnotesize{$J_{2j}$}};
        \node [green!75!black, font=\bfseries] at (0.14,0.245) {\footnotesize{$G_{2j-1}$}};
        \node [green!75!black, font=\bfseries] at (0.255,0.62) {\footnotesize{$G_{2j}$}};
        \node [blue!40!cyan, font=\bfseries] at (0.20,0.42) {$\mathcal{L}_j$};
    \end{scope}
\end{tikzpicture}

        \caption{The fences $\#_{2j-1}$ and $\#_{2j}$ are constructed such that their images under $f^s$ have $\tau$-separation.}
        \label{fig:fences-02}
\end{figure}

    Therefore, there exist pairwise disjoint laminations $\mathcal{L}_1,\ldots, \mathcal{L}_N$ such that each $\mathcal{L}_j$ is a restriction of $\mathcal{L}_{\text{per}}$ that connects $G_{2j-1}$ and $G_{2j}$. Let $k$ be such that $\mathcal{L}_k$ is the widest among all the $\mathcal{L}_j$'s. By property (v), since each $J_i$ lies within $f^s(\#_i)$, then $f^s(\mathcal{L}_k)$ overflows $\mathcal{F}_\tau(J_{2k})$. By Propositions \ref{series law} and \ref{harmonicsum}, and by (\ref{eqn:L-per-assumption}),
    \[
    W_\tau(J_{2k}) \geq W\left(f^s(\mathcal{L}_k)\right) \geq N \cdot W\left(f^s\left(\mathcal{L}_{\text{per}}\right)\right) = N \cdot W\left(\mathcal{L}_{\text{per}}\right) \geq 0.1 N K.
    \]
    Consider the constant $C>1$ from Proposition \ref{bounded-type}. There exists a level $n+m$ combinatorial subpiece $J$ of $J_{2k}$ with width $W_\tau(J) \geq 0.1 C^{-4} N K$. Finally, set $N = \lceil 10 C^4 M \rceil$ and we are done.
    \vspace{0.1in}
    
    \noindent \textbf{Case \ref{case:boundary-ring}:} The proof is similar to the previous case, but the construction of fences needs a small adjustment. Following Remark \ref{adjustment-in-case-c}, we assume that the $U^j$'s are disjoint from the connected component of the complement of the Herman ring $\He$ containing $0$. In particular, $U^{r}$ does not contain any inner bubbles. We will instead take the bubbles $\mathbf{B}_i^\infty$ and $\mathbf{B}_i^0$ described in (ii) to both be outer bubbles. Although the corresponding fences $\#_i$ will no longer separate $\beta_L$ and $\beta_R$, we claim that most of $\mathcal{L}_{\text{per}}$ still cross every fence in consecutive order. 
    
    Indeed, the set of leaves in $\mathcal{L}_{\text{per}}$ that are disjoint from some fence $\#_i$ overflows the family $\mathcal{L}'_i$ of curves in $\He \cap U^{r}$ that skip $P_i$, i.e. they all connect two disjoint intervals in $\Hq$ are adjacent to $P_i$ and are at most $\lambda l_n$ in length. By uniformizing $\He$ and applying Proposition \ref{log-rule}, the width of $\mathcal{L}'_i$ is at most some constant depending on $\lambda$ and $N$. Therefore, for sufficiently large $\mathbf{K}_2=\mathbf{K}_2(\lambda,N)>0$, we can assume that the width of the sublamination $\mathcal{L}''$ consisting of leaves in $\mathcal{L}_{\text{per}}$ that cross the fences in consecutive order is at least half of $\mathcal{L}_{\text{per}}$. The same remaining argument holds for $\mathcal{L}''$, and at the last moment we take $N=\lceil 20 C^4 M \rceil$ instead.
\end{proof}

\subsection{Proof of Proposition \ref{pos-entropy}}
\label{ss:proof-of-loss}

The results in the preceding subsections can be summarized as follows.

\begin{lemma}[Degeneration vs. loss of $Z^j$]
\label{lem:deg-or-loss-of-width}
    Given any $M \geq 1$, there exist constants $m=m(M) \in \N$, $\nu=\nu(M) \in (0,1)$, and $\mathbf{K}_1 = \mathbf{K}_1(M,\lambda)>0$ such that if $W(\mathcal{F}^j) \geq \mathbf{K}_1$ for some $j \in \N$, then either
    \begin{enumerate}[label=\textnormal{(\arabic*)}]
        \item there is a level $n+m$ combinatorial piece $J$ of width $W_\tau(J) \geq M\cdot W(\mathcal{F}^j)$, or
        \item $Z^{j+q_{n+m}} \leq \nu Z^j$.
    \end{enumerate}
\end{lemma}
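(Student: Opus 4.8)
The plan is to assemble the three ``conversion'' results of \S\ref{ss:decomposition}--\S\ref{ss:persistence} into a single dichotomy. First note that it suffices to treat $j=0$: relabelling the covering tower $\cdots\to(U^{j+1},\Upsilon^{j+1})\to(U^{j},\Upsilon^{j})\to\cdots$ at level $j$ leaves all the relevant constants (the degree of $f^{q_{n+m}}$, the $\mathrm{CV}$--count of Lemma \ref{lem:counting-cv}, the decomposition $\mathcal{T}=\mathcal{A}\cup\mathcal{B}\cup\mathcal{P}$, the separating curves of Lemmas \ref{lem:middle-curve}--\ref{lem:left-and-right}) uniform, since $f|_\Hq$ is a rotation and everything depends only on the combinatorics of $\Hq$ near $L,R$ at scale $l_n$ and on $m,\lambda$. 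So write $K:=W(\mathcal{F}^0)$, $r:=q_{n+m}$ for an integer $m=m(M)$ to be fixed now. Using $\mathcal{F}^0=\mathcal{A}^0\sqcup\mathcal{B}^0$ together with Proposition \ref{maximality} and Lemma \ref{lem:correction}, one has $K=A+B+O(1)$ and $Z^0=A+2B+O(1)$, hence $K\le Z^0\le 2K+O(1)$; in particular $K\ge\tfrac12 Z^0-O(1)$. Fix $m$ large enough that Lemma \ref{greater-deg} (with $M'=M$), Lemma \ref{lem:DE-degeneration}, and Lemma \ref{lem:counting-cv} all apply at level $n+m$; this determines the constant $\varepsilon=\varepsilon(m)>0$ of Lemma \ref{lem:DE-degeneration}, and we set $\kappa_0:=M/\varepsilon$, a constant depending only on $M$.

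We then split according to the size of $Z_{\textnormal{per}}$ and $D$. If $Z_{\textnormal{per}}\ge 0.1K$, then Lemma \ref{greater-deg} (legitimate once $K\ge\mathbf{K}_2(M,\lambda)$) produces a level $n+m$ combinatorial piece $J$ with $W_\tau(J)\ge MK=M\,W(\mathcal{F}^0)$, which is alternative \ref{goal-number-1}-type outcome (1). If instead $Z_{\textnormal{per}}<0.1K$ but $D\ge\kappa_0 K$, then Lemma \ref{lem:DE-degeneration} gives a level $n+m$ piece $J$ with $W_\tau(J)\ge\varepsilon\kappa_0 K=MK$, again outcome (1). It remains to handle the case $Z_{\textnormal{per}}<0.1K$ and $D<\kappa_0 K$, where we will extract the width loss.

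Under these two hypotheses, $Z_{\textnormal{non}}=(A+2B)-Z_{\textnormal{per}}\ge(K-O(1))-0.1K\ge 0.8K$ once $K$ is large, so $D<\kappa_0 K\le 1.25\,\kappa_0\,Z_{\textnormal{non}}$ and therefore $2(Z_{\textnormal{non}}+D)\le 2(1+1.25\kappa_0)Z_{\textnormal{non}}$. Feeding this into the Key Estimate (Proposition \ref{prop:change-in-width}) and using that $x\oplus y$ is increasing in $y$,
\[
Z^r-C\;\le\;Z_{\textnormal{per}}+Z_{\textnormal{non}}\oplus 2(Z_{\textnormal{non}}+D)\;\le\;Z_{\textnormal{per}}+\Bigl(1-\tfrac{1}{3+2.5\kappa_0}\Bigr)Z_{\textnormal{non}}\;=\;(A+2B)-\tfrac{Z_{\textnormal{non}}}{3+2.5\kappa_0}.
\]
By Lemma \ref{lem:correction}, $A+2B=Z^0+O(1)$, and $Z_{\textnormal{non}}\ge 0.8K\ge 0.4\,Z^0-O(1)$, so $Z^r\le Z^0\bigl(1-\tfrac{0.4}{3+2.5\kappa_0}\bigr)+O(1)$. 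Taking $\mathbf{K}_1=\mathbf{K}_1(M,\lambda)$ larger than all thresholds used above and large enough to absorb the $m,\lambda$--dependent additive error, we get $Z^{q_{n+m}}=Z^r\le\nu Z^0$ with $\nu:=1-\tfrac{0.2}{3+2.5\kappa_0}\in(0,1)$, which is outcome (2). Since $\kappa_0$ and hence $\nu$ depend only on $M$, this is the asserted statement (and Corollary \ref{cor:monotonicity} is recovered as the crude $\kappa_0\to\infty$ limit of the same estimate).

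The main thing to get right is the order of quantifiers: $m$, and through it $\varepsilon(m)$, $\kappa_0=M/\varepsilon(m)$, and $\nu$, must all be pinned down from $M$ alone \emph{before} the width-loss estimate is invoked, so that no circularity arises between ``$m$ large enough for the degeneration lemmas'' and ``$\kappa_0$ large enough for the series-law saving''. The only reason this works at all is the asymmetric bookkeeping in Proposition \ref{prop:change-in-width}: a non-persistent leaf of $\mathcal{F}^r$ must traverse two disjoint bottlenecks of comparable width $\asymp Z_{\textnormal{non}}$, so the harmonic sum forces a definite multiplicative loss once both $Z_{\textnormal{per}}$ and $D$ are a small fraction of $K$ --- while largeness of either of those quantities is exactly what Lemmas \ref{greater-deg} and \ref{lem:DE-degeneration} convert into $\tau$-degeneration. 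A secondary bookkeeping point is ensuring all the $C(m,\lambda)$ additive errors (from Lemma \ref{lem:correction}, Proposition \ref{prop:change-in-width}, and Lemma \ref{domination-property}) are dominated by shrinking a fixed fraction of $\nu$'s defect, which is why $\nu$ is stated as, say, $1-\tfrac{0.2}{3+2.5\kappa_0}$ rather than the sharper $1-\tfrac{0.4}{3+2.5\kappa_0}$.
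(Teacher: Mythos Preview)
Your proposal is correct and follows essentially the same approach as the paper: reduce to $j=0$, fix $m$ from Lemma~\ref{greater-deg} and then $\kappa_0=M/\varepsilon(m)$ from Lemma~\ref{lem:DE-degeneration}, case-split on $Z_{\textnormal{per}}\ge 0.1K$ and $D\ge\kappa_0 K$ to get outcome~(1), and otherwise feed the bounds into Proposition~\ref{prop:change-in-width} to obtain a definite multiplicative loss in $Z^r$. The only cosmetic difference is that the paper factors the harmonic-sum estimate through $Z^0$ (writing $\nu'=0.1+0.9\oplus(2+2\kappa)$) whereas you factor through $Z_{\textnormal{non}}$; both give a $\nu$ depending only on $M$.
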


\begin{proof}
    Let $K=W(\mathcal{F}^0)$ and $r=q_{n+m}$. Following Sections \S\ref{ss:decomposition}--\ref{ss:persistence}, assume $j=0$ without loss of generality and denote by $C=C(m,\lambda)$ any positive constant depending only on $m$ and $\lambda$. By Lemmas \ref{lem:DE-degeneration} and \ref{greater-deg}, for sufficiently high integers $m$ and $\kappa$ depending on $M$, either item (1) holds, or
    \begin{equation}
    \label{eqn:per-degeneration-01}
        D \leq \kappa K \qquad \text{and} \qquad Z_{\text{per}} \leq 0.1 K.
    \end{equation}
    We will show that the latter assertion implies (2). By Proposition \ref{prop:change-in-width},
    \begin{align*}
        Z^r -C & \leq Z_{\textnormal{per}} + (Z^0-Z_{\textnormal{per}}) \oplus (2+ 2\kappa)Z^0.
    \end{align*}
    Set $\nu' := 0.1+0.9 \oplus (2+2\kappa)$; clearly, $0<\nu'<1$. By (\ref{eqn:per-degeneration-01}), the inequality simplifies to
    \begin{align*}
        Z^{r} -C & \leq \nu'Z^0.
    \end{align*}
    Set $\nu= (1+\nu')/2$ and assume $\mathbf{K}_1 \geq 2C/(1-\nu')$. Then, $Z^r \leq \nu Z^0$.
\end{proof}

At last, we are ready to prove the main result of this section. We will apply Lemma \ref{lem:deg-or-loss-of-width} many times until the shrinking factor is as low as we want.

\begin{proof}[Proof of Proposition \ref{pos-entropy}]
Fix $\Delta>1$ and $\delta \in (0,1)$. We will be applying Lemma \ref{lem:deg-or-loss-of-width} using the constant $M=\delta^{-1}\Delta$. Consider the constants $m$, $\nu$, and $\mathbf{K_1}$ from the lemma. Set $r:=q_{n+m}$ and $\mathbf{K}:=\delta^{-1} \mathbf{K}_1$, and let us assume that $W\left(\mathcal{F}^0\right) = K \geq \mathbf{K}$. Let us pick $\mathbf{t} \in \N$ such that $\nu^{\mathbf{t}}\leq \delta/2$. Our goal is to prove that either
\begin{enumerate}
    \item[(a.)] there is a level $n+m$ combinatorial piece $J$ of $\tau$-width at least $\Delta K$,
\end{enumerate}
or there is some $t$ between $1$ and $\mathbf{t}$ such that
\begin{enumerate}
    \item[(b.$t$)] $W\left(\mathcal{F}^{rt}\right) \leq \delta K$.
\end{enumerate}
The proof below involves another related assertion, which is
\begin{enumerate}
    \item[\textnormal{(c.$t$})] $Z^{rt} \leq \nu^t Z^0$.
\end{enumerate}

\begin{claim}
    If (c.$t$) holds, then either (a.), or (b.$t$), or (c.$t+1$) holds.
\end{claim}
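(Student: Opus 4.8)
The plan is to obtain the claim as a one-step consequence of Lemma~\ref{lem:deg-or-loss-of-width} applied to the iterate $j=rt$, combined with the arithmetic choices $M=\delta^{-1}\Delta$, $\mathbf{K}=\delta^{-1}\mathbf{K}_1$ and $r=q_{n+m}$ fixed just above. Assume (c.$t$) holds. If in addition (b.$t$) holds, there is nothing to prove, so I may suppose $W(\mathcal{F}^{rt})>\delta K$, which will be the working hypothesis.

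First I would check that Lemma~\ref{lem:deg-or-loss-of-width} is applicable at $j=rt$: since $K\geq\mathbf{K}=\delta^{-1}\mathbf{K}_1$, the working hypothesis gives $W(\mathcal{F}^{rt})>\delta K\geq\mathbf{K}_1$. The lemma, invoked with the parameter $M=\delta^{-1}\Delta$, then offers two alternatives. In its first alternative there is a level $n+m$ combinatorial piece $J$ with $W_\tau(J)\geq M\cdot W(\mathcal{F}^{rt})>M\delta K=\Delta K$, which is exactly conclusion (a.). In its second alternative, $Z^{rt+q_{n+m}}\leq\nu Z^{rt}$; here I would feed in the hypothesis (c.$t$), i.e. $Z^{rt}\leq\nu^{t}Z^{0}$, to deduce $Z^{r(t+1)}=Z^{rt+r}\leq\nu\cdot\nu^{t}Z^{0}=\nu^{t+1}Z^{0}$, which is conclusion (c.$t+1$). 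Since these two alternatives exhaust the conclusion of the lemma, the claim follows.

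There is essentially no obstacle in this step — it is bookkeeping that propagates the loss factor $\nu$ one stage further while monitoring whether the $\tau$-degeneration (a.) has already appeared or the horizontal width has already dropped below $\delta K$ (b.$t$). The only points requiring care, both arranged before the claim, are that Lemma~\ref{lem:deg-or-loss-of-width} is invoked with the specific constant $M=\delta^{-1}\Delta$ so that $M\delta K=\Delta K$, and that $\mathbf{K}$ was taken $\geq\delta^{-1}\mathbf{K}_1$ so that the applicability threshold $\mathbf{K}_1$ of the lemma is automatically cleared by $\delta K$ under the working hypothesis.
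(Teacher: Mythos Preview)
Your proof is correct and follows essentially the same route as the paper: assume (b.$t$) fails, apply Lemma~\ref{lem:deg-or-loss-of-width} at $j=rt$ with $M=\delta^{-1}\Delta$, and read off (a.) or (c.$t+1$) from its two alternatives. You are slightly more explicit than the paper in verifying the applicability threshold $W(\mathcal{F}^{rt})>\delta K\geq\mathbf{K}_1$, which is a welcome detail.
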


\begin{proof}
    Suppose (c.$t$) holds and (b.$t$) fails. By the lemma, either there is a level $n+m$ combinatorial piece $J$ of $\tau$-width at least $\delta^{-1} \Delta \cdot W(\mathcal{F}^{rt})$, or $Z^{r(t+1)} \leq \nu Z^{rt}$. If the former assertion holds, since (b.$t$) does not hold, then
    \[
        W_\tau(J) \geq \delta^{-1} \Delta \cdot W\left(\mathcal{F}^{rt}\right) \geq \Delta K.
    \]
    If the latter assertion holds instead, then by (c.$t$), $Z^{r(t+1)} \leq \nu^{t+1} Z^0$.
\end{proof}

Trivially, (c.0) holds. As we apply the claim above for $t=0,1,\ldots,\mathbf{t}-1$, we conclude that either (a.) holds, or (b.$t$) holds for some $t$ between $1$ and $\mathbf{t}-1$, or (c.$\mathbf{t}$) holds. The latter case implies (b.$\mathbf{t}$) because
\[
    W\left(\mathcal{F}^{r\mathbf{t}}\right) \leq Z^{r\mathbf{t}} \leq \nu^{\mathbf{t}} Z^0 \leq \frac{\delta}{2} Z^0 \leq \delta K.
\]
Therefore, either (a.) holds or (b.$t$) holds for some $t \leq \mathbf{t}$.
\end{proof}


\section{A priori bounds}
\label{sec:a-priori-bounds}
We are now prepared to prove the first main theorem of the paper. The results in Sections \S\ref{sec:bubble-wave-argument}--\ref{sec:loss-of-horizontal-width} are compiled together to obtain the following theorem.

\begin{theorem}[Amplification Theorem]
\label{amplification}
    There is an absolute constant $\tau > 1$ and some constants $\mathbf{K} >1$, $m \in \N$, and $N \in \mathbb{N}$ depending only on $d_0$, $d_\infty$, and $\beta(\theta)$ such that if 
    \begin{center}
        there is a $[K,\tau]$-wide combinatorial piece $I \subset \Hq$ of level $n\geq N$
    \end{center}
    where $K \geq \mathbf{K}$ then
    \begin{center}
        there is a $[2K,\tau]$-wide combinatorial piece $J \subset \Hq$ of level $n'\geq N$
    \end{center}
    where $|n'-n| \leq m$.
\end{theorem}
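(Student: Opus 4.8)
The plan is to obtain Theorem~\ref{amplification} as a bookkeeping argument chaining together Theorem~\ref{promotion}, Proposition~\ref{spreading-lambda}, and Theorem~\ref{demotion}, with the auxiliary constants chosen in the right order so that the multiplicative gains compose to a factor of at least~$2$ (compare Figure~\ref{fig:implication-diagram}). Starting from a $[K,\tau]$-wide combinatorial piece $I\subset\Hq$ of level $n$, Theorem~\ref{promotion} produces a combinatorial piece $J_1$ of level $n_1$ with $|n_1-n|\le\step$ and $n_1\ge\thres_\lambda$ which is either already $[2K,\tau]$-wide, in which case we are done, or merely $[\chi K,\lambda]$-wide for the \emph{absolute} constant $\chi\in(0,1)$. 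In the latter case we feed $J_1$ into Proposition~\ref{spreading-lambda} applied with the fixed parameter $\Xi:=\lceil 2/\chi\rceil$: this yields either a $[\Xi\chi K,\tau]$-wide combinatorial piece of level $n_1$ — and $\Xi\chi\ge 2$, so again we are done — or a level-$n_1$ almost tiling $\mathcal{I}$ all of whose pieces are $[\xi\chi K,\lambda]$-wide, where $\xi=\xi(\Xi)>0$ is now a fixed positive constant. Finally, Theorem~\ref{demotion} applied to $\mathcal{I}$ produces a $[\Pi_\lambda\,\xi\chi K,\tau]$-wide combinatorial piece $J$ of level $n'\ge\thres_\lambda$ with $|n'-n_1|\le\step_\lambda$.

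The parameter $\lambda$ is pinned down at the very end: since $\Pi_\lambda\to\infty$ as $\lambda\to\infty$ while $\xi$ and $\chi$ are independent of $\lambda$, we may fix $\lambda$ large enough that $\Pi_\lambda\,\xi\chi\ge 2$, simultaneously arranging that $\lambda$ satisfies all the other largeness requirements of the cited results (e.g.\ $\lambda\gg\tau$, $\lambda>\rho$, $\thres_\lambda\ge m_0+m_*$). Once $\lambda$ (hence also $\Xi$, $\xi$, $\step$, $\step_\lambda$, $\thres_\lambda$) is fixed, the three threshold constants $\mathbf{K}_{\mathrm{prom}}(\lambda)$, $\mathbf{K}_{\mathrm{spr}}(\Xi,\lambda)$, $\mathbf{K}_{\mathrm{dem}}(\lambda)$ from Theorems~\ref{promotion},~\ref{demotion} and Proposition~\ref{spreading-lambda} are determined, and we set
\[
\mathbf{K}:=\max\bigl\{\mathbf{K}_{\mathrm{prom}}(\lambda),\ \chi^{-1}\mathbf{K}_{\mathrm{spr}}(\Xi,\lambda),\ (\xi\chi)^{-1}\mathbf{K}_{\mathrm{dem}}(\lambda)\bigr\},\qquad N:=\thres_\lambda,\qquad m:=\step+\step_\lambda .
\]
With these choices, at each stage the running width ($K$, then $\chi K$, then $\xi\chi K$) exceeds the threshold required to invoke the next result, and each running level stays $\ge\thres_\lambda$ — the latter being automatic because Theorems~\ref{promotion} and~\ref{demotion} guarantee this for their output pieces while Proposition~\ref{spreading-lambda} preserves the level. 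Tracking the level shifts, the final piece $J$ has level $n'$ with $|n'-n|\le|n'-n_1|+|n_1-n|\le\step_\lambda+\step=m$ and $n'\ge\thres_\lambda=N$, which is exactly the conclusion.

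There is essentially no new analytic content in this step — all the difficulty resides in the three results being combined. The one point that genuinely requires care, and the place where a careless ordering of quantifiers would be fatal, is that $\chi$ (and therefore, after $\Xi$ is chosen, also $\xi$) must be fixed \emph{before} $\lambda$, so that the divergence $\Pi_\lambda\to\infty$ can be used to dominate the fixed loss factors $\chi$ and $\xi$; symmetrically $\Xi=\lceil 2/\chi\rceil$ is chosen after $\chi$ but before $\lambda$. Granting this ordering, the inequality $\Pi_\lambda\,\xi\chi\ge 2$ is attainable and the amplification loop closes, completing the proof.
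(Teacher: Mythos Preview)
Your proof is correct and follows essentially the same route as the paper's own argument: chain Theorem~\ref{promotion}, Proposition~\ref{spreading-lambda} (with $\Xi\asymp 2/\chi$), and Theorem~\ref{demotion}, then fix $\lambda$ large enough that $\Pi_\lambda\xi\chi\ge 2$, with $N=\thres_\lambda$ and $m=\step+\step_\lambda$. If anything, your version is slightly more careful than the paper's in spelling out the order of quantifiers and the composite threshold $\mathbf{K}$.
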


The motivation behind the Amplification Theorem comes from D. Dudko and Lyubich's motto in \cite{DL22}: 
\[
\text{\emph{``If life is bad now, it will be worse tomorrow.``}}
\]
This is in the same spirit as Kahn's general strategy in his proof of a priori bounds for infinitely renormalizable quadratic polynomials of bounded primitive combinatorics in \cite{K06}.

\begin{proof}
    Set $\tau:=10$. Fix a large constant $\lambda \gg \tau$, and set $N := \thres_\lambda$ and $m := \step_\lambda + \step$, where $\step_\lambda$ and $\step$ are constants from Theorems \ref{demotion} and \ref{promotion} respectively. We will take $\mathbf{K}$ to be sufficiently high so that all the arguments below hold. 
    
    Suppose $I$ is a $[K,\tau]$-wide level $\geq N$ combinatorial piece in $\Hq$, where $K\geq \mathbf{K}$. By Theorem \ref{promotion}, either there is a $[2K,\tau]$-wide combinatorial piece $J$ or there is a $[\chi K, \lambda]$-wide combinatorial piece $L$. In the latter case, apply Proposition \ref{spreading-lambda} for the value $\Xi =  2/\chi$ such that either there is a $[2K, \tau]$-wide combinatorial piece $J$ or there is an almost tiling $\mathcal{I}''$ consisting of $[\xi \chi K, \tau]$-wide pieces. If the latter holds, apply Theorem \ref{demotion} to $\mathcal{I}''$ to obtain a $[\Pi_\lambda \xi \chi K, \tau]$-wide combinatorial piece $J \subset \Hq$. Refer to Figure \ref{fig:implication-diagram} for an illustration. Finally, we choose the constant $\lambda$ such that $\Pi_\lambda \xi \chi \geq 2$. Then, $J$ is the piece we are looking for.
\end{proof}

\begin{figure}
    \centering
    \begin{tikzpicture}[node distance = 2.5cm, auto]
    \tikzset{edge/.style = {->,> = latex'}}
    \node [special] (1) {$W_\tau(I) \geq K$ \\for some $I$};
    \node [process, below of=1, node distance=2cm] (4) {Thm \ref{promotion}};
    \node [block, right of=4, node distance=4cm] (2) {$W_\lambda(I')\geq \chi K$ for some $I'$};
    \node [process, below of=2, node distance=2cm] (5) {Prop \ref{spreading-lambda}};
    \node [block, right of=5, node distance=4cm] (3) {$W_\lambda(I'')\geq \xi\chi K$ for all $I''$ in some $\mathcal{I}''$};
    \node [process, below of=3, node distance=2cm] (6) {Thm \ref{demotion}};
    \node [special, below of=5, node distance=3cm] (goal) {$W_\tau(J)\geq 2K$ for some $J$};
    \draw[edge, thick] (1) to (4);
    \draw[edge, thick] (4) to (2);
    \draw[edge, thick] (4) to (goal);
    \draw[edge, thick] (2) to (5);
    \draw[edge, thick] (5) to (3);
    \draw[edge, thick] (5) to (goal);
    \draw[edge, thick] (3) to (6);
    \draw[edge, thick] (6) to (goal);
\end{tikzpicture}
    \caption{Implication diagram illustrating the amplification process.}
    \label{fig:implication-diagram}
\end{figure}
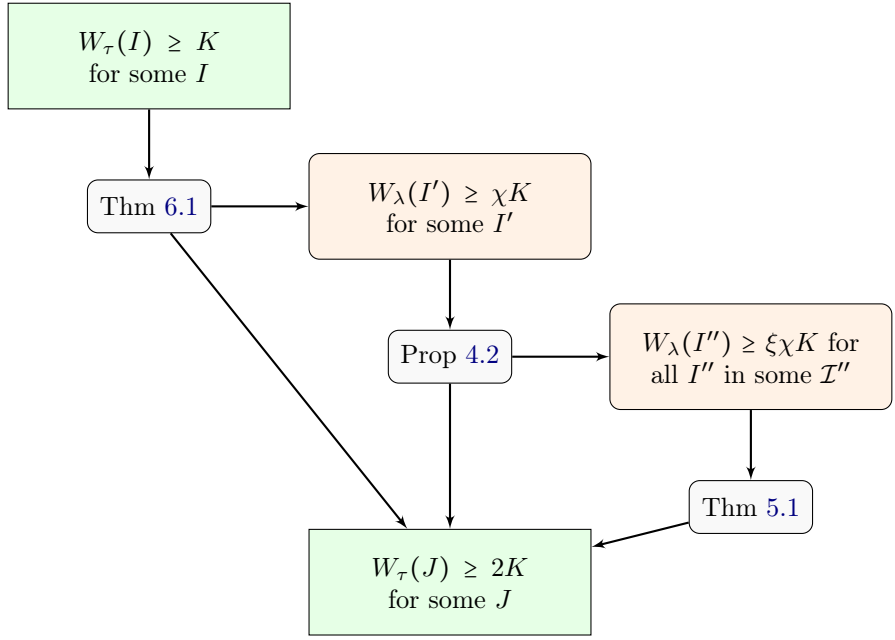

Below is a direct consequence of the Amplification Theorem in Case \ref{case:herman-curve}.

\begin{corollary}
\label{APB-deg}
    The Herman quasicircle $\Hq$ of every rational map $f \in \HQspace_{d_0, d_\infty,\theta}$ has dilatation depending only on $d_0$, $d_\infty$ and $\beta(\theta)$.
\end{corollary}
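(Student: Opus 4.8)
The plan is to deduce the corollary from the Amplification Theorem \ref{amplification} together with Proposition \ref{main-prop}, working throughout in Case \ref{case:herman-curve}: $f \in \HQspace_{d_0,d_\infty,\theta}$ and $\Hq$ is its Herman quasicircle. Since $\theta$ is of bounded type, Petersen's proposition supplies a quasiconformal conjugacy $\phi$ between $f|_\Hq$ and $R_\theta$ with $\phi(\Hq) = \T$, so $\Hq$ is \emph{a priori} a $k_0$-quasicircle for some finite $k_0$ that may depend on $f$. By Proposition \ref{main-prop}(1) (with $\alpha = \tau$) there is then a constant $\mathbf{K}_0 = \mathbf{K}_0(k_0)$ with $W_\tau(I) \le \mathbf{K}_0$ for every piece $I$ with $|I| < (2\tau)^{-1}$. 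This finiteness is all that is needed to start the bootstrap.

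First I would show that the $\tau$-width of deep combinatorial pieces is bounded \emph{uniformly} over $\HQspace_{d_0,d_\infty,\theta}$. Let $\mathbf{K}$, $N = \thres_\lambda$, $m$ be the constants from Theorem \ref{amplification}, with $\lambda$ taken large depending only on $d_0,d_\infty,\beta(\theta)$; enlarging $\lambda$, we may also assume $l_N < (2\tau)^{-1}$. If some combinatorial piece of level $\ge N$ were $[\mathbf{K},\tau]$-wide, then iterating Theorem \ref{amplification} would produce combinatorial pieces $I_j$ of level $\ge N$ with $W_\tau(I_j) \ge 2^j \mathbf{K}$ for all $j \in \N$; since $|I_j| \le l_N < (2\tau)^{-1}$, this contradicts $W_\tau(I_j) \le \mathbf{K}_0$ as soon as $2^j\mathbf{K} > \mathbf{K}_0$. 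Hence $W_\tau(I) < \mathbf{K}$ for every combinatorial piece $I \subset \Hq$ of level $\ge N$, where now $\mathbf{K}$ and $N$ depend only on $d_0,d_\infty,\beta(\theta)$.

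Next I would interpolate from combinatorial pieces to arbitrary short intervals. Let $C>1$ be the constant of Proposition \ref{bounded-type} and set $\alpha_0 := 20C$; enlarging $\lambda$ we may assume $\alpha_0 l_N < 1$. Given any interval $I \subset \Hq$ with $|I| \le l_N$, pick the largest $n \ge N$ with $l_n \ge |I|$, so that $C^{-1}l_n \le l_{n+1} < |I| \le l_n$ by bounded type. Since $|I| \le l_n$, a suitable translate of a level-$n$ combinatorial piece $P$ contains $I$; a comparison of combinatorial lengths (using $I \subseteq P$ and $|I| \asymp |P| = l_n$) gives $\tau P \subseteq \alpha_0 I$, hence $(\alpha_0 I)^c \subseteq (\tau P)^c$. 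Consequently every curve in $\mathcal{F}_{\alpha_0}(I)$ contains a subcurve in $\mathcal{F}_\tau(P)$ — that is, $\mathcal{F}_\tau(P) < \mathcal{F}_{\alpha_0}(I)$ — so $W_{\alpha_0}(I) \le W_\tau(P) < \mathbf{K}$.

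Finally I would invoke Proposition \ref{main-prop}(2) with $\alpha = \alpha_0$, $\varepsilon = l_N$, and the bound $\mathbf{K}$: the dilatation of $\Hq$ depends only on $\alpha_0$, $l_N$, and $\mathbf{K}$. Since $C = C(\beta(\theta))$, since $l_N = l_{\thres_\lambda} \asymp \lambda^{-1}$ with constants controlled by $\beta(\theta)$ (Proposition \ref{bounded-type}), and since $\lambda$ and $\mathbf{K}$ depend only on $d_0,d_\infty,\beta(\theta)$, the corollary follows. The real substance of the argument is the Amplification Theorem, which is already established; here the only mildly delicate points are the interpolation step, where one must apply the overflow inequality in the correct direction and verify $\tau P \subseteq \alpha_0 I$ even though $P$ and $I$ need not be concentric, and the observation that makes the bootstrap work at all — namely that the constants $\mathbf{K}$ and $N$ of Theorem \ref{amplification} are independent of $f$, so the a priori finite (but $f$-dependent) cap $\mathbf{K}_0$ suffices to rule out the divergent scenario.
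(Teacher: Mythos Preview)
Your argument is correct and follows essentially the same route as the paper: iterate the Amplification Theorem to produce pieces of unbounded $\tau$-width, contradict this via Proposition~\ref{main-prop}(1) using the a priori (possibly $f$-dependent) quasicircle constant from Petersen, and conclude with Proposition~\ref{main-prop}(2). The only cosmetic difference is in the interpolation between arbitrary intervals and combinatorial pieces: the paper passes to a combinatorial \emph{sub}interval $I_0\subset I$ and stays with $\tau$-width (absorbing the loss into a factor $C$), whereas you pass to a combinatorial \emph{super}interval $P\supset I$ and switch to $\alpha_0$-width with $\alpha_0 = 20C$; both versions of the overflow inequality are valid and yield equivalent bounds.
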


\begin{proof}
    Let $\tau$, $\mathbf{K}$ and $N$ be constants from the previous theorem, and let $C$ be the constant from Proposition \ref{bounded-type}. We claim that every interval $I \subset \Hq$ with $|I|\leq l_N$ has bounded $\tau$-width: $W_\tau(I) \leq C \mathbf{K}$. By Proposition \ref{main-prop} (2), this is enough to prove the corollary.
    
    Suppose for a contradiction that there is an interval $I$ of length $|I| \leq l_N$ that is $[CK,\tau]$-wide for some $K \geq \mathbf{K}$. Then, there is a $[K,\tau]$-wide combinatorial interval $I_0 \subset I$ of some level $\geq N$. By the Amplification Theorem, we obtain an infinite sequence of level $\geq N$ combinatorial intervals $\{I_j\}_{j=0,1,2,\ldots}$ such that each $I_j$ is $[2^j K, \tau]$-wide. This contradicts Proposition \ref{main-prop} (1).
\end{proof}

Proposition \ref{main-prop} (1) is not applicable in Case \ref{case:herman-ring}. In particular, $\tau$-degeneration can always be found amongst pieces of level $\gg \log(\modu(\He)^{-1})$. To prove a priori bounds for Herman rings $\He$ in $\HRspace_{d_0, d_\infty,\theta}$, we will switch between pieces of $\overline{\He}$ and intervals of a boundary component of $\He$ of sufficiently deep level depending on $\modu(\He)$.

\begin{proof}[Proof of Theorem \ref{main-theorem-01}]
    Let $f \in \HRspace_{d_0, d_\infty,\theta}$. Let $\He$ the Herman ring of $f$ and denote by $\mu>0$ the conformal modulus of $\He$. By Corollary \ref{old-bounds}, it is sufficient to prove the theorem when $\mu < \mu_0$ for some fixed $0<\mu_0<1$.
    
    Let $Y^0$ and $Y^\infty$ be the connected components of $\RS \backslash \overline{\He}$ containing $0$ and $\infty$ respectively. Denote the boundary components of $\He$ by 
    \[
    H^0 := \partial Y^0, \quad \text{and} \quad H^\infty := \partial Y^\infty.
    \]
    Let $\tau$, $m$, $\mathbf{K}$, and $N$ be constants from Theorem \ref{amplification}, and let $C$ be the constant from Proposition \ref{bounded-type}. It is sufficient to show that every interval $I$ in $H^0 \cup H^\infty$ of length $\leq l_N$ must have width $W_\tau(I) \leq C \mathbf{K}$. 
    
    Let $M \in \N$ be such that 
    \begin{equation}
        \label{ineq:level-modulus}
    l_{M+1} \leq \mu < l_{M}.
    \end{equation}
    Pick the threshold $\mu_0$ to be small enough such that $M > N + 2m$. All the combinatorial intervals and pieces considered below will be of level $\geq N$, and similar to the shallow-deep treatment in Sections \S\ref{sec:trading}--\ref{sec:amplifying-tau-degeneration}, they will be distinguished into two:
    \begin{description}
        \item[Herman scale] $N \leq n < M$,
        \item[Siegel scale] $n \geq M$.
    \end{description}
    Note that these scales coincide with the ones introduced in \S\ref{ss:outline} and \S\ref{sss:modulus-Siegel-scale}.
        
    \begin{lemma}
        If there is a $[K,\tau]$-wide combinatorial interval $I^\bullet \subset H^\bullet$ at the Siegel scale for some $\bullet \in \{0,\infty\}$ and $K \geq \mathbf{K}$, then there is a $[2K,\tau]$-wide combinatorial interval $J^\bullet \subset H^\bullet$ of level at least $N$.
    \end{lemma}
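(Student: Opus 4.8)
The plan is to deduce this lemma directly from the Amplification Theorem \ref{amplification}, applied in Case \ref{case:boundary-ring} with $\Hq = H^\bullet$; recall that the standing convention of \S\ref{ss:setup} in this case is that every interval under consideration lies at the Siegel scale, and the hypothesis guarantees that $I^\bullet$ is such an interval. The first step is the level arithmetic. Since $I^\bullet$ is a combinatorial interval of some level $n$, its combinatorial length equals $l_n$, and being at the Siegel scale means $l_n \leq \mu$; together with the defining inequality (\ref{ineq:level-modulus}) this gives $l_n \leq \mu < l_M$, hence $n \geq M$. Now apply the Amplification Theorem: it produces a $[2K,\tau]$-wide combinatorial interval $J^\bullet \subset H^\bullet$ of some level $n'$ with $|n'-n| \leq m$. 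Using $n \geq M$ and the requirement $M > N + 2m$ imposed in the proof of Theorem \ref{main-theorem-01}, we conclude $n' \geq n - m \geq M - m > N + m > N$, so $J^\bullet$ has level at least $N$, as claimed.

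The one point that genuinely needs to be checked is that the Amplification Theorem is applicable in Case \ref{case:boundary-ring} when the starting interval sits only \emph{barely} at the Siegel scale. Running the amplification from a piece of level $n$ involves auxiliary pieces of level between $n-m$ and $n+m$, hence of combinatorial length at most $l_{n-m} \leq l_{M-m}$; by Proposition \ref{bounded-type}, $l_{M-m} \leq C^{\,m+1} l_{M+1} \leq C^{\,m+1}\mu$, so every such piece has combinatorial length comparable to $\mu$ up to the factor $C^{m+1}$, which depends only on $\lambda$ (equivalently, only on $d_0$, $d_\infty$, $\beta(\theta)$) and not on $\mu$. Consequently the width of curves in $\He$ joining any of these pieces to the inner boundary component $H^0$ is bounded by a constant depending only on $\lambda$, exactly as in Lemma \ref{negligibility}, so the Case \ref{case:boundary-ring} modifications of Remarks \ref{removal}, \ref{removal1} and \ref{adjustment-in-case-c} remain valid. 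Moreover, since $n - m \geq M - m > N = \thres_\lambda$, all these pieces are deep enough that the shallow cases of Theorems \ref{demotion} and \ref{promotion} are never invoked. This is precisely why the proof of Theorem \ref{main-theorem-01} chooses $\mu_0$ small enough to force $M > N + 2m$: the surplus of $2m$ levels between $N$ and $M$ is exactly the room needed for the whole amplification to run inside the deep, essentially Siegel-scale regime.

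I expect this bookkeeping — verifying that the negligibility estimate persists for pieces lying a bounded factor above the exact Siegel scale, and that no piece ever drops into the shallow range — to be the only real obstacle; the remainder is a verbatim citation of the Amplification Theorem together with the inequality $M > N + 2m$. It is worth emphasizing, however, that the interval $J^\bullet$ produced here need not itself lie at the Siegel scale: its level may fall in the Herman range $[N, M)$. This is harmless for the present lemma, whose conclusion only asks for level at least $N$, but it is the reason the surrounding proof of Theorem \ref{main-theorem-01} must also treat the Herman scale separately, by passing to the closure $\overline{\He}$ of the Herman ring and invoking the Amplification Theorem in Case \ref{case:herman-ring}.
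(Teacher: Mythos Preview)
Your proposal is correct and follows exactly the paper's approach: the paper's own proof is a single sentence invoking Theorem \ref{amplification} in Case \ref{case:boundary-ring} and the level bound $n' \geq n - m > N$ (the paper's ``$N_1$'' is a typo for the level of $I^\bullet$). Your second paragraph spells out justifications the paper leaves implicit --- that the choice $M > N + 2m$ keeps every intermediate piece in the deep regime so the shallow cases (unavailable in Case \ref{case:boundary-ring}) are never invoked, and that Lemma \ref{negligibility} survives with a $\lambda$-dependent constant for pieces of length $O_\lambda(\mu)$ --- which is helpful commentary but not a different argument.
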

    
    \begin{proof} 
    By applying Theorem \ref{amplification} in Case \ref{case:boundary-ring}, we can obtain from $I^\bullet$ a $[2 K, \tau]$-wide combinatorial interval $J^\bullet \subset H^\bullet$ of level $\geq N_1 - m > N$.
    \end{proof}
    
    To amplify degeneration about intervals at the Herman scale, we will thicken them to pieces of $\overline{\He}$, amplify via Theorem \ref{amplification} in Case \ref{case:herman-ring}, and convert pieces to intervals to obtain more degenerate intervals.
    
    \begin{lemma}
    \label{non-abyssal}
        If there is a $[K,\tau]$-wide combinatorial interval $I^\bullet \subset H^\bullet$ at the Herman scale for some $\bullet \in \{0,\infty\}$ and $K \geq \mathbf{K}$, then there is a $[2 K, \tau]$-wide combinatorial interval $J^\dagger \subset H^\dagger$ at the Siegel scale for some $\dagger \in \{0,\infty\}$.
    \end{lemma}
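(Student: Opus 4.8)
The plan is to transfer the degeneration witnessed on a boundary component $H^\bullet$ to the closure $\overline{\He}$, amplify there using Theorem \ref{amplification} in Case \ref{case:herman-ring}, and then push the resulting degeneration back to one of the boundary components, ensuring that after enough amplification steps the level is forced past $M$ into the Siegel scale. First I would observe that a combinatorial interval $I^\bullet \subset H^\bullet$ at the Herman scale (level $n < M$) thickens canonically to a combinatorial piece $\tilde I \subset \overline{\He}$ of the same level: namely $\tilde I := (\psi \circ \phi)^{-1}(I')$ where $I' \subset \T$ is the image of $I^\bullet$, with horizontal sides on $H^0$ and $H^\infty$. Since $|I^\bullet| = l_n > l_M > \mu = \modu(\He)$, the rectangle $\widetilde{\tau I^\bullet}$ has modulus $\asymp l_n / \mu \gg 1$, so the width of vertical curves across $\He$ is small; more precisely, curves in $\mathcal{F}_\tau(I^\bullet)$ computed in $\RS$ that pass through $\He$ to reach $H^\dagger$ (the opposite boundary) contribute only $O(1)$ to the width, by an argument entirely analogous to Lemma \ref{negligibility} (with the roles of the scales reversed: here $I^\bullet$ is long compared to $\mu$, so it is the inner boundary that is conformally far). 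Hence $W_\tau(\tilde I) \succ W_\tau(I^\bullet) \geq K$ when computed for $\Hq = \overline{\He}$, so $\tilde I$ is $[\varepsilon_0 K, \tau]$-wide for some absolute $\varepsilon_0 > 0$ (and we absorb $\varepsilon_0$ by taking $\mathbf{K}$ large enough that $\varepsilon_0 K \geq \mathbf{K}$, at the cost of starting with a slightly larger constant).

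Next I would apply Theorem \ref{amplification} in Case \ref{case:herman-ring} iteratively. Each application produces from a $[K', \tau]$-wide combinatorial piece of level $n' \geq N$ a $[2K', \tau]$-wide combinatorial piece of level $n''$ with $|n'' - n'| \leq m$. Iterating $j$ times yields a $[2^j K', \tau]$-wide combinatorial piece of level within $jm$ of the starting level. Now, Proposition \ref{main-prop} (1) — applied to the closure $\overline{\He}$ via the $L$-quasiconformal conjugacy of Corollary \ref{old-bounds}, whose dilatation depends on $\mu$ — gives an a priori upper bound $\mathbf{K}_\mu$ on the $\tau$-width of \emph{any} piece of $\overline{\He}$ of combinatorial length $< (2\tau)^{-1}$. (This bound degenerates as $\mu \to 0$, which is exactly why this lemma, rather than a direct argument, is needed.) Therefore the iteration of Theorem \ref{amplification} applied purely within $\overline{\He}$ cannot continue forever: once $2^j \varepsilon_0 K > \mathbf{K}_\mu$ we reach a contradiction — \emph{unless} at some stage the level of the amplified piece exceeds $M$, i.e. enters the Siegel scale. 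So the iteration necessarily produces, after finitely many steps, a $[2^j \varepsilon_0 K, \tau]$-wide combinatorial piece $P \subset \overline{\He}$ of level $n_P \geq M$; by choosing the starting constant $\mathbf{K}$ large enough (so that $2^j \varepsilon_0 K \geq 2K$ already for $j = 1$, say, after rescaling) and by keeping only the first such step, we may take $P$ to be $[2K, \tau]$-wide with $n_P \geq M$ and $n_P \leq M + m$, hence still at the Siegel scale but not too deep.

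Finally I would convert the piece $P \subset \overline{\He}$ back to an interval on a boundary component. Since $n_P \geq M$, we have $|P| = l_{n_P} \leq l_M \leq \mu / \tilde C \cdot (\text{const})$ — that is, $P$ is at the Siegel scale, short compared to $\mu$. Writing $J^0 := P \cap H^0$ and $J^\infty := P \cap H^\infty$, the family $\mathcal{F}_\tau(P)$ decomposes into curves landing on $H^0$-side and on $H^\infty$-side relative to the core curve, so $W_\tau(P) \leq W_\tau(J^0) + W_\tau(J^\infty) + O(1)$ where the $O(1)$ accounts for curves crossing $\He$ (again by a Lemma \ref{negligibility}-type estimate, now genuinely at the Siegel scale). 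Hence at least one of $J^0, J^\infty$ has $\tau$-width $\succ K$, say $J^\dagger \subset H^\dagger$, and it is at the Siegel scale and of level $\geq M - 1 > N$. After adjusting $\mathbf{K}$ to absorb the implicit constants so the final bound reads $\geq 2K$, this $J^\dagger$ is the desired interval. The main obstacle I anticipate is the careful bookkeeping of the multiplicative constants $\varepsilon_0$ and the $O(1)$ corrections across the thickening and un-thickening steps: one must ensure that a single choice of $\mathbf{K}$ (independent of $\mu$) makes every "$\succ$" into a genuine "$\geq 2K$", and that the level only moves by a bounded amount $O(m)$ so that the output is genuinely at the Siegel scale $n \geq M - O(m)$ rather than drifting back down. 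Controlling that the $\mu$-dependent bound $\mathbf{K}_\mu$ from Proposition \ref{main-prop}(1) is the \emph{only} place $\mu$ enters — and that it is used merely to terminate the iteration, not to bound the final output — is the conceptual crux.
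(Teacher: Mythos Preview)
Your overall architecture---thicken $I^\bullet$ to a piece of $\overline{\He}$, amplify there via Theorem~\ref{amplification} in Case~\ref{case:herman-ring}, then un-thicken back to a boundary interval---matches the paper exactly. Two points, however, need correction.

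First, the thickening step is trivial: every curve in $\mathcal{F}_\tau(I^\bullet)$ starts on $I^\bullet\subset I$ and ends on $(\tau I^\bullet)^c_{H^\bullet}\subset (\tau I)^c_{\overline{\He}}$, hence restricts to a curve in $\mathcal{F}_\tau(I)$. So $W_\tau(I)\geq W_\tau(I^\bullet)\geq K$ with no loss; your $\varepsilon_0$ and the ``Lemma~\ref{negligibility} with roles reversed'' detour are unnecessary.

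Second, and this is the genuine gap: you stop the amplification at the \emph{first} step $j^*$ where the level reaches the Siegel scale. But $j^*$ may equal $1$ (e.g.\ if the starting level is $M-1$), so the piece $P$ you obtain is only $[2K,\tau]$-wide. The un-thickening step then costs a genuine \emph{multiplicative} factor---after removing curves that cross $\He$ through the collars $\overline{\tau P\setminus P}$ and curves that fail to join the thickened intervals $Q^\sharp,R^\flat$, and after subdividing $Q^\dagger$ into level-$(n_1+s)$ subpieces---so the output $J^\dagger$ only satisfies $W_\tau(J^\dagger)\succ K$, not $\geq 2K$. Adjusting the threshold $\mathbf{K}$ cannot repair a multiplicative deficit. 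The paper's remedy is to fix an integer $t\geq 2$, look only at the subsequence $J_t,J_{2t},J_{3t},\ldots$, and take the first $j$ with $J_{jt}$ at the Siegel scale. Since $J_{(j-1)t}$ is still at the Herman scale, the level of $J_{jt}$ lies in $[M,M+tm)$, while its width is $\geq 2^{jt}K\geq 2^t K$. The un-thickening then produces $W_\tau(J^\dagger)\succ 2^t K$ with an absolute implicit constant, and one finally chooses $t$ large enough (absolutely) so that this reads $\geq 2K$. Note also that one of the additive losses in the un-thickening, the paper's $C_2(t)$, depends on $t$; this is harmless because $t$ is absolute and $\mathbf{K}$ may depend on it.
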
 
    
    \begin{proof}
    Let $I \subset \overline{\He}$ be the combinatorial piece such that $I \cap H^\bullet = I^\bullet$. The piece $I$ is also at the Herman scale and $[K,\tau]$-wide in $\overline{\He}$. By inductively applying the Amplification Theorem, we obtain an infinite sequence of combinatorial pieces $J_1, J_2,\ldots$ where each $J_i$ is $[2^i K, \tau]$-wide.
    
    Let $t \geq 2$ be a fixed integer that is to be determined later. By compactness, it is impossible for every piece in $\{J_{it}\}_{i \geq 1}$ to be at the Herman scale. Let $j \geq 1$ be the smallest integer such that $J_{jt}$ is at the Siegel scale. The piece $J:=J_{jt}$ has $\tau$-width $W_\tau(J) \geq 2^{jt} K \geq 2^t K$. Note that the level $n_1$ of $J$ must satisfy 
    \begin{equation}
        \label{ineq:M-n1}
        M \leq n_1 < M+tm.
    \end{equation}
        
    Denote the horizontal sides of $J$ by $P^0:= J \cap H^0$ and $P^\infty := J \cap H^\infty$. For each $\dagger \in \{0,\infty\}$, we denote by $Q^\dagger$ the union of $P^\dagger$ and both of its neighboring combinatorial intervals of level $n_1+1$, and by $R^\dagger$ the union of $(\tau P^\dagger)^c$ and both of its neighboring combinatorial intervals of level $n_1+1$.
    
    \begin{claim1}
        The width of curves in $\mathcal{F}_\tau (J)$ that cross through (intersect both horizontal sides of) any component of $\overline{\tau J \backslash J}$ is at most some absolute constant $C_1>0$. 
    \end{claim1}
    
    \begin{proof}
        Indeed, suppose $A$ is one of the two components of $\overline{\tau J \backslash J}$. As a conformal rectangle, the width of $A$ is equal to $|A|/\mu$. Note that
        \[
        \frac{|A|}{\mu} = \frac{\tau-1}{2}\cdot \frac{l_{n_1}}{\mu} \leq \frac{\tau-1}{2}\cdot \frac{l_{n_1}}{l_{M+1}} \leq \frac{C}{2}(\tau-1),
        \]
    where the first inequality follows from (\ref{ineq:level-modulus}) and the second follows from (\ref{ineq:M-n1}). As there are two possible $A$'s to consider, the claim follows by taking $C_1=C\cdot(\tau-1)$.
    \end{proof}
    
    \begin{claim2}
        The width of curves in $\mathcal{F}_\tau (J)$ that do not restrict to curves joining $Q^\sharp$ and $R^\flat$ for some $\sharp, \flat \in \{0,\infty\}$ is at most some constant $C_2(t)>0$. 
    \end{claim2} 
    
    \begin{proof}
        If a curve in $\mathcal{F}_\tau(J)$ does not have a subcurve joining some $Q^\sharp$ and $R^\flat$, then it must have a subcurve that is proper in $A$ and connects the vertical sides of $A$, where $A$ is one of the four level $n_1+1$ combinatorial pieces of $\overline{\He}$ next to $J$ or $(\tau J)^c$. The width $w_A$ of proper curves in $A$ connecting the vertical sides of $A$ satisfies
    \[
        w_A = \frac{\mu}{l_{n_1+1}} \leq \frac{l_M}{l_{n_1+1}} \leq C^{tm+1},
    \]
    where the first inequality follows from (\ref{ineq:level-modulus}) and the second is from (\ref{ineq:M-n1}). As there are four possible $A$'s to consider, our claim follows from taking $C_2=4C^{tm+1}$.
    \end{proof}
    
    From both claims above, there is some $\dagger \in \{ 0, \infty \}$ such that the width $W\left(Q^\dagger, R^\dagger\right)$ of curves joining $Q^\dagger$ and $R^\dagger$ satisfies
    \[
    W\left(Q^\dagger, R^\dagger\right) \geq \frac{W_\tau(J)-C_1-C_2(t)}{2} \geq 2^{t-1}K - \frac{C_1+C_2(t)}{2}.
    \]
    By replacing $\mathbf{K}$ with a higher constant depending on $t$ if necessary, we have
    \[
    W\left(Q^\dagger, R^\dagger\right) \geq 2^{t-2}K.
    \]
    There is an absolute constant $s \in \N$ such that for any combinatorial subinterval $J^\dagger$ of $Q^\dagger$ of level $n_1+s$, the piece $(\tau J^\dagger)^c$ contains $R^\dagger$. Therefore, there is a level $n_1 + s$ combinatorial subinterval $J^\dagger \subset Q^\dagger$ such that
    \begin{align*}
    W_\tau(J^\dagger) \geq \frac{|J^\dagger|}{|Q^\dagger|} \cdot W\left(Q^\dagger, R^\dagger\right) \succ 2^t K.
    \end{align*}
    Finally, we can pick $t$ to be sufficiently high such that $J^\dagger$ is $[2K,\tau]$-wide.
    \end{proof}

    Suppose for a contradiction that on one of the boundary components, say $H^\infty$, there exists an interval $I^\infty$ of length $\leq l_N$ and $\tau$-width at least $CK$ where $K\geq \mathbf{K}$. Then, $I^\infty$ admits a $[K,\tau]$-wide combinatorial subinterval $I_0^\infty \subset I^\infty$ of level $\geq N$. The two lemmas above imply that there is an increasing sequence of positive integers $\{i_j\}_{j\in \mathbb{N}}$ and $[2^{i_j}K, \tau]$-wide combinatorial intervals $I^{\bullet}_{i_j} \subset H^{\bullet}$ for all $j \in \mathbb{N}$ for some common $\bullet \in \{ 0, \infty \}$. This would contradict Proposition \ref{main-prop} (1) and thus conclude the proof of Theorem \ref{main-theorem-01}.
\end{proof}


\section{Construction of Herman curves} 
\label{sec:construction-of-herman-curves}

Endow the space $\rat_d$ of all degree $d = d_0 + d_\infty -1$ rational maps equipped with the topology of uniform convergence on compact sets. In this section, we will obtain Herman quasicircles in $\HQspace_{d_0, d_\infty,\theta}$ as limits of degenerating Herman rings in $\HRspace_{d_0, d_\infty,\theta}$. Towards the end, we show that such Herman quasicircles can be prescribed with arbitrary combinatorics.

Throughout this section, we will denote by $\D(x,r)$ the Euclidean disk centered at a point $z$ with radius $r$, and by $\mathbb{A}(r,R)$ the round annulus $\{r < |z| < R\}$ of inner and outer radii $r$ and $R$. For brevity, we will also encode the data $(d_0,d_\infty,\beta(\theta))$ with the symbol $\clubsuit$.

\subsection{Precompactness}
\label{ss:precompactness}
Given rational maps $f$ and $g$, we write $f \sim g$ to denote that $f$ and $g$ are conformally conjugate. Note that a M\"obius transformation preserves the space $\HRspace_{d_0,d_\infty,\theta}$ by conjugation if and only if it is a linear map $z \mapsto \lambda z$. One consequence of a priori bounds is the following theorem.

\begin{theorem}
\label{precompactness}
    Denote by $\He_f$ the Herman ring of a rational map $f$ whenever there is a unique one. For any $\mu>0$ and $N \in \N$, the quotient space 
    \[
    \left\{ f \in \rat_{d} \: | \: f \in \HRspace_{d_0,d_\infty,\theta} \text{ where } \beta(\theta)\leq N \text{ and } \modu(\He_f) < \mu \right\}/_\sim
    \]
    is precompact.
\end{theorem}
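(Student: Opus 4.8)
The plan is to produce, for any sequence $f_k \in \HRspace_{d_0,d_\infty,\theta_k}$ with $\beta(\theta_k) \leq N$ and $\modu(\He_{f_k}) < \mu$, a subsequence converging up to conformal conjugacy. First I would use the fact that a M\"obius transformation conjugating maps in $\HRspace_{d_0,d_\infty,\theta}$ must be linear (since it has to fix $0$ and $\infty$) to normalize each $f_k$: I will rescale by $z \mapsto \lambda_k z$ so that the Herman ring $\He_{f_k}$ has its core curve $\gamma_k$ passing through, say, the point $1$, and more precisely so that the linearizing coordinate $\phi_k : \He_{f_k} \to \mathbb{A}(1,e^{2\pi\modu(\He_{f_k})})$ is normalized (e.g. $\phi_k(z_k) = 1$ for a marked point $z_k$ on the core, or normalized by the position of one free critical point on the outer boundary). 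Since $\beta(\theta_k) \leq N$, by compactness of the space of bounded-type rotation numbers with bounded partial quotients we may pass to a subsequence with $\theta_k \to \theta_\infty$, still of bounded type with $\beta(\theta_\infty) \leq N$. Now the goal is to show the normalized $f_k$ form a normal family and that any locally uniform limit is again a degree $d$ rational map with the required structure.

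The heart of the argument is to control the \emph{geometry} of $\Hq_k := \overline{\He_{f_k}}$ uniformly. By Theorem \ref{main-theorem-01} ({\it a priori bounds}), both boundary components $H^0_k$ and $H^\infty_k$ of $\He_{f_k}$ are $K$-quasicircles with $K = K(\clubsuit)$ independent of $k$ and of $\modu(\He_{f_k})$. Since the normalization pins down the position of the core curve, the uniform quasicircle bound together with the normalization forces the $\Hq_k$ to lie in a fixed compact annular region of $\C^*$ bounded away from $0$ and $\infty$; equivalently, there are radii $0 < r_1 < r_2 < \infty$ depending only on $\clubsuit$ and the normalization such that $\Hq_k \subset \mathbb{A}(r_1, r_2)$ and $\{|z| \leq r_1/2\} \cup \{|z|\geq 2 r_2\}$ is disjoint from the postcritical set of $f_k$ outside the superattracting basins. (Here one uses that a $K$-quasicircle through a normalized marked point, together with the uniform conformal modulus bound of the two complementary disks relative to the marked structure, cannot escape to $0$ or $\infty$ or collapse; if $\modu(\He_{f_k})\to 0$ the ring degenerates to a $K$-quasicircle but stays in the same compact region.) Consequently the free critical values of $f_k$, which by Definition \ref{main-definition}\ref{def:4} lie on $\partial\He_{f_k} \subset \overline{\mathbb{A}(r_1,r_2)}$, stay in a fixed compact subset of $\C^*$. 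Then I would invoke normality: a sequence of degree $d$ rational maps all of whose critical values lie in a fixed compact set of $\C^*$ (and which all fix $0$ and $\infty$) is precompact in $\rat_d$ — this is a standard consequence of Montel's theorem applied on $\RS \setminus \{$three points$\}$ after extracting the critical value set, or more directly of the properness of the map $\rat_d \to (\text{configuration of critical values})$ modulo the (now trivial, because of the normalization) automorphism group. Pass to a subsequence $f_k \to f_\infty$ locally uniformly; $f_\infty$ has degree $d$ (the degree cannot drop because $0$ and $\infty$ remain superattracting fixed points of local degrees $d_0, d_\infty$ whose sum is $d+1$, forcing $\deg f_\infty \geq d$, and $\deg$ is lower semicontinuous so $\deg f_\infty = d$).

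Finally I would identify the limit's structure, which is essentially Corollary \ref{main-corollary} / the statement $\HRlimitspace_{d_0,d_\infty,\theta} \subset \HQspace_{d_0,d_\infty,\theta}$ discussed in \S\ref{sec:construction-of-herman-curves}: the normalized conjugacies $\phi_k^{-1} : \mathbb{A}(1, e^{2\pi \modu(\He_{f_k})}) \to \Hq_k$ are uniformly $K(\clubsuit)$-quasiconformal (by a priori bounds plus Corollary \ref{old-bounds}(2) with the modulus dependence now removed), hence a subsequence converges to a $K$-quasiconformal (or quasisymmetric, if the ring collapses) conjugacy between $f_\infty$ and the rotation $R_{\theta_\infty}$ on the limiting round annulus or circle; this shows $f_\infty$ has either a Herman ring or a Herman quasicircle of rotation number $\theta_\infty$ separating $0$ and $\infty$, with all free critical points on it, i.e. $f_\infty \in \HRspace_{d_0,d_\infty,\theta_\infty} \cup \HQspace_{d_0,d_\infty,\theta_\infty}$. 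In all cases the limit has a uniquely determined invariant rotation domain/curve, so the class of $f_\infty$ in the quotient is well-defined, giving precompactness. The main obstacle I anticipate is the second step: converting the uniform quasicircle dilatation bound into a genuine a priori \emph{size} bound on $\Hq_k$ (uniform non-escape to $0$, $\infty$ and non-collapse) under a good normalization — this requires checking that one can normalize using the linearizing coordinate (which is intrinsic, unique up to rotation) rather than an extrinsic M\"obius normalization, and then that a normalized bounded-dilatation quasicircle separating two marked points cannot degenerate in modulus \emph{of position}; the uniform two-sided harmonic-measure comparison established in the proof of Proposition \ref{main-prop} is the right tool, since it controls the geometry of $\Hq_k$ from the combinatorics, which is itself compact (finitely many free critical points with positions in a compact configuration space $\mathcal{C}_{d_0,d_\infty}$).
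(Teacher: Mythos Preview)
Your overall strategy is sound, but it diverges from the paper's proof and has a real gap exactly where you flag it.

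\textbf{How the paper argues.} The paper does not pass through a general ``critical values bounded $\Rightarrow$ precompact in $\rat_d$'' principle. Instead it writes each $f$ explicitly as
\[
f(z)=\lambda\, z^{d_0}\,\frac{(z-z_1)\cdots(z-z_{d_\infty-1})}{(z-p_1)\cdots(z-p_{d_0-1})}
\]
and shows directly that, after the normalization $\max_{z\in H^0}|z|=1$, there is $\varepsilon=\varepsilon(\clubsuit,\mu)>0$ with (i) all $z_i,p_j\in\mathbb{A}(\varepsilon,\varepsilon^{-1})$, (ii) $\dist(\{z_i\},\{p_j\})>\varepsilon$, and (iii) $\varepsilon<|\lambda|<\varepsilon^{-1}$. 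Items (i)--(iii) are precisely the coefficient bounds that make $\{f\}$ precompact in $\rat_d$ with no degree drop. The key geometric input is Lemma~\ref{bounded-shape-lemma}: using the branched cover $f:Y^0_1\to Y^0$ (degree $d_0$, branched only at $0$) and a harmonic-measure argument, one shows that $H^0$ together with all inner bubbles of generation $1$ lies in a round annulus $\mathbb{A}(\varepsilon r,r)$ with $\varepsilon=\varepsilon(\clubsuit)$. This is what pins down $f^{-1}(\overline{\He})$ in a fixed spherical annulus and hence traps the zeros and poles. Item (ii) is then obtained by a separate modulus estimate showing each zero/pole is uniformly far from $\overline{\He}$, and (iii) follows by evaluating at $z=1\in H^0$.

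\textbf{Where your sketch falls short.} The step you yourself single out---turning the uniform quasicircle dilatation bound into a uniform \emph{position} bound for $\Hq_k$---is not a formality, and it is not delivered by Proposition~\ref{main-prop} or by the compactness of $\mathcal{C}_{d_0,d_\infty}$ alone. A $K$-quasicircle separating $0$ and $\infty$ can still be arbitrarily close to $0$ or $\infty$ unless you use something dynamical; the paper's mechanism is exactly the harmonic-measure/covering argument of Lemma~\ref{bounded-shape-lemma}. Second, your appeal to ``properness of $\rat_d\to$ (critical values)'' is not standard in the form you need and does not by itself prevent zero--pole collision (hence degree drop); your Riemann--Hurwitz count only gives $\deg f_\infty\geq (d+1)/2$, not $\geq d$. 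The paper sidesteps this entirely by bounding zeros and poles apart. If you want to salvage your route, you would need to supply an analogue of Lemma~\ref{bounded-shape-lemma} and then argue zero/pole separation directly, at which point you have essentially reproduced the paper's proof.
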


The following lemma will serve as a key ingredient in the proof.

\begin{lemma}[Bounded shape about $0$ and $\infty$]
    \label{bounded-shape-lemma}
    Let $f \in \HRspace_{d_0,d_\infty,\theta}$. The union of the inner (resp. outer) boundary component of the Herman ring of $f$ and all the inner (resp. outer) bubbles of generation $1$ is contained in some round annulus $\mathbb{A}(\varepsilon r,r)$ where $0<\varepsilon<1$ depends only on $\clubsuit$.
\end{lemma}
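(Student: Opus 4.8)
\textbf{Plan for the proof of Lemma \ref{bounded-shape-lemma}.}

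The statement is really a normalization-plus-compactness assertion about the geometry of the superattracting basins, so the plan is to first fix a convenient normalization of the maps in $\HRspace_{d_0,d_\infty,\theta}$ and then control the relevant Euclidean scales using a priori bounds together with the covering structure from Proposition \ref{bubble-str}. First I would recall that M\"obius transformations preserving $\HRspace_{d_0,d_\infty,\theta}$ are exactly the scalings $z \mapsto \lambda z$, so after conjugating by such a scaling we may assume the outer boundary component $H^\infty$ of the Herman ring has Euclidean diameter $1$ (or any fixed normalization pinning down one scale). All estimates will then be asserted up to constants depending only on $\clubsuit = (d_0,d_\infty,\beta(\theta))$.

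The core of the argument is to bound, from above and below, the Euclidean size of $H^\bullet$ and of each generation-$1$ bubble $A_i^\bullet$ attached to it, for $\bullet \in \{0,\infty\}$. For this I would combine three ingredients. First, by Theorem \ref{main-theorem-01}, each boundary component $H^\bullet$ is a $K(\clubsuit)$-quasicircle; in particular it has bounded turning, so its ``thickness'' is comparable to its diameter and it cannot be geometrically degenerate. Second, Proposition \ref{bubble-str} tells us that $f^{-1}(\Hq)$ near $H^\bullet$ consists of $H^\bullet$ together with the $d_\bullet - 1$ bubbles $A_i^\bullet$, each attached at a single critical point and each mapped univalently onto $\Hq$ by $f$; since $\Hq = \overline{\He}$ separates $0$ from $\infty$ and $f$ has bounded degree, the Koebe distortion theorem applied to the univalent branches of $f^{-1}$ gives that each $A_i^\bullet$ has diameter comparable (with constant depending on the quasicircle dilatation, hence on $\clubsuit$) to the diameter of $H^\bullet$, and that it is attached within a bounded-ratio collar of $H^\bullet$. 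Third, to pin the location and scale of $H^0$ relative to $H^\infty$: the Herman ring $\He$ is, by a priori bounds and Corollary \ref{old-bounds} (applied after we already know $\modu(\He)$ can be taken small — but more robustly, by the a priori bounds themselves), quasiconformally a standard round annulus with uniform dilatation, so the two boundary components $H^0$ and $H^\infty$ are uniformly separated and uniformly comparable in size; equivalently the ``harmonic'' scale of $H^0$ about $0$ is comparable to that of $H^\infty$ about $\infty$ after the normalization. Putting these together: there is $r \asymp_\clubsuit 1$ and $\varepsilon = \varepsilon(\clubsuit) \in (0,1)$ such that $H^\bullet \cup \bigcup_i A_i^\bullet \subset \mathbb{A}(\varepsilon r, r)$ — the outer radius $r$ coming from the bounded diameter of $H^\bullet$ and its collar of bubbles, and the inner radius $\varepsilon r$ coming from the lower bound on the ``inner'' extent, i.e.\ that $0$ (resp.\ $\infty$) is not too close to $H^\bullet$, which follows because the immediate basin of $\bullet$ has modulus bounded below: indeed $f$ near $\bullet$ is $\bullet \mapsto \text{const}\cdot z^{d_\bullet} + \ldots$ and the first-return geometry, controlled via Proposition \ref{bubble-structure}'s welding picture and Zhang's a priori bounds, keeps $\bullet$ at definite relative distance.

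The step I expect to be the main obstacle is the uniform lower bound — the ``$\varepsilon r$'' part — namely showing that $0$ (and symmetrically $\infty$) stays a definite Euclidean distance away from $H^0$ after normalization. The quasicircle bounds alone control the shape of $H^\bullet$ and of the bubbles hanging off it toward $\Hq$, but not a priori the position of the superattracting fixed point buried inside $Y^0$. I would handle this by a compactness/contradiction argument: if $0$ could approach $H^0$ arbitrarily closely (relative to $\operatorname{diam} H^0$), pass to a limit using the precompactness furnished by Theorem \ref{precompactness} (equivalently, by the normal-families argument that a priori bounds make available) and obtain a limiting map in which the Herman ring (or its degenerate Herman quasicircle replacement in $\HQspace_{d_0,d_\infty,\theta}$) would touch the superattracting fixed point $0$, contradicting the fact that $\Hq$ separates $0$ and $\infty$ and that $0$ has a genuine (nonempty interior) immediate basin of definite size — the latter because $f$ has local degree exactly $d_\bullet \geq 2$ at $\bullet$ and bounded global degree, so its Böttcher coordinate is defined on a disk of size bounded below by Koebe. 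Once the lower bound is in place, the inclusion in $\mathbb{A}(\varepsilon r, r)$ with $\varepsilon, r$ depending only on $\clubsuit$ is immediate.
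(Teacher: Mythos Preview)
Your plan has a genuine circularity at the key step. For the lower bound --- keeping $0$ a definite distance from $H^0$ --- you propose to pass to a limit using Theorem \ref{precompactness}. But in the paper Theorem \ref{precompactness} is proved \emph{after} Lemma \ref{bounded-shape-lemma} and depends on it through Lemma \ref{normalized-conjugacy}; so invoking precompactness here would assume what you are trying to prove. The same circularity appears earlier when you assert that the Herman ring is ``quasiconformally a standard round annulus with uniform dilatation'': that is exactly the content of Lemma \ref{normalized-conjugacy}, and its proof needs Lemma \ref{bounded-shape-lemma} to extend the boundary conjugacy across $Y^0$ and $Y^\infty$ with dilatation depending only on $\clubsuit$. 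Theorem \ref{main-theorem-01} alone tells you only that each boundary component is a $K(\clubsuit)$-quasicircle; it says nothing about where $0$ sits inside $Y^0$. Your fallback via the B\"ottcher coordinate does not help either, since a lower bound on the radius of the B\"ottcher disk is a bound relative to some external scale, and that comparison is precisely what is at issue.

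The paper avoids compactness entirely and argues directly with harmonic measure. Normalize so that $H^0_1:=\partial Y^0_1$ has farthest point from $0$ at distance $1$, where $Y^0_1$ is the component of $f^{-1}(Y^0)$ containing $0$. The map $f:(Y^0_1,0)\to (Y^0,0)$ is a degree-$d_0$ branched covering branched only at $0$, so harmonic measure about $0$ scales by $1/d_0$ under pullback along $f|_{H^0}$. Take $\zeta\in H^0$ closest to $0$ and let $I_\zeta$ be the level-$2$ combinatorial interval centred at $\zeta$; its pullback $I'_\zeta=(f|_{H^0})^{-1}(I_\zeta)$ is disjoint from $I_\zeta$ by rotation combinatorics. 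Since $Y^0_1\subset Y^0$ and $I'_\zeta\subset H^0\subset H^0_1$, monotonicity gives $\hm_{Y^0}(I'_\zeta;0)\geq \hm_{Y^0_1}(I'_\zeta;0)=\kappa/d_0$ where $\kappa=\hm_{Y^0}(I_\zeta;0)$, whence $\kappa<(1+1/d_0)^{-1}$. This universal bound on $\kappa$, together with the $K(\clubsuit)$-quasisymmetry of the linearization on $H^0$ (which gives the two halves of $I_\zeta$ definite Euclidean diameter), forces $|\zeta|=\dist(0,H^0)$ to be bounded below. The bubbles are handled by the same mechanism: each inner bubble boundary has harmonic measure exactly $1/d_0$ in $Y^0_1$ about $0$, which again keeps it a definite distance from $0$.
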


    \begin{proof}
        We will prove the lemma for the inner boundary and inner bubbles. The treatment for the outer case is analogous. Denote by $Y^0$ the connected component of the complement of the Herman ring containing $0$, and by $Y^0_1$ the component of $f^{-1}(Y^0)$ that is contained in $Y^0$. Let $H^0:= \partial Y^0$ and $H^0_1:=\partial Y^0_1$. By conjugating with a linear map, we can assume that the maximum Euclidean distance between $0$ and a point on $H^0_1$ is $1$. It is sufficient to find a lower bound $\varepsilon$ on $\dist(0,H^0_1)$.
        
        Denote by $\zeta$ a point on $H^0$ that is closest to $0$, by $I_\zeta$ the level $2$ combinatorial interval on $H^0$ centered at $\zeta$, and by $I'_\zeta:= (f|_{H^0})^{-1}(I_\zeta)$ the lift of $I_\zeta$ inside $H^0$. Let $\kappa$ be the harmonic measure of $I_\zeta$ in $Y^0$ about $0$. As $f: Y^0_1 \to Y^0$ is a degree $d_0$ covering map that is branched only at $0$, the harmonic measure of $I'_\zeta$ on $Y^0_{1}$ about $0$ is $\kappa/d_0$. Since $Y^0_{1} \subset Y^0$ and $H^0 \subset H^0_1$, then the harmonic measure of $I'_\zeta$ in $Y^0$ about $0$ is at least $\kappa/d_0$. Note that since $l_2 < \max\{\theta,1-\theta\}$, the intervals $I'_\zeta$ and $I_\zeta$ must be disjoint. As such, $\kappa$ must be bounded above by
        \begin{equation}
        \label{eqn:harm-meas-01}
            \kappa < \left( 1+ \frac{1}{d_0}\right)^{-1} < 1.
        \end{equation}
        
        By assumption, the Euclidean diameter of $H^0$ is greater than $1$. Since the conjugacy $\phi: H^0 \to \T$ between $f|_{H^0}$ and $R_\theta|_\T$ is a $K(\clubsuit)$-quasisymmetry, every connected component of $I_\zeta \backslash \{\zeta\}$ has diameter greater than some $L_1=L_1(\clubsuit)>0$. As $H^0$ is a quasicircle, there is also some small $L_2=L_2(\clubsuit)>0$ such that $H^0 \backslash I_\zeta$ is disjoint from the disk $\D(\zeta,L_2)$. Together with (\ref{eqn:harm-meas-01}), this implies that $\zeta$ cannot be arbitrarily close to $0$, that is, $\dist(0,H^0) > \varepsilon'$ for some $\varepsilon'=\varepsilon'(\clubsuit)>0$.

        The outer boundary of every inner bubble of generation $1$ is contained in $H^0_1$, and its harmonic measure in $Y^0_1$ about $0$ is simply the constant $1/d_0$. Using a similar argument, we conclude that every inner bubble of generation $1$ is of distance at least some constant $\varepsilon(\clubsuit)>0$ away from $0$.
    \end{proof}

\begin{lemma}
\label{normalized-conjugacy}
    Let $f\in \HRspace_{d_0,d_\infty,\theta}$. There is a $K$-quasiconformal map $\phi: \RS \to \RS$ such that the following properties hold.
    \begin{enumerate}[label=\textnormal{(\arabic*)}]
        \item $\phi$ maps the Herman ring $\He$ of $f$ to some annulus $\mathbb{A} = \mathbb{A}(r, re^{2\pi\modu(\He)})$;
        \item $\phi$ is conformal in $\He$;
        \item $\phi|_{\overline{\He}}$ is a conjugacy between $f|_{\overline{\He}}$ and rigid rotation $R_\theta|_{\overline{\mathbb{A}}}$; 
        \item $\phi$ fixes $0$ and $\infty$;
        \item $K$ depends only on $\clubsuit$.
    \end{enumerate}
\end{lemma}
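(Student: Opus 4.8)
The plan is to build $\phi$ in two stages: first a conformal model on the Herman ring that straightens $f|_{\overline{\He}}$ to a rigid rotation, then a uniformly quasiconformal interpolation on the two complementary disks $Y^0$ and $Y^\infty$, all the while keeping track of normalization at $0$ and $\infty$. The main point to exploit is that Theorem \ref{main-theorem-01} (a priori bounds) gives a uniform dilatation $K_0 = K_0(\clubsuit)$ for the boundary quasicircles $H^0 = \partial Y^0$ and $H^\infty = \partial Y^\infty$, and Corollary \ref{old-bounds}/Theorem \ref{main-theorem-01} together give a uniformly quasisymmetric conjugacy on each boundary component with dilatation depending only on $\clubsuit$.

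First I would fix the conformal structure on $\He$: let $\varphi_{\He}: \He \to \mathbb{A}(1, e^{2\pi\modu(\He)})$ be the Riemann uniformization, which by construction conjugates $f|_{\He}$ to $R_\theta$ up to post-composing with a rotation, and which by boundary regularity extends to a homeomorphism $\overline{\He} \to \overline{\mathbb{A}(1, e^{2\pi\modu(\He)})}$ conjugating $f|_{\overline{\He}}$ to $R_\theta|_{\overline{\mathbb{A}}}$. Next, on $\overline{Y^0}$ I would take the quasisymmetric conjugacy $h^0: H^0 \to \{|z| = 1\}$ between $f|_{H^0}$ and $R_\theta$ (dilatation $O_\clubsuit(1)$ by the theorem, matching $\varphi_{\He}|_{H^0}$ up to rotation), extend it quasiconformally into $Y^0$ by the Beurling--Ahlfors/Douady--Earle extension to a map $Y^0 \to \D$ with dilatation depending only on $\clubsuit$, and likewise on $\overline{Y^\infty}$ to a map into $\{|z|>e^{2\pi\modu(\He)}\}$. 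Gluing these three pieces along $H^0$ and $H^\infty$ (where they agree as boundary maps, after rotating the two disk extensions to be compatible with $\varphi_{\He}$), one obtains a quasiconformal $\tilde\phi: \RS \to \RS$ carrying $\He$ to the round annulus $\mathbb{A}(1, e^{2\pi\modu(\He)})$, conformal on $\He$, conjugating $f|_{\overline{\He}}$ to $R_\theta$, with $K(\tilde\phi) = K(\clubsuit)$; this is essentially (1)--(3), (5).

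It remains to arrange the normalization (4): $\tilde\phi$ sends $0$ to some point $\tilde\phi(0) \in \D$ and $\infty$ to $\tilde\phi(\infty)$ in the outer complementary disk. Post-composing with a Möbius map would move these to $0$ and $\infty$ but would also move the round annulus; instead I would pre-compose, or rather correct within the disks: replace the Douady--Earle extension $Y^0 \to \D$ by its composition with a Möbius automorphism of $\D$ sending $\tilde\phi(0)$ to $0$ (and similarly for $Y^\infty$), which changes the dilatation only by a factor depending on how close $\tilde\phi(0)$ is to $\partial\D$. This is exactly where Lemma \ref{bounded-shape-lemma} is needed: after scaling so that the round annulus is $\mathbb{A}(r, re^{2\pi\modu(\He)})$ with an appropriate choice of $r$, the bounded-shape estimate forces $\tilde\phi(0)$ to lie in a compact subset of $\D$ (say $|\tilde\phi(0)| \le 1-\varepsilon(\clubsuit)$) and $\tilde\phi(\infty)$ in a compact subset of the exterior disk, so the correcting Möbius automorphisms have uniformly bounded distortion. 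The resulting $\phi$ has all five properties.

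The main obstacle I expect is the bookkeeping around the normalization and the choice of the radius $r$: Lemma \ref{bounded-shape-lemma} controls the Euclidean shape of the inner/outer boundaries \emph{in the $z$-coordinate of $f$}, not in the image of an arbitrary conjugacy, so one must set up the comparison carefully — either conjugate $f$ first by a linear map so that Lemma \ref{bounded-shape-lemma} applies with a fixed scale, then check that the conformal welding produced by $\tilde\phi$ keeps $0$ and $\infty$ in controlled position relative to the round annulus. A secondary technical point is verifying that the three boundary matchings (the uniformizing map of $\He$ versus the two disk extensions) can be made to agree, which only requires rotating each disk extension since all three boundary maps are conjugacies to $R_\theta$ and hence agree up to a rigid rotation of $\T$; once the rotations are fixed the glued map is genuinely quasiconformal across $H^0$ and $H^\infty$ because these are $K_0(\clubsuit)$-quasicircles and removable for quasiconformal maps.
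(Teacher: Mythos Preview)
Your proposal follows the same architecture as the paper's (two-sentence) proof and invokes the same two ingredients, Theorem \ref{main-theorem-01} and Lemma \ref{bounded-shape-lemma}. There is, however, a gap in the normalization step. Composing the Douady--Earle extension $\tilde\phi|_{Y^0}: Y^0 \to \D$ with a M\"obius automorphism $M$ of $\D$ does not change the dilatation at all (M\"obius maps are conformal), so your phrase ``changes the dilatation only by a factor\ldots'' already signals a confusion; more importantly, post-composition replaces the boundary map by $M\circ\varphi_{\He}|_{H^0}$, which no longer matches $\varphi_{\He}$ across $H^0$ and so breaks the gluing, while pre-composition (after uniformizing $Y^0$ by $\D$) leaves $\tilde\phi$ unchanged by the conformal naturality of the Douady--Earle extension. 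A single M\"obius correction cannot simultaneously prescribe the boundary values and the image of an interior point.

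What the paper's appeal to Lemma \ref{bounded-shape-lemma} really encodes is an annular interpolation. Since $H^0 \subset \mathbb{A}(\varepsilon r, r)$ with $\varepsilon=\varepsilon(\clubsuit)$, one can choose the target so that $\varphi_{\He}(H^0)=\{|z|=r\}$, set $\phi$ equal to the identity on the definite subdisk $\D(0,\varepsilon r/2)\subset Y^0$ (so $\phi(0)=0$ automatically), and then interpolate quasiconformally across the remaining annulus $Y^0\setminus\overline{\D(0,\varepsilon r/2)}$ between the identity on the inner circle and the $K(\clubsuit)$-quasisymmetric map $\varphi_{\He}|_{H^0}$ on the outer boundary. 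The role of the bounded-shape lemma is precisely that this leftover annulus has modulus pinned between two $\clubsuit$-constants, which is what makes the interpolation uniformly quasiconformal; the same construction on the $Y^\infty$ side fixes $\infty$.
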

\begin{proof}
    By Theorem \ref{main-theorem-01}, it is immediate that there is a map $\phi: \overline{\He} \to \overline{\mathbb{A}}$ satisfying (1)-(3) that restricts to a $K'(\clubsuit)$-quasisymmetric map from $\He$ to $\partial A$. By Lemma \ref{bounded-shape-lemma}, the control of $\partial\He$ relative to $0$ and $\infty$ allows us to extend $\phi$ to a global quasiconformal map satisfying (4) and (5).
\end{proof}

\begin{proof}[Proof of Theorem \ref{precompactness}]
    Let $f\in \HRspace_{d_0,d_\infty,\theta}$ be a rational map such that its Herman ring $\He$ has modulus $\modu(\He) < \mu$. Denote by $H^0$ and $H^\infty $ the inner and outer boundary components of $\He$. By conjugating $f$ with a linear map, assume that the maximum Euclidean distance between $0$ and a point in $H^0$ is $1$.
    
    From Definition \ref{main-definition}, the rational map $f$ must be of the form
    \[
    f(z) = \lambda z^{d_0} \frac{(z-z_1) \ldots (z-z_{d_\infty -1})}{(z-p_1)\ldots(z-p_{d_0 -1})},
    \]
    where $\mathcal{Z} := \{z_1, \ldots, z_{d_\infty -1} \}$ and $\mathcal{P} := \{p_1, \ldots, p_{d_0 -1} \}$ are the sets of zeros and poles of $f$ respectively. To prove precompactness, it is sufficient to show that there exists some $\varepsilon=\varepsilon(\clubsuit,\mu)>0$ such that 
    \begin{enumerate}[label=\textnormal{(\roman*)}]
        \item $\mathcal{Z} \cup \mathcal{P} \subset \mathbb{A}(\varepsilon ,\varepsilon^{-1})$,
        \item $\dist(\mathcal{Z},\mathcal{P}) > \varepsilon$, and
        \item $\varepsilon < |\lambda| < \varepsilon^{-1}$.
    \end{enumerate}
    
    From our choice of normalization, the outer boundary $H^\infty$ must contain some point $w$ such that $|w| \leq e^{2\pi \mu}$. Indeed, if otherwise, $\He$ would contain the annulus $\{1\leq |z|\leq e^{2\pi \mu}\}$ which would contradict the assumption that $\modu(\He) < \mu$. As a consequence of Lemma \ref{bounded-shape-lemma}, there is some $\varepsilon_1 = \varepsilon_1(\clubsuit,\mu)>0$ such that
    \begin{equation}
    \label{eqn:bdd-sph-dist}
    f^{-1}(\overline{\He}) \subset \mathbb{A} (\varepsilon_1,\varepsilon_1^{-1}).
    \end{equation}
    Since the zeros and poles are enclosed by bubbles of generation $1$, we obtain (i).
    
    Next, (ii) follows directly from the claim below. 
    
    \begin{claim}
        There is some $\varepsilon_2=\varepsilon_2(\clubsuit,\mu)>0$ such that $\dist\left(\overline{\He}, \mathcal{Z} \cup \mathcal{P}\right) > \varepsilon_2$.
    \end{claim}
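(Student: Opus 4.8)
The plan is to prove the estimate for a zero $z_i\in\mathcal{Z}$; the case of a pole is handled symmetrically, swapping the roles of $0$ and $\infty$, of $Y^0$ and $Y^\infty$, and of outer and inner generation-one bubbles. First I would recall from the covering structure of $f$ (Proposition \ref{bubble-str} together with the degree bookkeeping in its proof) that $z_i$ lies in the Jordan disk $D_i$ bounded by the inner boundary circle of some generation-one outer bubble $A^\infty_j$, that $f$ restricts to a conformal isomorphism $f|_{D_i}\colon D_i\to Y^0$ with $f(z_i)=0$, and that this extends to a conformal isomorphism of $\operatorname{int}\!\big(\overline{D_i}\cup A^\infty_j\big)$ onto $\RS\setminus\overline{Y^\infty}$. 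Since $D_i$ is a connected component of $\RS\setminus f^{-1}(\overline\He)$ and $\overline\He\subseteq f^{-1}(\overline\He)$ is disjoint from $D_i$, every segment from $z_i$ to a point of $\overline\He$ must cross $\partial D_i$, so $\dist(z_i,\overline\He)\ge\dist(z_i,\partial D_i)$, and it is enough to bound $\dist(z_i,\partial D_i)$ from below.

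Next I would reduce this to a derivative estimate. Writing $\psi:=(f|_{D_i})^{-1}\colon Y^0\to D_i$, so $\psi(0)=z_i$, I would use the fact — already contained in the proof of Lemma \ref{bounded-shape-lemma}, via Theorem \ref{main-theorem-01} and the normalization $\max_{H^0}|z|=1$ — that $\D(0,\varepsilon')\subseteq Y^0$ for some $\varepsilon'=\varepsilon'(\clubsuit)>0$. The Koebe one-quarter theorem applied to the univalent map $\psi$ on $\D(0,\varepsilon')$ then gives $\D\!\big(z_i,\tfrac14\varepsilon'|\psi'(0)|\big)\subseteq\psi(\D(0,\varepsilon'))\subseteq D_i$, hence
\[
\dist(z_i,\partial D_i)\ \ge\ \tfrac14\,\varepsilon'\,|\psi'(0)|.
\]
So the Claim will follow once one shows $|\psi'(0)|\ge c$ for some $c=c(\clubsuit,\mu)>0$; equivalently, that the bubble $\overline{D_i}\cup A^\infty_j$ is not conformally negligible as seen from $z_i$.

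This last lower bound is where I expect the real work to be, and I would establish it by a normal-families argument that uses only the a priori bounds already proved (invoking precompactness here would be circular). Suppose otherwise: there is a sequence $f_n\in\HRspace_{d_0,d_\infty,\theta_n}$ with $\beta(\theta_n)\le N$, $\modu(\He_n)<\mu$, normalized as above, and zeros $z^{(n)}$ of $f_n$ with $|\psi_n'(0)|\to 0$, so that $\dist(z^{(n)},\overline{\He_n})\to 0$. By part (i) all zeros and poles stay in the fixed annulus $\mathbb A(\varepsilon_1,\varepsilon_1^{-1})$; by Theorem \ref{main-theorem-01} and Lemma \ref{bounded-shape-lemma} the curves $H^0_n,H^\infty_n$ are uniform quasicircles of bounded position and the conjugacies of $f_n|_{\overline{\He_n}}$ to $R_{\theta_n}$ are uniformly quasiconformal. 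After passing to a subsequence, all zeros and poles converge, $\theta_n\to\theta_\infty$ (still of bounded type, since the partial quotients are $\le N$, so $\theta_\infty$ is irrational with $\beta(\theta_\infty)\le N$), $H^0_n\to\hat H^0$, $H^\infty_n\to\hat H^\infty$, and $\overline{\He_n}\to\hat K$ in the Hausdorff metric, where $\hat K$ is a continuum contained in a fixed annulus bounded away from $0$ and $\infty$. If no zero collides with a pole in the limit, then $f_n\to f_\infty$ locally uniformly off the finitely many limiting poles, $f_\infty$ has degree $d$, and passing to the limit in the dynamics gives $f_\infty(\hat K)=\hat K$ with $f_\infty|_{\hat K}$ quasisymmetrically conjugate to $R_{\theta_\infty}$; in particular $f_\infty$ has no zero or pole on $\hat K$. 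But $z^{(n)}\to\hat\zeta\in\hat K$ and $f_\infty(\hat\zeta)=\lim_n f_n(z^{(n)})=0$, a contradiction. In the remaining case $\hat\zeta$ is a common limit of some zeros and of some poles of $f_n$; then the outer (resp.\ inner) bubbles carrying those zeros (resp.\ poles) all shrink, and their roots — critical points of $f_n$ lying on $H^\infty_n$ (resp.\ $H^0_n$) — all converge to $\hat\zeta\in\hat H^0\cap\hat H^\infty$, and comparing the number of critical points of $f_n$ accumulating at $\hat\zeta$ with the local degree forced on $f_\infty$ there once the cancellation is accounted for again yields a contradiction. The careful bookkeeping of zeros, poles and critical multiplicities near $\hat\zeta$ in this degenerate case is the main obstacle to making the argument rigorous.

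Finally, granting the Claim with some $\varepsilon_2=\varepsilon_2(\clubsuit,\mu)>0$, assertion (ii) is immediate: every zero lies in $Y^\infty$ and every pole in $Y^0$, and $\overline\He$ separates these two regions, so any segment joining a zero to a pole meets $\overline\He$ and therefore $|z_i-p_k|\ge\dist(z_i,\overline\He)>\varepsilon_2$ for all $i,k$.
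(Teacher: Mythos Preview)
Your reduction via Koebe to a lower bound on $|\psi'(0)|$ is fine, but the compactness argument you propose to obtain that bound has a genuine gap, and it is precisely the gap you yourself flag. The case where a zero and a pole of $f_n$ collide at $\hat\zeta$ is not a side issue: it is exactly the phenomenon the Claim is meant to exclude, and your sketch there (``comparing the number of critical points \ldots once the cancellation is accounted for'') is not a proof. When zeros and poles collide you lose local uniform convergence of $f_n$ near $\hat\zeta$, the limiting degree can drop, and there is no clean local model to read off a contradiction from multiplicity bookkeeping alone. Note also a circularity risk in your ``good'' case: to get $f_n\to f_\infty$ in $\rat_d$ you implicitly need $|\lambda_n|$ bounded above and below, which in the paper is item (iii) and is deduced \emph{from} the Claim.

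The paper bypasses compactness entirely with a direct, constructive estimate. For a pole $p$ enclosed by an inner bubble $B$ rooted at a critical point $c\in H^0$, it uses the global $K(\clubsuit)$-quasiconformal conjugacy $\phi$ of Lemma~\ref{normalized-conjugacy} (normalized so that $\phi(H^0)=\T$ and $\phi(f(c))=1$) and pulls back under $\phi\circ f$ a fixed model pair $(A',D')$ in the target: $\gamma=[0,1]$, $D'=$ closure of the left half-plane minus $\D$, and $A'=\RS\setminus(D'\cup\gamma)$, an annulus of universal modulus $\kappa$. The lift $A$ of $A'$ landing in $B$ separates $p$ from $\overline{\He}$ and has $\modu(A)\ge\kappa/K$. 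A harmonic-measure argument (the image $f(D)\cap H^0$ has harmonic measure $\ge\delta(\clubsuit)$ in $Y^0$, hence $D\cap H^0_1$ has harmonic measure $\ge\delta/d_0$ in $Y^0_1$) combined with the containment $f^{-1}(\overline{\He})\subset\mathbb A(\varepsilon_1,\varepsilon_1^{-1})$ gives a definite diameter for $D$, and then the Teichm\"uller modulus--distance estimate turns $\modu(A)\ge\kappa/K$ into a uniform lower bound on the Euclidean distance between the two boundary components of $A$, hence on $\dist(p,\overline{\He})$. This is quantitative and avoids any limit-passing; if you want to salvage your approach, the missing ingredient is exactly a direct geometric lower bound on the size of $D_i$ of this kind, not a normal-families argument.
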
 
    
    \begin{proof}
        Let us pick a pole $p \in \mathcal{P}$. The treatment for zeros is analogous. Recall the notation $H^0_1$, $Y^0$ and $Y^0_1$ used in the proof of Lemma \ref{bounded-shape-lemma}. Let $c \in H^0$ be the critical point that is the root of the inner bubble $B$ of generation $1$ that encloses $p$. Let $\phi$ be the $K(\clubsuit)$-quasiconformal map from Lemma \ref{normalized-conjugacy}. We can normalize $\phi$ such that it maps the inner boundary $H^0$ to the unit circle and the critical value $f(c)$ to $1$.
        
        Let $\gamma$ be the straight segment $[0,1]$ and let $D'$ be the closure of the left half plane minus $\D$. By construction, the annulus $A':= \RS \backslash (D' \cup \gamma)$ has modulus equal to some universal constant $\kappa>0$. Let $A$ (resp. $D$) be the unique lift of $A'$ (resp. $D'$) under $\phi \circ f$ that intersects the bubble $B$. See Figure \ref{fig:last-claim}.
        
        \begin{figure}
        \centering
        \begin{tikzpicture}
    \node[anchor=south west,inner sep=0] (image) at (0,0) {\includegraphics[width=1\linewidth]{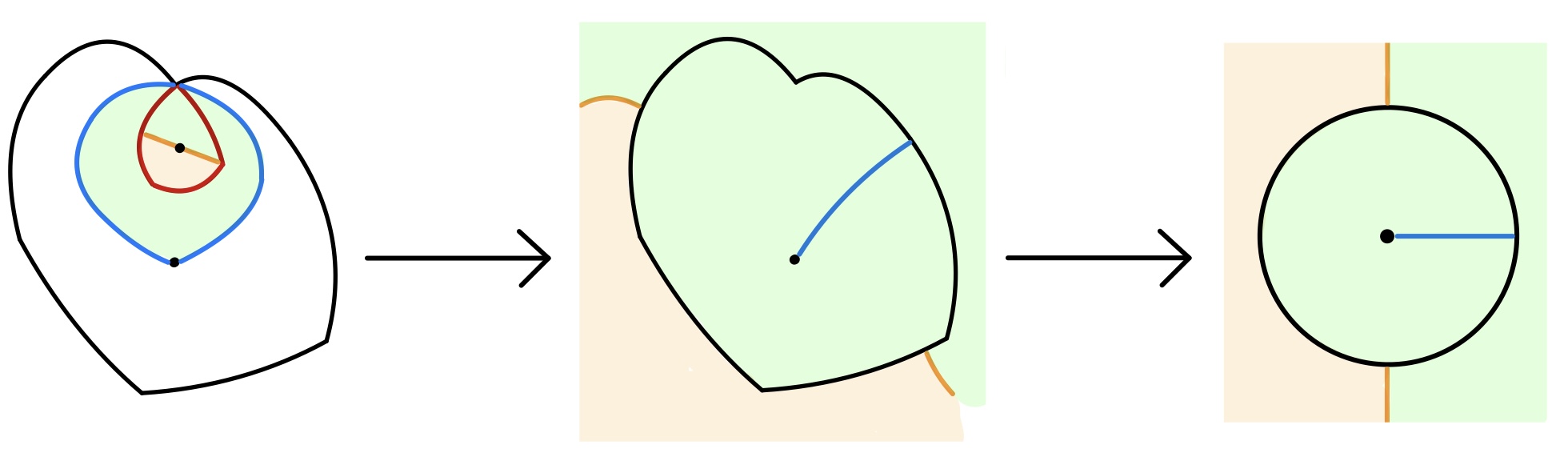}};
    \begin{scope}[
        x={(image.south east)},
        y={(image.north west)}
    ]
        \node [black, font=\bfseries] at (0.03,0.89) {\footnotesize{$H^0$}};
        \node [yellow!30!red, font=\bfseries] at (0.11,0.635) {\footnotesize{$D$}};
        \node [green!60!black, font=\bfseries] at (0.075,0.60) {\footnotesize{$A$}};
        \node [blue!70!white, font=\bfseries] at (0.12,0.87) {\footnotesize{$c$}};
        \node [black, font=\bfseries] at (0.11,0.715) {\footnotesize{$p$}};
        \node [black, font=\bfseries] at (0.11,0.39) {\footnotesize{$0$}};
        
        \node [black, font=\bfseries] at (0.29,0.37) {$f$};
        
        \node [black, font=\bfseries] at (0.42,0.89) {\footnotesize{$H^0$}};
        \node [yellow!30!red, font=\bfseries] at (0.41,0.16) {\footnotesize{$f(D)$}};
        \node [green!60!black, font=\bfseries] at (0.48,0.70) {\footnotesize{$f(A)$}};
        \node [black, font=\bfseries] at (0.48,0.39) {\footnotesize{$0$}};
        \node [blue!70!white, font=\bfseries] at (0.61,0.73) {\footnotesize{$f(c)$}};
        
        \node [black, font=\bfseries] at (0.70,0.37) {$\phi$};
        
        \node [yellow!30!red, font=\bfseries] at (0.82,0.18) {\footnotesize{$D'$}};
        \node [green!60!black, font=\bfseries] at (0.90,0.62) {\footnotesize{$A'$}};
        \node [blue!70!white, font=\bfseries] at (0.93,0.44) {\footnotesize{$\gamma$}};
        \node [blue!70!white, font=\bfseries] at (0.98,0.48) {\footnotesize{$1$}};
        \node [black, font=\bfseries] at (0.88,0.43) {\footnotesize{$0$}};
    \end{scope}
\end{tikzpicture}
        \caption{Construction of the annulus $A$ surrounding $D$.}
        \label{fig:last-claim}
\end{figure}
        
        Since $\phi$ maps $\left(Y^0,0\right)$ to $(\D,0)$, the harmonic measure of $f(D) \cap H^0$ in $Y^0$ about $0$ is at least some $\delta(\clubsuit)>0$. Therefore, the harmonic measure of $D \cap H_1^0$ in $Y^0_1$ about $0$ is at least $\delta/d_0$. Combined with (\ref{eqn:bdd-sph-dist}), the diameter of $D$ must be bounded above by some $\delta'(\clubsuit,\mu)>0$. Since $\modu(A) \geq \kappa/K$, we can apply Teichm\"uller estimates (cf. \cite[\S3]{A06}) and conclude that the distance between the two boundary components of $A$ is at least some constant $\varepsilon_2(\clubsuit,\mu)>0$. Finally, as $A$ separates the pole $p$ from $\overline{\He}$, $\dist\left(\overline{\He},p\right) > \varepsilon_2$.
    \end{proof}
    
    The claim and (\ref{eqn:bdd-sph-dist}) imply that every $w \in \mathcal{Z}\cup \mathcal{P}$ satisfies $\varepsilon_2 \leq |1-w| \leq 1+\varepsilon_1^{-1}$. Moreover, as $f(1)$ lies on the inner boundary $H^0$, then $\varepsilon_1 \leq |f(1)| \leq \varepsilon^{-1}_1$. These two observations imply (iii), and we are done.
\end{proof}

\begin{remark}
\label{generalization}
    With similar proof, we can show the compactness of the moduli space of rational maps in $\mathcal{X}_{d_0,d_\infty,\theta}$ as well as the moduli space of degree $d$ polynomials having a bounded type Siegel disk whose boundary contains all free critical points.
\end{remark}

\subsection{Degenerating Herman rings} 
\label{ss:degenerating-herman-rings}
Consider the limit space 
\[
\HRlimitspace_{d_0,d_\infty,\theta} := \overline{\HRspace_{d_0,d_\infty,\theta}} \backslash \HRspace_{d_0,d_\infty,\theta} \: \subset \: \rat_{d_0+d_\infty-1}.
\]
Theorem \ref{precompactness} implies that $\HRlimitspace_{d_0,d_\infty,\theta}/_\sim$ is compact. Another consequence of a priori bounds is that we are finally able to establish a formal relation between the two spaces $\HRspace_{d_0,d_\infty,\theta}$ and $\HQspace_{d_0,d_\infty,\theta}$.

\begin{corollary}
\label{limiting}
     $\HRlimitspace_{d_0,d_\infty,\theta}$ is contained in $\HQspace_{d_0,d_\infty,\theta}$.
\end{corollary}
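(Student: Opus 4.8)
The plan is to take any $f \in \HRlimitspace_{d_0,d_\infty,\theta}$, realize it as a locally uniform limit $f = \lim_{k} f_k$ with $f_k \in \HRspace_{d_0,d_\infty,\theta}$, and show that the Herman rings of the $f_k$ must degenerate so that the limit carries a Herman quasicircle. First I would observe that $f \notin \HRspace_{d_0,d_\infty,\theta}$, so by Theorem \ref{precompactness} (precompactness of the moduli space with modulus bounded below) the moduli $\modu(\He_{f_k})$ cannot stay bounded away from $0$; passing to a subsequence, $\modu(\He_{f_k}) \to 0$. Next, using Lemma \ref{normalized-conjugacy}, I would conjugate each $f_k$ by a $K(\clubsuit)$-quasiconformal map $\phi_k$ fixing $0$ and $\infty$ so that $\He_{f_k}$ becomes a round annulus $\mathbb{A}(r_k, r_k e^{2\pi\modu(\He_{f_k})})$ and $\phi_k$ is conformal on $\He_{f_k}$; composing with a further linear normalization I can arrange the core curve to be $\T$ (or fix a concrete normalization of $r_k$). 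Since $K$ is uniform, the $\phi_k$ form a normal family (after the normalization they are equicontinuous on $\RS$), so a subsequence converges uniformly to a $K(\clubsuit)$-quasiconformal limit $\phi$. The conjugated maps $g_k := \phi_k \circ f_k \circ \phi_k^{-1}$ converge locally uniformly to $g := \phi \circ f \circ \phi^{-1}$, and $g$ is conformally conjugate to $f$, so it suffices to produce the required structure for $g$.

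The heart of the matter is to show that $\Hq := \phi(\text{core curve})$, which is the image under $\phi$ of $\T$ — hence a $K(\clubsuit)$-quasicircle — is a Herman quasicircle for $g$. The a priori bounds of Theorem \ref{main-theorem-01} give a uniform dilatation bound on the boundary components $H^0_{f_k}$ and $H^\infty_{f_k}$, and since $\modu(\He_{f_k}) \to 0$, both boundary components and the core curve of the normalized ring collapse together: in the normalized coordinate the annulus $\mathbb{A}(r_k, r_k e^{2\pi\modu_k})$ shrinks radially onto a single circle, and the uniform quasisymmetry of the conjugacies $\phi_k|_{\overline{\He_{f_k}}}$ forces $H^0_{f_k}$, $H^\infty_{f_k}$, and the core curve to converge (in the Hausdorff sense, after applying $\phi_k$) to the same $K$-quasicircle $\Hq$. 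On this limit curve, $g_k|_{\text{core}}$ is exactly the rigid rotation $R_\theta$ by construction; passing to the limit, $g|_\Hq$ is conjugate to $R_\theta$ via $\phi|_\Hq$, which is quasisymmetric with dilatation depending only on $\clubsuit$. That $\Hq$ separates $0$ and $\infty$ is inherited from property \ref{def:3} for each $f_k$ together with the fact that $\phi_k$ fixes $0$ and $\infty$. That $0$ and $\infty$ remain superattracting fixed points of local degree $d_0$, $d_\infty$ follows because these are closed conditions preserved under locally uniform limits (the limit still has degree $d_0+d_\infty-1$, so no critical points can escape to collision); similarly every free critical point of $g_k$ lies on $H^0_{f_k}\cup H^\infty_{f_k}$ and hence accumulates on $\Hq$, giving property \ref{def:HQ4}. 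One must also check $\Hq$ is not contained in the closure of a rotation domain of $g$: if it were, $g$ would have a Herman ring or Siegel disk whose closure contains $\Hq$, but a neighborhood of $\Hq$ is accumulated by bubbles (preimages of $\Hq$, via Proposition \ref{bubbles-julia} applied in the limit) so $\Hq \subset J(g)$; hence it is a genuine Herman curve. This establishes all of \ref{def:HQ1}--\ref{def:HQ4}, i.e. $g$ — and therefore $f$ — lies in $\HQspace_{d_0,d_\infty,\theta}$.

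The main obstacle I anticipate is controlling the convergence of the \emph{dynamics} near the collapsing annulus, not just the curves themselves — specifically, ensuring that the limit map $g$ does not degenerate (e.g. drop degree, or develop a parabolic or attracting cycle absorbing $\Hq$) and that $g|_\Hq$ is a homeomorphism conjugate to $R_\theta$ rather than, say, a map with wandering behavior or a critical point mapping $\Hq$ onto a proper arc. The precompactness theorem keeps the coefficients of $f_k$ in a compact set of $\rat_d$, which rules out degree drop, and the uniform quasisymmetry of the conjugacies on the (collapsing) core curves is exactly what prevents the rotation dynamics from being lost in the limit; but making the Hausdorff convergence of $H^0_{f_k}$, the core, and $H^\infty_{f_k}$ to a common curve fully rigorous — and checking that this common curve is precisely $\phi(\T)$ and carries the conjugated rotation — will require a careful equicontinuity argument using Lemma \ref{bounded-shape-lemma} and the a priori bounds. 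I would also need Proposition \ref{bubble-structure} (or its combinatorial version) to identify the bubble structure in the limit and conclude $\Hq \subset J(g)$, which is what upgrades "invariant quasicircle with rotation dynamics" to "Herman quasicircle."
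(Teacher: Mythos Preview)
Your approach is essentially the paper's: take the uniformly $K(\clubsuit)$-quasiconformal linearizing maps $\phi_k$ from Lemma~\ref{normalized-conjugacy}, extract a subsequential limit $\phi$, and pass the conjugacy $R_\theta = \phi_k f_k \phi_k^{-1}$ on $\overline{\He_{f_k}}$ to the limit. One correction worth noting: Theorem~\ref{precompactness} concerns Herman rings with modulus bounded \emph{above}, not below, so it does not by itself force $\modu(\He_{f_k})\to 0$. The paper avoids this step entirely by allowing $\mu_k \to \mu \geq 0$ and showing with the same quasiconformal-compactness argument that the limit lies in $\HRspace_{d_0,d_\infty,\theta}$ when $\mu>0$ and in $\HQspace_{d_0,d_\infty,\theta}$ when $\mu=0$; since $f\notin\HRspace_{d_0,d_\infty,\theta}$ by definition of $\HRlimitspace_{d_0,d_\infty,\theta}$, the conclusion follows. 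Your explicit check that $\Hq$ is not contained in the closure of a rotation domain is a detail the paper leaves implicit (it follows from the presence of critical points on $\Hq$ together with $\Hq$ separating the two superattracting basins), and your worry about degree drop is handled in the paper simply by observing that the local degrees at $0$ and $\infty$ persist under uniform convergence.
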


\begin{proof}
    Suppose $f_n \to f$ for some sequence of rational maps $f_n \in \mathcal{H}_{d_0,d_\infty,\theta}$. We will show that the limit $f$ must lie in $\mathcal{H}_{d_0,d_\infty,\theta} \cup \HQspace_{d_0,d_\infty,\theta}$. 
    
    Due to uniform convergence, both $0$ and $\infty$ remain superattracting fixed points for $f$ of local degrees $d_0$ and $d_\infty$ respectively. In particular, the Julia sets $J(f_n)$ must all be contained in $\mathbb{A}(\varepsilon, \varepsilon^{-1})$ for some $0< \varepsilon<1$ independent of $n$, and that the moduli $\mu_n$ of the Herman rings $\He_n$ of $f_n$ are bounded above by $\frac{1}{\pi} \log \frac{1}{\varepsilon}$. Moreover, for sufficiently high $n$, $f_n$ has a free critical point $c_n$ of the same local degree independent of $n$ located on the inner boundary of $\He_n$, and $c_n \to c$ where $\varepsilon \leq |c| \leq \varepsilon^{-1}$.
    
    By Lemma \ref{normalized-conjugacy}, every $f_n$ admits a $K(\clubsuit)$-quasiconformal map $\phi_n : \RS \to \RS$ that is conformal in $\He_n$, fixes $0$ and $\infty$, maps $c_n$ to $1$, and restricts to a conjugacy between $f_n|_{\overline{\He_n}}$ and the rigid rotation $R_\theta$ on the closed annulus $A_n := \overline{\mathbb{A} (1, e^{2 \pi \mu_n})}$. By the compactness of normalized $K$-quasiconformal maps, $\phi_n$ has a subsequence converging to a $K$-quasiconformal map $\phi$ which fixes $0$ and $\infty$ and maps $c$ to $1$.
    
    By passing to a further subsequence, suppose $\mu_n \to \mu$ for some limit $\mu \geq 0$. As $n\to \infty$, $A_n$ converges in the Hausdorff topology to the closed annulus $A := \overline{\mathbb{A} (1,e^{2\pi\mu})}$ on which we have the conjugacy:
    \[
    R_\theta = \lim_{n\to\infty} \phi_{n} f_{n} \phi_{n}^{-1} = \phi f \phi^{-1}.
    \]
    Moreover, $\overline{\He_{n}}$ converges to $\Hq := \phi^{-1}(A)$. Since all free critical points of $f_n$ lie on $\partial \He_n$, then all free critical points of $f$ also lie on $\partial\Hq$. In particular, if $\mu =0$, then $\Hq$ must be a Herman quasicircle and thus $f \in \HQspace_{d_0, d_\infty,\theta}$. Else, $\Hq$ is the closure of a Herman ring of $f$ of modulus $\mu >0$ and thus $f \in \HRspace_{d_0, d_\infty,\theta}$.
\end{proof}

Corollary \ref{main-corollary} then follows from the corollary above. In the proof, notice that $\Hq$ is independent of any choice of convergent subsequence taken. In particular, we have simultaneously shown:

\begin{corollary}
\label{continuity}
    For $f \in \overline{\mathcal{H}_{d_0,d_\infty,\theta}}$, let $\Hq_f$ denote either the closure of the Herman ring of $f$ or the Herman quasicircle of $f$. Then, $f \mapsto \Hq_f$ is continuous in the Hausdorff topology.
\end{corollary}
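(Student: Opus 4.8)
The plan is to deduce Corollary~\ref{continuity} directly from the proof of Corollary~\ref{limiting} together with the precompactness established in Theorem~\ref{precompactness}. The key observation, already flagged in the sentence preceding the statement, is that in the proof of Corollary~\ref{limiting} the limiting set $\Hq$ was obtained without ever having to choose a subsequence: although subsequences were extracted at several points (for the quasiconformal maps $\phi_n$, for the moduli $\mu_n$), the resulting $\Hq = \phi^{-1}(A)$ does not depend on which subsequence was taken. So the real content of the corollary is to promote this ``subsequential independence'' to genuine continuity in the Hausdorff topology on compact subsets of $\RS$.

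First I would fix $f_0 \in \overline{\mathcal{H}_{d_0,d_\infty,\theta}}$ and an arbitrary sequence $f_n \to f_0$ in $\overline{\mathcal{H}_{d_0,d_\infty,\theta}}$, and set $\Hq_n := \Hq_{f_n}$, $\Hq_0 := \Hq_{f_0}$. Since the space of nonempty compact subsets of $\RS$ with the Hausdorff metric is itself compact, it suffices to show that \emph{every} Hausdorff-convergent subsequence of $\{\Hq_n\}$ converges to $\Hq_0$; then the whole sequence converges to $\Hq_0$. So pass to a subsequence along which $\Hq_{n} \to \mathcal{L}$ for some compact $\mathcal{L} \subset \RS$; the goal is $\mathcal{L} = \Hq_0$. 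Here one has to handle two cases depending on whether $f_0 \in \HRspace_{d_0,d_\infty,\theta}$ or $f_0 \in \HQspace_{d_0,d_\infty,\theta}$, but both are treated by the same mechanism as in Corollary~\ref{limiting}: using Lemma~\ref{normalized-conjugacy} (or, in the $\HQspace$ case, the conjugacy from Proposition~\cite{Pe04} together with Lemma~\ref{bounded-shape-lemma}), each $f_n$ (for $n$ large, once the local degree of the distinguished critical point $c_n$ has stabilized) carries a normalized $K(\clubsuit)$-quasiconformal map $\phi_n$ with $\phi_n(0)=0$, $\phi_n(\infty)=\infty$, $\phi_n(c_n)=1$, mapping $\overline{\He_n}$ (resp.\ $\Hq_n$) conformally onto the standard annulus $A_n := \overline{\mathbb{A}(1,e^{2\pi\mu_n})}$ and conjugating $f_n$ there to $R_\theta$. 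Passing to a further subsequence, $\phi_n \to \phi$ uniformly (compactness of normalized $K$-quasiconformal maps), $\mu_n \to \mu_0 = \modu$ of the relevant invariant object of $f_0$, and $A_n \to A_0 := \overline{\mathbb{A}(1,e^{2\pi\mu_0})}$ in the Hausdorff topology. Exactly as in Corollary~\ref{limiting}, $\phi f_0 \phi^{-1} = R_\theta$ on $A_0$ and $\phi^{-1}(A_0)$ is the closure of the Herman ring (if $\mu_0>0$) or the Herman quasicircle (if $\mu_0 = 0$) of $f_0$; by the uniqueness of the invariant rotation domain separating $0$ and $\infty$, this forces $\phi^{-1}(A_0) = \Hq_0$.

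It then remains to identify the Hausdorff limit: $\mathcal{L} = \lim \Hq_n = \lim \phi_n^{-1}(A_n) = \phi^{-1}(A_0) = \Hq_0$, where the middle equality uses that $\phi_n \to \phi$ uniformly on $\RS$ (so $\phi_n^{-1} \to \phi^{-1}$ uniformly) and $A_n \to A_0$ in the Hausdorff topology, and uniform convergence of homeomorphisms of a compact space together with Hausdorff convergence of the sets is enough to pass the image to the limit. Since $\mathcal{L} = \Hq_0$ regardless of the subsequence chosen, the original sequence $\Hq_n$ converges to $\Hq_0$, which is the desired continuity. One small point worth spelling out, and the main place care is needed, is the stabilization of combinatorial type along the sequence: a priori the local degrees of the free critical points, or the labelling used to normalize $\phi_n$, might jump; but this is exactly the uniform-convergence argument already run in the proof of Corollary~\ref{limiting} (the critical point $c_n$ has degree independent of $n$ for large $n$, and $c_n \to c$ with $\varepsilon \le |c| \le \varepsilon^{-1}$), so no new difficulty arises. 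The genuinely substantive input is entirely upstream --- the a priori bounds of Theorem~\ref{main-theorem-01} feeding the uniform dilatation bound $K(\clubsuit)$ and the precompactness of Theorem~\ref{precompactness} --- and once those are in hand the continuity statement is a soft consequence of compactness of the Hausdorff hyperspace plus uniform convergence of the normalizing quasiconformal maps.
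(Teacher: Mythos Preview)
Your proposal is correct and takes essentially the same approach as the paper: the paper simply remarks that in the proof of Corollary~\ref{limiting} the limit $\Hq$ is independent of the subsequence chosen, and you have spelled out exactly how that observation, combined with compactness of the Hausdorff hyperspace and uniform convergence of the normalized quasiconformal conjugacies, yields continuity. The only additions you make beyond the paper's one-line justification are the routine care needed when some $f_n$ already lie in $\HQspace_{d_0,d_\infty,\theta}$ and the identification $\phi^{-1}(A_0)=\Hq_0$ via uniqueness of the invariant rotation set, both of which are straightforward.
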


Recall that the combinatorics of $\Hq_f$ can be encoded by elements of the space $\mathcal{C}_{d_0,d_\infty}$. (See Definition \ref{comb-definition}.) At last, we prove a stronger version of Theorem \ref{main-theorem-02}.

\begin{theorem}
    $\HRlimitspace_{d_0,d_\infty,\theta} \to \mathcal{C}_{d_0,d_\infty}, f \mapsto \text{comb}(f)$ is a continuous surjection.
\end{theorem}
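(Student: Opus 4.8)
The plan is to prove two things: continuity of the combinatorics map $f \mapsto \mathrm{comb}(f)$ on $\HRlimitspace_{d_0,d_\infty,\theta}$, and surjectivity onto $\mathcal{C}_{d_0,d_\infty}$. I would first dispose of \emph{continuity}. Suppose $f_n \to f$ in $\HRlimitspace_{d_0,d_\infty,\theta}$. By Corollary \ref{continuity}, the curves $\Hq_{f_n}$ converge to $\Hq_f$ in the Hausdorff topology, and by the proof of Corollary \ref{limiting} the normalized quasiconformal conjugacies $\phi_n : \RS \to \RS$ (normalized as in Lemma \ref{normalized-conjugacy}, say sending a marked free critical point to $1$) can be chosen to converge to the conjugacy $\phi$ for $f$, uniformly on $\RS$, because they are uniformly $K(\clubsuit)$-quasiconformal and normalized. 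The free critical points of $f_n$ are exactly the preimages of the critical values, hence are determined by the coefficients of $f_n$; by uniform convergence the critical points of $f_n$ converge (with multiplicity) to those of $f$. Pushing forward by $\phi_n \to \phi$, the tuples $(\phi_n(c^0_i))$ and $(\phi_n(c^\infty_j)/R_n)$ — where $R_n = e^{2\pi \modu(\He_{f_n})} \to 1$ since $\modu \to 0$ on the limit space — converge to the corresponding tuples for $f$. Since $\mathcal{C}_{d_0,d_\infty}$ carries the quotient topology from $\SP^{d_0-1}(\T) \times \SP^{d_\infty-1}(\T)$ modulo rotation, and all maps involved (symmetric product, quotient by $\T$) are continuous and open, $\mathrm{comb}(f_n) \to \mathrm{comb}(f)$. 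A small technical point to check: the marked critical point used for normalization may not vary continuously if criticality changes in the limit, but one circumvents this by noting $\mathrm{comb}$ is defined only up to rotation, so any choice of normalization gives the same element of $\mathcal{C}_{d_0,d_\infty}$, and locally one can pick a consistent marking.

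Next, \emph{surjectivity}. Fix any $\mathcal{C} \in \mathcal{C}_{d_0,d_\infty}$. By Wang's Theorem \ref{wang}, for each $k \in \N$ there is a map $g_k \in \HRspace_{d_0,d_\infty,\theta}$ whose Herman ring has modulus $1/k$ and combinatorics $\mathcal{C}$. By Theorem \ref{precompactness}, the sequence $\{g_k\}/_\sim$ is precompact; normalizing each $g_k$ by a linear map (as in the proof of Theorem \ref{precompactness}, e.g. fixing the marked critical point at $1$ and fixing $0$, $\infty$), we may pass to a subsequence $g_{k_j} \to g_0 \in \rat_d$. Since $\modu(\He_{g_{k_j}}) = 1/k_j \to 0$, the argument in Corollary \ref{limiting} shows $g_0 \in \HQspace_{d_0,d_\infty,\theta}$ (the limiting rotation domain has modulus $0$), and in particular $g_0 \in \HRlimitspace_{d_0,d_\infty,\theta}$. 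It remains to identify $\mathrm{comb}(g_0)$. Here I would argue exactly as in the continuity step: the normalized conjugacies $\phi_{k_j}$ converge to the conjugacy $\phi_0$ of $g_0$, the free critical points converge, and the rescalings $R_{k_j} = e^{2\pi/k_j} \to 1$, so the combinatorial tuples of $g_{k_j}$ — which all represent $\mathcal{C}$ by construction — converge to those of $g_0$. Since $\mathrm{comb}(g_{k_j}) = \mathcal{C}$ is constant, $\mathrm{comb}(g_0) = \mathcal{C}$. Thus $g_0 \in \HRlimitspace_{d_0,d_\infty,\theta}$ realizes $\mathcal{C}$, proving surjectivity and hence Theorem \ref{main-theorem-02}.

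The main obstacle, I expect, is making the convergence of combinatorics fully rigorous under the quotient topology on $\mathcal{C}_{d_0,d_\infty}$ — specifically, controlling the normalization of the conjugacy $\phi_n$ in a way compatible with the marking of critical points. The issue is that $\phi_n$ in Lemma \ref{normalized-conjugacy} is normalized using a \emph{specific} free critical point on the inner boundary, and a priori one must check (i) such a critical point exists on the inner boundary for all $n$ (this follows from Corollary \ref{old-bounds}(1) / the bubble structure, since each $A_i^0$ is rooted at a critical point), (ii) one can select these consistently along the sequence so the limit critical point is again a critical point of $g_0$ on $\Hq_{g_0}$, and (iii) the multiplicities match up in the limit. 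Point (iii) is automatic from the fact that critical points counted with multiplicity vary continuously and the total count $2(d_0+d_\infty-2)$ is fixed, and all of $g_0$'s free critical points lie on $\Hq_{g_0}$ by the proof of Corollary \ref{limiting}; so no critical points can escape to $0$ or $\infty$ or collide catastrophically. Once this bookkeeping is set up, the rest is soft: continuity of symmetric products and of passing to the quotient by the rotation action. I would also remark that the surjectivity argument gives slightly more — every accumulation point of a sequence $g_k$ with fixed combinatorics $\mathcal{C}$ and $\modu \to 0$ realizes $\mathcal{C}$ — which, combined with continuity, shows the fibers of $\mathrm{comb}$ over $\mathcal{C}$ are precisely the accumulation sets of such sequences.
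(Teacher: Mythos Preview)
Your proof is correct and takes essentially the same approach as the paper. The only minor variant is that for surjectivity the paper invokes Wang's theorem once (at modulus $1$) and then uses quasiconformal deformation of the Herman ring to produce a real-analytic family $\{f_t\}_{0<t\leq 1}$ with modulus $t$ and fixed combinatorics, whereas you invoke Wang's theorem separately for each modulus $1/k$; both routes yield a sequence in $\HRspace_{d_0,d_\infty,\theta}$ with fixed combinatorics and modulus $\to 0$, and the remaining limiting argument is identical.
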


\begin{proof}
    Continuity of comb$(\cdot)$ follows directly from Corollary \ref{continuity}. 
    
    Pick any arbitrary combinatorial data $\mathcal{C} \in \mathcal{C}_{d_0,d_\infty}$. By Theorem \ref{wang}, there is a rational map $f_1 \in \HRspace_{d_0,d_\infty,\theta}$ with a Herman ring $\He_1$ of modulus $1$ with $\text{comb}(f)=\mathcal{C}$. By deforming the complex structure of $\He_1$ (see \cite[\S6.1]{BF14}), we obtain a real analytic family of rational maps $\{f_t \}_{0<t \leq 1}$ in $\HRspace_{d_0, d_\infty,\theta}$ where each $f_t$ has a Herman ring $\He_t$ of modulus $t$ with the same combinatorics $\mathcal{C}$.
    
    From Theorem \ref{precompactness}, by appropriately normalizing $f_t$, there is a sequence $\{t_n\}_{n\in\N}$ in $(0,1)$ such that as $n\to \infty$, the modulus $t_n$ converges to $0$ and $f_{t_n}$ converges to a rational map $f$ of degree $d_0+d_\infty-1$. Clearly, $f$ cannot lie in $\HRspace_{d_0,d_\infty,\theta}$ because otherwise it would contradict the continuity of the moduli of Herman rings guaranteed in Corollary \ref{continuity}. Therefore, by Corollary \ref{limiting}, $f$ must lie on $\HRlimitspace_{d_0,d_\infty,\theta}$ and it has the same combinatorics $\mathcal{C}$.
\end{proof}


\section{Unicritical Herman curves}
\label{sec:unicritical-herman-curves}

Fix $d_0,d_\infty \geq 2$. For every bounded type irrational number $\theta \in (0,1)$, Theorem \ref{main-theorem-02} guarantees the existence of a rational map in $\HQspace_{d_0,d_\infty,\theta}$ possessing a unique free critical point which, by conjugating with a linear map, can be assumed to be at $z=1$. Elementary computation yields the following explicit formula.

\begin{proposition}
\label{unicritical-formula}
    Suppose that $F_c \in \rat_d$ has critical points at $0$, $\infty$, and $1$ with local degrees $d_0$, $d_\infty$, and $d:= d_0+d_\infty-1$ respectively, and that $F_c(0)=0$, $F_c(\infty)=\infty$, and $F_c(1)=c \in \C^*$. Then,
    \[
    F_c(z) := - c \: \cfrac{ \displaystyle\sum_{j=d_0}^{d} \binom{d}{j} \cdot (-z)^j}{ \displaystyle\sum_{j=0}^{d_0-1} \binom{d}{j} \cdot (-z)^j}.
    \]
\end{proposition}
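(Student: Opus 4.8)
The plan is to read off the shape of $F_c$ from the local data at its three critical points and then solve a tiny linear system in the coefficients. First I would write $F_c = P/Q$ as a ratio of coprime polynomials. Since $F_c(\infty)=\infty$, we have $\deg P > \deg Q$, and the local degree $d_\infty$ at $\infty$ equals $\deg P - \deg Q$; combined with $\deg F_c = d$ this forces $\deg P = d$ and $\deg Q = d - d_\infty = d_0 - 1$. Since $0$ is a fixed point with $F_c(0)=0$ and local degree $d_0$, and $Q(0)\neq 0$, the numerator $P$ vanishes to order exactly $d_0$ at the origin. (A count confirms that the critical points at $0,1,\infty$, of multiplicities $d_0-1$, $d-1$, $d_\infty-1$, already exhaust the $2d-2$ critical points, so there is nothing further to impose.)

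The key step is the condition at $z=1$. Because $F_c(1)=c\in\C^*$ is finite we have $Q(1)\neq 0$, and the local degree $d$ at $z=1$ says that $F_c - c$ has a zero of order $d$ there; hence $P - cQ$ has a zero of order $d$ at $z=1$. Since $\deg P = d > d_0 - 1 = \deg Q$, the polynomial $P - cQ$ has degree exactly $d$, with leading coefficient equal to the leading coefficient $a\neq 0$ of $P$, so necessarily
\[
  P(z) - cQ(z) = a\,(z-1)^d .
\]
Now I expand $(z-1)^d = \sum_{j=0}^{d}\binom{d}{j}(-1)^{d-j}z^j$ and use $P = a(z-1)^d + cQ$ together with $\deg Q \le d_0-1$ and the vanishing of $P$ to order $d_0$ at $0$: comparing coefficients in degrees $0,1,\dots,d_0-1$ forces
\[
  cQ(z) = -a\sum_{j=0}^{d_0-1}\binom{d}{j}(-1)^{d-j}z^j , \qquad
  P(z) = a\sum_{j=d_0}^{d}\binom{d}{j}(-1)^{d-j}z^j .
\]
Dividing $P$ by $Q$ the constant $a$ cancels, and multiplying numerator and denominator by $(-1)^d$ converts each $(-1)^{d-j}z^j$ into $(-z)^j$, yielding exactly
\[
  F_c(z) = -c\,\frac{\displaystyle\sum_{j=d_0}^{d}\binom{d}{j}(-z)^j}{\displaystyle\sum_{j=0}^{d_0-1}\binom{d}{j}(-z)^j}.
\]

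I do not expect a genuine obstacle here; the only things requiring care are the degree bookkeeping — verifying that $\deg P = d$ and $\deg Q = d_0-1$ are \emph{forced} by the fixed-point and local-degree conditions rather than assumed — and keeping the signs straight when passing from $(-1)^{d-j}z^j$ to $(-z)^j$. As a consistency check one verifies directly from the displayed formula that the denominator has degree exactly $d_0-1$ with nonzero constant term $\binom{d}{0}=1$ (so $0$ and $\infty$ are fixed with the right local degrees) and that the numerator vanishes to order $d_0$ at $0$; the critical point of local degree $d$ at $1$ is then immediate from $P - cQ = a(z-1)^d$. Conversely, the computation shows that for each $c\in\C^*$ such an $F_c$ is unique, which is worth recording since it justifies speaking of \emph{the} unicritical map in $\HQspace_{d_0,d_\infty,\theta}$ normalized so that its free critical point is at $z=1$.
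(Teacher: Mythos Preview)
Your proof is correct and follows essentially the same approach as the paper: both exploit that the local degree $d$ condition at $z=1$ forces $P - cQ$ (equivalently, the numerator of $F_c/c - 1$) to equal a constant times $(z-1)^d$, then expand by the binomial theorem and match coefficients. Your version is slightly more direct in that it works with $P - cQ = a(z-1)^d$ immediately, whereas the paper first normalizes to $F_1 = F_c/c$ and introduces the auxiliary $g(z) = 1 - F_1(-z)$ before reading off the binomial coefficients; but this is a cosmetic difference, not a different method.
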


\begin{proof}
    The rational map $F_1(z) := c^{-1}F_c(z)$ has superattracting fixed points at $0$, $\infty$, and $1$ with local degrees $d_0$, $d_\infty$, and $d$ respectively. From the behaviour at $0$ and $\infty$, the map $F_1$ is of the form $z^{d_0} \frac{p(z)}{q(z)}$ where $p$ is a degree $d_\infty-1$ polynomial and $q$ is a degree $d_0-1$ polynomial. Let us present $F_1$ as
    \[
    F_1(z) = - \: \frac{ (-z)^{d} + \sum_{j={d_0}}^{d-1} a_j (-z)^{j} }{ \sum_{j=0}^{d_0-1} a_j (-z)^{j} }
    \]
    for some coefficients $a_0,a_1,\ldots,a_{d-1}$. The map $g(z) := 1-F_1(-z)$ is of the form
    \[
    g(z) = \frac{ z^d + \sum_{j=0}^{d-1} a_j z^j }{ \sum_{j=0}^{d_0-1} a_j z^j}.
    \]
    From the behaviour of $F_1$ at $1$, $z=-1$ must be a zero of $g$ of order $d$. Thus, the numerator of $g$ must be divisible by $(z+1)^{d} = \sum_{j=0}^{d} \binom{d}{j} z^j$. This implies that $a_j = \binom{d}{j}$ for every $j$, and we are done.
\end{proof}

Denote by $c(\theta)$ the parameter such that $f_\theta:= F_{c(\theta)}$ lies in $\HQspace_{d_0,d_\infty,\theta}$. For every bounded type $\theta$, the uniqueness of the parameter $c(\theta)$ will follow from the sequel \cite{Lim23}, in which we justify combinatorial rigidity. Specific examples of $f_\theta$ can be found in Example \ref{eg:general-blaschke} when $d_0=d_\infty$, and in Figure \ref{fig:asymmetric-herman-quasicircle} when $d_0=2$, $d_\infty=4$, and $\theta$ is the golden mean.

We will conclude this paper with a discussion of Renormalization Theory for unicritical Herman quasicircles in two different viewpoints. These will be further explored in the near future.

First, we will consider the first return maps of $f_\theta$ near its critical point. Let us denote by $\Hq$ the Herman curve of $f_\theta$. For any $j \in \Z$, let $c_j := (f_\theta|_\Hq)^j(1)$. For brevity, we again write $\clubsuit=\left(d_0,d_\infty,\beta(\theta)\right)$. We say that a pointed domain $(U,x)$ has \emph{bounded shape} if the inner and outer radii of $U$ about $x \in U$ are comparable.

\begin{theorem}[First return maps]
    For every $n \in \N$, there exists a degree $d$ branched covering map
    \begin{equation}
        \label{1st-return}
        f_\theta^{q_n} : (U_n,c_0) \to (V_n,c_{q_n})
    \end{equation}
    between two pointed disks with bounded shape and diameter $ \asymp |c_0-c_{q_n}|$. All bounds depend only on $\clubsuit$.
\end{theorem}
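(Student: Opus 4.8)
The plan is to bootstrap the first return map from \emph{a priori} bounds for $\Hq$ together with the covering structure of $f_\theta$ described in Proposition \ref{bubble-structure}. First I would set up the combinatorial geometry on $\Hq$: equip $\Hq$ with its combinatorial metric via the quasisymmetric conjugacy $\phi:\Hq\to\T$, and recall from Corollary \ref{APB-deg} that $\Hq$ is a $K(\clubsuit)$-quasicircle, so combinatorial and Euclidean/spherical geometry on $\Hq$ are $\clubsuit$-comparably related. Let $I_n:=[c_0,c_{q_n}]$, the level-$n$ combinatorial piece in $\Hq$ rooted at the critical value $c_0=1$, which has combinatorial length $l_n$ and hence spherical diameter $\asymp \operatorname{diam}(I_n)\asymp |c_0-c_{q_n}|$ (using quasisymmetry and the fact that $\phi$ distorts ratios by a $\clubsuit$-bounded amount). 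The target disk $V_n$ will be built as a $\clubsuit$-bounded-shape neighborhood of $I_n$, and $U_n$ as the appropriate pullback component under $f_\theta^{q_n}$ containing $c_0$.

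The key step is to produce $V_n$ with bounded shape about $c_{q_n}$ and diameter $\asymp |c_0-c_{q_n}|$. Here I would use Proposition \ref{main-prop}(1): since $\Hq$ is a $K(\clubsuit)$-quasicircle, every sufficiently short interval $J\subset\Hq$ has $\alpha$-width $W_\alpha(J)\le \mathbf{K}(\clubsuit)$ for, say, $\alpha=10$. Applying this to $I_n$ (or a bounded enlargement of it), the bounded $\alpha$-width means precisely that the curve family connecting $I_n$ to the rest of $\Hq$ is conformally thin in both complementary components $Y^0,Y^\infty$, and by the harmonic-measure comparison established inside the proof of Proposition \ref{main-prop} the inner and outer harmonic measures of subintervals are quasisymmetrically equivalent to combinatorial length. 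Concretely, I would take $V_n$ to be the union of $I_n$ with the two ``pockets'' obtained in $Y^0$ and $Y^\infty$ by the construction of cuts (Lemma \ref{cuts}) — i.e., cutting off the far part of each $Y^\bullet$ by a pair of rays of $\clubsuit$-bounded harmonic measure — so that $V_n$ is a topological disk, $V_n\cap\Hq$ is a $\clubsuit$-bounded enlargement of $I_n$, and the modulus of $V_n\setminus I_n$ is $\clubsuit$-bounded above and below. A Teichm\"uller/Koebe-type estimate (as used in the proof of Theorem \ref{precompactness}) then gives that $V_n$ has bounded shape about $c_{q_n}$ with diameter $\asymp\operatorname{diam}(I_n)\asymp|c_0-c_{q_n}|$. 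One must also check $V_n$ stays away from $0$ and $\infty$ on the correct scale, which follows from Lemma \ref{bounded-shape-lemma} and the fact that $I_n$ shrinks.

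Next I would analyze the pullback $f_\theta^{q_n}:U_n\to V_n$, where $U_n$ is the component of $f_\theta^{-q_n}(V_n)$ containing $c_0$. The degree is $d$: this is exactly the unicritical analogue of Lemma \ref{counting-critical} — along $\Hq$ the orbit $c_0,c_{-1},\dots,c_{-q_n+1}$ visits each combinatorial piece of level $n$ at most a $\clubsuit$-bounded number of times, but in the unicritical case one checks directly that the single free critical point $1=c_0$ is the \emph{only} critical point of $f_\theta^{q_n}$ landing in $V_n$ and it is picked up exactly once (since $f_\theta^{q_n}|_\Hq$ is a homeomorphism near $c_0$ and $c_0\in V_n\cap\Hq$), with local degree $d$; hence $\deg(f_\theta^{q_n}:U_n\to V_n)=d$. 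Then, since $f_\theta^{q_n}:U_n\to V_n$ is a degree-$d$ branched cover branched only at $c_0$, and $V_n$ has bounded shape, the Gr\"otzsch/distortion theory for such covers (again Teichm\"uller estimates, cf.\ \cite{A06}) forces $U_n$ to have bounded shape about $c_0$ with diameter comparable to $\operatorname{diam}(V_n)^{1/d}$-type control — but more cleanly: $W(V_n\setminus I_n)\asymp_{\clubsuit} 1$ implies $W(U_n\setminus I_n^{(q_n)})\asymp_{\clubsuit}1$ by the Covering Lemma or just by $d^{\pm 1}$ comparison of widths under degree-$d$ maps (Proposition \ref{transformation law}), where $I_n^{(q_n)}:=[c_{-q_n},c_0]=f_\theta^{-q_n}(I_n)\cap\Hq$ is again a level-$n$ combinatorial piece with $\operatorname{diam}\asymp|c_0-c_{-q_n}|\asymp|c_0-c_{q_n}|$ by bounded type (Proposition \ref{bounded-type}). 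This yields bounded shape and diameter $\asymp|c_0-c_{q_n}|$ for $U_n$ as well.

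\textbf{Main obstacle.} I expect the genuine difficulty to be the passage from ``bounded $\alpha$-width'' to ``bounded \emph{shape} of an actual domain'': $W_\alpha(I_n)\le\mathbf{K}$ controls an extremal \emph{width}, not directly a Euclidean roundness. Converting this into a domain $V_n$ that is simultaneously (a) a topological disk, (b) a $\clubsuit$-bounded enlargement of $I_n$ in $\Hq$, (c) of bounded shape in the spherical metric about $c_{q_n}$, and (d) of diameter $\asymp|c_0-c_{q_n}|$, requires carefully combining the two-sided harmonic-measure estimate (inner vs.\ outer, as in the proof of Proposition \ref{main-prop}), the cut construction of Lemma \ref{cuts}, and a quasiconformal straightening so that Koebe-type bounds apply; ensuring the cuts can be chosen to make $V_n\setminus I_n$ have modulus bounded \emph{below} (not just above) is the delicate point. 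Everything afterward — the degree count and the pullback distortion estimate — is then routine given the machinery already developed in \S\ref{sec:spreading-degeneration} and \S\ref{ss:precompactness}.
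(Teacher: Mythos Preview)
Your overall strategy is sound, but the paper's proof is considerably simpler and bypasses what you flag as the ``main obstacle'' entirely. Instead of building $V_n$ from cuts and $\alpha$-width estimates, the paper exploits that $\phi$ extends to a global $K(\clubsuit)$-quasiconformal map $(\RS,\Hq)\to(\RS,\T)$ (this is exactly what Corollary \ref{APB-deg} gives, not merely a quasisymmetry on $\Hq$). One then takes $V_n:=\phi^{-1}(D_n)$ where $D_n$ is the round disk meeting $\T$ orthogonally along $\phi(I_n)$ for a suitable combinatorial interval $I_n$; bounded shape and the diameter estimate are immediate because quasiconformal maps preserve bounded shape. Your construction via Lemma \ref{cuts} and harmonic-measure balancing would probably work, but it is doing by hand what the global straightening map already encodes.

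Two further points where the paper is sharper. First, to get bounded shape of the \emph{pullback} $U_n$, the paper uses a nested pair $V_n\Subset\tilde V_n$ (coming from nested intervals $I_n\subset\tilde I_n$) so that $\tilde V_n\setminus\overline{V_n}$ is a protective annulus of modulus $\asymp 1$; then $f_\theta^{q_n}:\tilde U_n\to\tilde V_n$ is a degree $d$ cover and Koebe distortion on the pulled-back annulus gives bounded shape of $U_n$. Your route via $W(V_n\setminus I_n)\asymp 1$ and Proposition \ref{transformation law} is workable but less direct. Second, the degree being \emph{exactly} $d$ (not just $O_\clubsuit(1)$) requires that the only critical value of $f_\theta^{q_n}$ inside $\tilde V_n$ is $c_{q_n}$; the paper secures this by taking $\tilde I_n=(c_{q_n-q_{n-1}},\,c_{q_{n-1}})$, the maximal interval around $c_{q_n}$ containing no other $c_j$ with $1\le j\le q_n$. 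Your argument that ``$c_0$ is picked up exactly once'' needs this combinatorial choice to be made explicit --- a bounded enlargement of $[c_0,c_{q_n}]$ chosen carelessly could swallow $c_{q_{n-1}}$.
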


\begin{proof}
    Let $\phi: (\RS, \Hq) \to (\RS, \T)$ be a $K(\clubsuit)$-quasiconformal map that conjugates $f|_\Hq$ and $R_\theta|_\T$. In this proof, any implicit constants involved depend on $\clubsuit$.
    
    Consider nested open intervals $I_n := (c_{2q_n},c_{-q_n}) \subset \tilde{I}_n := (c_{q_n-q_{n-1}},c_{q_{n-1}})$ in $\Hq$. Let $V_n$ (resp. $\tilde{V}_n$) denote the unique disk such that its image under $\phi$ is the round disk intersecting $\T$ orthogonally on the interval $\phi(I_n)$ (resp. $\phi(\tilde{I}_n)$). By construction, $V_n$ has bounded shape about $c_{q_n}$ with diameter $\asymp |c_0-c_{q_n}|$. Moreover, $V_n$ is contained in $\tilde{V}_n$ and the annulus $\tilde{V}_n \backslash \overline{V_n}$ has modulus $\asymp 1$.
    
    Denote by $U_n$ (resp. $\tilde{U}_n$) the lift of $V_n$ (resp. $\tilde{V}_n$) under $f_\theta^{q_n}$ containing $c_0$. The interval $\tilde{I}_n$ is precisely the maximal interval such that the only critical value of $f_\theta^{q_n}$ on $\tilde{I}_n$ is $c_{q_n}$. As such, $f_\theta^{q_n}: \tilde{U}_n \to \tilde{V}_n$ is a degree $d$ covering map branched only at $c_0$. By Koebe distortion theorem, since $\modu\left(\tilde{V}_n \backslash \overline{V_n}\right) \asymp 1$, the pointed disk $(U_n,c_0)$ also has bounded shape. Since $U_n \cap \Hq = (c_{q_n}, c_{-2q_n})$ has diameter comparable to $I_n$, then $U_n$ must have diameter $\asymp |c_0 - c_{q_n}|$.
\end{proof}

\begin{figure}
    \centering
\begin{tikzpicture}
    \node[anchor=south west,inner sep=0] (image) at (0,0) {\includegraphics[width=0.9\linewidth]{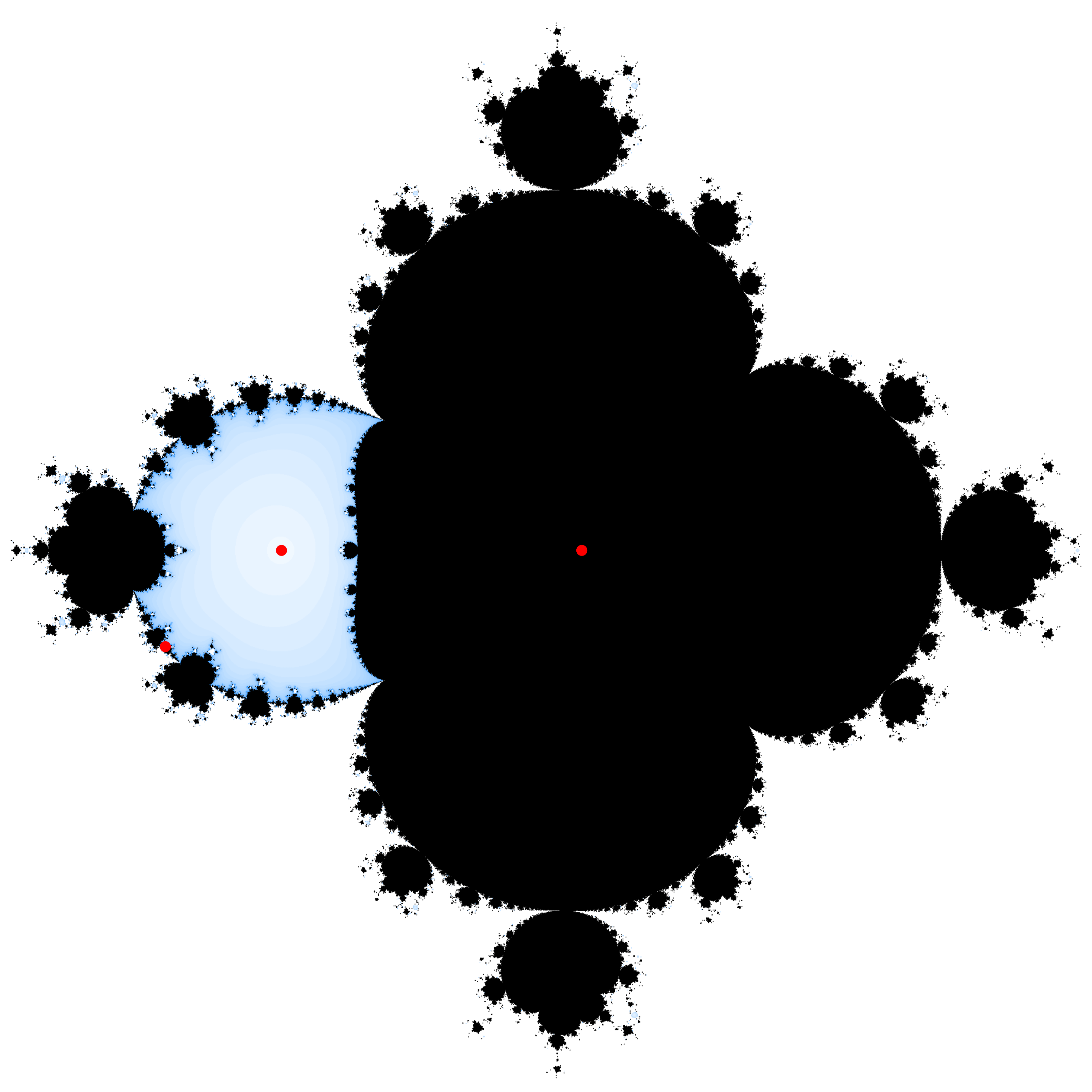}};
    \begin{scope}[
        x={(image.south east)},
        y={(image.north west)}
    ]
    
    \node [red, font=\bfseries] at (0.257,0.475) {\footnotesize{$0$}};
    \node [red, font=\bfseries] at (0.53,0.475) {\footnotesize{$1$}};
    \draw [red] (0.151,0.4074) -- (0.08,0.36);
    \node [red, font=\bfseries] at (0.08,0.34) {\footnotesize{$c_*$}};
    \end{scope}
\end{tikzpicture}
    \caption{The parameter space $\{F_c\}_{c \in \C^*}$ for $(d_0,d_\infty)=(2,4)$ is shown above. The region $\{c \in \C^* \: | \: F_c^n(1)\to \infty\}$ is colored white, the region $\{c \in \C^* \: | \: F_c^n(1)\to 0\}$ is colored blue, and the non-escaping locus $\mathcal{M}_{2,4}$ is shown in black.}
    \label{fig:parspace1}
\end{figure}

We shall equip the space of branched coverings between pointed domains with the Carath\'eodory topology in the sense of \cite[\S5]{McM94}. The theorem above has the following immediate implication.

\begin{corollary}[Rescaled limits of (\ref{1st-return})]
\label{rescaled-limits}
    Consider the sequence of affine maps $A_n(z)=(z-c_0)/(c_{q_n}-c_0)$. The sequence of rescaled first return maps
    \[
    A_n \circ f_{\theta}^{q_n} \circ A_n^{-1}: \left(A_n(U_n),0\right) \to \left(A_n(V_n), 1\right)
    \]
    is precompact in the Carath\'eodory topology. Moreover, all limit maps are degree $d$ covering maps branched only at $0$.
\end{corollary}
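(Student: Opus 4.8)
The plan is to extract all the compactness from the geometric control supplied by the preceding theorem: each $(U_n,c_0)$ and $(V_n,c_{q_n})$ has bounded shape with Euclidean diameter $\asymp|c_0-c_{q_n}|$, with implicit constants depending only on $\clubsuit$. Since $A_n$ is the affine map sending $c_0\mapsto0$ and $c_{q_n}\mapsto1$, these properties are scale-invariant, so the normalized domains $(A_n(U_n),0)$ and $(A_n(V_n),1)$ again have bounded shape, now with diameter $\asymp1$. As each domain contains its own base point, this yields universal radii $0<r<R<\infty$, depending only on $\clubsuit$, with
\[
\D(0,r)\subset A_n(U_n)\subset\D(0,R)\qquad\text{and}\qquad\D(1,r)\subset A_n(V_n)\subset\D(1,R)
\]
for all $n$. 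A family of pointed disks squeezed between two fixed round disks about the base point is precompact in the Carath\'eodory topology, so after passing to a subsequence we may assume $A_n(U_n)\to U_\infty$ and $A_n(V_n)\to V_\infty$, with $U_\infty\ni0$ and $V_\infty\ni1$ satisfying the same sandwich bounds. Writing $g_n:=A_n\circ f_\theta^{q_n}\circ A_n^{-1}\colon A_n(U_n)\to A_n(V_n)$, each $g_n$ is a degree $d$ branched covering branched only at $0$ with $g_n(0)=1$. For compact $K\subset U_\infty$ one has $K\subset A_n(U_n)$ and $g_n(K)\subset A_n(V_n)\subset\D(1,R)$ for all large $n$, so $\{g_n\}$ is uniformly bounded on compact subsets of $U_\infty$; by Montel's theorem and a diagonal extraction, $g_n\to g$ locally uniformly on $U_\infty$. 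This is exactly convergence of $g_n$ to $g\colon U_\infty\to V_\infty$ in the Carath\'eodory topology of \cite[\S5]{McMc94}, so the sequence is precompact.

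It remains to identify the limit $g$, and the crucial point is that $g$ is non-constant; the plan is to control its leading coefficient at $0$ from above and below. On $\D(0,r)$ the map $g_n$ takes values in $\D(1,R)$ and $g_n(z)-1$ vanishes to order exactly $d$ at $0$, so $g_n(z)=1+a_nz^d+O(z^{d+1})$ with $|a_n|\le C_1(\clubsuit)$ by the Cauchy estimate. For a lower bound, let $W_n$ be the connected component of $g_n^{-1}(\D(1,r))$ containing $0$. Since the local degree of $g_n$ at $0$ equals the total degree $d$, this is the whole preimage and $g_n\colon W_n\to\D(1,r)$ is a degree $d$ branched covering branched only at $0$ with full local degree there; in particular $W_n$ is a topological disk, and uniformizing it by a Riemann map $\psi_n\colon\D\to W_n$ with $\psi_n(0)=0$ gives $g_n\circ\psi_n(w)=1+r\eta_n w^d$ with $|\eta_n|=1$. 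Comparing Taylor coefficients yields $|a_n|\,|\psi_n'(0)|^d=r$, and since $W_n\subset\D(0,R)$ the Schwarz lemma forces $|\psi_n'(0)|\le R$, so $|a_n|\ge r/R^d=:C_2(\clubsuit)>0$. Writing $g_n(z)-1=a_nz^dh_n(z)$ with $h_n(0)=1$, the two-sided bound on $a_n$ makes $\{h_n\}$ uniformly bounded near $0$; after a further extraction $a_n\to a$ with $0<|a|<\infty$ and $h_n\to h$, so $g(z)=1+az^dh(z)$ near $0$ with $h(0)=1$. Hence $g$ is non-constant and has a critical point of order exactly $d-1$ at $0$. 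The same computation yields the universal lower bound $\D(0,c_\ast)\subset W_n$ (by Koebe applied to $\psi_n$), i.e. the immediate preimage disks do not shrink.

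Next, $g$ is branched only at $0$: if $g'(z_1)=0$ for some $z_1\in U_\infty\setminus\{0\}$, then, since $g'\not\equiv0$ and $g_n'\to g'$ locally uniformly, Hurwitz's theorem would produce critical points of $g_n$ converging to $z_1$, contradicting that $g_n$ is branched only at $0$ while $0$ stays a definite distance from $z_1$. Finally, that $g\colon U_\infty\to V_\infty$ is proper of degree $d$ follows from the persistence of degree for proper maps under Carath\'eodory limits of non-constant maps \cite[\S5]{McMc94}; the only hypothesis is the non-degeneration just established. Concretely, the bound $\D(0,c_\ast)\subset W_n\subset\D(0,R)$ guarantees that the Carath\'eodory limit $W_\infty$ of the immediate preimage disks is a genuine subdisk of $U_\infty$ carrying the degree $d$ branched covering $g\colon W_\infty\to\D(1,r)\subset V_\infty$ branched only at $0$, and since the degree of a proper holomorphic map is globally constant, $g\colon U_\infty\to V_\infty$ is proper of degree $d$. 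Applying the argument along an arbitrary subsequence shows that every Carath\'eodory limit of the rescaled return maps is a degree $d$ branched covering branched only at $0$.

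I expect the main obstacle to be exactly the non-degeneration of the limit --- excluding both the collapse $g\equiv1$ (leading coefficient tending to $0$) and, when $g$ is non-constant, the escape of preimages of a regular value to $\partial U_\infty$ with a consequent drop of degree. Both are controlled by a single mechanism: the immediate preimage disk $W_n$ is squeezed between round disks of universal radii --- bounded above because $A_n(U_n)\subset\D(0,R)$, and bounded below because $A_n(V_n)\supset\D(1,r)$ and $g_n|_{W_n}$ is conformally modeled on $z\mapsto z^d$. This is precisely the content of the preceding theorem (bounded shape and diameter $\asymp|c_0-c_{q_n}|$), so with those estimates in hand the rest is the standard compactness package for the Carath\'eodory topology.
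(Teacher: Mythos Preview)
Your proof is correct and supplies exactly the details the paper omits: the paper states the corollary as an ``immediate implication'' of the preceding theorem and gives no separate argument. Your expansion---sandwiching the rescaled pointed disks between fixed round disks to get Carath\'eodory precompactness, then using a two-sided bound on the leading Taylor coefficient $a_n$ to rule out degeneration and Hurwitz to control the branching---is the standard package the paper is implicitly invoking. One minor point: the citation key should be \texttt{McM94}, not \texttt{McMc94}.
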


In light of Corollary \ref{rescaled-limits}, we conduct a more rigorous study of the first return maps to prove various scaling properties for unicritical Herman quasicircles in \cite{Lim23}. These include universality and self-similarity about the critical point, similar to critical circle maps and quadratic Siegel disks. See \cite{McM98, dF99, dFdM99}.

\begin{figure}
    \centering
    \includegraphics[width=0.78\linewidth]{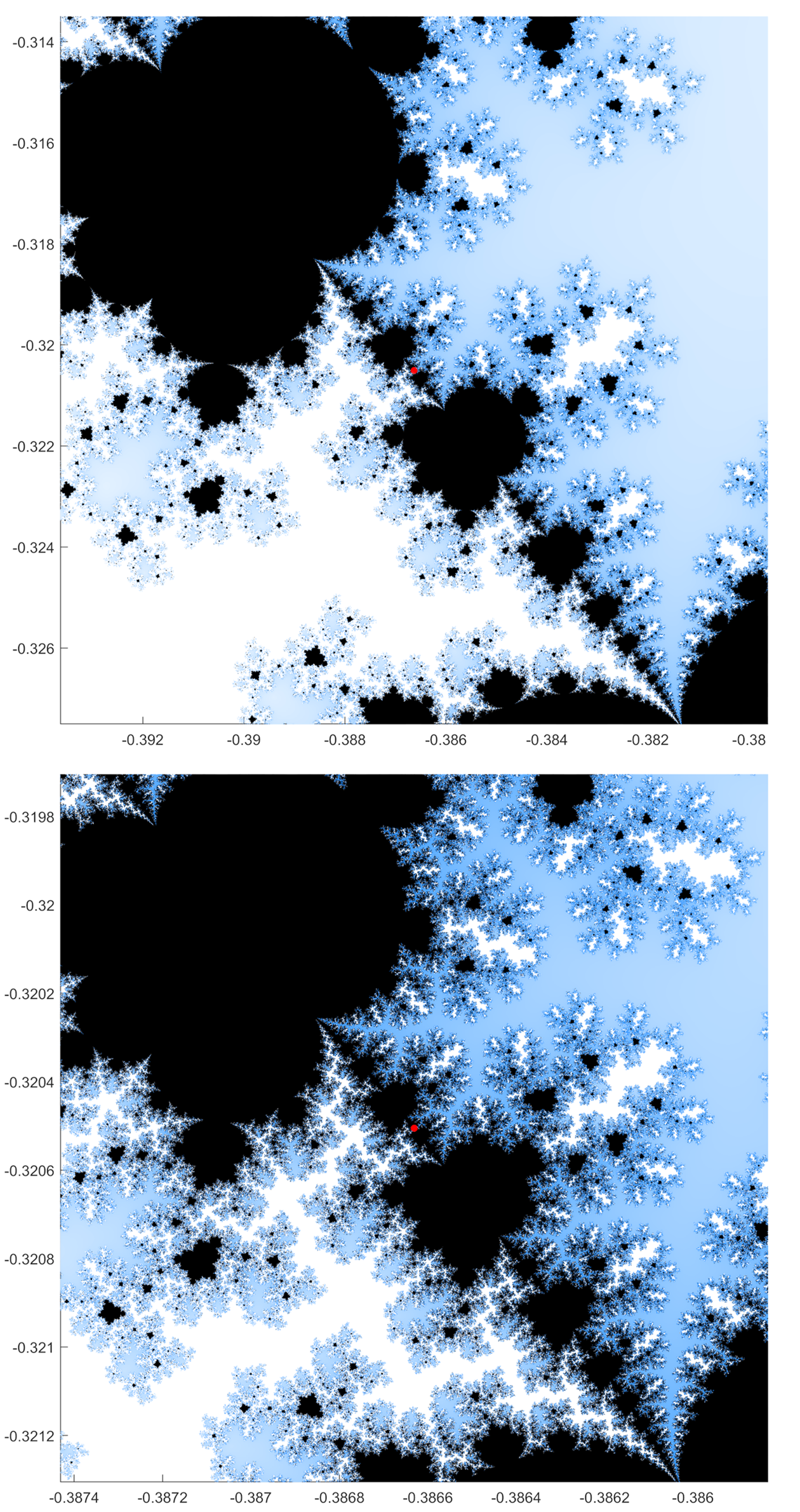}

    \caption{Magnifications of Figure \ref{fig:parspace1} by different scales about the parameter $c_* \approx -0.386631-0.320505i$ that is marked in red. The Julia set of $F_{c_*}$ is displayed in Figure \ref{fig:asymmetric-herman-quasicircle}.}
    \label{fig:parspace2}
\end{figure}

Secondly, let us consider the parameter space $\{F_c\}_{c \in \C^*}$ in Proposition \ref{unicritical-formula}. The non-escaping locus 
\[
\mathcal{M}_{d_0,d_\infty}=\left\{c \in \C^* \: | \: F_c^n(1) \not\to 0 \text{ and } F_c^n(1) \not\to \infty \right\}
\]
is displayed in Figure \ref{fig:parspace1}. Let $c_* := c(\theta_*)$ where $\theta_*$ is the golden mean irrational number. Displayed in Figure \ref{fig:parspace2} are the blow ups of the parameter space near $c_*$, which provide preliminary evidence of asymptotic self-similarity of $\mathcal{M}_{d_0,d_\infty}$ near $c_*$. As we compare with \cite{L99, DLS}, we expect that this phenomenon is a consequence of hyperbolicity of some appropriate renormalization operator. 

\begin{conjecture}
\label{conj:self-similarity}
    For every stationary type irrational number $\theta = [0;N,N,N,\ldots]$, the non-escaping locus $\mathcal{M}_{d_0,d_\infty}$ is asymptotically self-similar at $c(\theta)$. There is a hyperbolic renormalization operator associated to it.
\end{conjecture}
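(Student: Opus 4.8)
The plan is to follow the renormalization paradigm that has been successful for critical circle maps and for quadratic Siegel disks, adapted to the unicritical Herman-curve setting prepared in Corollary \ref{rescaled-limits}. First I would upgrade the precompactness of Corollary \ref{rescaled-limits} to the construction of a genuine renormalization operator $\mathcal{R}$ acting on an appropriate space $\mathcal{H}$ of germs: a point of $\mathcal{H}$ should record a degree $d$ branched covering $g:(U,0)\to (V,1)$ together with the germ of the invariant arc $\Hq_g$ through the critical point on which $g$ is conjugate to an irrational rotation, marked up to affine rescaling. Using a cylinder-renormalization formalism in the spirit of McMullen and Yampolsky (the presence of a critical point on $\Hq$ forces a ``critical circle map carrying an invariant curve'' picture rather than the smooth Siegel picture), one defines $\mathcal{R}g$ as the affine rescaling $A\circ g^{q}\circ A^{-1}$ of the appropriate first return near $0$, where $q$ is the first return time dictated by the rotation number $\rho(g)$ of $g|_{\Hq_g}$. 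The point is that this operator is well defined and complex analytic on a Banach-analytic submanifold of $\mathcal{H}$ precisely because the relevant maps enjoy a priori bounds and bounded-shape estimates, which the present paper supplies (Theorem \ref{main-theorem-01}, Lemma \ref{bounded-shape-lemma}, and the First Return Maps theorem).

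Second, I would establish the existence of the renormalization fixed point. For a stationary type $\theta=[0;N,N,\ldots]$ the rotation number is fixed by the Gauss map, so $\mathcal{R}$ should fix, or cyclically permute, the germ of $f_\theta$ at its critical point; I would obtain a fixed (or periodic) point $f_*$ as a limit of $\mathcal{R}^n f_\theta$, whose precompactness is exactly Corollary \ref{rescaled-limits}, and pin down the limit and its uniqueness using the combinatorial rigidity announced in \cite{Lim23}. One then wants $\mathcal{R}^n g\to f_*$ for every $g\in\mathcal{H}$ with this combinatorics and rotation number — contraction along the ``hybrid leaf'' — which again runs on a priori bounds together with a pull-back/rigidity argument in the style of Sullivan, Lyubich, and de Faria--de Melo.

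Third comes hyperbolicity of $\mathcal{R}$ at $f_*$: one must show that $D\mathcal{R}_{f_*}$ has spectrum splitting into a codimension-one contracting part, tangent to deformations that change neither the rotation number nor the conformal dynamics on the curve, and a one-dimensional expanding part with eigenvalue $\delta=\delta(d_0,d_\infty,N)$, $|\delta|>1$, transverse to it — the analogue of the ``essentially bounded'' and small-orbits arguments of Lyubich for the Feigenbaum operator and of Yampolsky for critical circle maps. Finally, for the parameter statement, one verifies that the holomorphic family $\{F_c\}$ of Proposition \ref{unicritical-formula} is transverse to the stable manifold $W^s(f_*)$ at $c(\theta)$ (a transversality provable from a phase--parameter relation together with univalence of the critical-value map near $c(\theta)$), and then a Douady--Hubbard--Lyubich--Schwick style argument identifies $\mathcal{M}_{d_0,d_\infty}$ near $c(\theta)$, after the holonomy onto $W^u(f_*)$, with an $\mathcal{R}$-invariant set; the affine blow-ups by $\delta^n$ then converge in the Hausdorff topology, i.e. $\mathcal{M}_{d_0,d_\infty}$ is asymptotically self-similar at $c(\theta)$ with scaling ratio $\delta$.

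The main obstacle is the combination of the first and third steps: making $\mathcal{R}$ a bona fide complex-analytic operator on a Banach manifold when the underlying invariant object is a quasicircle carrying a critical point — so that neither the Inou--Shishikura near-parabolic machinery nor McMullen's smooth cylinder renormalization applies verbatim — and then carrying out the spectral analysis that yields hyperbolicity with a one-dimensional unstable direction. This is genuinely open; what the present paper contributes are exactly the inputs (a priori bounds, precompactness of rescaled first returns, bounded-shape control of the dynamical domains) that make the construction and the compactness arguments of the first two steps plausible, while the hyperbolicity of $D\mathcal{R}_{f_*}$ remains the crux.
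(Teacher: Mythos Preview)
The statement you are addressing is labelled \emph{Conjecture} in the paper and is not accompanied by any proof; it is explicitly presented as an open problem motivated by the numerical evidence in Figures~\ref{fig:parspace1}--\ref{fig:parspace2} and by analogy with \cite{L99, DLS}. So there is no ``paper's own proof'' to compare your proposal against.

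Your write-up is not a proof but a research programme, and you yourself correctly flag this at the end (``This is genuinely open''). As such it is a reasonable sketch of the standard renormalization strategy one would attempt, and it accurately identifies what the present paper contributes: the a priori bounds (Theorem~\ref{main-theorem-01}), bounded-shape control (Lemma~\ref{bounded-shape-lemma}), and precompactness of rescaled first returns (Corollary~\ref{rescaled-limits}) are precisely the ingredients that make the compactness and fixed-point steps plausible. Your identification of the main obstacle --- constructing $\mathcal{R}$ as a genuine complex-analytic operator on a Banach manifold when the invariant object is a quasicircle with a critical point, and then establishing hyperbolicity of $D\mathcal{R}$ at the fixed point --- is on target; neither step is carried out here or in the announced sequel \cite{Lim23}, which addresses rigidity and scaling but not operator hyperbolicity. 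In short: your proposal is an honest outline of what a proof would require, not a proof, and the paper makes no stronger claim.
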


In a follow up work \cite{Lim24}, we construct a hyperbolic renormalization operator associated to unicritical Herman curves. However, Conjecture \ref{conj:self-similarity} remains open.

\appendix


\section{Near-degenerate regime}
\label{sec:near-degenerate-regime}

\subsection{Extremal width}
\label{ss:extremal-width}
Given a family $\mathcal{G}$ of curves on a Riemann surface $S$, we denote by $W(\mathcal{G})$ the extremal width of $\mathcal{G}$. We list without proof a number of fundamental results on extremal width. (See \cite{A06} and the appendix in \cite{KL05} for details.)

\begin{proposition}[Parallel Law]
    \label{parallel law}
    For any two curve families $\mathcal{G}_1$ and $\mathcal{G}_2$,
    \[
    W(\mathcal{G}_1 \cup \mathcal{G}_2) \leq W(\mathcal{G}_1) + W(\mathcal{G}_2).
    \]
    Equality is achieved when $\mathcal{G}_1$ and $\mathcal{G}_2$ have disjoint support.
\end{proposition}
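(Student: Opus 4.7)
The plan is to use the standard variational definition of extremal width. Recall that $W(\mathcal{G})$ equals the infimum of the Dirichlet integral $\int_S \rho^2 \, dA$ taken over Borel measurable conformal metrics $\rho$ on the ambient Riemann surface $S$ that are \emph{admissible} for $\mathcal{G}$, meaning $\int_\gamma \rho \, ds \geq 1$ for every locally rectifiable $\gamma \in \mathcal{G}$. Both assertions of the Parallel Law will reduce to direct comparisons between admissibility classes.

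For the inequality, I would fix $\varepsilon > 0$ and choose admissible metrics $\rho_1, \rho_2$ for $\mathcal{G}_1, \mathcal{G}_2$ with $\int \rho_i^2 \, dA \leq W(\mathcal{G}_i) + \varepsilon$. Next, form the pointwise maximum $\rho := \max(\rho_1, \rho_2)$, which remains Borel. Any $\gamma \in \mathcal{G}_1 \cup \mathcal{G}_2$ belongs to some $\mathcal{G}_i$, so $\int_\gamma \rho \, ds \geq \int_\gamma \rho_i \, ds \geq 1$; hence $\rho$ is admissible for the union. The elementary estimate $\max(\rho_1, \rho_2)^2 \leq \rho_1^2 + \rho_2^2$ then delivers
\[
W(\mathcal{G}_1 \cup \mathcal{G}_2) \leq \int_S \rho^2 \, dA \leq \int_S \rho_1^2 \, dA + \int_S \rho_2^2 \, dA \leq W(\mathcal{G}_1) + W(\mathcal{G}_2) + 2\varepsilon,
\]
and letting $\varepsilon \to 0$ finishes this half.

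For the equality case, suppose the supports of $\mathcal{G}_1$ and $\mathcal{G}_2$ lie in disjoint open subsets $S_1, S_2 \subset S$. Given any admissible $\rho$ for $\mathcal{G}_1 \cup \mathcal{G}_2$, the truncation $\rho \chi_{S_i}$ remains admissible for $\mathcal{G}_i$ since each curve in $\mathcal{G}_i$ lies in $S_i$. Disjointness of $S_1$ and $S_2$ then yields $\int_S \rho^2 \, dA \geq \int_{S_1} \rho^2 \, dA + \int_{S_2} \rho^2 \, dA \geq W(\mathcal{G}_1) + W(\mathcal{G}_2)$, and taking the infimum over admissible $\rho$ gives the reverse inequality, hence equality. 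The proof amounts to a direct application of the variational definition combined with the trivial pointwise bound $\max(a,b)^2 \leq a^2 + b^2$; the only minor bookkeeping concerns the measurability of pointwise maxima and characteristic-function truncations, which is entirely standard and poses no genuine obstacle.
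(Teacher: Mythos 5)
Your proof is correct. The paper states the Parallel Law without proof (deferring to Ahlfors and the appendix of Kahn--Lyubich), and your argument is exactly the standard one: the pointwise maximum of near-optimal admissible metrics together with $\max(\rho_1,\rho_2)^2 \leq \rho_1^2+\rho_2^2$ gives subadditivity, and truncation of an admissible metric to the (disjoint, measurable) supports gives the reverse inequality. The only cosmetic point is that you need not assume the supports lie in disjoint \emph{open} sets; restricting $\rho$ to any disjoint measurable sets containing the respective curve families works verbatim.
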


We say that a curve family $\mathcal{G}$ \emph{overflows} another curve family $\mathcal{H}$, denoted by $\mathcal{H} < \mathcal{G}$, if every curve in $\mathcal{G}$ contains a curve in $\mathcal{H}$ (curves in $\mathcal{G}$ are longer and fewer). We also say that $\mathcal{H}$ is a \emph{restriction} of $\mathcal{G}$ if $\mathcal{G}$ overflows $\mathcal{H}$ but not any proper subfamily of $\mathcal{H}$ (curves in $\mathcal{G}$ are longer, but not more nor fewer).

Denote by $x \oplus y$ the harmonic sum $(x^{-1}+y^{-1})^{-1}$.

\begin{proposition}[Series Law]
    \label{series law}
    Suppose a curve family $\mathcal{G}$ overflows two disjoint curve families $\mathcal{G}_1$ and $\mathcal{G}_2$. Then,
    \[
    W(\mathcal{G}) \leq W(\mathcal{G}_1) \oplus W(\mathcal{G}_2).
    \]
\end{proposition}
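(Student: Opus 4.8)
The plan is to reduce the Series Law to the Parallel Law (Proposition \ref{parallel law}) by exploiting the hypothesis that $\mathcal{G}$ overflows two \emph{disjoint} families $\mathcal{G}_1$ and $\mathcal{G}_2$. Recall that the extremal width of a curve family $\mathcal{G}$ on a Riemann surface $S$ equals the reciprocal of the extremal length $L(\mathcal{G}) = \sup_\rho \inf_{\gamma \in \mathcal{G}} \ell_\rho(\gamma)^2 / \operatorname{Area}(\rho)$, so the inequality $W(\mathcal{G}) \leq W(\mathcal{G}_1) \oplus W(\mathcal{G}_2)$ is equivalent to the extremal length inequality $L(\mathcal{G}) \geq L(\mathcal{G}_1) + L(\mathcal{G}_2)$. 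First I would fix near-extremal metrics $\rho_1$ and $\rho_2$ for $\mathcal{G}_1$ and $\mathcal{G}_2$ respectively, normalized so that $\inf_{\gamma \in \mathcal{G}_i} \ell_{\rho_i}(\gamma) \geq 1$ while $\operatorname{Area}(\rho_i)$ is within $\varepsilon$ of $L(\mathcal{G}_i)$; since $\mathcal{G}_1$ and $\mathcal{G}_2$ have disjoint support, I may further arrange (by multiplying by the indicator of the support, which only decreases area and does not decrease the lengths of curves in the respective family) that $\rho_1$ and $\rho_2$ have disjoint support.

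Next I would form the competing metric $\rho := \rho_1 + \rho_2$ for the family $\mathcal{G}$. The key computation is the length estimate: take any $\gamma \in \mathcal{G}$. By the overflow hypothesis $\mathcal{G}_1 < \mathcal{G}$ and $\mathcal{G}_2 < \mathcal{G}$, the curve $\gamma$ contains a subcurve $\gamma_1 \in \mathcal{G}_1$ and a subcurve $\gamma_2 \in \mathcal{G}_2$. Because $\rho_1$ and $\rho_2$ have disjoint support, traversing $\gamma$ we pick up $\rho$-length at least $\ell_{\rho_1}(\gamma_1)$ from the part lying in $\operatorname{supp}(\rho_1)$ and at least $\ell_{\rho_2}(\gamma_2)$ from the part lying in $\operatorname{supp}(\rho_2)$, and these contributions do not overlap, so
\[
\ell_\rho(\gamma) \;\geq\; \ell_{\rho_1}(\gamma_1) + \ell_{\rho_2}(\gamma_2) \;\geq\; 1 + 1 \;=\; 2.
\]
Meanwhile, disjointness of supports gives $\operatorname{Area}(\rho) = \operatorname{Area}(\rho_1) + \operatorname{Area}(\rho_2) \leq L(\mathcal{G}_1) + L(\mathcal{G}_2) + 2\varepsilon$. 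Hence $L(\mathcal{G}) \geq \inf_\gamma \ell_\rho(\gamma)^2 / \operatorname{Area}(\rho) \geq 4 / (L(\mathcal{G}_1) + L(\mathcal{G}_2) + 2\varepsilon)$; this is not quite what I want, so instead I would scale more carefully, normalizing $\rho_i$ so that $\inf \ell_{\rho_i} \geq L(\mathcal{G}_i)$ and $\operatorname{Area}(\rho_i) \leq L(\mathcal{G}_i)^2$ up to $\varepsilon$ (the standard extremal-length normalization), whence $\ell_\rho(\gamma) \geq L(\mathcal{G}_1) + L(\mathcal{G}_2)$ and $\operatorname{Area}(\rho) \leq L(\mathcal{G}_1)^2 + L(\mathcal{G}_2)^2 + 2\varepsilon \leq (L(\mathcal{G}_1)+L(\mathcal{G}_2))^2$, giving $L(\mathcal{G}) \geq L(\mathcal{G}_1) + L(\mathcal{G}_2)$ after letting $\varepsilon \to 0$. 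Taking reciprocals yields exactly $W(\mathcal{G}) \leq W(\mathcal{G}_1) \oplus W(\mathcal{G}_2)$.

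The main obstacle — and the place where the disjointness hypothesis is genuinely used — is ensuring that the length contributions from $\rho_1$ and $\rho_2$ along a single curve $\gamma \in \mathcal{G}$ genuinely \emph{add} rather than being double-counted; this is exactly why one first cuts $\rho_i$ down to its own support and then relies on $\operatorname{supp}(\rho_1) \cap \operatorname{supp}(\rho_2) = \emptyset$. A secondary technical point is the bookkeeping with the $\varepsilon$'s and the precise normalization of the near-extremal metrics, but this is routine once the additivity of lengths along $\gamma$ is in place. Since the statement is listed among the standard facts with references to \cite{A06} and the appendix of \cite{KL05}, I would in fact simply cite those sources; the sketch above is the argument they contain.
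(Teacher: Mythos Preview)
Your final instinct is exactly right: the paper does not prove this proposition at all, listing it among the ``fundamental results on extremal width'' stated without proof and citing \cite{A06} and the appendix of \cite{KL05}. So simply citing those sources is precisely what the paper does.

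That said, your sketch contains a genuine bookkeeping slip in the second normalization. If you scale a near-extremal metric $\rho_i$ so that $\inf_{\gamma\in\mathcal{G}_i}\ell_{\rho_i}(\gamma)=L(\mathcal{G}_i)$, then from $(\inf\ell_{\rho_i})^2/\operatorname{Area}(\rho_i)\approx L(\mathcal{G}_i)$ you get $\operatorname{Area}(\rho_i)\approx L(\mathcal{G}_i)$, \emph{not} $L(\mathcal{G}_i)^2$. With the correct area bound, $\operatorname{Area}(\rho)\approx L(\mathcal{G}_1)+L(\mathcal{G}_2)$ and hence
\[
L(\mathcal{G})\;\geq\;\frac{(L(\mathcal{G}_1)+L(\mathcal{G}_2))^2}{L(\mathcal{G}_1)+L(\mathcal{G}_2)}\;=\;L(\mathcal{G}_1)+L(\mathcal{G}_2),
\]
which is what you want. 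As written, your claim $\operatorname{Area}(\rho_i)\leq L(\mathcal{G}_i)^2$ leads to $L(\mathcal{G})\geq (L_1+L_2)^2/(L_1+L_2)^2=1$, which says nothing. The underlying idea---disjoint supports make the $\rho_i$-lengths along a single $\gamma\in\mathcal{G}$ genuinely additive---is the standard one and is correct.
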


The following proposition allows us to convert harmonic sums into friendlier expressions. 

\begin{proposition}
    \label{harmonicsum}
    For any positive numbers $a_1, \ldots, a_n$,
    \[ 
    \bigoplus_{i=1}^n a_i \leq \min \left\{ a_1, \ldots , a_n, \frac{1}{n}\max_i a_i, \frac{1}{n^2} \sum_{i=1}^n a_i \right\}.
    \]
\end{proposition}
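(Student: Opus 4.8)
The inequality $\bigoplus_{i=1}^n a_i \leq \min\{a_1,\ldots,a_n, \tfrac{1}{n}\max_i a_i, \tfrac{1}{n^2}\sum_{i=1}^n a_i\}$ is purely arithmetic, so the plan is to verify the three bounds inside the minimum one at a time, working directly from the definition $\bigoplus_{i=1}^n a_i = \left(\sum_{i=1}^n a_i^{-1}\right)^{-1}$. For the first bound, $\bigoplus_i a_i \leq a_j$ for each fixed $j$: since all $a_i > 0$, we have $\sum_i a_i^{-1} \geq a_j^{-1}$, and inverting the inequality (which reverses it, as both sides are positive) gives $\bigoplus_i a_i \leq a_j$; taking the minimum over $j$ yields $\bigoplus_i a_i \leq \min_j a_j$.

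For the second bound, apply the first bound with $a_j$ chosen to be the minimum: $\bigoplus_i a_i \leq \min_i a_i$. Then I would relate $\min_i a_i$ to $\tfrac{1}{n}\max_i a_i$ — but this is false in general, so instead the correct route is to bound $\sum_i a_i^{-1} \geq n \cdot (\max_i a_i)^{-1}$, since each term $a_i^{-1} \geq (\max_i a_i)^{-1}$. Inverting gives $\bigoplus_i a_i \leq \tfrac{1}{n}\max_i a_i$. For the third bound, I would use the Cauchy--Schwarz (or AM--HM) inequality: $\left(\sum_i a_i\right)\left(\sum_i a_i^{-1}\right) \geq n^2$, which rearranges to $\left(\sum_i a_i^{-1}\right)^{-1} \leq \tfrac{1}{n^2}\sum_i a_i$, i.e. $\bigoplus_i a_i \leq \tfrac{1}{n^2}\sum_i a_i$. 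Since $\bigoplus_i a_i$ is simultaneously bounded above by all three quantities, it is bounded above by their minimum, completing the proof.

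There is essentially no obstacle here; the only mild subtlety is remembering that inverting an inequality between positive reals reverses its direction, and recalling the AM--HM inequality (equivalently Cauchy--Schwarz applied to the vectors $(\sqrt{a_i})$ and $(1/\sqrt{a_i})$) for the third estimate. I would present the argument compactly in three short displayed lines, one per bound, and conclude. No appeal to any earlier result in the paper is needed beyond the definition of $\oplus$ recalled just above the statement.
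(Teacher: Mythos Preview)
Your argument is correct and complete. The paper actually states this proposition without proof (it is listed among the ``fundamental results on extremal width'' that the appendix records without justification), so there is nothing to compare against; your three-line verification from the definition of $\oplus$ together with the AM--HM inequality is exactly the intended elementary check.
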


Extremal width is invariant under conformal maps. More generally, we have the following transformation rule.

\begin{proposition}
    \label{transformation law}
    Let $f: U\to V$ be a holomorphic map between two Riemann surfaces and $\mathcal{G}$ be a family of curves in $U$. Then,
    \[
    W(f(\mathcal{G})) \leq W(\mathcal{G}).
    \]
    If $f$ is at most $d$ to $1$, then
    \[
    W(\mathcal{G}) \leq d \cdot W(f(\mathcal{G})).
    \]
\end{proposition}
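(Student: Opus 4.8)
\textbf{Proof proposal for Proposition \ref{transformation law}.}

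The plan is to establish both inequalities directly from the definition of extremal width as a supremum over admissible conformal metrics (length-area method). Recall that for a curve family $\mathcal{G}$ on a Riemann surface $U$, the extremal width $W(\mathcal{G})$ equals $\sup_\rho L_\rho(\mathcal{G})^2 / A_\rho(U)$, where $\rho$ ranges over Borel conformal metrics $\rho\,|dz|$ on $U$, $L_\rho(\mathcal{G}) = \inf_{\gamma \in \mathcal{G}} \int_\gamma \rho\,|dz|$ is the $\rho$-length of the shortest curve, and $A_\rho(U) = \int_U \rho^2\,\frac{i}{2}dz\wedge d\bar z$ is the $\rho$-area; equivalently $W(\mathcal{G})$ is the reciprocal of the extremal length (modulus) of $\mathcal{G}$. (This is the convention used throughout \cite{A06} and the appendix of \cite{KL05}, which the paper cites as its reference.)

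For the first inequality, $W(f(\mathcal{G})) \leq W(\mathcal{G})$: given any conformal metric $\rho\,|dw|$ on $V$, pull it back to the metric $f^*\rho := (\rho\circ f)\,|f'|\,|dz|$ on $U$. Each curve $\gamma \in \mathcal{G}$ maps to a curve $f\circ\gamma \in f(\mathcal{G})$ with $\int_{f\circ\gamma}\rho\,|dw| = \int_\gamma (f^*\rho)\,|dz|$, so $L_{f^*\rho}(\mathcal{G}) \geq L_\rho(f(\mathcal{G}))$. On the area side, since $f$ is holomorphic, $\int_U (f^*\rho)^2 = \int_U (\rho\circ f)^2 |f'|^2 \geq \int_V \rho^2 = A_\rho(V)$ when $f$ is injective, and more generally $\int_U (f^*\rho)^2 \geq A_\rho(f(U)) \geq$ the area needed for the family on $f(U)$; in fact for the extremal-length inequality the clean statement is that the pullback metric has area at least $A_\rho(V)$ restricted to the image — but it is cleaner to phrase it via extremal length and the standard fact that $f(\mathcal G)$ is a larger family than (the image of) $\mathcal G$ only up to the conformal change. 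The honest and simplest route: extremal length satisfies $\lambda(f(\mathcal{G})) \geq \lambda(\mathcal{G})$ because a metric $\rho$ on $V$ admissible for $f(\mathcal{G})$ pulls back to a metric on $U$ admissible for $\mathcal{G}$ with no larger area (the area can only shrink or stay the same under pullback by a holomorphic, hence area-nonincreasing-after-normalization map when counting multiplicity correctly — more precisely $\int_U (f^*\rho)^2 = \int_V \rho^2 \cdot (\text{multiplicity})$, and dividing through, one compares $L^2/A$ correctly). Taking reciprocals gives $W(f(\mathcal{G})) \leq W(\mathcal{G})$.

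For the second inequality, suppose $f$ is at most $d$-to-$1$. Given a conformal metric $\sigma\,|dz|$ on $U$ admissible in the extremal problem for $\mathcal{G}$, push it forward: define $\rho$ on $V$ by $\rho(w)^2 := \sum_{z \in f^{-1}(w)} \sigma(z)^2 / |f'(z)|^2$ away from critical values (a measurable definition, extended arbitrarily on the measure-zero critical value set). Then by the area formula, $A_\rho(V) = \int_V \rho^2 = \int_U \sigma^2 = A_\sigma(U)$. For lengths: each curve $\gamma' \in f(\mathcal{G})$ lifts to some $\gamma \in \mathcal{G}$ with $f\circ\gamma = \gamma'$; by Cauchy–Schwarz applied to the (at most $d$) preimage branches, $\rho(w) \geq \sigma(z)/(|f'(z)|\sqrt{d})$ for the relevant branch, giving $\int_{\gamma'}\rho\,|dw| \geq \frac{1}{\sqrt d}\int_\gamma \sigma\,|dz| \geq \frac{1}{\sqrt d} L_\sigma(\mathcal{G})$. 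Hence $L_\rho(f(\mathcal{G}))^2 / A_\rho(V) \geq \frac{1}{d} L_\sigma(\mathcal{G})^2 / A_\sigma(U)$, and taking suprema over $\sigma$ yields $W(f(\mathcal{G})) \geq \frac{1}{d} W(\mathcal{G})$, i.e. $W(\mathcal{G}) \leq d\cdot W(f(\mathcal{G}))$.

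The main obstacle — really the only subtlety — is bookkeeping around critical points of $f$ and the branching multiplicity in the area computation: one must be careful that the set of critical values is measure zero (true, since $f$ is holomorphic and nonconstant) so it does not affect integrals, and that the Cauchy–Schwarz step correctly accounts for the possibility of fewer than $d$ preimages (which only helps, since averaging over fewer branches gives a larger lower bound for $\rho$). Since the paper explicitly defers to \cite{A06} and \cite{KL05} for "fundamental results on extremal width," I would in fact present this as a brief citation-backed sketch rather than a from-scratch proof, noting that it is the standard length–area argument with the push-forward metric $\rho^2 = \sum_{f(z)=w} \sigma(z)^2/|f'(z)|^2$.
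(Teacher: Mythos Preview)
The paper does not actually prove this proposition: it is one of several ``fundamental results on extremal width'' that the appendix lists \emph{without proof}, deferring to \cite{A06} and \cite{KL05}. So your closing instinct---to present this as a citation-backed standard fact---is exactly what the paper does.

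That said, your sketch has a genuine mix-up. You have the two arguments swapped, and this is why your first argument visibly falls apart (``area can only shrink or stay the same\ldots counting multiplicity correctly'') while your second one needs an artificial Cauchy--Schwarz step. With extremal width written correctly as $W(\mathcal{G})=\inf_\rho A_\rho(U)/L_\rho(\mathcal{G})^2$ (your displayed formula $\sup L^2/A$ is actually extremal \emph{length}, contradicting your own next sentence), the clean arguments run as follows. For $W(f(\mathcal{G}))\le W(\mathcal{G})$: given $\sigma$ admissible for $\mathcal{G}$ on $U$, push forward by $\rho(w)^2:=\sum_{f(z)=w}\sigma(z)^2/|f'(z)|^2$; then $A_\rho(V)=A_\sigma(U)$ exactly, and for any $\gamma'=f\circ\gamma$ one has $\rho(f(z))\ge \sigma(z)/|f'(z)|$ (no $\sqrt d$ needed!), so $L_\rho(\gamma')\ge L_\sigma(\gamma)$---this gives the first inequality with no degree hypothesis. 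For $W(\mathcal{G})\le d\,W(f(\mathcal{G}))$: given $\rho$ admissible for $f(\mathcal{G})$ on $V$, pull back to $f^*\rho$; lengths are preserved, and $A_{f^*\rho}(U)=\int_V\rho^2\cdot\#f^{-1}(w)\le d\,A_\rho(V)$---this is where the degree bound enters, and only here. Your pushforward argument, as written, actually proves (a weakened form of) the first inequality, not the second; and your pullback argument cannot prove the first inequality at all, since without a degree bound the pulled-back area is uncontrolled.
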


A \emph{(conformal) rectangle} $P$ on a surface $S$ is the image of a continuous map $\phi: [0,m]\times[0,1] \to \overline{S}$ that restricts to a conformal embedding in the interior. The vertical sides of a rectangle $P$ are $\phi(\{0\}\times [0,1])$ and $\phi(\{m\} \times [0,1])$, and the horizontal sides of $P$ are $\phi([0,m]\times \{0\})$ and $\phi([0,1]\times\{m\})$. A curve in $P$ is called \emph{vertical} if it connects the two horizontal sides of $P$. The \emph{vertical foliation} of $P$ is defined to be the collection of curves
\[
\mathcal{F}(P) := \{ \phi\left( \{t\} \times (0,1) \right) \: | \: t \in (0,m)\}.
\]
The \emph{width} of $P$ is 
\[
W(P) := W(\mathcal{F}(P))= m.
\]
We say that $P$ \emph{crosses} a curve $\gamma$ if every vertical curve in $P$ intersects $\gamma$.

\begin{proposition}[Non-Crossing Principle]
\label{non-crossing-principle}
    If a pair of rectangles $P_1$ and $P_2$ on $S$ has width $W(P_1), W(P_2)> 1$, then they never cross, i.e., there exist disjoint leaves $\gamma_1 \in \mathcal{F}(P_1)$ and $\gamma_2 \in \mathcal{F}(P_2)$.
\end{proposition}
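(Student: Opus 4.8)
The plan is to argue by contradiction using the extremal-length (extremal-width) definition directly, together with the Parallel Law. Suppose for contradiction that $P_1$ and $P_2$ cross, i.e. every vertical leaf of $P_1$ meets every vertical leaf of $P_2$. Intuitively, if both rectangles are ``wide'' in the conformal sense, each carries a lot of modulus transversely, and mutual crossing forces their supports to overlap so substantially that we can compare a metric for one family against the other and violate the width bounds. The cleanest route is the classical observation of Jenkins: on the overlap region, consider the extremal metric $\rho_1$ realizing $W(P_1) = W(\mathcal{F}(P_1))$ (so $\rho_1$ is, after uniformizing $P_1$ to a Euclidean rectangle $[0,m_1]\times[0,1]$, the pullback of the Euclidean metric, with $\rho_1$-length of every vertical leaf $\geq 1$ and total $\rho_1$-area $= m_1 = W(P_1)$). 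Similarly take $\rho_2$ for $P_2$.

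First I would make precise the crossing hypothesis: since $P_1$ crosses $P_2$ means every vertical curve of $P_1$ intersects $\gamma$ for every $\gamma\in\mathcal F(P_2)$, and symmetrically, the two open rectangles share a common subregion $\Omega$ through which all leaves of both foliations pass. On $\Omega$, every vertical leaf $\gamma_2\in\mathcal F(P_2)$, restricted to $\Omega$, still joins the two horizontal sides of $P_1$ across $\Omega$ (this uses that $\gamma_2$ meets every leaf of $P_1$, hence cannot be contained in a proper sub-band). Therefore the $\rho_1$-length of $\gamma_2\cap\Omega$ is $\geq 1$. Using the metric $\rho := \max(\rho_1,\rho_2)$ on $\Omega$ (and $0$ outside), every leaf of $\mathcal F(P_1)$ and every leaf of $\mathcal F(P_2)$ has $\rho$-length $\geq 1$, so $\rho$ is admissible for both families, giving
\[
W(P_1) \leq \operatorname{Area}_\rho(\Omega), \qquad W(P_2) \leq \operatorname{Area}_\rho(\Omega).
\]
But $\operatorname{Area}_\rho(\Omega) \leq \operatorname{Area}_{\rho_1}(\Omega) + \operatorname{Area}_{\rho_2}(\Omega) \leq W(P_1) + W(P_2)$ is too weak; instead one uses the sharper fact that on $\Omega$ the two foliations are \emph{transverse}, so a single conformal coordinate on $\Omega$ sees $\rho_1$ and $\rho_2$ as (comparable to) Euclidean metrics in two directions. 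Then a Fubini/Cauchy–Schwarz estimate on $\Omega$ forces $W(P_1)\cdot W(P_2) \leq \operatorname{Area}(\Omega \text{ in its own conformal structure}) \leq$ something, and combined with $W(P_i) \le \operatorname{Area}_{\rho_i}$ one extracts $W(P_1) W(P_2) \leq 1$ — contradicting $W(P_1), W(P_2) > 1$. I would phrase this last inequality as the standard ``two transverse foliations of a region $\Omega$ have reciprocal extremal widths'' statement, i.e. $W(\mathcal F(P_1)|_\Omega)\cdot W(\mathcal F(P_2)|_\Omega) \le 1$ when the leaves of the two families cross each other, which is Jenkins' inequality; then $W(P_i) \le W(\mathcal F(P_i)|_\Omega)$ is false in general, so more carefully one should restrict attention to the sub-rectangles $P_i' \subset P_i$ whose leaves actually lie in $\Omega$ and note crossing forces $W(P_i') = W(P_i)$ up to the observation that \emph{every} leaf of $P_i$ meets $\Omega$.

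The main obstacle, and the step requiring care, is exactly this reduction to a common region $\Omega$ and the clean application of Jenkins' reciprocal inequality: one must verify that the crossing hypothesis genuinely implies that the restriction of each foliation to the overlap still has the \emph{full} width of the ambient rectangle (no width is ``lost'' outside $\Omega$), and that the two restricted foliations are transverse in the precise sense needed. Once that is set up, the inequality $W(P_1)W(P_2)\le 1$ is immediate from the standard theory (cf. the appendix of \cite{KL05} or \cite{A06}), and the contradiction with $W(P_1),W(P_2)>1$ closes the proof. An alternative, perhaps more elementary, route avoids Jenkins entirely: uniformize $P_1$ to $[0,m_1]\times[0,1]$; the image of $P_2$ is a topological rectangle whose vertical leaves all run ``horizontally'' (joining the two vertical sides $\{0\}\times[0,1]$ and $\{m_1\}\times[0,1]$ of $P_1$, by the crossing hypothesis). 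Hence $W(P_2) = W(\mathcal F(P_2)) \le$ (width of the family of curves in $[0,m_1]\times[0,1]$ joining the two vertical sides) $= 1/m_1 = 1/W(P_1)$, using monotonicity of extremal width under inclusion of families and the explicit computation for a Euclidean rectangle. This gives $W(P_1)W(P_2)\le 1$ directly, contradicting the hypothesis. I would present this second argument as the main proof, since it only invokes the conformal invariance of width, monotonicity, and the elementary width of a Euclidean rectangle.
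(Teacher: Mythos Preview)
The paper does not prove this proposition; it is listed in the appendix among the standard extremal-width facts quoted without proof from \cite{A06} and \cite{KL05}. So there is no ``paper's proof'' to compare against. Your second argument --- uniformize $P_1$ and derive $W(P_1)W(P_2)\le 1$ --- is the standard one and yields the correct contradiction.

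There is, however, one genuine gap in your justification of the key inequality $W(P_2)\le 1/W(P_1)$. You claim that in the uniformized coordinates $P_1=[0,m_1]\times[0,1]$ the family $\mathcal F(P_2)$ overflows the family of curves joining the two vertical sides of $P_1$, and then invoke monotonicity. But a leaf $\gamma_2\in\mathcal F(P_2)$ need not contain a \emph{connected} sub-arc in $P_1$ joining $\{0\}\times[0,1]$ to $\{m_1\}\times[0,1]$: it may exit $P_1$ through a horizontal side and re-enter, so that $\gamma_2\cap P_1$ is a disjoint union of arcs none of which individually crosses $P_1$ horizontally. The crossing hypothesis only guarantees that the \emph{union} of these arcs meets every vertical line $\{x=t\}$; that is not enough for overflow in the sense of the paper.

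The fix is to bypass overflow and test the extremal length of $\mathcal F(P_2)$ directly against the Euclidean density $\rho$ on $P_1$ (extended by zero outside $P_1$). Since $\gamma_2$ meets every vertical fiber $\{x=t\}\times[0,1]$, projection to the $x$-coordinate gives
\[
\ell_\rho(\gamma_2)\ \ge\ \int_{\gamma_2\cap P_1}|dx|\ =\ \int_0^{m_1}\#\bigl(\gamma_2\cap(\{t\}\times[0,1])\bigr)\,dt\ \ge\ m_1,
\]
while $\operatorname{Area}(\rho)=m_1$. Hence the extremal length of $\mathcal F(P_2)$ is at least $m_1^2/m_1=m_1$, i.e.\ $W(P_2)\le 1/m_1=1/W(P_1)$. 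This is precisely the computation your first approach was reaching for (and is how the argument runs in \cite{KL05}); once stated this way, the proof is complete and entirely elementary.
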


Suppose a rectangle $P$ of width $m$ has width greater than $8$. The \emph{buffers} of $P$ are subrectangles of $P$ of the form $\phi( [0,\varepsilon)\times (0,1)$ or $\phi( (m-\varepsilon,m] \times (0,1))$ for some $\varepsilon \leq 4$. A direct consequence of the non-crossing principle is the following proposition.

\begin{proposition}[{\cite[Lemma 2.14]{KL05}}]
    Every pair of rectangles $P_1$ and $P_2$ of width greater than $8$ admits subrectangles $P_1^{new}$ and $P_2^{new}$ obtained by removing some buffers such that $P_1^{new}$ and $P_2^{new}$ have disjoint vertical sides.
\end{proposition}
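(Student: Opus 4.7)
My plan is to produce four leaves --- two serving as the vertical sides of $P_1^{\text{new}}$ and two as those of $P_2^{\text{new}}$ --- all chosen inside the buffer regions of $P_1, P_2$, so that the pair coming from $P_1$ is disjoint from the pair coming from $P_2$. The hypothesis $W(P_i) > 8$ is exactly what is needed so that each of $P_1, P_2$ admits two full-width buffers of width $4$, each itself a rectangle of width $4 > 1$, making the Non-Crossing Principle directly applicable.

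First I would fix notation for the four maximal buffers $B_1^L,\, B_1^R \subset P_1$ and $B_2^L,\, B_2^R \subset P_2$, each of width exactly $4$. The plan is then to pick arbitrary candidate vertical sides $\gamma_L \in \mathcal{F}(B_2^L)$ and $\gamma_R \in \mathcal{F}(B_2^R)$ for $P_2^{\text{new}}$, and to use the following \emph{thinness claim} to produce the vertical sides of $P_1^{\text{new}}$ in the complement: for any rectangle $B$ with $W(B)>1$ and any single leaf $\gamma$ of the vertical foliation of a rectangle $Q$ with $W(Q)>1$, the sub-family $\mathcal{F}_\gamma(B) \subset \mathcal{F}(B)$ consisting of leaves meeting $\gamma$ has extremal width at most $1$. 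Granting this, the sub-family of $\mathcal{F}(B_1^L)$ whose leaves meet $\gamma_L \cup \gamma_R$ has width at most $2$ by the Parallel Law, strictly less than $W(B_1^L)=4$; hence by the Parallel Law again its complement has positive width and contains some leaf $\alpha$. The same argument inside $B_1^R$ produces a leaf $\beta$, and the subrectangles of $P_1, P_2$ bounded by $\alpha,\beta$ and $\gamma_L,\gamma_R$ respectively are the desired $P_1^{\text{new}}, P_2^{\text{new}}$; by construction, their vertical sides are pairwise disjoint across the two rectangles.

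The main obstacle is proving the thinness claim. Since $\gamma$ itself has extremal width zero, the Non-Crossing Principle does not apply to $\gamma$ directly. I would argue by contradiction: if $W(\mathcal{F}_\gamma(B))>1$, the corresponding parameter set in $B$ has Lebesgue measure greater than $1$, so the Parallel Law gives a sub-family which in turn supports a sub-rectangle $B' \subset B$ with $W(B')>1$ every leaf of which meets $\gamma$. Conformally, the ``contact'' map $L_t^{(B')} \mapsto (\text{first intersection point of } L_t^{(B')} \text{ with } \gamma)$ would embed a width-$>1$ family of disjoint leaves into $\gamma$, and $\gamma$ is a height-$1$ curve as a leaf of $Q$ (whose horizontal foliation has height $1$). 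Transporting the family across $\gamma$ and concatenating with short arcs along $\gamma$ inside $Q$ would then realize $B'$ as effectively crossing a width-$>1$ sub-rectangle of $Q$ built as a thin neighborhood of $\gamma$ inside $Q$, contradicting the Non-Crossing Principle. The delicate technical point is the truncation and reassembly step: a leaf of $B'$ may cross $\gamma$ transversally at several points at various heights, so one must pick the intersection points consistently (e.g.\ the first-return to $\gamma$ along a chosen orientation) to prevent the reassembled curves from folding back and to guarantee that they genuinely realize the crossing required for the contradiction.
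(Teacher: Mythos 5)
The paper offers no proof of this proposition --- it is quoted directly from \cite[Lemma 2.14]{KL05} --- so your argument has to stand on its own, and unfortunately its central \emph{thinness claim} is false. Take $P_2=[0,10]\times[0,1]$ with vertical leaves $\{s\}\times[0,1]$ (width $10$), and $P_1=[1.95,2.05]\times[0,1]$ with \emph{horizontal} leaves $[1.95,2.05]\times\{y\}$ (width $1/0.1=10$). The curve $\gamma=\{2\}\times[0,1]$ is a leaf of the width-$4$ left buffer $Q=B_2^L=[0,4]\times[0,1]$ of $P_2$, yet \emph{every} leaf of $P_1$ --- in particular every leaf of each buffer $B_1^L$, $B_1^R$ --- meets $\gamma$, so $\mathcal{F}_\gamma(B_1^L)=\mathcal{F}(B_1^L)$ has width $4$, not $\leq 1$. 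Your contradiction argument never closes because a thin collar of a single leaf $\gamma$ inside $Q$, foliated parallel to $\gamma$, is a long, thin rectangle whose width tends to $0$ (not $>1$), so the Non-Crossing Principle says nothing against $B'$. Worse, the example shows your overall strategy cannot be repaired: with $\gamma_L=\{2\}\times[0,1]$ chosen ``arbitrarily'', \emph{no} leaf of $P_1$ avoids it, so the sides of $P_2^{new}$ cannot be fixed in advance --- the two pairs of sides must be chosen adaptively.

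The statement does follow from a quantitative form of the non-crossing argument, but the bookkeeping is different. For a leaf $L_t$ of $B_1^L$ let $Y_{LL}(t)\subset(0,4)$ be the set of parameters of leaves of $B_2^L$ that $L_t$ meets. If $|Y_{LL}(t)|\geq 2$, then in the uniformizing coordinates of $B_2^L$ the set $L_t\cap B_2^L$ projects onto $Y_{LL}(t)$ and hence has length $\geq 2$; taking $\rho$ to be the Euclidean metric of those coordinates (area $4$) shows the family of such ``bad'' $t$ has extremal width, hence parameter measure, at most $4/2^2=1$. Running this over all four buffer pairs, the set of $t$ with $|Y_{LL}(t)|<2$ and $|Y_{LR}(t)|<2$ has measure $\geq 4-1-1=2>0$, and likewise for $t'$ in $B_1^R$; then $|Y_{LL}(t)\cup Y_{RL}(t')|<4$ leaves room for a leaf of $B_2^L$ disjoint from both chosen leaves of $P_1$, and similarly in $B_2^R$. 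This is exactly where the threshold $8=2\cdot 4$ enters.
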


When $S$ has boundary, we say that a curve $\gamma : (0,1) \to S$ is \emph{proper} if it has well-defined endpoints $\gamma(0)$ and $\gamma(1)$ contained in $\partial S$. For any disjoint subsets $I$ and $J$ of $\partial S$, we denote by $\mathcal{F}_S (I,J)$ and $W_S(I,J)$ the family of proper curves in $S$ that connect $I$ and $J$, and its width respectively. When $S$ is a Jordan disk, the width $W_S (I,J)$ can be estimated as follows.

\begin{proposition}[Log-Rule, {\cite[Lemma 2.5]{DL22}}]
\label{log-rule}
    Suppose $S$ is a Jordan disk and suppose its boundary $\partial S$ is partitioned into four intervals $I_1, I_2, I_3, I_4$, labelled cyclically. Denote by $|I_i|$ the harmonic measure of $I_i$ in $S$ about a point $x \in S$.
    \begin{enumerate}[label=\textnormal{(\arabic*)}]
        \item If $\min(|I_1|, |I_3|) \geq \min(|I_2|, |I_4|)$, then
        \[
        W_S(I_1,I_3) \asymp \log \frac{\min \{|I_1|, |I_3|\}}{\min\{|I_2|, |I_4|\}} + 1;
        \]
        \item Otherwise,
        \[
        W_S(I_1,I_3) \asymp \left( \log \frac{\min\{|I_2|, |I_4|\}}{\min\{|I_1|, |I_3|\}} + 1 \right)^{-1}.
        \]
    \end{enumerate}
\end{proposition}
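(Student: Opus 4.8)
Since this proposition is quoted from \cite[Lemma~2.5]{DL22}, let me indicate how I would prove it from scratch; it is a classical estimate on the modulus of a conformal quadrilateral, and the argument is self-contained.

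\emph{Reduction to the disk.} First I would uniformize. By the Riemann mapping theorem there is a conformal isomorphism $\Psi\colon S\to\D$ with $\Psi(x)=0$; under $\Psi$ the harmonic measure about $x$ becomes normalized Lebesgue measure on $\partial\D$, and the quantity $W_S(I_1,I_3)$ — being the extremal width of the curve family $\mathcal F_S(I_1,I_3)$, a conformal invariant — is unchanged. So I may assume $S=\D$, $x=0$, and $a_i:=|I_i|=\tfrac1{2\pi}\operatorname{length}(I_i)$ with $a_1+a_2+a_3+a_4=1$; write $p:=\min(a_1,a_3)$ and $q:=\min(a_2,a_4)$. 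In this normalization $W_\D(I_1,I_3)$ is precisely the modulus of the conformal quadrilateral $(\D;I_1,I_2,I_3,I_4)$ with the distinguished pair $I_1,I_3$; equivalently it equals $\lambda_\D(I_2,I_4)$, the extremal length of the family of proper curves joining the two gap arcs $I_2$ and $I_4$ (the conjugate modulus, $=\lambda_\D(I_1,I_3)^{-1}$). In particular it depends only on the cross-ratio $\chi$ of the four dividing points $\partial I_1\cap\partial I_2,\dots$ on $\partial\D$.

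\emph{Identifying the relevant parameter.} The key step is to show that, up to bounded factors, the modulus is governed by $p$ and $q$ alone, i.e. that the asymmetry between $a_1$ and $a_3$, and between $a_2$ and $a_4$, is harmless. I would prove this by a serial decomposition: cut $\D$ along a crosscut $\gamma$ of bounded harmonic diameter that separates a neighborhood $\Omega$ of the shorter of $I_1,I_3$ — where the genuine constriction lives — from the rest of the disk, estimate directly that the sub-quadrilateral $\Omega$ (connecting the short arc to $\gamma$) has modulus $\asymp\log\tfrac pq+1$ in Case 1, and absorb the remaining piece at cost $O(1)$ by the Series Law (Proposition~\ref{series law}). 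Concretely, mapping $\D$ with its four marked arcs conformally onto a horizontal strip of height $\pi$ via $z\mapsto\log\frac{z-b}{z-b'}$ for the appropriate endpoints $b,b'$ (as in the Cayley/logarithm normalization), the image of $I_1$ is a half-line $\{\operatorname{Im}=0,\ s<\sigma_1\}$ on one edge and the image of $I_3$ a half-line $\{\operatorname{Im}=\pi,\ s>\sigma_3\}$ on the other, with $\sigma_1-\sigma_3\asymp\log\tfrac pq$ regardless of the asymmetry. The modulus of the resulting region is then a monotone function of $\sigma_1-\sigma_3$ and the height $\pi$: it is $\asymp(\sigma_1-\sigma_3)/\pi+O(1)$ when $I_1$ and $I_3$ overlap in the $s$-coordinate (Case~1, $p\ge q$), and $\asymp\pi/(\sigma_3-\sigma_1)$ when they are separated by a horizontal gap (Case~2, $p<q$).

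\emph{Closing the estimate.} Feeding $\sigma_1-\sigma_3\asymp\log\tfrac pq$ into the previous displayed comparison yields $W_\D(I_1,I_3)\asymp\log\tfrac pq+1$ in Case~1 and $W_\D(I_1,I_3)\asymp\bigl(\log\tfrac qp+1\bigr)^{-1}$ in Case~2, which is the assertion; equivalently one may phrase the last step through the Gr\"otzsch modulus function $\mu(r)$ (the modulus of $\D\setminus[0,r]$), using that the quadrilateral of cross-ratio $\chi$ has modulus $\tfrac1\pi\mu(k')$ for an explicit algebraic $k'=k'(\chi)$ with $\min(k',\sqrt{1-k'^2})\asymp\min(p,q)/\max(p,q)$, together with the standard facts $\log\tfrac2r\le\mu(r)\le\log\tfrac4r$ on $(0,1)$, the complementary relation $\mu(r)\,\mu(\sqrt{1-r^2})=\pi^2/4$, and $\mu(r)\asymp1$ on compact subintervals of $(0,1)$.

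\emph{Main obstacle.} The conformal normalization and the substitution into the one-variable $\mu$-asymptotics are routine. The genuine content — and the step I expect to require real care — is the comparison in the second paragraph showing that the two-scale, asymmetric configurations (one of $I_1,I_3$ much longer than the other, likewise for $I_2,I_4$) are still controlled by $p$ and $q$ alone, so that enlarging an already-dominant arc changes the modulus by only an $O(1)$ factor; this is where the crosscut $\gamma$ and the Series Law do the work. Everything else reduces to classical estimates for the Gr\"otzsch function.
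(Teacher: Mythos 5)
The paper offers no proof of this statement to compare yours against: it is imported verbatim from \cite[Lemma 2.5]{DL22} and appears in Appendix~\ref{ss:extremal-width} among the facts explicitly ``listed without proof.'' Judged on its own, your outline is the standard argument for this classical quadrilateral estimate and its skeleton is sound: uniformize so that harmonic measure becomes normalized arc length, pass to a strip of height $\pi$ by a logarithm so that the modulus becomes a monotone function of a single real parameter (the logarithm of a cross-ratio), and then verify that this parameter equals $\log(p/q)$ up to bounded error, where $p=\min(|I_1|,|I_3|)$ and $q=\min(|I_2|,|I_4|)$. Two points should be tightened before this counts as a proof. First, ``$\sigma_1-\sigma_3\asymp\log\frac{p}{q}$'' must be the additive statement $\sigma_1-\sigma_3=\log\frac{p}{q}+O(1)$ (up to a bounded positive factor, e.g.\ a factor $2$ when $I_1$ and $I_3$ are both short and separated by the long gaps); as written, $\asymp$ is vacuous when $\log(p/q)$ is near zero or negative, and this computation --- done in a normalization where the point conjugate to $\infty$ lies in the longest arc, so that harmonic measure is comparable to Euclidean length for the three bounded intervals --- is precisely where the ``only $p$ and $q$ matter'' content lives. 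Second, your crosscut reduction is misdirected: in Case~1 the width is carried by the half-annulus between scales $q$ and $p$ around the \emph{shorter gap arc} (the shorter of $I_2,I_4$), whose concentric half-circles all join $I_1$ to $I_3$, not by a neighborhood of the shorter of $I_1,I_3$; and ``absorbing the remaining piece by the Series Law'' cannot be right there, since a harmonic sum with an $O(1)$ term caps the total at $O(1)$, while in Case~1 the width is large. The Series Law (Proposition~\ref{series law}) is the correct tool for the Case~2 upper bound; in Case~1 the spillover is controlled by the Parallel Law (Proposition~\ref{parallel law}) or, most cleanly, by the duality $W_S(I_1,I_3)\,W_S(I_2,I_4)=1$, which converts the upper bound into another lower bound. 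Since your strip computation, carried out correctly, already yields both cases, I would delete the crosscut paragraph rather than repair it.
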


Given a compact subset $I$ of $S$, denote by $W(S,I)$ the extremal width of the family $\mathcal{F}(S,I)$ of proper curves in $S\backslash I$ connecting $I$ and $\partial S$. We will state important near-degenerate tools from \cite{KL05} in a manner best suited to our context.

\begin{lemma}[Quasi-Additivity Law]
\label{quasi-additivity-law}
    Suppose $S$ is a topological disk in $\mathbb{C}$ and $A_1, \ldots, A_n$ be pairwise disjoint non-empty compact connected subsets of $S$. Let
    \[
    X :=  W\left(S, \bigcup_{i=1}^n A_i\right), \quad
    Y := \sum_{i=1}^n W(S, A_i), \quad
    Z_i := W\left(S \backslash \bigcup_{j\neq i} A_j, A_i\right)  \text{ for } i=1,\ldots,n. 
    \]
    Then, there exists some $K =K(n) > 0$ such that
    \[
    Y \geq K \quad \Longrightarrow \quad \max\{X, Z_1, \ldots, Z_n\} \geq \frac{Y}{\sqrt{2n}}.
    \]
\end{lemma}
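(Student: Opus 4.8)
This statement is not proved from scratch in this paper; it is a restatement of the Quasi-Additivity Law of Kahn--Lyubich \cite{KL05}, packaged in the form most convenient for the applications in Sections \S\ref{sec:trading}--\ref{sec:amplifying-tau-degeneration}. Accordingly, the plan is to deduce it from the original formulation rather than to reprove the underlying extremal-length inequality. The heart of the matter in \cite{KL05} is the following dual pair of estimates: if the families $\mathcal{F}(S,A_i)$ are all ``fat'' (each $W(S,A_i)$ large), then the dual thin-part widths $1/W(S,A_i)$ are small, and the superadditivity of the total harmonic-type energy forces one of the three listed quantities to absorb a definite fraction of $Y$. One extracts from this that $\max\{X,Z_1,\dots,Z_n\}$ cannot be much smaller than $Y/\sqrt{2n}$ once $Y$ exceeds a threshold $K(n)$ depending only on the number of pieces.

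The steps I would carry out are as follows. First, I would recall the precise statement from \cite[\S2]{KL05} (the Quasi-Additivity Law as stated there involves a configuration of nested annuli or, equivalently, compacta inside a disk with an outer boundary, and concludes a comparison between the width of the ``combined'' family and the sum of the individual widths, modulo the ``cross'' widths $Z_i$). Second, I would verify that our hypotheses match: $S$ is a topological disk in $\mathbb{C}$, and $A_1,\dots,A_n$ are pairwise disjoint non-empty compact connected subsets, which is exactly the combinatorial input required. Third, I would identify $X=W(S,\bigcup_i A_i)$, $Y=\sum_i W(S,A_i)$, and $Z_i = W(S\backslash\bigcup_{j\neq i}A_j, A_i)$ with the corresponding quantities in the source, and observe that the constant $\sqrt{2n}$ is the explicit form taken by the abstract constant there (it comes from a Cauchy--Schwarz step applied to $n$ terms, together with a factor $2$ absorbing a bounded overlap). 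Fourth, I would confirm that the threshold $K=K(n)$ is exactly the point past which the ``fatness'' of the $\mathcal{F}(S,A_i)$ dominates the error terms in the superadditivity inequality, so that the clean conclusion $Y\geq K \Rightarrow \max\{X,Z_1,\dots,Z_n\}\geq Y/\sqrt{2n}$ holds.

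The step I expect to require the most care is the bookkeeping between the two formulations: \cite{KL05} phrases the law in terms of moduli of annuli (and their reciprocals, the widths of the ``horizontal'' families), whereas here everything is phrased in terms of widths $W(S,\cdot)$ of proper-curve families connecting a compactum to $\partial S$. Translating between these requires the standard duality $W(S,A)\cdot \operatorname{mod}(S\backslash A)=1$ for an annulus and its careful generalization when $A$ is merely compact connected rather than a Jordan curve; one must check that passing to such $A$ (e.g. by filling in the complementary components of $A$ in $S$, which does not change $\mathcal{F}(S,A)$ up to homotopy) is harmless. Once this identification is in place, the inequality is immediate from \cite{KL05}. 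I would therefore present the proof as: ``This is \cite[Quasi-Additivity Law]{KL05}; we have merely recast the conclusion using the width notation of \S\ref{ss:extremal-width}, noting that $W(S,A)$ is the reciprocal of the modulus of the thin part $S\backslash A$ and that filling complementary components of the $A_i$ does not affect any of $X$, $Y$, or the $Z_i$.''
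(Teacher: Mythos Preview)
Your proposal is correct and matches the paper's approach: the paper does not prove this lemma at all, but simply states it in the appendix as one of the ``important near-degenerate tools from \cite{KL05}'' formulated for convenience, with no further argument. Your plan to cite \cite{KL05} and note the translation into the width notation of \S\ref{ss:extremal-width} is exactly what is intended; the additional bookkeeping you outline (duality with moduli, filling complementary components) is more detail than the paper itself provides.
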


\begin{lemma}[Covering Lemma]
\label{covering-lemma}
    Let $\Lambda \Subset \Lambda' \subset U$ and $B \Subset B' \subset V$ be two nests of simply connected domains and $f: (U, \Lambda', \Lambda) \to (V,B', B)$ be a branched covering map with degrees $\text{deg}(f: \Lambda' \to B') = d$ and $\text{deg}(f: U\to V) = D$. For all $\kappa >1$, there is some $\mathbf{K} = \mathbf{K}(\kappa, D)>0$ such that if $W(U,\Lambda) = K > \mathbf{K}$, then either
    $$
    W(B', B) > \kappa K, \qquad \text{or} \qquad W(V, B) > (2\kappa d^2)^{-1} K.
    $$
\end{lemma}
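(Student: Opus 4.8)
The statement to prove is the Covering Lemma (Lemma \ref{covering-lemma}). My plan is to reduce it to the Quasi-Additivity Law by a restriction-and-overflow argument together with a degree count, following the classical Kahn--Lyubich framework. First I would set up the geometry: inside $V$ we have the nest $B \Subset B' \subset V$, and the full preimage $f^{-1}(B')$ consists of $\Lambda'$ together with other components, while the full preimage of $B$ inside $\Lambda'$ is $\Lambda$ together with components of lower ``local degree''. The idea is that the width $W(U,\Lambda)$ is large, so curves in $\mathcal{F}(U,\Lambda)$ connecting $\partial U$ to $\Lambda$ are abundant; pushing forward by $f$, their images connect $\partial V$ to $B$, but they may also meet the auxiliary preimage components of $B'$ and $B$. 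I would split the lamination $\mathcal{F}(U,\Lambda)$ into two parts: the leaves whose $f$-images stay clear of the other preimage components (these push forward to genuine curves in $\mathcal{F}(V,B)$, giving a lower bound on $W(V,B)$ after dividing by the degree $D$ via Proposition \ref{transformation law}), and the leaves that do interact with the other preimages.

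For the second, interacting part, the strategy is to apply the Quasi-Additivity Law inside $B'$. The other components $A_1,\dots,A_n$ of $f^{-1}(B)$ inside $\Lambda'$ — more precisely their $f$-images are all $B$, but the relevant picture is one copy of $B'$ containing several disjoint compacta, namely $B$ and the images under the covering of the ``secondary'' preimage pieces near $\Lambda$ — no: the cleaner route is to work upstairs. One copies the standard argument: the leaves that interact overflow, after restriction, the families $\mathcal{F}(\Lambda' \setminus \bigcup_{j\neq i} A_j, A_i)$ where $A_i$ ranges over the components of $f^{-1}(B) \cap \Lambda'$ other than $\Lambda$ itself, plus $\Lambda$. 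If all these $Z_i := W(\Lambda' \setminus \bigcup_{j \neq i}A_j, A_i)$ were small, then by the Quasi-Additivity Law the sum $Y = \sum W(\Lambda', A_i)$ would be small too, which combined with the parallel law would force the total width $W(U,\Lambda)$ to be small — contradicting $W(U,\Lambda) = K > \mathbf{K}$. Hence either some $Z_i$ is large, or $W(\Lambda', \Lambda) = X$ is large. Pushing forward a large $Z_i$ by $f$ (which is univalent, or at worst of bounded degree $d$, on the relevant piece) gives $W(B', B) \gtrsim Z_i$, and pushing forward a large $X$ by $f: \Lambda' \to B'$, which has degree $d$, gives $W(B', B) \geq W(B',B) \gtrsim X/d$ — actually $W(\Lambda',\Lambda) \leq d\, W(B',B)$ directly.

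Assembling the bookkeeping: I would fix a threshold so that whenever $W(U,\Lambda) = K$ is large enough (depending on $\kappa$ and $D$, the latter controlling the number $n$ of preimage components through $n \leq D$), either at least half the width of $\mathcal{F}(U,\Lambda)$ comes from non-interacting leaves — yielding $W(V,B) \geq \tfrac{1}{2D}K > (2\kappa d^2)^{-1}K$ after adjusting constants — or at least half comes from interacting leaves, in which case the Quasi-Additivity Law applied with the constant $K(n)$ from Lemma \ref{quasi-additivity-law} yields $\max\{X, Z_1,\dots,Z_n\} \gtrsim K$, and transporting this downstairs through the degree-$d$ map on $\Lambda'$ gives $W(B',B) > \kappa K$ once $\mathbf{K}$ is chosen large enough to absorb the multiplicative constants $\sqrt{2n}$, $d$, and the factor $2$ from the splitting.

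\textbf{Main obstacle.} The technical heart — and the step I expect to be most delicate — is the careful splitting of $\mathcal{F}(U,\Lambda)$ into ``interacting'' and ``non-interacting'' sublaminations and verifying the overflow relations precisely, in particular checking that the interacting leaves, after restriction, genuinely overflow the families $\mathcal{F}(\Lambda' \setminus \bigcup_{j\neq i}A_j, A_i)$ and that the degree of $f$ restricted to each relevant piece is bounded by $d$ (not $D$), so that the final constant in the ``$\kappa K$'' alternative depends only on $\kappa$ and $D$ as claimed. One must also be careful that the number $n$ of secondary preimage components is at most $D - 1$, so the Quasi-Additivity constant $K(n)$ is controlled by $D$ alone; this is where the hypothesis that $f: U \to V$ has degree exactly $D$ enters. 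The rest is routine extremal-width arithmetic using Propositions \ref{parallel law}, \ref{series law}, \ref{harmonicsum}, and \ref{transformation law}.
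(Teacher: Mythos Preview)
The paper does not prove this lemma. Lemma~\ref{covering-lemma} (like Lemma~\ref{quasi-additivity-law}) is stated in the appendix as a tool imported from Kahn--Lyubich~\cite{KL05}; the paper explicitly says ``We will formulate important near-degenerate tools from \cite{KL05} in a way that is most suitable for our setting'' and then lists both lemmas without argument. So there is no in-paper proof to compare against.

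That said, your instinct to reduce the Covering Lemma to the Quasi-Additivity Law is exactly the strategy of the original Kahn--Lyubich proof, so you are on the right track. A few points where your sketch drifts: the clean way to run the argument is to work \emph{downstairs} in $V$, not upstairs. One looks at the full preimage $f^{-1}(B)$ inside $U$, which consists of $\Lambda$ together with other components $A_1,\dots,A_n$ (with $n$ controlled by $D$), and splits $\mathcal{F}(U,\Lambda)$ according to whether a leaf meets $\bigcup A_i$. The non-interacting leaves push forward to give a lower bound on $W(V,B)$ with a factor of $d$ (not $D$) because $f$ has degree $d$ on $\Lambda'$ and the relevant leaves can be restricted to $\Lambda'$; this is where the $d^2$ in the conclusion comes from after one also accounts for the number of sheets. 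The interacting leaves overflow families connecting $\Lambda$ to the $A_i$'s, and Quasi-Additivity is applied to $\Lambda', \{\Lambda, A_1,\dots,A_n\}$ to conclude that some $W(\Lambda',A_i)$ or $W(\Lambda',\Lambda)$ is large, which pushes forward to $W(B',B)$. Your version conflates these two regimes and at one point has the degree bound backwards (you want $W(\Lambda',\Lambda) \leq d\cdot W(B',B)$, which you do state correctly, but the surrounding discussion wobbles). The bookkeeping of the constant $(2\kappa d^2)^{-1}$ is also more delicate than ``adjusting constants'' suggests; in \cite{KL05} it falls out of a specific choice of threshold in the dichotomy. If you want a self-contained proof, consult \cite[\S3]{KL05} directly.
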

\vspace{0.05in}

\subsection{Canonical lamination}
\label{ss:canonical-lamination}

Consider an open hyperbolic Riemann surface $S$ with a finite number of boundary components. We allow the presence of finitely many punctures, which are separate from the ideal boundary $\partial S$. We will survey the fundamental properties of the \emph{canonical lamination} $\mathcal{F}_{\textnormal{can}}(S)$ of $S$ following Kahn's work \cite{K06}. The canonical lamination captures the near-degeneracy of $S$ induced by components of $\partial S$ that are very close to one another. Let us first sketch the construction.

Let $\pi: \D \to S$ be the universal cover of $S$. Since $\partial S$ is non-empty, the limit set $\Lambda \subset \partial \D$ of $\pi_1(S)$ is a Cantor set. For every component $\tilde{I} \subset$ of $\D \backslash \Lambda$, $\pi$ extends continuously to a universal covering $\tilde{I} \to I$ for some component $I$ of $\partial S$. Two proper curves $\gamma_0$ and $\gamma_1$ are \emph{properly homotopic} in $S$ if there is a homotopy $\gamma_{t}$, $t\in [0,1]$ between $\gamma_0$ and $\gamma_1$ such that each $\gamma_t$ is also a proper curve in $S$. An \emph{arc} in $S$ is a proper homotopy class of proper curves in $S$.

Consider a non-trivial arc $\alpha$ in $S$ connecting two (not necessarily distinct) components $I$ and $J$ of $\partial S$. Let $\tilde{\alpha}$ be a lift of $\alpha$ under $\pi$; it connects $\tilde{I}$ and $\tilde{J}$, which are some lifts of $I$ and $J$ respectively. Let us identify $\D$ with the structure of a conformal rectangle with horizontal sides $\tilde{I}$ and $\tilde{J}$. Kahn observed that removing buffers of width $1$ gives us a subrectangle that can be pushed forward by $\pi$ to a new conformal rectangle $\mathcal{R}_{\textnormal{can}}(S;\alpha)$ with horizontal sides contained in $I$ and $J$.

The \emph{canonical arc diagram} $\mathcal{A}_{\textnormal{can}}(S)$ is the set of non-trivial arcs $\alpha$ in $S$ such that the canonical rectangle $\mathcal{R}_{\textnormal{can}}(S;\alpha)$ is non-empty. The removal of buffers in the construction ensures that these rectangles are pairwise disjoint. The cardinality of $\mathcal{A}_{\textnormal{can}}(S)$ is at most a constant depending only on the Euler characteristic of $S$. 

We define the \emph{thick-thin decomposition} and the \emph{canonical lamination} of $S$ by
\[
    \textnormal{TTD}(S) := \bigcup_{\alpha \in \mathcal{A}_{\textnormal{can}}(S)} \mathcal{R}_{\textnormal{can}}(S;\alpha) \quad \text{and} \quad
    \mathcal{F}_{\textnormal{can}}(S) := \bigcup_{\alpha \in \mathcal{A}_{\textnormal{can}}(S)} \mathcal{F}_{\textnormal{can}}(S;\alpha),
\]
respectively, where $\mathcal{F}_{\textnormal{can}}(S;\alpha)$ is the vertical foliation of the canonical rectangle $\mathcal{R}_{\textnormal{can}}(S;\alpha)$. Every leaf of $\mathcal{F}_{\textnormal{can}}(S;\alpha)$ is represented by $\alpha \in \mathcal{A}_{\textnormal{can}}(S)$. If a proper arc $\alpha$ is not in $\mathcal{A}_{\textnormal{can}}(S)$, we set $\mathcal{F}_{\textnormal{can}}(S;\alpha)$ to be the empty lamination.

Below, we list without proof a number of fundamental properties of the canonical lamination. Firstly, it is maximal in the following sense.

\begin{proposition}[{\cite[Lemma 3.2]{K06}}]
    \label{maximality}
    For any proper family $\mathcal{F}$ of curves in $S$ represented by a single arc $\alpha$,
    \[
    W(\mathcal{F}) -2 \leq W\left(\mathcal{F}_{\textnormal{can}}(S;\alpha)\right).
    \]
    In other words, up to an additive constant, curves in $\mathcal{F}$ are vertical curves inside of the rectangle $\mathcal{R}_{\textnormal{can}}(S;\alpha)$.
\end{proposition}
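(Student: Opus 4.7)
The plan is to pull everything back to the universal cover, where the combinatorics of the single arc $\alpha$ makes the geometry literally that of a conformal rectangle, and then read off the additive constant $2$ from the two buffers of width $1$ removed in the construction of $\mathcal{R}_{\textnormal{can}}(S;\alpha)$.

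More precisely, first I would fix lifts $\tilde I$ and $\tilde J$ of the boundary components $I,J \subset \partial S$ connected by $\alpha$ and lift the family $\mathcal{F}$ through the universal cover $\pi:\D\to S$ to the family $\tilde{\mathcal{F}}$ of all lifts whose two endpoints lie on $\tilde I$ and $\tilde J$. Because $\pi$ is a local isometry and curves in $\mathcal{F}$ are all in the proper homotopy class $\alpha$, they biject with their lifts in $\tilde{\mathcal F}$ modulo the stabilizer $\Gamma_\alpha\subset\pi_1(S)$ of the ordered pair $(\tilde I,\tilde J)$. Since $\pi$ factors through $\D/\Gamma_\alpha$ conformally and onto an embedded neighborhood of $\alpha$, the width is preserved: $W(\tilde{\mathcal F})=W(\mathcal F)$.

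Next, I would observe that the convex hull (in the hyperbolic sense) of $\tilde I\cup\tilde J$ in $\D$, modulo $\Gamma_\alpha$, is a conformal rectangle $R$ with horizontal sides $\tilde I,\tilde J$ (and vertical sides equal to the two geodesics limiting to endpoints of $\tilde I\cup\tilde J$). Each curve in $\tilde{\mathcal F}$ is a proper curve in $R$ joining its two horizontal sides, hence $W(\tilde{\mathcal F})\leq W(R)$. By construction of the canonical lamination, the canonical rectangle $\mathcal{R}_{\textnormal{can}}(S;\alpha)$ is the $\pi$-image of the subrectangle $R'\subset R$ obtained by deleting two buffers of width $1$ adjacent to the vertical sides; the buffer removal is exactly what guarantees that $\pi|_{R'}$ is injective, so $W(\mathcal{F}_{\textnormal{can}}(S;\alpha))=W(R')=W(R)-2$.

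Putting these together gives
\[
W(\mathcal F)=W(\tilde{\mathcal F})\leq W(R)=W(R')+2=W(\mathcal{F}_{\textnormal{can}}(S;\alpha))+2,
\]
which is the desired inequality. The only delicate point I foresee is the first step: verifying carefully that lifting to the universal cover preserves extremal width in the presence of the stabilizer $\Gamma_\alpha$, i.e.\ that counting each curve exactly once on each side produces the same modulus. This is handled by noting that $\pi$ is a conformal covering of an embedded collar of $\alpha$ in $S$ by the quotient $R/\Gamma_\alpha$ and that $W$ is invariant under such coverings when we restrict to a single proper homotopy class on each side; once that identification is in place, the remainder is a direct appeal to the conformal-rectangle model and to the definition of the canonical rectangle.
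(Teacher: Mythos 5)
The paper states this proposition without proof, citing \cite[Lemma 3.2]{K06}, so your argument has to be measured against the construction of $\mathcal{R}_{\textnormal{can}}(S;\alpha)$ given in Appendix \ref{ss:canonical-lamination}. Your overall architecture --- lift $\mathcal{F}$ to the universal cover, bound the lifted family by the modulus of the rectangle with horizontal sides $\tilde I$ and $\tilde J$, and account for the two unit buffers --- is exactly the intended argument. Two of the points you worry about are non-issues: the stabilizer $\Gamma_\alpha$ of the ordered pair $(\tilde I,\tilde J)$ is trivial (a nontrivial element of a Fuchsian group cannot fix the three or more distinct endpoints of two distinct components of $\partial\D \backslash \Lambda$), and you never need the equality $W(\tilde{\mathcal F})=W(\mathcal F)$: the one-sided bound $W(\mathcal F)=W(\pi(\tilde{\mathcal F}))\leq W(\tilde{\mathcal F})$ is immediate from Proposition \ref{transformation law} and is all the final chain uses. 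Your justification of the equality (``$\pi$ factors \ldots onto an embedded neighborhood of $\alpha$'') is not correct as stated, but it is harmless since that direction is not needed.

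The genuine flaw is your choice of $R$. You take $R$ to be the hyperbolic convex hull of $\tilde I\cup\tilde J$, with geodesic vertical sides. This breaks the argument twice. First, a lift $\tilde\gamma\in\tilde{\mathcal F}$ joining $\tilde I$ to $\tilde J$ need not stay inside the convex hull, and need not even contain a subcurve crossing it from $\tilde I$ to $\tilde J$ (it can exit and re-enter through the same geodesic side), so $W(\tilde{\mathcal F})\leq W(R)$ is unjustified for this $R$. Second, the canonical rectangle is obtained by removing unit buffers from \emph{all of} $\D$, endowed with the conformal-rectangle structure whose horizontal sides are $\tilde I,\tilde J$ and whose vertical sides are the two complementary arcs of $\partial\D$ --- not from the convex hull, whose modulus is in general strictly smaller, so your identification $W(R')=W(\mathcal{F}_{\textnormal{can}}(S;\alpha))$ does not match the definition. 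Both problems vanish if you take $R=\D$ with that marking: then $\tilde{\mathcal F}$ is a subfamily of the full connecting family of $R$, so $W(\tilde{\mathcal F})\leq W(R)$ by monotonicity of width, $W(R)=W(R')+2$ for the buffer-removed subrectangle $R'$, and $W(R')=W(\mathcal{F}_{\textnormal{can}}(S;\alpha))$ because $\pi|_{R'}$ is a conformal embedding in the interior. With that correction the proof is complete.
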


Consider two hyperbolic Riemann surfaces $U$ and $V$ with boundary. The fact that the thick-thin decomposition is defined via the universal cover yields the following property.

\begin{proposition}[{\cite[Lemma 3.3]{K06}}]
    \label{naturality}
    For any holomorphic covering map $f: U \to V$ of finite degree, 
    \[
         \textnormal{TTD}(U) = f^*\textnormal{TTD}(V) \quad \text{and} \quad \mathcal{F}_{\textnormal{can}}(U) = f^*\mathcal{F}_{\textnormal{can}}(V) .
    \]
\end{proposition}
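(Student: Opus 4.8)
The plan is to exploit the fact that both $\textnormal{TTD}(\cdot)$ and $\mathcal{F}_{\textnormal{can}}(\cdot)$ are manufactured purely from the universal cover $\mathbb{D}$ together with the limit set $\Lambda$, and to observe that a finite holomorphic covering $f\colon U\to V$ leaves all of this Fuchsian data untouched. First I would fix compatible uniformizations: since $f$ is a covering, the universal cover $\pi_U\colon\mathbb{D}\to U$ simultaneously serves as a universal cover of $V$, so after choosing basepoints we get $\pi_V\colon\mathbb{D}\to V$ with $\pi_V=f\circ\pi_U$, and the deck groups satisfy $\Gamma_U=\pi_1(U)\le\Gamma_V=\pi_1(V)$ with $[\Gamma_V:\Gamma_U]=\deg(f)<\infty$. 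Because $U$ and $V$ have finite topological type, $\Gamma_V$ is a finitely generated Fuchsian group, and the key preliminary remark is that a finite-index subgroup has the same limit set: $\Lambda(\Gamma_U)=\Lambda(\Gamma_V)=:\Lambda$ (the degenerate cases, $V$ a disk or annulus, being immediate). Consequently the decomposition of $\partial\mathbb{D}\setminus\Lambda$ into boundary intervals is identical for $U$ and $V$, and the boundary components of $U$ (resp. $V$, punctures included) are precisely the $\Gamma_U$-orbits (resp. $\Gamma_V$-orbits) of these intervals.

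Next I would match canonical rectangles across $f$. Given an arc $\alpha$ of $V$ with a lift $\tilde{\alpha}$ joining intervals $\tilde{I},\tilde{J}$ of $\partial\mathbb{D}\setminus\Lambda$, the buffer-removed rectangle $R_{\tilde{\alpha}}\subset\mathbb{D}$ used to build $\mathcal{R}_{\textnormal{can}}(V;\alpha)$ depends only on $\Lambda$ and on the pair $(\tilde{I},\tilde{J})$, not on the group, and $\mathcal{R}_{\textnormal{can}}(V;\alpha)=\pi_V(R_{\tilde{\alpha}})$ by definition. Since $\pi_V=f\circ\pi_U$, this gives $\mathcal{R}_{\textnormal{can}}(V;\alpha)=f\bigl(\pi_U(R_{\tilde{\alpha}})\bigr)$, and $\pi_U(R_{\tilde{\alpha}})=\mathcal{R}_{\textnormal{can}}\bigl(U;\pi_U(\tilde{\alpha})\bigr)$: the width-$1$ buffers were stripped off exactly so that the $\Gamma_V$-translates of $R_{\tilde{\alpha}}$ are disjoint away from the stabilizer of $\tilde{\alpha}$, hence a fortiori the $\Gamma_U$-translates are, so $\pi_U$ restricts to the very same embedded rectangle. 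Running this in both directions — any canonical arc $\beta$ of $U$ has $R_{\tilde{\beta}}\neq\emptyset$, whence $f(\beta)=\pi_V(\tilde{\beta})$ is a canonical arc of $V$, and the components of $f^{-1}\bigl(\mathcal{R}_{\textnormal{can}}(V;\alpha)\bigr)$ are exactly the $\mathcal{R}_{\textnormal{can}}(U;\beta)$ with $f(\beta)=\alpha$ — yields $\textnormal{TTD}(U)=f^{*}\textnormal{TTD}(V)$. The lamination statement then follows for free, because $\mathcal{F}_{\textnormal{can}}$ is the union of the vertical foliations of these rectangles and a covering map carries the vertical foliation of each component of a preimage rectangle bijectively onto that of the rectangle downstairs; hence $\mathcal{F}_{\textnormal{can}}(U)=f^{*}\mathcal{F}_{\textnormal{can}}(V)$.

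The main obstacle I anticipate is the bookkeeping around stabilizers and making precise that $R_{\tilde{\alpha}}$ is literally group-independent: when $\tilde{I}$ and $\tilde{J}$ lie in the same $\Gamma$-orbit (an arc with both ends on one boundary component), one must check that the conformal-rectangle structure on $\mathbb{D}$ and the width-$1$ truncation used in $V$ and in $U$ coincide, and that passing to the smaller group $\Gamma_U$ only shrinks (or preserves) the relevant arc stabilizer, so that no canonical rectangle is either lost or spuriously created. Apart from this, the argument is a direct transcription of the construction in \cite[\S3]{K06} through the semiconjugacy $\pi_V=f\circ\pi_U$.
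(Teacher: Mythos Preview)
The paper does not supply its own proof of this proposition: it is listed among the properties stated ``without proof'' in Appendix~\ref{ss:canonical-lamination}, with a citation to \cite[Lemma~3.3]{K06}, and the only hint given is the sentence ``The fact that the thick-thin decomposition is defined via the universal cover yields the following property.'' Your argument is precisely the unpacking of that sentence---shared universal cover, equal limit sets for finite-index subgroups, and group-independence of the buffer-removed rectangle $R_{\tilde\alpha}$---so there is nothing to compare against, and your approach is the intended one.
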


When $U \subset V$, the restriction of $\mathcal{F}_{\textnormal{can}}(V)$ onto $U$ results in a proper lamination in $U$. By Proposition \ref{maximality}, the width of this restriction will be bounded above by the canonical lamination of $U$ after some buffers are removed. This can be formulated more precisely as follows.
    
\begin{proposition}[{\cite[Lemma 3.10]{K06}}]
    \label{domination}
    When $U \subset V$, there exists a sublamination $\mathcal{L} \subset \mathcal{F}_{\textnormal{can}}(V)$ such that
    \[
        W\left(\mathcal{F}_{\textnormal{can}}(V)\right) - C \leq W(\mathcal{L})
    \]
    for some constant $C>0$ depending only on the Euler characteristic of $U$ with the following property. For every leaf $\gamma$ of $\mathcal{L}$, every component of $\gamma \cap U$ is either
    \begin{enumerate}[label=\textnormal{(\arabic*)}]
        \item a homotopically trivial proper curve in $U$, or
        \item a vertical curve in $\mathcal{R}_{\textnormal{can}}(U;\alpha)$ for some $\alpha \in \mathcal{A}_{\textnormal{can}}(U)$.
    \end{enumerate}
\end{proposition}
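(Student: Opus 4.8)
The plan is to reproduce Kahn's proof of \cite[Lemma 3.10]{K06}; I outline it here. First I would describe a leaf of $\mathcal{F}_{\textnormal{can}}(V)$ as seen from $U$: since such a leaf $\gamma$ is a proper curve in $V$, every connected component of $\gamma \cap U$ has well-defined endpoints on $\partial U$ and so is itself a proper curve in $U$, representing a proper arc of $U$. Call such a component \emph{trivial} if its arc is trivial, and otherwise say it \emph{realizes} that arc. A trivial component already satisfies alternative (1) of the statement, so the task is to delete from $\mathcal{F}_{\textnormal{can}}(V)$ a set of leaves of controlled total width, after which every realized component is a vertical curve of the canonical rectangle $\mathcal{R}_{\textnormal{can}}(U;\alpha)$ of the arc $\alpha$ it realizes.

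The engine of the argument is the maximality property, Proposition \ref{maximality}. Fix a non-trivial arc class $\alpha$ of $U$, and let $\mathcal{G}_\alpha$ be the family of \emph{all} components --- over all leaves of $\mathcal{F}_{\textnormal{can}}(V)$ --- that realize $\alpha$. Because distinct canonical rectangles $\mathcal{R}_{\textnormal{can}}(V;\beta)$ are disjoint and each carries a genuine vertical foliation, the leaves of $\mathcal{F}_{\textnormal{can}}(V)$, hence all of their components, are pairwise disjoint; thus $\mathcal{G}_\alpha$ is a disjoint family and the Parallel Law (Proposition \ref{parallel law}) holds with equality for $\mathcal{G}_\alpha$. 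Since $\mathcal{G}_\alpha$ is a proper family in $U$ represented by the single arc $\alpha$, Proposition \ref{maximality} supplies a sub-family $\mathcal{G}'_\alpha$ consisting of vertical curves of $\mathcal{R}_{\textnormal{can}}(U;\alpha)$ with $W(\mathcal{G}'_\alpha) \ge W(\mathcal{G}_\alpha) - 2$, so by the equality just noted its complement $\mathcal{G}_\alpha \setminus \mathcal{G}'_\alpha$ has width at most $2$. Now declare $\mathcal{L}$ to be the set of leaves $\gamma$ of $\mathcal{F}_{\textnormal{can}}(V)$ all of whose components are trivial or lie in the appropriate $\mathcal{G}'_\alpha$; this is exactly properties (1)--(2). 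Every leaf outside $\mathcal{L}$ contains a curve of some $\mathcal{G}_\alpha \setminus \mathcal{G}'_\alpha$, so $\mathcal{F}_{\textnormal{can}}(V)\setminus\mathcal{L}$ overflows $\bigcup_\alpha(\mathcal{G}_\alpha\setminus\mathcal{G}'_\alpha)$ and the Parallel Law gives
\[
W\!\left(\mathcal{F}_{\textnormal{can}}(V)\right) \;\le\; W(\mathcal{L}) + 2n,
\]
where $n$ is the number of arc classes $\alpha$ that occur.

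Finally I would bound $n$. The components realizing non-trivial arcs are, as noted, pairwise disjoint, so choosing one per occurring class yields a system of pairwise disjoint representatives of $n$ distinct non-trivial arc classes of $U$; the number of such classes is bounded by a constant $c(\chi(U))$ depending only on the Euler characteristic of $U$ --- the same kind of bound that controls $\#\mathcal{A}_{\textnormal{can}}(U)$. Taking $C := 2\,c(\chi(U))$ gives $W(\mathcal{F}_{\textnormal{can}}(V)) - C \le W(\mathcal{L})$, completing the proof. The step I expect to require the most care is the interplay of this disjointness bookkeeping with the precise form of Proposition \ref{maximality} that produces $\mathcal{G}'_\alpha$ genuinely inside the canonical rectangle rather than merely overflowing it; the remaining points --- putting components in general position with respect to $\partial U$, reading ``vertical curve of $\mathcal{R}_{\textnormal{can}}(U;\alpha)$'' correctly when the two boundary sides of $\alpha$ coincide, and the degenerate case where $\mathcal{R}_{\textnormal{can}}(U;\alpha)$ is empty (so the entire class of $\alpha$ has width $\le 2$ and is absorbed into $\mathcal{G}_\alpha \setminus \mathcal{G}'_\alpha$) --- are routine and are handled exactly as in \cite{K06}.
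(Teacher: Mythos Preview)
The paper does not give its own proof of this proposition. It is stated in the appendix under the heading ``Below, we list without proof a number of fundamental properties of the canonical lamination,'' and is simply cited as \cite[Lemma~3.10]{K06}. There is therefore nothing in the paper to compare your proposal against.

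That said, your sketch is a reasonable outline of the argument one finds in Kahn's work: restrict leaves of $\mathcal{F}_{\textnormal{can}}(V)$ to $U$, invoke maximality (Proposition~\ref{maximality}) arc-class by arc-class to trim off width at most $2$ per class, and bound the number of classes by a constant depending on $\chi(U)$. The point you flag as needing care is indeed the crux: Proposition~\ref{maximality} as stated only gives a width inequality $W(\mathcal{G}_\alpha)-2\le W(\mathcal{F}_{\textnormal{can}}(U;\alpha))$, not directly a subfamily $\mathcal{G}'_\alpha\subset\mathcal{G}_\alpha$ of vertical curves in $\mathcal{R}_{\textnormal{can}}(U;\alpha)$ with $W(\mathcal{G}_\alpha\setminus\mathcal{G}'_\alpha)\le 2$. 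One needs the slightly sharper fact (implicit in the construction via the universal cover and the buffer removal) that the curves of $\mathcal{G}_\alpha$ lying outside $\mathcal{R}_{\textnormal{can}}(U;\alpha)$ form a family of width at most $2$. This is what the informal ``in other words'' clause after Proposition~\ref{maximality} is gesturing at, and it is how the argument in \cite{K06} proceeds.
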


In application, the Riemann surface $S$ we consider in \S\ref{sec:amplifying-tau-degeneration}--\ref{sec:loss-of-horizontal-width} is of the form $U\backslash K$ where $U \subset \C$ is a disk and $K$ is a non-empty compact subset of $U$. We say that a proper curve in $U \backslash K$ is \emph{horizontal} if both of its endpoints are on $K$, and \emph{vertical} if it connects a point on $K$ and a point on $\partial U$. We define the \emph{canonical horizontal} (resp. \emph{vertical}) \emph{lamination} $\mathcal{F}^h_{\textnormal{can}}(U, K)$ (resp. $\mathcal{F}^v_{\textnormal{can}}(U, K)$) on $U\backslash K$ to be the lamination consisting of all horizontal (resp. vertical) leaves of $\mathcal{F}_{\textnormal{can}}(U \backslash K)$. Similarly, we define the \emph{horizontal} and \emph{vertical thick-thin decomposition} $\textnormal{TTD}^h(U,K)$ and $\textnormal{TTD}^v(U,K)$ of $U\backslash K$ respectively.

\bibliographystyle{alpha}

\bibliography{bibliography}

\end{document}